\documentclass[12pt, reqno]{amsart}
\usepackage{mathrsfs}
\usepackage{amsfonts}
\usepackage[centertags]{amsmath}

\usepackage{amssymb}
\usepackage{amsthm}
\usepackage{graphicx}
\usepackage{caption}
\usepackage{dynkin-diagrams}
\usepackage[colorlinks]{hyperref}
\hypersetup{
bookmarksnumbered,
pdfstartview={FitH},
breaklinks=true,
linkcolor=blue,
urlcolor=blue,
citecolor=blue,
bookmarksdepth=2
}
\usepackage[sorted,compressed-cites,sorted-cites,initials]{amsrefs}
\usepackage{enumitem}
\usepackage[margin=1in]{geometry}
\usepackage{tikz-cd}
\usepackage{rotating}
\usetikzlibrary{matrix,arrows,decorations.pathmorphing}
\usepackage{tikz-cd}
\usetikzlibrary{calc}
\usetikzlibrary{decorations.pathmorphing}

\tikzset{curve/.style={settings={#1},to path={(\tikztostart)
    .. controls ($(\tikztostart)!\pv{pos}!(\tikztotarget)!\pv{height}!270:(\tikztotarget)$)
    and ($(\tikztostart)!1-\pv{pos}!(\tikztotarget)!\pv{height}!270:(\tikztotarget)$)
    .. (\tikztotarget)\tikztonodes}},
    settings/.code={\tikzset{quiver/.cd,#1}
        \def\pv##1{\pgfkeysvalueof{/tikz/quiver/##1}}},
    quiver/.cd,pos/.initial=0.35,height/.initial=0}

\tikzset{tail reversed/.code={\pgfsetarrowsstart{tikzcd to}}}
\tikzset{2tail/.code={\pgfsetarrowsstart{Implies[reversed]}}}
\tikzset{2tail reversed/.code={\pgfsetarrowsstart{Implies}}}
\tikzset{no body/.style={/tikz/dash pattern=on 0 off 1mm}}
\newcommand{\ceil}[1]{\lceil #1\rceil}
\newcommand{\floor}[1]{\lfloor #1\rfloor}

\newcommand{\Bfloor}[1]{\Big\lfloor #1\Big\rfloor}

\usepackage{listing}
\usepackage{color}
\usepackage{float}

\DeclareMathOperator{\supp}{supp}
\newcommand{\brd}[2]{\underbrace{#1\cdots}_{#2}}

\newcommand{\OEIS}[1]{\href{https:oeis.org/#1}{#1}}
\newcommand{\mP}{\mathsf P}
\newcommand{\la}{\langle}
\newcommand{\ra}{\rangle}
\newcommand{\wh}[1]{\widehat{#1}}

\newcommand{\ascprod}{\mathop{\overrightarrow{\prod}}}
\newcommand{\dscprod}{\mathop{\overleftarrow{\prod}}}
\newcommand{\ascprodst}{\mathop{\overrightarrow{\prod}^\times}}
\newcommand{\dscprodst}{\mathop{\overleftarrow{\prod}^\times}}
\newcommand{\prodst}{\mathop{{\prod}^\times}}

\usepackage{xstring}
\makeatletter
\newcommand{\partref}[1]{\StrCut{#1}{.}\@pref@\@suff@\ref{\@pref@}\StrLen{\@suff@}[\@slen@]\ifnum\@slen@>0\ref{#1}\fi}
\makeatother

\allowdisplaybreaks[4]
\usetikzlibrary{matrix,arrows,decorations.pathmorphing}

\newtheorem{theorem}{Theorem}[section]
\newtheorem{corollary}[theorem]{Corollary}
\newtheorem{lemma}[theorem]{Lemma}
\newtheorem{proposition}[theorem]{Proposition}
\theoremstyle{definition}
\newtheorem{definition}[theorem]{Definition}
\newtheorem{remark}[theorem]{Remark}
\newtheorem{example}[theorem]{Example}

\newtheorem{problem}{Problem}

\newtheorem{conjecture}[theorem]{Conjecture}

\newcommand{\cx}[2]{c_{#1\to #2}}

\newcommand{\cxr}[2]{c_{#1\leftarrow #2}}

\newenvironment{enmalph}{\begin{enumerate}[label={\rm(\alph*)},leftmargin=*]}{\end{enumerate}}

\SetLabelAlign{center}{\hfil#1\hfil}
\newenvironment{enmroman}{\begin{enumerate}[label={\rm(\roman*)},align=center,leftmargin=*]}{\end{enumerate}}

\numberwithin{equation}{section}
\newcommand{\Cox}[1]{\mathbf{Cox}(#1)}

\newcommand{\lie}[1]{{\mathfrak{#1}}}
\newcommand{\ZZ}{{\mathbb Z}}

\DeclareMathOperator{\ad}{ad}

\DeclareMathOperator{\Br}{Br}

\DeclareMathOperator{\Hom}{Hom}

\DeclareMathOperator{\SQF}{SQF}
\makeatletter
\renewcommand*{\Hdynkin}{\Adynkin\dynkinEdgeLabel{\numexpr\dynkin@rank-1}{\dynkin@rank}{5}}
\newcommand{\plink}[1]{\hypertarget{#1}{}\label{page:#1}}
\makeatother
\pgfkeys{/Dynkin diagram,
edge length=1.0cm,text style/.style={scale=0.8},Coxeter,root radius=0.07}

\begin{document}

\title{Hecke monoids, their homomorphisms
and parabolicity}
\author{Arkady Berenstein, Jacob Greenstein and Jian-Rong Li}
\address{Arkady Berenstein, Department of Mathematics, University of Oregon, Eugene, OR 97403, USA}
\email{arkadiy@math.uoregon.edu}
\address{Jacob Greenstein, Department of Mathematics, University of California, Riverside, CA 92521, USA}
\email{jacobg@ucr.edu}
\address{Jian-Rong Li, Faculty of Mathematics, University of Vienna, Oskar-Morgenstern-Platz 1, 1090 Vienna, Austria}
\email{lijr07@gmail.com}
\date{}

\thanks{This work was partially supported by the Simons Foundation Collaboration Grant no.~636972 (A.~Berenstein), the Simons
foundation collaboration grant no.~245735 (J.~Greenstein), Austrian Science Fund (FWF): P 34602, Grant DOI: 10.55776/P34602, and PAT 9039323, Grant-DOI 10.55776/PAT9039323 (J.-R. Li).
}

\begin{abstract}
We study homomorphisms of Hecke monoids,  notably parabolic homomorphisms, which map parabolic elements to
parabolic elements, and injective ones. The importance of the first class stems from the fact that parabolic elements form a rather mysterious submonoid of the Hecke monoid, and 
we found a plethora of parabolic homomorphisms.
Concerning injective ones, as a first
step towards their classification, we classified
all locally injective connected homomorphisms
between Hecke monoids of classical types and 
expect all of them to be injective.
As a surprising byproduct of our study of parabolic and injective homomorphisms we described, to some extent, all homomorphisms between Hecke monoids.
\end{abstract}

\maketitle

\tableofcontents

\section{Introduction and main results}
The aim of the present work is to systematically study
homomorphisms of Hecke monoids and thus to develop their comprehensive theory. Homomorphisms, especially injective ones, are
instrumental in representation theory of these monoids and Coxeter groups as well as corresponding unipotent algebraic groups.

We start with {\em parabolic} homomorphisms of Hecke monoids, also referred to as 0-Hecke monoids, Demazure monoids and even Coxeter monoids in the literature. Our interest in them was motivated by the following surprising
observation.
Let $M$ be a Coxeter matrix over a finite set~$I$ and let $W(M)=\la s_i\,:\, i\in I\ra$ 
(respectively, $(W(M),\star)$) be the corresponding Coxeter group 
(respectively, the Hecke
monoid generated by simple idempotents also denoted by $s_i$, $i\in I$, see~\S\S\ref{subs:Br(M)W(M)}, \ref{subs:Hecke} for details).
Given~$J\subset I$, let $W_J(M)=\la s_j\,:\, j\in J\ra$ be the {\em parabolic} subgroup of~$W(M)$
corresponding to~$J$ (\S\ref{subs:parab}). 
It is well-known that the assignments
$s_j\mapsto s_j$, $j\in J$ and $s_j\mapsto 1$, $j\in I\setminus J$, define a homomorphism of monoids $p_J:(W(M),\star)\to 
(W_J(M),\star)$, which we refer to as a {\em parabolic projection}. It should be noted
that such homomorphisms of Coxeter groups are very rare (see Remark~\ref{rem:parab proj Cox}). 

We say that~$K\subset I$ is of {\em finite type} (in the literature they are also called {\em spherical}) if~$W_K(M)$ is finite. 
For any~$K\subset I$ of finite type,
there is a distinguished
family of elements $w_{J;K}\in W_K(M)$, $J\subset K$ called {\em parabolic} (cf.~\S\ref{subs:parab elts}).
In particular, $w_\circ^K:=w_{\emptyset;K}$
is the longest element in~$W_K(M)$.
Our first main result is the following 
 \begin{theorem}\label{thm:mthm I}
Each parabolic projection maps parabolic elements to parabolic elements. Moreover, all {\em light} homomorphisms of Hecke monoids, i.e. those homomorphisms sending generators to generators or 1, also map all parabolic elements to parabolic elements.
 \end{theorem}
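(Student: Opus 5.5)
The plan is to reduce everything to light homomorphisms and then characterize parabolic elements intrinsically in terms of the Hecke product. A parabolic projection $p_J$ sends each generator either to a generator or to $1$, so it is itself light. The first assertion of Theorem~\ref{thm:mthm I} therefore follows from the second, and it suffices to treat a light homomorphism $f:(W(M),\star)\to (W(M'),\star)$. For $K\subset I$ write $f(K)=\{i'\in I' : s_{i'}=f(s_k)\text{ for some }k\in K\}$.

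\emph{Step 1: longest elements go to longest elements.} Let $K$ be of finite type and set $x=f(w_\circ^K)$. Since $s_k\star w_\circ^K=w_\circ^K=w_\circ^K\star s_k$ for all $k\in K$, applying $f$ gives $s_{i'}\star x=x=x\star s_{i'}$ for all $i'\in f(K)$. Also $x\in W_{f(K)}(M')$, because $f(w_\circ^K)$ is the Hecke product of the images of the letters of a reduced word. An element of $W_{f(K)}(M')$ absorbing every generator of $f(K)$ on the left has every $s_{i'}$, $i'\in f(K)$, as a left descent. Hence every element of $W_{f(K)}(M')$ lies below $x$ in the Bruhat order. This forces $W_{f(K)}(M')$ to be finite and $x=w_\circ^{f(K)}$.

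\emph{Step 2: an intrinsic characterization of $w_{J;K}$.} I will use the description $w_{J;K}=w_\circ^Jw_\circ^K$, the longest minimal coset representative in ${}^JW_K$. The key fact is that for $y\in W_K(M)$ one has $w_\circ^J\star y=w_\circ^J\cdot{}^Jy$, where ${}^Jy$ is the minimal representative of $W_J(M)y$ and the lengths add. Consequently $w_\circ^J\star y=w_\circ^K$ holds iff ${}^Jy=w_{J;K}$. In particular, $w_{J;K}$ is the unique element $y\in W_K(M)$ such that:
\begin{enumerate}[label={\rm(\roman*)},leftmargin=*]
\item $w_\circ^J\star y=w_\circ^K$, and
\item $y$ has no left descents in $J$, or equivalently $\ell(y)\le \ell(w_\circ^K)-\ell(w_\circ^J)$.
\end{enumerate}

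\emph{Step 3: transport.} Set $y=f(w_{J;K})$. Applying $f$ to $w_\circ^J\star w_{J;K}=w_\circ^K$ and using Step~1 gives $w_\circ^{f(J)}\star y=w_\circ^{f(K)}$ with $y\in W_{f(K)}(M')$, so condition (i) holds for the pair $f(J)\subset f(K)$. It remains to verify (ii) for $y$.

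I expect this to be the main obstacle, because a light homomorphism can identify generators or kill them. For instance, if $f(J)=f(K)$, the claim is that $y=1$. My first attempt will be a length count along a specific reduced factorization of $w_{J;K}$. I would write $w_{J;K}$ as a product of "relative longest elements" along a chain $J=J_0\subset J_1\subset\cdots\subset J_r=K$ adding one index at a time, so that $w_{J;K}=w_{J_0^*;J_1}\cdots$ with appropriately conjugated intermediate sets, using that the $\star$-product of parabolic elements along such a chain is again parabolic. Then I would check (ii) one rank-one step at a time. When the added index $k$ satisfies $f(s_k)\in\{1\}\cup\{f(s_j): j\in J_{t-1}\}$, the corresponding factor should be absorbed: its image lies in $W_{f(J_{t-1})}(M')$ and is swallowed by $w_\circ^{f(J_{t-1})}$. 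Otherwise the factor maps to the analogous relative element for $f(J_{t-1})\subset f(J_t)$. If this inductive bookkeeping becomes unwieldy, the fallback is to prove (ii) by contradiction. Suppose $s_{i'}\star y=y$ for some $i'=f(j)$ with $j\in J$. I would then pull this back through a reduced word of $w_{J;K}$, whose first letter cannot lie in $J$, to contradict $J$-minimality of $w_{J;K}$ in the source.
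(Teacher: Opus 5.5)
\paragraph{There is a genuine gap: Step 3 aims at a false statement.} Your Steps 1 and 2 are correct. However, conditions (i) and (ii) for the pair $f(J)\subset f(K)$ would give $f(w_{J;K})=w_{f(J);f(K)}$, and light homomorphisms do not behave this way.

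Take $M=A_2$, $f=p_{\{1\}}$, $J=\{1\}$, $K=\{1,2\}$. Then $w_{J;K}=s_1\cdot s_1s_2s_1=s_2s_1$ and $f(s_2s_1)=1\star s_1=s_1=w_{\emptyset;\{1\}}$, even though $f(J)=f(K)=\{1\}$. So your claim that $f(J)=f(K)$ forces $y=1$ fails, and $y=s_1$ has a left descent in $f(J)$, which violates (ii).

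This is not a degenerate phenomenon. In $A_3$ with $f=p_{\{1,2\}}$, $J=\{1\}$, $K=[1,3]$, one has $w_{J;K}=s_2s_1s_3s_2s_1$ and $f(w_{J;K})=w_\circ^{[1,2]}=w_{\emptyset;[1,2]}$, not $w_{\{1\};[1,2]}=s_2s_1$.

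In the paper's notation (projection $p_J$, parabolic element $w_{K;L}$) the correct formula is $p_J(w_{K;L})=w_{(J\cap L)\star_L K;J\cap L}$. Here $(J\cap L)\star_L K$ is in general a proper subset of $J\cap K$: the image is parabolic, but for a smaller subset than the one you prescribe.

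Consequently neither your rank-one bookkeeping nor your fallback contradiction can succeed. In the $A_2$ example, the descent $s_1$ of $y$ comes from the \emph{last} letter of the reduced word $s_2s_1$. It becomes initial only because $s_2$ is killed, so nothing pulls back to a left descent of $w_{J;K}$ in $J$.

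\paragraph{What is actually needed, and how the paper supplies it.} Your Step 2 does yield the right target: $y\in W_{K'}(M')$ is $K'$-parabolic if and only if $w_\circ^{K'\setminus D_L(y)}\star y=w_\circ^{K'}$. But checking this for $y=f(w_{J;K})$ and $K'=f(K)$ requires knowing $D_L(y)$, i.e.\ identifying the subset $J''$ with $y=w_{J'';f(K)}$. That identification is the whole content of the theorem, and no formal transport argument supplies it.

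The paper identifies $J''$ as a $\star$-product and proves $p_J(w_K)=w_{J\star K;J}$ in several stages:
\begin{itemize}
\item It reduces to connected $J$ and $K$, using $\downarrow w_{K_1}\cap\downarrow w_{K_2}=\downarrow w_{K_1\cup K_2}$ for orthogonal $K_1,K_2$ and the monotonicity of parabolic projections in the Bruhat order.
\item It transfers between types through injective homogeneous homomorphisms (the unfoldings $B_n\to A_{2n-1}$, $H_3\to D_6$, $H_4\to E_8$).
\item It computes explicitly in types $A$, $D$ and $E$.
\end{itemize}
General light homomorphisms are then handled by decomposing them into a tautological map followed by a projection, a folding and an inclusion. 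Tautological maps and the foldings $D_{n+1}\to A_n$ and $D_4\to A_2$ are checked separately.

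Your reduction of the first assertion to the second is logically fine, but nothing in your argument replaces this identification of $J''$.
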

 This theorem is highly non-trivial and is proven in~\S\ref{subs:pJ parabolic}. 
 The argument relies very heavily on
 the structure of the rather mysterious monoid $\mathsf P_K(M)=\{ w_{J;K}\,:\, J\subset K\}$. 
These monoids for Weyl groups appeared 
for the first time in the joint work of the first author with David Kazhdan (\cite{BK07}). In particular, it was shown, by a geometric argument,
that $\mathsf P_K(M)$ 
is an abelian submonoid of the Hecke monoid~$(W_K(M),\star)$.
For non-crystallographic finite Coxeter groups, we provide a 
different proof for non-crystallographic ones here for the reader's convenience (Proposition~\ref{prop:submonoid *}).

This result was a byproduct of geometric considerations, namely of our study of homomorphisms of nilpotent Lie algebras and the corresponding (unipotent) Lie groups. We show (Theorem~\ref{thm:Lie light}) that any light homomorphism of
Hecke monoids corresponding to Weyl groups $W(M)\to W(M')$, finite or infinite, gives rise to a homomorphism of nilpotent Lie algebras $\lie n(A)\to \lie n(A')$ and their (unipotent) Lie groups $U(A)\to U(A')$, where
$A$ and~$A'$ are (generalized) Cartan matrices associated with~$M$ and~$M'$, respectively (see~\S\ref{subs:Lie} for the details).  
 
Motivated by Theorem~\ref{thm:mthm I}, we pose the following
\begin{problem}\label{prob:parabolic}
Classify all parabolic homomorphisms of Hecke monoids,
that is all those homomorphisms which
map all parabolic elements in the domain to parabolic elements of the codomain.
\end{problem}
This problem is interesting and, hopefully, manageable since, in particular, the class of parabolic homomorphisms is closed under compositions, and so
it is enough to classify indecomposable ones.
For instance, we found a series of
such homomorphisms $(W(B_2),\star)\to 
(W(A_{2n-1}),\star)$ (see Remark~\ref{rem:B2 A2n-1 parab noninj}) which, like most of light homomorphisms,
are non-injective and we expect that, up to decorations
in style of Lemma~\ref{lem:decoration}, they
exhaust non-injective solutions of Problem~\ref{prob:parabolic}
for~$M=B_2$ and~$M'=A_n$. 

The following provides additional classes of solutions for Problem~\ref{prob:parabolic} which are very different from light ones.
\begin{theorem}[Proposition~\ref{prop:CH-prop}]\label{thm:Cox type Hecke hom}
Let~$\phi:(W(M),\star)\to (W(M'),\star)$ be a 
homomorphism of Hecke monoids. Suppose that~$\phi$
is also a homomorphism of Coxeter groups.
Then~$\phi$ is parabolic and, moreover, is injective if and only if it does not map any generator to~$1$.
\end{theorem}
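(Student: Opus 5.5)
We first pin down what it means for~$\phi$ to be simultaneously a homomorphism of Hecke monoids and of Coxeter groups. Both structures live on the same underlying set $W(M)$, so $\phi$ is a single map of sets with $\phi(u\star v)=\phi(u)\star\phi(v)$ and $\phi(uv)=\phi(u)\phi(v)$. Put $w_i:=\phi(s_i)$. Since $s_i^2=1$ in the group, $w_i^2=1$. Since $s_i\star s_i=s_i$ in the monoid, $w_i\star w_i=w_i$.

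The first step is to identify the $w_i$. We will show that each $w_i$ is either $1$ or a longest element $w_\circ^{K_i}$ of some finite type $K_i\subset I'$. The idempotent condition gives this: if $w\star w=w$, then $w$ is the longest element of the finite parabolic subgroup generated by its support. We will prove this, or quote it from the section on parabolic elements, as follows. If some $s_k$ with $k\in\supp w$ satisfies $\ell(s_kw)>\ell(w)$, then $\ell(w\star w)>\ell(w)$, which contradicts idempotence. So $w$ has every element of its support as a left descent, and a standard result (e.g.\ the characterization of $w_\circ^K$) then forces $w=w_\circ^{\supp w}$. In particular $\ell(w_i)$ is the length of a longest element.

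The second step is the key lemma. We will show that for $u,v\in W(M)$ with $\ell(uv)=\ell(u)+\ell(v)$ we have $\ell(\phi(uv))=\ell(\phi(u))+\ell(\phi(v))$; equivalently, $\phi$ is \emph{length-additive} with $\ell(\phi(w))=\sum_k \ell(w_{i_k})$ for any reduced word $s_{i_1}\cdots s_{i_r}$ of $w$. The mechanism is this. On reduced words, $u\star v=uv$, so $\phi(u)\phi(v)=\phi(uv)=\phi(u\star v)=\phi(u)\star\phi(v)$. We then use the general fact that $x\star y=xy$ if and only if $\ell(xy)=\ell(x)+\ell(y)$. This holds because $x\star y=x'y$ with $x'\ge x$ in Bruhat order and $\ell(x'y)=\ell(x')+\ell(y)$; hence equality of $x\star y$ and $xy$ forces lengths to add. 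We apply this inductively along a reduced word. This is the step where the two structures interact, and I expect it to be the main obstacle: one must be careful that equality of products really forces length additivity. I will verify this via the Demazure product formula $\ell(x\star y)\ge\max(\ell(x),\ell(y))$ together with the cancellation described above.

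The third step establishes parabolicity. Let $w_{J;K}$ be a parabolic element. Recall that $w_{J;K}=w_\circ^K w_\circ^J$ with $\ell(w_{J;K})=\ell(w_\circ^K)-\ell(w_\circ^J)$, so that $w_\circ^K=w_{J;K}\,w_\circ^J$ is a length-additive factorization. Both $w_\circ^K$ and $w_\circ^J$ are $\star$-idempotent, so their images are $\star$-idempotent and by the first step are longest elements $w_\circ^{K'}$ and $w_\circ^{J'}$. Since $w_\circ^J\le w_\circ^K$ as $\star$-divisors, that is $w_\circ^K\star w_\circ^J=w_\circ^K$, we get $J'\subset K'$. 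Group multiplicativity then gives $\phi(w_{J;K})=w_\circ^{K'}w_\circ^{J'}=w_{J';K'}$, which is parabolic.

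Finally, for injectivity, the "only if" direction is immediate: if $w_i=1$, then $\phi(s_i)=\phi(1)$. For the "if" direction, assume all $w_i\ne1$. By the second step, $\ell(\phi(w))\ge\ell(w)$. Suppose $\phi(u)=\phi(v)$. Then $\phi(u^{-1}v)=1$ because $\phi$ is a group homomorphism, so $\ell(u^{-1}v)\le\ell(1)=0$, which gives $u=v$.
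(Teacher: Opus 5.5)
Your proof is correct and follows the paper's route: homogeneity comes from $\phi(u)\phi(v)=\phi(u\times v)=\phi(u)\star\phi(v)$ together with the fact that $x\star y=xy$ forces $\ell(xy)=\ell(x)+\ell(y)$ (Proposition~\ref{prop:homogeneous<->Coxeter}); parabolicity comes from $\phi(w_\circ^J)=w_\circ^{[\phi](J)}$ (Lemma~\ref{lem:Hecke hom w0J}) plus group multiplicativity; and injectivity comes from a trivial kernel via length additivity (Proposition~\ref{prop:CH-prop}). Two harmless slips: in Proposition~\ref{prop:Bruhat order *} one has $x\star y=x'\times y$ with $x'\le x$ (not $x'\ge x$), which does not affect your cancellation argument; and the paper's convention is $w_{J;K}=w_\circ^Jw_\circ^K$, for which the same computation gives $\phi(w_{J;K})=w_{[\phi](J);[\phi](K)}$, with second index exactly $[\phi](K)$ as the definition of a parabolic homomorphism requires.
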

We show that homomorphisms satisfying the first assumption
of this theorem are the same as 
{\em homogeneous homomorphisms} (see~\S\ref{subs:homogeneous}). For finite~$W(M)$
they are classified in Theorem~\ref{thm:adm finite class}
and turn out to be, essentially, the unfoldings, and
are expected to be indecomposable.

Another class of injective solutions of Problem~\ref{prob:parabolic} is obtained as follows.
We prove
that the assignments $s_i\mapsto w_\circ^{[i,n-2+i]}$,
$i\in\{1,2\}$ define injective parabolic homomorphisms, respectively, 
$(W(A_2),\star)\to (W(A_n),\star)$
(Proposition~\ref{prop:A2->An}),
$(W(B_2),\star)\to (W(B_n),\star)$ (Proposition~\ref{prop:B2 Bn all}) 
and~$(W(B_2),\star)\to (W(D_n),\star)$ for~$n$ even
(Proposition~\ref{prop:B2 Dn+1}). 
\begin{conjecture}\label{conj:parab homs}
The homomorphisms described above
exhaust, up to diagram automorphisms and decorations (see Lemma~\ref{lem:decoration}), injective parabolic homomorphisms 
between Hecke monoids of irreducible crystallographic types
(except for~$G_2$)
which are indecomposable as homomorphisms of Hecke monoids.
\end{conjecture}
We verified this conjecture for many Hecke monoids of small rank.

It should be noted
that, like parabolic projections, homogeneous homomorphisms in the crystallographic case lift to homomorphisms of the corresponding reductive groups (see~\S\ref{subs:geom}). The corresponding geometric unfoldings of simple algebraic groups (such as the natural embedding
$SO(2n+1)\hookrightarrow SL(2n+1)$) were our second motivation.

Since all homomorphisms from Theorem~\ref{thm:Cox type Hecke hom}
and all those listed before Conjecture~\ref{conj:parab homs} are injective, it is natural to pose the following
\begin{problem}\label{prob:injective}
Classify injective homomorphisms of Hecke monoids.
\end{problem}
Like parabolic ones, this class of homomorphisms is also closed under compositions. This problem is certainly
harder than Problem~\ref{prob:parabolic}, nevertheless we hope that it is still manageable. As the first step, we pose the following (see Definitions~\ref{def:types heck hom} and~\ref{defn:locally inj} for the terms in italic)
\begin{problem}\label{prob:locally injective}
Classify all {\em locally injective connected}
homomorphisms of Hecke monoids.
\end{problem}
The class of connected homomorphisms of Hecke monoids is closed under compositions, like those of parabolic and injective homomorphisms. 
We expect that all (locally) injective
homomorphisms are obtained from connected
ones either
by using ``decorations'' (see Lemma~\ref{lem:decoration}) or by taking compositions 
with homogeneous homomorphisms (cf. Conjecture~\ref{conj:families of inj}).
We hope that this can be 
used in representation theory of Hecke monoids
and instrumental for solving Problem~\ref{prob:injective} for infinite Hecke monoids.

We completely solved Problem~\ref{prob:locally injective} 
for all classical series of Coxeter monoids in Section~\ref{sec:loc inj Homs}.
Namely,
we constructed infinite families of homomorphisms
$(W(A_k),\star)\to (W(A_n),\star)$, $n\ge k\ge 2$ (Theorem~\ref{thm:Ak to An}) and
$(W(B_k),\star)\to 
(W(B_n),\star)$, $n\ge k\ge 3$ 
(Theorem~\ref{thm:Bk Bn})
parametrized by
restricted integer partitions~$\mathcal A_{k+1}(n+1)$, $\mathcal B_k(n)$ (see~\S\S\ref{subs:A->A},
\ref{subs:B->B} for the details). 
There are also 
infinite families of injective homomorphisms 
$(W(B_2),\star)\to (W(B_n),\star)$
and~$(W(B_2),\star)\to (W(D_n),\star)$
which do not fit into these families
(see~\S\ref{subs:B->B} and~\S\ref{subs:B->D}
for the details).
Also, we proved (Theorem~\ref{thm:Br D_n+1}) that for each~$r\ge 2$, the assignments
$s_i\mapsto w_\circ^{[2i-1,2i+1]}$, $1\le i\le r-1$, $s_r\mapsto s_{2r}$, define 
a homomorphism $(W(B_r),\star)\to 
(W(D_{2r}),\star)$, which is also
the only homomorphism from~$(W(B_r),\star)$
to $(W(D_n),\star)$ which does not respect
the diagram automorphisms (a sort of ``sporadic series''). Finally, we prove that these 
are the only locally injective connected
homomorphisms, up to natural inclusions and 
compositions with homogeneous homomorphisms
and diagram automorphisms.

Quite unexpectedly, as a byproduct of our approach to Problems~\ref{prob:parabolic} and~\ref{prob:injective} 
(which are still open despite all solutions already found) and the 
classification of light homomorphisms, it turned out to be possible to describe, to some extent, {\em all} homomorphisms of Hecke monoids. To begin with,
since the generators $s_i$, $i\in I$ of~$(W(M),\star)$
are idempotents, to find
all homomorphisms of Hecke monoids it is necessary to find all idempotents in Hecke monoids.
One can show (see~\S\ref{subs:Hecke} and e.g.~\cite{K14}) that an element of~$(W(M),\star)$
is an idempotent if and only if it is equal to~$w_\circ^J$
for some~$J\subset I$ such that~$W_J(M)$ is finite
(see~\S\ref{subs:w0J}).
\begin{theorem}[Theorem~\ref{thm:Hom Heck Mon}]\label{thm:1.3}
Let $M=(m_{ij})_{i,j\in I}$, $M'=(m'_{ij})_{i,j\in I'}$ be
Coxeter matrices and let 
$(W(M),\star)$, $(W(M'),\star)=
\la s'_i: i\in I'\ra$ be the respective Hecke monoids.
\begin{enmalph}
    \item Given a family $\boldsymbol K=\{K_i\,:\,i\in I'\}$
    of subsets of~$I$ such that each~$W_{K_i}(M)$ is finite,
    the assignments $s'_i\mapsto w_\circ^{K_i}$, $i\in I'$,
    define a homomorphism $\phi_{\boldsymbol K}:(W(M'),\star)\to (W(M),\star)$
    if and only if  for all~$i\not=j\in I'$ either~$m'_{ij}=
    \infty$ or
    $W_{K_i\cup K_j}(M)$ is finite
    and
    $m'_{ij}\ge \max(\mu_M(
    K_i,K_j),\mu_M(K_j,K_i))$,
    where
    $\mu_M(K_i,K_j)$ is the minimal positive integer~$m$
    such that $\brd{w_\circ^{K_i}\star w_\circ^{K_j}\star}{m}=w_\circ^{K_i\cup K_j}$.
    \item All homomorphisms of Hecke monoids
    are of this form.
\end{enmalph}
\end{theorem}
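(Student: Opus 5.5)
Part (b) comes first and is the easier half. The Hecke monoid $(W(M'),\star)$ has the monoid presentation with generators $s'_i$, $i\in I'$, and relations $s'_i\star s'_i=s'_i$ together with the braid relations $\brd{s'_i\star s'_j\star}{m'_{ij}}=\brd{s'_j\star s'_i\star}{m'_{ij}}$ for $m'_{ij}<\infty$. The second set of relations holds by Matsumoto's theorem, since $(W,\star)$ is the quotient of the braid monoid by the idempotency relations. A homomorphism $\phi$ must send each idempotent $s'_i$ to an idempotent of $(W(M),\star)$. By the characterization of idempotents recalled above (see~\S\ref{subs:Hecke}), each such idempotent equals $w_\circ^{K_i}$ for some $K_i$ of finite type. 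This proves (b). Part (a) then reduces to the following question: exactly when do the elements $a=w_\circ^{K_i}$ and $b=w_\circ^{K_j}$ satisfy the braid relation of length $m=m'_{ij}$? Idempotency is automatic. When $m'_{ij}=\infty$ there is nothing to check.

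The key tool is a description of the products involved. For $x,y\in W$ we have $x\star y=\max\{uv: u\le x,\ v\le y\}$ in Bruhat order; equivalently, the Demazure product is monotone in each argument. Set $K=K_i\cup K_j$. Any alternating product of $a$ and $b$ lies in $W_K(M)$. It is also increasing in the number of factors, since $a\star b\ge b$ and so on.

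For sufficiency, suppose $W_K$ is finite and $m\ge\mu(K_i,K_j),\mu(K_j,K_i)$. By definition of $\mu$, the alternating product of length $\mu(K_i,K_j)$ starting with $a$ equals $w_\circ^K$, the maximum of $W_K$. Multiplying further by $a$ or $b$ keeps it equal to $w_\circ^K$, because $w_\circ^K\star s_k=w_\circ^K$ for every $k\in K$. Hence both alternating products of length $m$ equal $w_\circ^K$, and the relation holds.

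For necessity, assume $m=m'_{ij}<\infty$ and that $\pi_m(a,b)=\pi_m(b,a)=:x$, where $\pi_m(a,b)$ denotes the alternating product of length $m$ beginning with $a$. We will show that $x=w_\circ^K$ with $W_K$ finite. Then $\mu(K_i,K_j)\le m$, and symmetrically $\mu(K_j,K_i)\le m$.

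Suppose first that $m$ is odd. Then $\pi_m(a,b)$ ends with $a$, so $x\star a=x$. Likewise $\pi_m(b,a)$ ends with $b$, so $x\star b=x$. Hence $x\star s_k=x$ for all $k\in K$: every $k\in K$ is a right descent of $x$. If $m$ is even, the same argument applied to the first factors gives $a\star x=x=b\star x$, so every $k\in K$ is a left descent of $x$.

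Now $x\in W_K$, and an element of $W_K$ having every $s_k$, $k\in K$, as a descent is the longest element of $W_K$. Such an element exists only if $W_K$ is finite; this is the standard fact that an element whose descent set is all of $K$ forces $W_K$ finite, with $x=w_\circ^K$. Finally, $x=w_\circ^K$ together with monotonicity gives $\pi_m(a,b)=w_\circ^K$, so $\mu(K_i,K_j)\le m$.

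The main obstacle is the descent step: verifying carefully that $x\star s_k=x$ for $k\in K_i$ follows from $x\star w_\circ^{K_i}=x$, and that a full descent set forces finiteness. For the first point, I would use $w_\circ^{K_i}=w_\circ^{K_i}\star s_k$, which gives $x\star s_k=x\star w_\circ^{K_i}\star s_k=x\star w_\circ^{K_i}=x$. For the second, I would invoke the standard result (e.g.\ Bourbaki) that $W_K$ is finite when some element has all of $K$ as descents.
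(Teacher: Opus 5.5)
Your proof is correct. Its overall structure matches the paper's: homomorphisms send the idempotents $s'_i$ to idempotents $w_\circ^{K_i}$, only the braid relations need to be checked, and sufficiency is absorption once an alternating product has reached $w_\circ^{K_i\cup K_j}$, exactly as in Lemma~\ref{lem:Lam M'M HomM'M}. The necessity step, however, rests on a different key fact.

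\textbf{The paper's necessity argument.} In the proof of Theorem~\ref{thm:Hom Heck Mon}, the paper argues at the level of the homomorphism. When $m'_{ij}<\infty$, the submonoid $\langle s'_i,s'_j\rangle$ of $(W(M'),\star)$ is finite, so its image is a finite subsemigroup of $(W(M),\star)$. Corollary~\ref{cor:max elts} then shows that the support $K_i\cup K_j$ is of finite type and that the image has the unique longest element $w_\circ^{K_i\cup K_j}$. Lemma~\ref{lem:Hecke hom w0J} identifies $\phi(w_\circ^{\{i,j\}})$ with this element, so both alternating words of length $m'_{ij}$ map to it.

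\textbf{Your necessity argument.} You work only with the two idempotents $a,b$ and the single relation $\pi_m(a,b)=\pi_m(b,a)=x$. Since the two words start (and end) with different letters, $x$ absorbs every $s_k$, $k\in K_i\cup K_j$, on one side. The descent-set fact (Lemma~\ref{lem:left desc} together with Lemma~\ref{lem:char w_0 monoid}, or Bourbaki) then forces $W_{K_i\cup K_j}$ to be finite and $x=w_\circ^{K_i\cup K_j}$.

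\textbf{Comparison.} Your version is more local. It characterizes directly when two idempotents of $(W(M),\star)$ satisfy a braid relation of given length, and it never uses finiteness of the dihedral source monoid. The paper's version instead reuses the finite-subsemigroup corollary, which it needs anyway to classify idempotents.

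\textbf{Minor points.}
\begin{itemize}
\item The presentation of $(W(M'),\star)$ by idempotency and braid relations is its definition here, so no appeal to Matsumoto's theorem is needed.
\item Your parity split is harmless but unnecessary: for every $m$, both the first letters and the last letters of the two words differ, so $x$ is absorbed on both sides.
\item Your closing appeal to monotonicity is redundant, since $x=\pi_m(a,b)$ by definition.
\end{itemize}
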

This provides the classification of homomorphisms of
Hecke monoids (for example, $p_J=\phi_{\boldsymbol K(J)}$
where~$K(J)_i=\{i\}$ if~$i\in J$ and~$K(J)_i=\emptyset$
otherwise)
and will enhance representation theory of Hecke monoids developed in~\cite{Cart}. 
However, this classification does not immediately solve Problem~\ref{prob:parabolic}, yet alone Problem~\ref{prob:injective}. 

Motivated by Theorem~\ref{thm:1.3},
we introduce a remarkable family~$\{u_{J,K}\}\subset W(M)$ of ``indicators of local injectivity'' via
\begin{equation}\label{eq:u JK}
u_{J,K}:=
w_\circ^{J\cup K}(\brd{w_\circ^{J}\star
w_\circ^{K}\star}{\mu_M(J,K)-1}),
\end{equation}
where the outer product is taken {\em in the Coxeter group}~$W(M)$, for all~$J,K\subset I$
such that~$W_{J\cup K}(M)$ is finite. 
They
play a central role in our 
theory of locally injective homomorphisms
(see Section~\ref{sec:loc inj Homs}
and especially
Remarks~\ref{rem:u JK 1} and~\ref{rem: uJK 2}). In all cases
we studied so far such an element
turns out to be a maximal parabolic elements in
the smallest parabolic submonoid of~$(W(M),\star)$ containing it. We plan to investigate their parabolicity elsewhere.

\subsection*{Acknowledgments}
The main part of this work was carried out while the authors were visiting Erwin Schr\"odinger
International Institute for Theoretical Physics (ESI), Vienna, Austria,
in the framework of the ``Research in teams'' program. It is our pleasure to thank the ESI for its hospitality. During the work on
this project the first author was visiting Max Planck Institute for Mathematics in the Sciences (MIS), Leipzig, Germany and the second author was visiting Institut des Hautes \'Etudes Scientifiques (IHES), Bures-sur-Yvette, France. The hospitality of both institutions is gratefully acknowledged.

\section{Preliminaries}\label{sec:Prelim}

\subsection{General notation}
We extend the natural order on~$\mathbb Z$ to~$\mathbb Z\cup\{\infty\}$
via $\infty>n$ for all~$n\in\mathbb Z$
and use the convention that $n \infty=n+\infty=\infty$ for all~$n\in\mathbb Z_{>0}\cup\{\infty\}$.
In particular, $\infty$ is assumed 
to be divisible by all elements of~$\mathbb Z_{>0}\cup\{\infty\}$. 
Given~\plink{bar s}$s\in\mathbb Z$, let $\bar s\in\{0,1\}$
be the remainder of~$s$ when divided by~$2$.
For any $a,b\in\mathbb Z$ we denote $[a,b]=\{ i\in\mathbb Z\,:\,
a\le i\le b\}$ and \plink{[a,b]2}$[a,b]_2=\{ k\in [a,b]\,:\, \overline{b-k}=0\}$. Given $a,b\in\ZZ$ and~$J\subset \ZZ$, set $a+b J:=\{a +b j\,:\, j\in J\}$.
The power set of a set~$S$ will be denoted~\plink{PS}$\mathscr P(S)$. Given a category~$\mathscr{D}$, we denote $\Hom_{\mathscr D}(X,Y)$ the set of morphisms from $X\in\mathscr D$ to~$Y\in\mathscr D$.

\subsection{Monoids}\label{subs:monoids}
Throughout this paper, a homomorphism of monoids is assumed to map the identity element of the domain
to the identity element of the codomain.

Let~$\mathsf M$ be a multiplicative monoid. 
Given any finite subset~$I\subset \ZZ$ and
a family $X_i$, $i\in I$ of elements of~$\mathsf M$ we set
$$\plink{ascp}
\ascprod_{i\in I} X_i=X_{i_1}\cdots X_{i_r},\qquad
\dscprod_{i\in I} X_i=X_{i_r}\cdots X_{i_1}.
$$
where $I=\{i_1,\dots,i_r\}$ with~$i_1<\cdots<i_r$. This notation
will also be used for infinite families with all but finitely many of the~$X_i$
equal to~$1$.

Given a family~$S$ of generators of~$\mathsf M$,
the length function $\ell_S:\mathsf M\to \mathbb Z_{\ge0}$ is defined by setting $\ell_S(x)$,
$x\in \mathsf M$
to be
the minimal length of a word in~$S$ which is equal
to~$x$. Clearly, $\ell_S(xy)\le \ell_S(x)+\ell_S(y)$
for all~$x,y\in\mathsf M$.

For any $x,y\in \mathsf M$ and~$m\in\ZZ_{\ge0}$  denote
$$\plink{brd}
\brd{xy}{m}:=(xy)^{\floor{ \frac12 m}} x^{\bar m}.
$$
In other words, $\brd{xy}0=1$, $\brd{xy}{m+1}=
\brd{xy}m\,x$ if~$m$ is even, $\brd{xy}{m+1}=
\brd{xy}m\,y$ if~$m$ is odd, while $\brd{xy}{m+1}=
x\brd{yx}{m}$ for all~$m\in\mathbb Z_{\ge 0}$.

\subsection{Artin monoids and Coxeter groups}\label{subs:Br(M)W(M)}
Let~$I$ be a finite set and let~$M=(m_{ij})_{i,j\in I}$ be
a symmetric matrix with $m_{ii}=1$ and $m_{ij}\in\ZZ_{>1}\cup\{\infty\}$, $i\not=j$.
Such a matrix is called a {\em Coxeter matrix} (over~$I$), and we denote the set of 
all Coxeter matrices over~$I$ by~\plink{Cox I}$\Cox I$.
The Coxeter
graph~\plink{Gamma(M)}$\Gamma(M)$ associated with~$M$ is the undirected graph with vertex set~$I$
and with a unique edge connecting $i,j\in I$ if and only if~$m_{ij}>2$.
The edge is labeled with~$m_{ij}$ if~$m_{ij}>3$.

The {\em Artin monoid} $\Br^+(M)$\plink{Br+(M)} associated with~$M$ (see for example~\cites{BrSa,Del,Tits}) is generated by the~$T_i$, $i\in I$ subject to relations
$$
\brd{T_iT_j}{m_{ij}}=\brd{T_jT_i}{m_{ij}},\qquad i\not=j\in I,\, m_{ij}<\infty.
$$
Since defining relations of~$\Br^+(M)$ are homogeneous in the number of generators, the
length function with respect to~$\{T_i\}_{i\in I}$
is a homomorphism of monoids~\plink{ell}$\ell:\Br^+(M)\to (\ZZ_{\ge 0},+)$. If~$|I|=1$ this homomorphism
is actually an isomorphism.
Since defining relations of~$\Br^+(M)$ are palindromic,
$\Br^+(M)$
admits a unique \plink{op}anti-involu\-tion~${}^{op}$ defined on
generators by~$(T_i)^{op}=T_i$, $i\in I$. 

The {\em Coxeter group}\plink{W(M)}
$W=W(M)$ associated with~$M$ is generated by the~$s_i$, $i\in I$ subject
to relations
$$
(s_i s_j)^{m_{ij}}=1,\qquad i,j\in I,\, m_{ij}\not=\infty.
$$
Clearly, $W(M)$ is isomorphic to the quotient monoid of~$\Br^+(M)$ by the minimal congruence relation containing the~$(T_i^2,1)$, $i\in I$.
Let~\plink{piM}$\pi_M:\Br^+(M)\to W(M)$,
$T_i\mapsto s_i$, $i\in I$,
be the canonical 
surjective homomorphism of monoids~$\Br^+(M)\to W(M)$.
We denote~$\ell$ the length function for~$W(M)$
with respect to~$\{s_i\}_{i\in I}$.
An expression~$w=s_{i_1}\cdots s_{i_k}$,
$i_1,\dots,i_k\in I$ is called {\em reduced} if~$k=\ell(w)$. Clearly,
$\ell(\pi_M(T))\le \ell(T)$ for all~$T\in\Br^+(M)$ and we
set\plink{SQF}
$$
\SQF^+(M)=\{ T\in\Br^+(M)\,:\,\ell(\pi_M(T))=\ell(T)\}.
$$
Elements of~$\SQF^+(M)$ are called {\em square free}. The following is well-known.
\begin{theorem}[\cite{Tits}*{Theorem~3}] \label{thm:Tits}
\begin{enmalph}
\item\label{thm:Tits.a} $\pi_M$ restricts to a bijection~$\SQF^+(M)\to W(M)$.
\item\label{thm:Tits.b} Given $w\in W(M)$, denote $T_w$ the unique
element of~$\SQF^+(M)\cap \pi^{-1}_M(\{w\})$.
Then $T_w T_{w'}=T_{ww'}$ if and only if
$\ell(ww')=\ell(w)+\ell(w')$. In particular,
for any~$w\in W(M)$,
an expression $w=s_{i_1}\cdots s_{i_k}$, $i_1,\dots,i_k\in I$ is reduced
if and only if~$T_w=T_{i_1}\cdots T_{i_k}$.
\end{enmalph}
\end{theorem}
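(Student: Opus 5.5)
This is Tits' theorem, which the paper only cites, but a proof plan goes as follows. The central tool is Matsumoto's theorem (the word property): any two reduced expressions of the same $w\in W(M)$ are connected by a sequence of braid moves $\brd{s_is_j}{m_{ij}}\to\brd{s_js_i}{m_{ij}}$ alone, with no insertion or deletion of $s_i^2$. I would establish it first, by induction on $\ell(w)$, via the exchange condition.

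\textbf{Exchange condition.} If $\ell(s_iw)<\ell(w)$, then $w$ has a reduced expression beginning with $s_i$. I would prove this through the geometric (Tits) representation of $W(M)$ on the root system. There, $\ell(w)$ equals the number of positive roots sent to negative ones. Consequently $\ell(s_iw)<\ell(w)$ holds exactly when $w^{-1}(\alpha_i)<0$.

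\textbf{Matsumoto's theorem.} Take two reduced words for $w$, beginning with $s_i$ and with $s_j$ respectively. If $i=j$, cancel the first letter and use induction. If $i\ne j$, then $\ell(s_iw)<\ell(w)$ and $\ell(s_jw)<\ell(w)$, and the root-theoretic description shows that $w$ has a reduced expression beginning with $\brd{s_is_j}{m_{ij}}$; in particular $m_{ij}<\infty$. Both given words can then be connected by induction to words starting with $\brd{s_is_j}{m_{ij}}$ and $\brd{s_js_i}{m_{ij}}$ respectively, and these two words differ by a single braid move. I expect this step, the existence of a reduced word starting with the full alternating product, to be the main obstacle. I would handle it by restricting to the rank-two parabolic subgroup $\la s_i,s_j\ra$ and using its dihedral coset decomposition $w=w_{ij}w'$, with $w'$ minimal in its coset.

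\textbf{Part (a).} Define $T_w:=T_{i_1}\cdots T_{i_k}$ for any reduced expression $w=s_{i_1}\cdots s_{i_k}$. By Matsumoto this is well defined in $\Br^+(M)$, it lies in $\SQF^+(M)$, and $\pi_M(T_w)=w$, so $\pi_M$ restricted to $\SQF^+(M)$ is surjective. For injectivity, let $T\in\SQF^+(M)$ and write $T=T_{i_1}\cdots T_{i_k}$ with $k=\ell(T)$. Since $\ell$ is additive on $\Br^+(M)$, every word representing $T$ has length $k$. If $\ell(\pi_M(T))=k$, the word $s_{i_1}\cdots s_{i_k}$ is reduced, so $T=T_{\pi_M(T)}$. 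Thus $T$ is determined by $\pi_M(T)$.

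\textbf{Part (b).} If $\ell(ww')=\ell(w)+\ell(w')$, concatenating reduced words for $w$ and $w'$ gives a reduced word for $ww'$, so $T_wT_{w'}=T_{ww'}$. Conversely, if $T_wT_{w'}=T_{ww'}$, applying $\ell$ on $\Br^+(M)$ gives $\ell(w)+\ell(w')=\ell(T_{ww'})=\ell(ww')$. For the ``in particular'' statement: if $T_w=T_{i_1}\cdots T_{i_k}$, then $k=\ell(T_w)=\ell(w)$, so the expression is reduced. The reverse implication is the definition of $T_w$.
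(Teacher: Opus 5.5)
Your proof is correct. The paper gives no argument for this statement; it simply imports it from Tits, and Proposition~\ref{prop:prod *} likewise imports the exchange-type facts from Bourbaki. What you have written is therefore a self-contained replacement for the citation, following the standard Matsumoto route:
\begin{enumerate}
\item The geometric representation gives the inversion-count formula for $\ell$.
\item From it you get the rank-two step: $i,j\in D_L(w)$ with $i\ne j$ forces $m_{ij}<\infty$ and $w=w_\circ^{\{i,j\}}\times w'$.
\item That step gives the word property by induction on $\ell(w)$.
\item Parts (a) and (b) then follow formally from the additivity of $\ell$ on $\Br^+(M)$, exactly as you deduce them, including the injectivity on $\SQF^+(M)$ and the converse in (b).
\end{enumerate}

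One cosmetic point. What you call the exchange condition is trivial as stated. By the sign character, $\ell(s_iw)<\ell(w)$ forces $\ell(s_iw)=\ell(w)-1$, so you can just prepend $s_i$ to a reduced word for $s_iw$. The real content you extract from the root system is elsewhere:
\begin{itemize}
\item the rank-two step itself;
\item the length additivity $\ell(uw')=\ell(u)+\ell(w')$ for $u\in W_{\{i,j\}}(M)$ and $w'$ minimal in its coset.
\end{itemize}
You need the additivity to see that $s_i$ and $s_j$ are left descents of the dihedral factor. That factor is then the longest element of a finite dihedral group, and so it equals both alternating products of length $m_{ij}$.

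The trade-off is this. The citation keeps the paper short. Your route makes the paper independent of Tits' article, at the cost of building the geometric representation and proving the inversion-count formula, which is where essentially all the work lies.
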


The anti-involution~${}^{op}$ factors through to an anti-involution of~$W(M)$ which coincides with
the anti-involution $w\mapsto w^{-1}$, $w\in W(M)$.

\subsection{Parabolic submonoids and subgroups}\label{subs:parab}
Given~$J\subset I$, let $M_J=(m_{ij})_{i,j\in J}\in \Cox J$ be the corresponding
submatrix of~$M$. Then the submonoid \plink{Br+J(M)}$\Br^+_J(M):=\la T_j\,:\, j\in J\ra$ of~$\Br^+(M)$ is isomorphic to~$\Br^+(M_J)$. The subgroups~$\Br_J(M)$ of~$\Br(M)$ and~$W_J(M)$ of~$W(M)$ are defined similarly and are
isomorphic to respective objects corresponding to~$M_J$. Those subobjects are called {\em parabolic} submonoids (subgroups).
We will usually identify $W_J(M)$ with~$W(M_J)$ and so on and denote~\plink{iotaJ}$\iota_{J}$
the natural inclusion of~$W_J(M)$ (respectively, $\Br^+_J(M)$)
into~$W(M)$ (respectively, $\Br^+(M)$).

We say that~$J\subset I$ is of {\em finite type} if~$W(M_J)$ is finite.
The corresponding subgroups and submonoids are often referred to as being of {\em spherical type} in the literature. We denote~\plink{F(M)}$\mathscr F(M)$ the
set of all subsets of~$I$ of finite type. Clearly,
$\mathscr F(M)=\mathscr P(I)$ if and only if~$I\in\mathscr F(M)$, in which case we also say that~$M$ is of finite type. Note that~$\emptyset\in\mathscr F(M)$, the corresponding
parabolic subgroups and submonoids being trivial.

Define \plink{supp}$\supp:\Br^+(M)\to \mathscr P(I)$
by $$
\supp T=\bigcap_{J\subset I\,:\, T\in\Br^+_J(M)} J,
\qquad T\in\Br^+(M).
$$
The map~$\supp:W(M)\to \mathscr P(I)$ is defined
similarly.
Clearly, $\supp \pi_M(T)\subset \supp T$ for all~$T\in\Br^+(M)$.
Given a subset~$S$ of~$\Br^+(M)$ or~$W(M)$, we denote $\supp S=\bigcup_{x\in S} \supp x$. Observe that~$\supp TT'=\supp T\,\cup\,\supp T'$ for $T,T'\in\Br^+(M)$ while $\supp ww'\subset \supp w\,\cup\,\supp w'$ for $w,w'\in W(M)$. In particular, given
any expression $T=T_{i_1}\cdots T_{i_k}$
(respectively, a {\em reduced} expression
$w=s_{i_1}\cdots s_{i_k}$) where~$i_1,\dots,i_k\in I$
we have~$\supp T=\{i_1,\dots,i_k\}$ (respectively,
$\supp w=\{i_1,\dots,i_k\}$). It follows that the map
$\supp$ is surjective.
The following is well-known
(cf.~\cite{Tits}*{Theorem~3}, \cite{Bou}*{Ch. IV, \S1.5}).
\begin{lemma}\label{lem:extend supp}
Let~$w,w'\in W(M)$ with $\supp w\cap \supp w'=\emptyset$. Then
$\ell(ww')=\ell(w)+\ell(w')$ and~$\supp ww'=\supp w\cup\supp w'$.
\end{lemma}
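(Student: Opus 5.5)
We use the parabolic subgroups $W_J(M)$ of \S\ref{subs:parab} together with Theorem~\ref{thm:Tits}. Set $J=\supp w$ and $J'=\supp w'$, so $J\cap J'=\emptyset$ and $w\in W_J(M)$, $w'\in W_{J'}(M)$. Choose reduced expressions $w=s_{i_1}\cdots s_{i_k}$ with all $i_r\in J$ and $w'=s_{j_1}\cdots s_{j_l}$ with all $j_r\in J'$. Concatenating gives an expression of $ww'$ of length $k+l$, so $\ell(ww')\le k+l$. Once we show $\ell(ww')=k+l$, this concatenated expression is reduced. Then $\supp ww'=\{i_1,\dots,i_k,j_1,\dots,j_l\}=J\cup J'$ by the remark that the support of a reduced expression is its set of letters. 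So everything reduces to additivity of length.

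\textbf{Additivity via a sign-type homomorphism.} For each $i\in I$, the assignment $s_j\mapsto -1$ for $j=i$ and $s_j\mapsto 1$ otherwise need not respect the Coxeter relations: $(s_is_j)^{m}$ maps to $(-1)^m$, which fails when $m_{ij}$ is odd. So we do not use it, and argue differently.

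\textbf{Additivity via the exchange property.} Suppose $\ell(ww')<k+l$. By the exchange/deletion condition, some letter can be removed from the concatenated word, which gives an expression of length $k+l-2$. Deleting two letters reduces the word but may mix them, so we avoid induction on the word and pass to the standard geometric approach: the number of reflections $t$ with $\ell(tx)<\ell(x)$ equals $\ell(x)$.

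\textbf{Additivity via projection onto $W_J$.} A cleaner route is the coset decomposition of \cite{Bou}*{Ch.~IV, \S1.5}. Each $x\in W(M)$ factors uniquely as $x=x_J\,{}^Jx$, where $x_J\in W_J(M)$, ${}^Jx$ is the minimal-length representative of $W_J(M)x$, and $\ell(x)=\ell(x_J)+\ell({}^Jx)$. Take $x=ww'$ with $w\in W_J(M)$ and $w'\in W_{J'}(M)$. Since $J\cap J'=\emptyset$, the element $w'$ has no reduced expression beginning with $s_j$, $j\in J$: every reduced word for $w'$ uses only letters of $J'$, by Tits' theorem that all reduced words are connected by braid moves, which preserve the letter set. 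Hence $\ell(s_jw')>\ell(w')$ for all $j\in J$, so $w'$ is $J$-minimal. Uniqueness of the factorization then gives $x_J=w$ and ${}^Jx=w'$, and therefore $\ell(ww')=\ell(w)+\ell(w')$.

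The main obstacle is the fact that $w'$ is minimal in its coset $W_J(M)w'$. This reduces to the standard characterization of minimal coset representatives as those $y$ with $\ell(s_jy)>\ell(y)$ for all $j\in J$, which we cite from \cite{Bou}. Combined with Theorem~\ref{thm:Tits}, which says every reduced word of $w'$ lies in $\Br^+_{J'}(M)$ so that $w'$ admits no left descent in $J$, this completes the argument.
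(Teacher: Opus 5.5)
Your third argument is correct, but it takes a different route from the one the paper points to. The paper gives no proof at all. It only cites \cite{Tits}*{Theorem~3} and \cite{Bou}*{Ch.~IV, \S1.5}, that is, the solution of the word problem and the exchange condition.

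With those tools, the natural proof is an induction on $\ell(w')$. Write $w'=w''s_j$ with $j\in J'$ and $\ell(w'')=\ell(w')-1$. Suppose $\ell(ww''s_j)<\ell(ww'')$. Then the exchange condition removes one letter from the (inductively reduced) concatenated word for $ww''$, and there are two cases:
\begin{itemize}
\item If the letter comes from the $w''$-part, this contradicts $\ell(w''s_j)>\ell(w'')$.
\item If it comes from the $w$-part, producing a word $\hat w$, then $w^{-1}\hat w=w''s_jw''^{-1}$ lies in $W_J(M)\cap W_{J'}(M)=W_{\emptyset}(M)=\{1\}$, which is absurd.
\end{itemize}

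You instead put all the weight on the parabolic factorization $x=x_J\cdot{}^Jx$ with $\ell(x)=\ell(x_J)+\ell({}^Jx)$. Disjointness of supports is then used only once: to see that $w'$ has no left descent in $J$, because every reduced word of $w'$ has letter set $J'$ by Theorem~\ref{thm:Tits}. This turns the deduction into a two-liner. However, the factorization is itself normally proved from the exchange condition, so both routes rest on the same foundation. The exchange-condition induction is more self-contained; yours is shorter and conceptually cleaner.

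For a final write-up:
\begin{itemize}
\item Delete the two abandoned paragraphs. The exchange-condition one also conflates the exchange condition, which removes one letter, with the deletion condition, which removes two.
\item Cite the coset factorization from the exercises to \cite{Bou}*{Ch.~IV, \S1} or from \cite{BjBr}*{\S2.4}, rather than from \S1.5.
\end{itemize}
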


We say that~$J,K\subset I$ are {\em orthogonal}
if $m_{jk}=2$ for all~$j\in J$, $k\in K$. We say that~$J\subset I$
is {\em self-orthogonal} if~$m_{ij}\le 2$ for all~$i,j\in J$.
A Coxeter
matrix~$M$ over~$I$ is said to be {\em irreducible} if~$I$ cannot be written as a
disjoint union of two non-empty orthogonal subsets or, equivalently,
if~$\Gamma(M)$ is connected. We denote~$\Gamma_J(M)$ the full
weighted subgraph
of~$\Gamma(M)$ with vertex set~$J$. Clearly, $\Gamma_J(M)=\Gamma(M_J)$. We say that~$J\subset I$
is {\em connected} if~$\Gamma_J(M)$ is connected as a graph or,
equivalently, if $J$ is not the disjoint union of two non-empty
orthogonal subsets. By abuse of terminology, we say that~$J\subset I$
is a {\em connected component} of~$I$ if~$\Gamma_J(M)$
is a connected component of~$\Gamma(M)$ or, equivalently, if~$J$ is a maximal connected subset of~$I$.

It is well-known (see, e.g.~\cite{Bou}*{Ch. VI, \S4, Thm.~1}) that the Coxeter group~$W(M)$ with irreducible~$M$ is finite if and only if
$\Gamma(M)$ is isomorphic to one of the following weighted graphs 
\begin{alignat}{3}
A_n:&\dynkin[text style/.style={scale=0.8},Coxeter,root radius=0.07,expand labels={1,2,n-1,n},make indefinite edge={2-3},edge length=1.0cm]A4,&&n\ge 1,
\nonumber\\
B_n:&\dynkin[text style/.style={scale=0.8},Coxeter,root radius=0.07,expand labels={1,2,n-1,n},make indefinite edge={2-3},edge length=1.0cm]B4,&&n\ge 2,\nonumber\\
D_{n+1}: &\dynkin[text style/.style={scale=0.8},Coxeter,root radius=0.07,expand labels={1,2,n-1,n,n+1},label directions={,,right,,},make indefinite edge={2-3},edge length=1.0cm]D5,&&n\ge 3,\nonumber\\
E_n:&\dynkin[text style/.style={scale=0.8},Coxeter,ordering=Kac,root radius=0.07,expand labels={1,2,3,4,n-1,n},make indefinite edge={4-5},edge length=1.0cm]E6,&\qquad&
n\in\{6,7,8\},\nonumber\\
F_4:&\dynkin[text style/.style={scale=0.8},Coxeter,ordering=Kac,root radius=0.07,expand labels={1,2,3,4},edge length=1.0cm]F4,\nonumber\\
I_2(m):&
\dynkin[text style/.style={scale=0.8},Coxeter,ordering=Kac,root radius=0.07,expand labels={1,2},edge length=1.0cm,gonality=m]I2,&&m\ge 4,\nonumber\\
H_n:&\dynkin[text style/.style={scale=0.8},Coxeter,ordering=Kac,root radius=0.07,ordering=Adams,expand labels={1,2,n-1,n},make indefinite edge={2-3},edge length=1.0cm]H4
,&&n\in\{3,4\}.\label{eq:Coxeter graphs}
\end{alignat}
The labeling of vertices shown in~\eqref{eq:Coxeter graphs} will be used throughout the rest of
the paper unless specified otherwise.
Clearly, $I_2(3)$ (respectively,~$I_2(4)$) coincides with $A_2$ (respectively, $B_2$); the graph of type~$I_2(6)$
is traditionally denoted as~$G_2$. We will use~$X_n$ as the notation for the Coxeter matrix of the corresponding graph with the labeling as in~\eqref{eq:Coxeter graphs}. 

An automorphism~$\sigma$ of the weighted graph~$\Gamma(M)$, or, equivalently
a permutation~$\sigma$ of~$I$ such that $m_{\sigma(i)\sigma(j)}=m_{ij}$
for all~$i,j\in I$, induces an automorphism of~$\Br^+(M)$ (respectively,
$\Br(M)$, $W(M)$), called a {\em diagram automorphism} and also denoted
by~$\sigma$, via $\sigma(T_i)=T_{\sigma(i)}$ (respectively,
$\sigma(s_i)=s_{\sigma(i)}$), $i\in I$. If~$W(M)$ is finite and~$\Gamma(M)$ is connected, diagram automorphisms of order~$2$
exist only if $\Gamma(M)$ is of type $A_n$, $n\ge 1$, $D_{n+1}$, $n\ge 3$, $F_4$, $E_6$ or~$I_2(m)$, the corresponding permutation of~$I$ being
\begin{equation}\label{eq:diag aut}
\sigma=\begin{cases}
\prod_{1\le i\le \frac12n} (i,n+1-i),& M=A_n,\, n\ge 2,\\
(n,n+1),&M=D_{n+1},\, n\ge 3,\\
(1,4)(2,3),&M=F_4,\\
(1,5)(2,4),&M=E_6.
\end{cases}
\end{equation}
In type $D_4$, there is also a diagram
automorphism of order~$3$ given by the permutation $(1,3,4)$ of~$[1,4]$ and
so the group of all diagram automorphisms of~$D_4$ is isomorphic to~$S_3$.

If~$J\in\mathscr F(M)$, then $W_J(M)$ contains the unique element~\plink{w0J}
$w_\circ^J$ of maximal length (see, e.g.~\cites{Bou,Tits}), which is obviously an involution. It is well-known (see e.g.~\cite{Bou}*{Ch. IV, Ex. 22} or~\cite{BjBr}*{Proposition~2.3.2}) that
\begin{equation}\label{eq:ell w w0}
\ell(ww_\circ^J)=\ell(w_\circ^J w)=\ell(w_\circ^J)-\ell(w),\qquad
w\in W_J(M).
\end{equation}
It is also well-known (cf. e.g.~\cite{BjBr}*{Exercise~4.10} and~\cite{BrSa}) that for~$M$ irreducible and 
of finite type~$w_\circ^I$ is central unless~$M=A_n$,
$M=D_{n+1}$ with~$n$ even, $M=E_6$ or~$M=I_2(2m+1)$. If~$w_\circ^I$
is not central then
$ww_\circ^I=w_\circ^I\sigma(w)$ for all~$w\in W(M)$
where~$\sigma$ is the diagram automorphism.

\subsection{Hecke monoids}\label{subs:Hecke}
The {\em Hecke monoid} associated with $M\in\Cox I$ is the quotient
of~$\Br^+(M)$ by the minimal congruence relation
containing $(T_i^2,T_i)$ for all~$i\in I$.
We denote~\plink{pi*M}$\pi^\star_M$ the canonical homomorphism from $\Br^+(M)$
to the corresponding Hecke monoid.
Thus, the Hecke monoid is generated
by the $s_i:=\pi^\star_M(T_i)$, $i\in I$
subject to relations $s_i\star s_i=s_i$, $i\in I$ and
$$
\brd{s_i\star s_j\star}{m_{ij}}=\brd{s_j\star s_i\star}{m_{ij}},\qquad
i\not=j\in I,\, m_{ij}\not=\infty.
$$
Note that~${}^{op}$ and diagram automorphisms factor through to
the Hecke monoid.
\begin{remark}
In the literature, Hecke monoids are also referred to as Coxeter monoids
(see e.g.~\cite{K14}), $0$-Hecke monoids or Demazure monoids. The latter term is due to the fact that idempotent Demazure operators provide a representation of Hecke monoids.
\end{remark}
\begin{proposition}\label{prop:prod *}
For all~$i\in I$, $w\in W(M)$
\begin{equation}\label{eq:prod *}
s_i\star w=\begin{cases}
s_i w,&\ell(s_i w)>\ell(w),\\
w,&\ell(s_i w)<\ell(w),
\end{cases}\qquad
w\star s_i=\begin{cases}
ws_i,&\ell(ws_i)>\ell(w),\\
w,&\ell(ws_i)<\ell(w),
\end{cases}
\end{equation}
where we abbreviate $w=\pi^\star_M(T_w)$.
In particular, $\pi^\star_M(\Br^+(M))$ identifies with~$W(M)$ as a set,  the restriction
of~$\pi^\star_M$ to~$\SQF^+(M)$ is a bijection onto~$W(M)$ and
$\pi^\star_M|_{\SQF^+(M)}=\pi_M|_{\SQF^+(M)}$.
\end{proposition}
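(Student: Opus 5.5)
The plan is to build an explicit action of the Hecke monoid on the set $W(M)$ and compare it with the quotient map $\pi^\star_M$. For each $i\in I$ define $\sigma_i:W(M)\to W(M)$ by $\sigma_i(w)=s_iw$ if $\ell(s_iw)>\ell(w)$ and $\sigma_i(w)=w$ otherwise; the maps $w\mapsto w\star s_i$ will be handled afterwards by applying ${}^{op}$.

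The first step is to show that the $\sigma_i$ satisfy the defining relations of the Hecke monoid, so that $T_i\mapsto\sigma_i$ extends to a homomorphism $\rho:\Br^+(M)\to \mathrm{End}(W(M))$ factoring through $\pi^\star_M$. Idempotency $\sigma_i\circ\sigma_i=\sigma_i$ is immediate: if $\ell(s_iw)>\ell(w)$ then $\ell(s_i(s_iw))<\ell(s_iw)$, so the second application does nothing. For the braid relation with $m=m_{ij}<\infty$, I would use the standard parabolic factorization $w=uv$ with $u\in W_{\{i,j\}}(M)$, $v$ minimal in its coset $W_{\{i,j\}}v$, and $\ell(u'v)=\ell(u')+\ell(v)$ for all $u'\in W_{\{i,j\}}$ (Bourbaki). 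This reduces the problem to the dihedral group $W(I_2(m))$ acting on itself on the left. There both $\brd{\sigma_i\sigma_j}{m}$ and $\brd{\sigma_j\sigma_i}{m}$ send every $u$ to the longest element $w_\circ^{\{i,j\}}$. Indeed, each factor either extends $u$ by one letter on the left or fixes it, and after $m$ alternating steps starting from any $u$ one reaches length $m$; this is a direct check on the $2m$ elements, whose reduced words are alternating. This braid verification is the main obstacle, and the coset reduction is the key point.

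Next, define $\Phi:\Br^+(M)\to W(M)$ by $\Phi(T)=\rho(T)(1)$. By induction on the length of a word, for $T=T_{i_1}\cdots T_{i_k}$, $\Phi(T)$ is computed by deleting non-increasing letters. If the word is reduced, then $\Phi(T)=s_{i_1}\cdots s_{i_k}$. By Theorem~\ref{thm:Tits}, this shows that $\Phi$ restricted to $\SQF^+(M)$ is $\pi_M$, which is a bijection onto $W(M)$. Since $\rho$ factors through $\pi^\star_M$, so does $\Phi$. It follows that $\pi^\star_M|_{\SQF^+(M)}$ is injective.

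Surjectivity of $\pi^\star_M|_{\SQF^+(M)}$ comes from showing that every $T\in\Br^+(M)$ is congruent to a square free element. Argue by induction on $\ell(T)$: write $T=T_iT'$ with $T'\equiv T_w$ square free. If $\ell(s_iw)>\ell(w)$, then $T_iT_w=T_{s_iw}$ is square free by Theorem~\ref{thm:Tits}\ref{thm:Tits.b}. Otherwise $T_w=T_iT_{s_iw}$, so $T_iT_w=T_i^2T_{s_iw}\equiv T_w$. This computation also yields the left formula in \eqref{eq:prod *}. The right formula follows by applying ${}^{op}$, which preserves length and square freeness and corresponds to $w\mapsto w^{-1}$. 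Finally, $\pi^\star_M|_{\SQF^+(M)}=\pi_M|_{\SQF^+(M)}$ under the identification $w=\pi^\star_M(T_w)$, as claimed.
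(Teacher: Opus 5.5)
Your proof is correct. Its second half coincides with the paper's proof: the induction on $\ell(T)$ with $T=T_iT'$, the two cases $T_iT_w=T_{s_iw}$ and $T_w=T_iT_{s_iw}$ obtained from Theorem~\ref{thm:Tits} and the exchange property, and ${}^{op}$ for the right-hand formula.

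The genuine difference is your first half, which the paper does not have. The paper's proof only establishes two things: the formula for $s_i\star w$, and the fact that every element of $\pi^\star_M(\Br^+(M))$ equals $\pi^\star_M(T_w)$ for some $w\in W(M)$. That is surjectivity of $\pi^\star_M|_{\SQF^+(M)}$. The paper never argues that distinct $w$ give distinct elements of the Hecke monoid, which the bijection claim requires; it takes this for granted, essentially relying on the known representation by idempotent Demazure operators mentioned in the remark before the proposition.

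Your action $\sigma_i$ on $W(M)$ fills exactly this gap. Because $\Phi(T)=\rho(T)(1)$ factors through $\pi^\star_M$ and agrees with $\pi_M$ on $\SQF^+(M)$, injectivity of $\pi_M|_{\SQF^+(M)}$ forces injectivity of $\pi^\star_M|_{\SQF^+(M)}$. Your verification of the relations is sound:
\begin{itemize}
\item idempotency of $\sigma_i$ is immediate;
\item for the braid relation, the factorization $w=uv$ with $v$ minimal in $W_{\{i,j\}}(M)v$ makes $\sigma_i,\sigma_j$ act on $u$ alone, using that the length of $W(M)$ restricted to $W_{\{i,j\}}(M)$ is the intrinsic dihedral length;
\item in the dihedral group, each alternating product of length $m_{ij}$ strictly raises length until it reaches $w_\circ^{\{i,j\}}$, so both products are the constant map to $w_\circ^{\{i,j\}}$.
\end{itemize}

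The paper's route is shorter but not self-contained on this point. Yours costs the braid check and in exchange proves the full bijection. It also justifies the remark following the proposition, that $W(M)$ with $\star$ can serve as a model of the Hecke monoid.
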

\begin{proof}
By~\cite{Bou}*{Ch.~IV, \S1.5}, if~$\ell(s_i w)>\ell(w)$ then $\ell(s_i w)=\ell(w)+1$ and
so $T_{s_iw}=T_i T_w$ and it remains to apply~$\pi^\star_M$.
Also by~\cite{Bou}*{Ch.~IV, \S1.5}, if $\ell(s_i w)<\ell(w)$ then
$w=s_i u$ for some~$u\in W(M)$ with~$\ell(w)=\ell(u)+1$. Then
$T_w=T_i T_u$ and so applying~$\pi^\star_M$ yields
$s_i\star w=s_i\star (s_i\star u)=s_i\star u=w$. The second identity
follows by using~${}^{op}$.

We now prove by induction on~$\ell(T)$ that for any~$T\in\Br^+(M)$, $\pi^\star_M(T)=w$ for some~$w\in W(M)$. The induction base $\ell(T)=0$ is obvious.
For the inductive step, if~$\ell(T)>0$ then $T=T_i T'$ for some~$i\in I$, $T'\in\Br^+(M)$ with $\ell(T')=\ell(T)-1$.
 Therefore, $\pi^\star_M(T)=s_i\star \pi^\star_M(T')=s_i\star w'$ for some~$w'\in W(M)$ by the induction hypothesis. Then $\pi^\star_M(T)\in \{w',s_i w'\}$ by~\eqref{eq:prod *}, which proves the inductive step. The last assertion
follows by an obvious induction on~$\ell(T)$, $T\in \SQF^+(M)$
since $T\in \SQF^+(M)$ implies
that $T=T_{i_1}\cdots T_{i_k}$, $k=\ell(T)\ge 0$ with $\ell(s_{i_1}
\cdots s_{i_r})=r$ for all~$1\le r\le k$.
\end{proof}
In particular, $
\SQF^+(M)=\{ T\in\Br^+(M)\,:\,\ell(\pi^\star_M(T))=\ell(T)\}$.
From now on, we identify the Hecke monoid associated with
the Coxeter matrix~$M$ with the Coxeter group~$W(M)$ {\em as a set} and denote it
\plink{HeMon}$(W(M),\star)$. Note that $\supp (w\star w')=\supp w\cup\supp w'$ for all
$w,w'\in W(M)$.
\begin{remark}
Proposition~\ref{prop:prod *} can be regarded as a presentation of the Hecke monoid. Namely,
we can define it as $W(M)$, as a set, equipped with the unique associative operation~$\star$ satisfying the
first property in~\eqref{eq:prod *}.
\end{remark}

The following Lemmata are immediate.
\begin{lemma}\label{lem:free product Artin}\label{lem:free product}
Let~$M\in\Cox I$, $M'\in\Cox{I'}$. 
\begin{enmalph}
\item\label{lem:free product.a}
Define~$M\times M'\in\Cox{I\sqcup I'}$ by
$$(M\times M')_{ij}=(M\times M')_{ji}=\begin{cases}
                                     m_{ij},&i,j\in I,\\
                                     m'_{ij},&i,j\in I',\\
                                     2,&i\in I,\,j\in I'.
                                \end{cases}$$
Then $W(M)\times W(M')\cong W(M\times M')$
and $(W(M),\star)\times (W(M'),\star)\cong 
(W(M\times M'),\star)$;
\item \label{lem:free product.b}
Define~$M\coprod M'\in\Cox{I\sqcup I'}$ by
$$(M\textstyle\coprod M')_{ij}=(M\textstyle\coprod M')_{ji}=\begin{cases}
m_{ij},& i,j\in I,\\
m'_{ij},& i,j\in I',\\
\infty,& i\in I,\,j\in I'.
\end{cases}
$$
Then the free product of~$W(M)$ and~$W(M')$
(respectively, of $(W(M),\star)$ and~$(W(M'),\star)$
is isomorphic to~$W(M\coprod M')$ (respectively,
$(W(M\coprod M'),\star)$).
\end{enmalph}
\end{lemma}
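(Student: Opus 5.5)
The plan is to construct the isomorphisms explicitly on generators, check that they respect the defining relations, and then show bijectivity.

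For part~\partref{lem:free product.a}, set $\tilde I=I\sqcup I'$. I would define $\Phi:W(M\times M')\to W(M)\times W(M')$ on generators by $s_i\mapsto(s_i,1)$ for $i\in I$ and $s_j\mapsto(1,s_j)$ for $j\in I'$. The defining relations of $W(M\times M')$ are of three kinds.
\begin{enmroman}
\item Relations among generators indexed by $I$: their images lie in $W(M)\times\{1\}$, where they hold.
\item Relations among generators indexed by $I'$: likewise, they hold in $\{1\}\times W(M')$.
\item Mixed relations $(s_is_j)^2=1$ with $i\in I$, $j\in I'$: these say that the images commute, which they do, since they lie in different factors.
\end{enmroman}
So $\Phi$ is a well-defined homomorphism. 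In the other direction, the parabolic subgroups $W_I(M\times M')\cong W(M)$ and $W_{I'}(M\times M')\cong W(M')$ commute elementwise, because their generators commute. Hence multiplication gives a homomorphism $\Psi:W(M)\times W(M')\to W(M\times M')$, $(u,v)\mapsto uv$. Both composites $\Phi\circ\Psi$ and $\Psi\circ\Phi$ fix the generators, so they are identities.

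The Hecke monoid case is word for word the same. Here the relations are $s_i\star s_i=s_i$, the braid relations inside $I$ and inside $I'$, and the mixed relations $s_i\star s_j=s_j\star s_i$. All of them hold in the direct product of monoids $(W(M),\star)\times(W(M'),\star)$. For the inverse I would use the identifications of parabolic submonoids from \S\ref{subs:parab}, which are stated there for $\Br^+$ and pass to the Hecke quotient: since $\supp$ is respected, the submonoid generated by $\{s_i\}_{i\in I}$ in $(W(M\times M'),\star)$ is $(W_I,\star)\cong(W(M),\star)$.

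For part~\partref{lem:free product.b}, I would argue by universal properties. The matrix $M\coprod M'$ has no relations between $I$ and $I'$, because $m_{ij}=\infty$ imposes none. Therefore the presentation of $W(M\coprod M')$ is exactly the disjoint union of the presentations of $W(M)$ and $W(M')$. A disjoint union of presentations is precisely a presentation of the free product, in groups and in monoids alike. So the two objects satisfy the same universal property: homomorphisms out of either correspond to pairs of homomorphisms out of the factors. The same applies verbatim to the Hecke monoids, whose relations again do not mix $I$ and $I'$.

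The only point needing any care is in~(a): checking that the natural maps out of the parabolic subobjects are injective, i.e.\ that $W(M)\to W(M\times M')$ does not collapse anything. This is already the content of the identification $W_J(M)\cong W(M_J)$ recalled in \S\ref{subs:parab}. For the Hecke monoid it also follows directly from Proposition~\ref{prop:prod *}, since $\star$ restricted to $W_I$ is determined by lengths, which are the same computed in $W(M)$ or in $W(M\times M')$. Consequently I do not expect a genuine obstacle.
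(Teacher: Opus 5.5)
Your argument is correct and is the routine presentation-level verification the paper has in mind when it calls this lemma immediate (it gives no proof). In part~(a) you build mutually inverse homomorphisms on generators; in part~(b) you note that $\infty$-entries impose no mixed relations, so the presentation is that of the coproduct. One remark: the injectivity of $W(M)\to W(M\times M')$ that you single out as the delicate point needs no separate input, since $\Phi\circ\Psi=\mathrm{id}$ already forces $\Psi$ to be injective.
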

\begin{lemma}\label{lem:orth factors}
Let~$M\in\Cox I$ and let $J,K\subset I$ be orthogonal.
Then
\begin{enmalph}
    \item\label{lem:orth factors.b}
    $W_{J\cup K}(M)\cong W_J(M)\times 
    W_K(M)$;
    \item\label{lem:orth factors.c}
    $(W_{J\cup K}(M),\star)\cong (W_J(M),\star)\times 
    (W_K(M),\star)$.
\end{enmalph}
In particular, submonoids $W_J(M)$, $W_K(M)$ (respectively, $(W_J(M),\star)$, $(W_K(M),\star)$) commute element-wise in $W(M)$ (respectively, in $(W(M),\star)$).
\end{lemma}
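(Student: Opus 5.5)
The plan is to deduce both parts of Lemma~\ref{lem:orth factors} from Lemma~\ref{lem:free product}\ref{lem:free product.a}, applied to the parabolic data. Identify $W_{J\cup K}(M)$ with $W(M_{J\cup K})$ and $(W_{J\cup K}(M),\star)$ with $(W(M_{J\cup K}),\star)$, as in \S\ref{subs:parab}. Since $J$ and $K$ are orthogonal, they are disjoint: $j\in J\cap K$ would force $m_{jj}=2$, contradicting $m_{jj}=1$. The orthogonality condition $m_{jk}=2$ for $j\in J$, $k\in K$ then says exactly that
\[
M_{J\cup K}=M_J\times M_K
\]
as Coxeter matrices over $J\sqcup K$.

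With this identification, Lemma~\ref{lem:free product}\ref{lem:free product.a} gives $W(M_J)\times W(M_K)\cong W(M_{J\cup K})$ and $(W(M_J),\star)\times(W(M_K),\star)\cong(W(M_{J\cup K}),\star)$. Combining these with $W_J(M)\cong W(M_J)$, $W_K(M)\cong W(M_K)$ and their Hecke monoid analogues yields parts \ref{lem:orth factors.b} and \ref{lem:orth factors.c}. One point needs checking: the isomorphism should be the natural one, induced by the inclusions. Concretely, the map $(u,v)\mapsto uv$ (respectively $u\star v$) has inverse sending $s_j\mapsto(s_j,1)$ and $s_k\mapsto(1,s_k)$.

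If one wants to verify Lemma~\ref{lem:free product}\ref{lem:free product.a} directly rather than cite it, compare presentations. The product of two monoids (or groups) given by presentations is presented by the union of the generators and relations, together with relations saying that each generator of one factor commutes with each generator of the other. For $m_{jk}=2$ the braid relation $\brd{s_j\star s_k\star}{2}=\brd{s_k\star s_j\star}{2}$ is exactly $s_j\star s_k=s_k\star s_j$, and likewise in the Coxeter group. So the presentations coincide, and the identity map on generators extends to mutually inverse homomorphisms in both directions.

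The final assertion follows because in a direct product the factors $\mathsf M_1\times\{1\}$ and $\{1\}\times\mathsf M_2$ commute elementwise. Under the isomorphism these factors correspond to the images of $W_J(M)$ and $W_K(M)$ (respectively of $(W_J(M),\star)$ and $(W_K(M),\star)$). Alternatively, the generators $s_j$ and $s_k$ commute, hence so do the submonoids they generate. The main obstacle is only bookkeeping: making sure the parabolic submonoid of $(W(M),\star)$ really identifies with $(W(M_J),\star)$. This holds because the set $W_J(M)$ is closed under $\star$ by \eqref{eq:prod *} and carries the same relations, so I would cite \S\ref{subs:parab} for it.
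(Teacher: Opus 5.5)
Your proposal is correct and is exactly the argument behind the paper's ``immediate'' (the paper gives no proof). Orthogonality forces $J\cap K=\emptyset$ and $M_{J\cup K}=M_J\times M_K$, so both parts, and the elementwise commutation, follow from Lemma~\ref{lem:free product}\ref{lem:free product.a} together with the identification of $W_J(M)$ and $(W_J(M),\star)$ with $W(M_J)$ and $(W(M_J),\star)$. The one step you gloss over as ``carries the same relations'' is injectivity in that last identification; it really rests on the standard fact that the length function of $W(M)$ restricts to that of $W(M_J)$ under $W(M_J)\cong W_J(M)$, which the paper also takes for granted.
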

\begin{lemma}\label{lem:monoid len}
We have $\ell(u\star v)\ge \max(\ell(u),\ell(v))$ for all $u,v\in W(M)$.
\end{lemma}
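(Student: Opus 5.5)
The statement to prove is Lemma~\ref{lem:monoid len}: $\ell(u\star v)\ge\max(\ell(u),\ell(v))$ for all $u,v\in W(M)$. I will induct on $\ell(v)$ using Proposition~\ref{prop:prod *}, and then get the bound $\ell(u\star v)\ge \ell(v)$ by the anti-involution ${}^{op}$.

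First I show $\ell(u\star v)\ge \ell(u)$. If $\ell(v)=0$ then $v=1$, so $u\star v=u$ and there is nothing to prove. Otherwise choose a reduced expression $v=s_{i_1}\cdots s_{i_k}$. By Theorem~\ref{thm:Tits}\partref{thm:Tits.b} we have $T_v=T_{i_1}\cdots T_{i_k}$. Applying $\pi^\star_M$ and using the identification of Proposition~\ref{prop:prod *} gives
$$v=s_{i_1}\star\cdots\star s_{i_k}.$$
By associativity,
$$u\star v=(\cdots((u\star s_{i_1})\star s_{i_2})\cdots)\star s_{i_k}.$$
The second formula in~\eqref{eq:prod *} says that, for any $w$, $w\star s_i$ equals either $w$ or $ws_i$ with $\ell(ws_i)>\ell(w)$. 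So each step weakly increases length, and hence $\ell(u\star v)\ge\ell(u)$.

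For the other inequality, I use that ${}^{op}$ descends to an anti-automorphism of $(W(M),\star)$. On $W(M)$ as a set this is $w\mapsto w^{-1}$, since ${}^{op}$ preserves $\SQF^+(M)$ (it reverses words and preserves $\ell$) and coincides with inversion on $W(M)$. Therefore
$$(u\star v)^{-1}=v^{-1}\star u^{-1}.$$
Since $\ell(w^{-1})=\ell(w)$, the first part applied to the pair $(v^{-1},u^{-1})$ gives $\ell(u\star v)\ge\ell(v)$.

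The only point needing any care is this compatibility of ${}^{op}$ with $\star$ and with inversion. It follows because the defining relations of the Hecke monoid are palindromic. Alternatively, I can avoid ${}^{op}$ altogether and run the same argument directly, using the first formula of~\eqref{eq:prod *} and multiplying $v$ on the left by the letters of a reduced word of $u$ taken from right to left.
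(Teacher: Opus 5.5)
Your proof is correct and follows essentially the paper's argument: the paper also inducts on $\ell(v)$, using the right-hand formula in~\eqref{eq:prod *} to see that each factor $s_i$ weakly increases length. For $\ell(u\star v)\ge\ell(v)$ it runs the symmetric induction on $\ell(u)$ with the left-hand formula, which is your stated alternative; your ${}^{op}$/inversion shortcut is an equally valid way to obtain that half.
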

\begin{proof}
By Proposition~\ref{prop:prod *}, $\ell(u\star s_i)\ge \ell(u)$ for all $u\in W$, $i\in I$. An obvious induction on the length of~$v$ proves that $
\ell(u\star v)\ge \ell(u)$. Again by Proposition~\ref{prop:prod *}, $\ell(s_i\star v)\ge \ell(v)$ for all $u\in W$ and $i\in I$, and an induction on $\ell(u)$ shows that $\ell(u\star v)\ge \ell(v)$.
\end{proof}
Note that $\ell(uv)=\ell(u)+\ell(v)$ if and only if $uv=u\star v$ and, following~\cite{K14}, we will abbreviate
that equality as \plink{times}$u\times v$ and write
$u\vdash v$.
In particular, $w=s_{i_1}\cdots s_{i_r}$, $i_t\in I$, $1\le i\le r$, is a reduced 
expression if and only if $w=s_{i_1}\times\cdots\times s_{i_r}$.

The multiplication in~$(W(M),\star)$ has an additional
characterization which will be important later.

Define a relation $\longrightarrow$ on~$W(M)$
by $u\longrightarrow w$, $u,w\in W(M)$ if and only if
$\ell(u)<\ell(w)$ and $u^{-1}w$ is conjugate to~$s_i$
for some~$i\in I$.
The {\em strong Bruhat order} on~$W(M)$, which we will denote by $\le$, is the transitive
closure of this relation and is easily seen to be
a partial order (see e.g.~\cite{BjBr}*{\S2.1}).
We will need the following properties of the strong
Bruhat order.
\begin{proposition}[see e.g.~\cite{BjBr}*{Theorems~2.2.2 and~2.2.6, Proposition~2.3.4}]\label{prop:Bruhat order}
\begin{enmalph}
    \item\label{prop:Bruhat order.a}
    $u\le w\in W(M)$ if and only if any reduced expression for~$w$ contains a reduced expression for~$u$ as a subexpression. More precisely,
    let $w=s_{i_1}\times\cdots\times s_{i_k}\in W(M)$,
    $i_t\in I$, $1\le t\le k=\ell(w)$.
    Then $u\le w$ if and only if there exists $J\subset [1,k]$
    such that $|J|=\ell(u)$ and $u=\ascprodst_{j\in J} s_{i_j}$.
    In particular, the restriction of the strong Bruhat order on~$W(M)$
    to~$W_K(M)$ coincides with the strong Bruhat order on~$W_K(M)$ for any~$K\subset I$.
     \item\label{prop:Bruhat order.b} If~$u\le w\in W(M)$
     then there exists a chain $x_0=u<x_1<\cdots<x_r=w$ in~$W(M)$
     such that $\ell(x_i)=\ell(u)+i$, $0\le i\le r$.
     \item\label{prop:Bruhat order.c} If~$J\in\mathscr F(M)$
     then for any $w,w'\in W_J(M)$, $w<w'$ if and only
     if $w_\circ^J w'<w_\circ^J w$ and if and only if $w'w_\circ^J<
     ww_\circ^J$.
\end{enmalph}
\end{proposition}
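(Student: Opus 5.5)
The statement collects three standard facts, and I would prove them in the order (a), (c), (b). Part (a) is the foundation for the other two. Throughout I would use the exchange/deletion property of Coxeter groups, which is underlying \cite{Bou}*{Ch.~IV, \S1.5}, and the fact that the reflections are exactly the conjugates of the $s_i$.

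For (a), write $T=\{ws_iw^{-1}\}$ for the set of reflections. The key lemma is that for $t\in T$ and a reduced expression $w=s_{i_1}\times\cdots\times s_{i_k}$, if $\ell(wt)<\ell(w)$ then $wt=s_{i_1}\cdots\widehat{s_{i_j}}\cdots s_{i_k}$ for some $j$. This is the strong exchange condition.

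\emph{``Only if''.} Note first that $u\longrightarrow w$ means $w=ut$ with $t\in T$ and $\ell(u)<\ell(w)$. By strong exchange, $u$ is obtained from a reduced word of $w$ by deleting one letter. If the resulting word is not reduced, the deletion property removes further pairs of letters to give a reduced subword. Iterating along a chain $u=x_0\longrightarrow\cdots\longrightarrow x_r=w$ shows that every reduced word for $w$ contains a reduced subword for $u$.

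\emph{``If''.} Suppose $u=\ascprodst_{j\in J}s_{i_j}$ with $|J|=\ell(u)$. I would induct on $k-|J|$ by removing one index of $[1,k]\setminus J$ at a time. Removing letter $j$ from a word for $v$ gives $v\,t$ with $t=(s_{i_{j+1}}\cdots s_{i_k})^{-1}s_{i_j}(s_{i_{j+1}}\cdots s_{i_k})\in T$. The intermediate words need not be reduced, so to stay within reduced words I would use the standard trick of induction on $k$. Split according to whether $j_1:=\min J$ is $1$ or not, and use the lifting property: if $s\,w<w$ and $s\,u>u$ then $u\le sw$ and $su\le w$. The lifting property itself follows from the definition via a short case analysis.

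The final clause of (a) is then immediate, since reduced words for elements of $W_K(M)$ only involve letters from $K$, and every subword of such a word does too.

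For (c), the plan is to use \eqref{eq:ell w w0}, $\ell(w_\circ^J w)=\ell(w_\circ^J)-\ell(w)$, together with the observation that left multiplication by $w_\circ^J$ preserves the condition ``$u^{-1}w\in T$''. Indeed, $(w_\circ^J w)^{-1}(w_\circ^J u)=w^{-1}u=(u^{-1}w)^{-1}$, and $T$ is closed under inversion. Hence $u\longrightarrow w$ in $W_J(M)$ if and only if $w_\circ^J w\longrightarrow w_\circ^J u$, because the length inequality flips. Taking transitive closures, and using the restriction statement in (a) so that the order on $W_J(M)$ is intrinsic, gives the first equivalence. The right-multiplication version follows by the same argument, or by applying the anti-involution $w\mapsto w^{-1}$, which preserves $\le$ by (a) since reversing a reduced word gives a reduced word.

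For (b), the chain property, I expect the main obstacle. The plan is induction on $\ell(w)$. Choose $s=s_i$ with $sw<w$.
\begin{itemize}
\item If $su<u$, then $su\le sw$ by the subword property. Induction gives a saturated chain from $su$ to $sw$. Multiplying each element by $s$, the lifting property keeps it a chain with length increasing by one at each step, and this yields a chain from $u$ to $w$.
\item If $su>u$, then $u\le sw$ by lifting. Induction gives a saturated chain from $u$ to $sw$, which we extend by the single step $sw\longrightarrow w$.
\end{itemize}
The delicate point is checking that multiplying by $s$ preserves the covering relations in the first case. I would handle it by showing that, when both ends of a covering $x\longrightarrow y$ satisfy $sx<x$ and $sy<y$, also $sx\longrightarrow sy$; this again follows from the subword characterization in (a).
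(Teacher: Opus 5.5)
The paper does not prove this proposition; it quotes it from Bj\"orner--Brenti. Your arguments for (a) and (c) are the standard ones and are sound. For (a), that is strong exchange plus deletion for ``only if'', and induction on $k$, split on whether the first letter is used, for ``if''. For (c), left multiplication by $w_\circ^J$ reverses lengths on $W_J(M)$ and preserves the condition $u^{-1}w\in T$, and (a) lets you work entirely inside $W_J(M)$. There is, however, a genuine gap in (b), in the case $su<u$.

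\textbf{The gap.} The saturated chain $su=y_0<y_1<\cdots<y_r=sw$ given by the induction hypothesis may pass through elements with $sy_j<y_j$. When it does, $sy_0,\dots,sy_r$ is not a chain at all. For instance, in $W(A_2)$ take $s=s_1$, $u=s_1$, $w=s_1s_2s_1$. Then $su=1$ and $sw=s_2s_1$, and the chain $1<s_1<s_2s_1$ is sent to $s_1,1,s_1s_2s_1$. The lifting property does not rule this out. Your auxiliary claim (if $x\longrightarrow y$ with $sx<x$ and $sy<y$, then $sx\longrightarrow sy$) does not help either: your chain has $s$ as an \emph{ascent} at both ends, and nothing controls the intermediate elements.

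\textbf{A repair within your framework.}
\begin{itemize}
\item If $sy_i>y_i$ for all $i$: then $y_{i+1}=y_it$ with $t\in T$ gives $sy_{i+1}=(sy_i)t$ with $\ell(sy_{i+1})=\ell(sy_i)+1$. So $u=sy_0<\cdots<sy_r=w$ is saturated.
\item Otherwise: let $j$ be minimal with $sy_j<y_j$ (so $1\le j\le r-1$). Lifting applied to $y_{j-1}<y_j$ gives $sy_{j-1}\le y_j$. Both have length $\ell(y_{j-1})+1$, so $sy_{j-1}=y_j$. Then $u=sy_0<\cdots<sy_{j-1}=y_j<y_{j+1}<\cdots<y_r=sw<w$ is a chain from $u$ to $w$ with lengths increasing by one at each step.
\end{itemize}
Alternatively, prove by induction on $\ell(w)$ that every $u<w$ lies below some $v$ with $v\longrightarrow w$ and $\ell(v)=\ell(w)-1$, and iterate.

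\textbf{The same issue in your lifting claim.} You say lifting ``follows from the definition via a short case analysis''. That is true for a single step $u\longrightarrow w$, via strong exchange applied to a reduced word of $w$ beginning with $s$. Passing to arbitrary $u\le w$ along a chain again requires controlling intermediate elements whose descent status may change. The cleanest order is:
\begin{itemize}
\item prove only the weaker statement ``$u\le w$ implies $su\le w$ or $su\le sw$'', whose disjunctive conclusion does propagate along chains;
\item use it for the ``if'' direction of (a), where from $su\le sw$ it yields $u\le w$;
\item read off the full lifting property from the subword property, before using it in (b).
\end{itemize}
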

Given $u\in W(M)$, denote \plink{dar}$\downarrow u=\{ w\in W(M)\,:\, w\le u\}$ and $\uparrow u=\{ w\in W(M)\,:\, u\le w\}$.

\begin{proposition}[\cite{He09}*{Lemma~1 and Corollary~1}, \cite{K14}*{Lemma~2 and Proposition~8}]\label{prop:Bruhat order *}
Let $w,w'\in W(M)$.
\begin{enmalph}
    \item\label{prop:Bruhat order *.a}
    $u\le w$, $u'\le w'$
    implies that $u\star u'\le w\star w'$;
    \item\label{prop:Bruhat order *.b}
    $(\downarrow w)(\downarrow w'):=
    \{ uu'\,:\, u\in \downarrow w,\,u'\in\downarrow u'\}=
    \downarrow w\star w'$, that is, $w\star w'$ is
    the unique maximal element of $\{ uu'\,:\, u\le w,u'\le w'\}$
    with respect to the strong Bruhat order. Moreover,
    $w\star w'=u\times w'=w\times u'$ for some (necessarily unique) $u\le w$, $u'\le w'$.
\end{enmalph}
\end{proposition}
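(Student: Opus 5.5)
The plan is to reduce everything to multiplication by a single simple reflection, and then induct on length using the subword characterization of Proposition~\ref{prop:Bruhat order}\partref{prop:Bruhat order.a}. We will use the standard fact that if $w=s_{i_1}\times\cdots\times s_{i_k}$, then the product of \emph{any} subword (reduced or not) is $\le w$. This follows from the subword property combined with the deletion property: a non-reduced subword can be shortened to a reduced subword of itself with the same product.

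\textbf{Step 1 (one generator).} We first show $(\downarrow w)(\downarrow s_i)=\downarrow(w\star s_i)$ for all $w\in W(M)$ and $i\in I$. Here $\downarrow s_i=\{1,s_i\}$, and the argument splits by Proposition~\ref{prop:prod *}.
\begin{enumerate}
\item[(1)] If $\ell(ws_i)>\ell(w)$, then $w\star s_i=ws_i$, and a reduced word for $ws_i$ is a reduced word for $w$ followed by $i$.
\begin{enumerate}
\item[$\supseteq$:] By the subword property, every $x\le ws_i$ is a subword product of this word, so $x=u$ or $x=us_i$ for some subword product $u\le w$.
\item[$\subseteq$:] For $u\le w$, both $u$ and $us_i$ are subword products of the word for $ws_i$, hence are $\le ws_i$.
\end{enumerate}
\item[(2)] If $\ell(ws_i)<\ell(w)$, then $w\star s_i=w$, and we choose a reduced word for $w$ ending in $i$. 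Take $u\le w$, realized as a subword product. If that subword uses the final letter $i$, then $us_i$ is a subword product of the word for $ws_i$, so $us_i\le ws_i<w$. Otherwise, $us_i$ is a (possibly non-reduced) subword product of the word for $w$ itself, so $us_i\le w$. This gives $\subseteq$; the inclusion $\supseteq$ is trivial because $1\in\downarrow s_i$.
\end{enumerate}
This lifting-type case (2) is the only genuinely delicate point. It is where the remark about non-reduced subwords is needed.

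\textbf{Step 2 (part (b), first assertion).} We induct on $\ell(w')$. Write $w'=w''\times s_i$. Step 1 applied to $w''$ gives $\downarrow w'=(\downarrow w'')(\downarrow s_i)$. Then, using associativity of set products in the group, the induction hypothesis, and Step 1 again,
$$(\downarrow w)(\downarrow w')=\bigl((\downarrow w)(\downarrow w'')\bigr)(\downarrow s_i)=\bigl(\downarrow (w\star w'')\bigr)(\downarrow s_i)=\downarrow(w\star w''\star s_i)=\downarrow (w\star w').$$
Since $w\star w'=w\star w''\star s_i$ because $w''\star s_i=w''s_i=w'$, this is the claim. In particular, $w\star w'$ is the unique maximum of $\{uu'\,:\,u\le w,\ u'\le w'\}$.

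\textbf{Step 3 (part (a)).} Suppose $u\le w$ and $u'\le w'$. By Step 2, $u\star u'\in(\downarrow u)(\downarrow u')$. Since $\downarrow u\subseteq\downarrow w$ and $\downarrow u'\subseteq\downarrow w'$, this set is contained in $(\downarrow w)(\downarrow w')=\downarrow(w\star w')$. Hence $u\star u'\le w\star w'$.

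\textbf{Step 4 (the factorizations).} We prove $w\star w'=u\times w'$ with $u\le w$ by induction on $\ell(w)$, the case $\ell(w)=0$ being trivial. Write $w=s_i\times w_1$. By induction, $w_1\star w'=u_1\times w'$ with $u_1\le w_1$, so $w\star w'=s_i\star(u_1w')$. We apply Proposition~\ref{prop:prod *}.
\begin{enumerate}
\item[(1)] If $\ell(s_iu_1w')>\ell(u_1w')$, then $w\star w'=s_iu_1w'$. Its length is $1+\ell(u_1)+\ell(w')$, which forces $s_iu_1=s_i\times u_1$ and $(s_iu_1)\times w'$. Moreover $s_iu_1\le s_iw_1=w$ by the subword property.
\item[(2)] Otherwise $w\star w'=u_1\times w'$, with $u_1\le w_1\le w$.
\end{enumerate}
The statement $w\star w'=w\times u'$ with $u'\le w'$ follows by applying ${}^{op}$, equivalently by the mirror induction on $\ell(w')$. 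Uniqueness of $u$ and $u'$ is cancellation in the group $W(M)$.
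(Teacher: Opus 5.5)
The paper gives no proof of its own here and defers to He and Kenney. Your argument is correct and is essentially the standard one from those sources: the one-generator identity $(\downarrow w)\{1,s_i\}=\downarrow(w\star s_i)$ via the subword (lifting) property, then induction on length, with part (a) and the factorizations $u\times w'=w\times u'$ deduced from it. You also correctly read the typo $u'\in\downarrow u'$ in the statement as $u'\in\downarrow w'$.
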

\subsection{Idempotents in Hecke monoids}\label{subs:idemp}
First, note the following characterization of the~$w_\circ^J$, $J\in\mathscr F(M)$, in Hecke monoids.
\begin{lemma}\label{lem:char w_0 monoid}
Suppose that~$J\in\mathscr F(M)$. The following
are equivalent for $w\in W_J(M)$:
\begin{enmroman}
 \item\label{lem:char w_0 monoid.i} $w=w_\circ^J$;
 \item\label{lem:char w_0 monoid.ii} $s_i\star w=w$ for all $i\in J$;
 \item\label{lem:char w_0 monoid.iv} $x\star w=w$ for all $x\in W_J(M)$;
 \item\label{lem:char w_0 monoid.iii} $w\star s_i=w$ for all $i\in J$;
 \item\label{lem:char w_0 monoid.v} $w\star x=w$ for all $x\in W_J(M)$.
\end{enmroman}
In particular, $w_\circ^J$ is an idempotent in~$(W(M),\star)$.
\end{lemma}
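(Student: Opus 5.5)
We will prove the chain of equivalences \ref{lem:char w_0 monoid.i}$\Rightarrow$\ref{lem:char w_0 monoid.iv}$\Rightarrow$\ref{lem:char w_0 monoid.ii}$\Rightarrow$\ref{lem:char w_0 monoid.i}. The equivalence of \ref{lem:char w_0 monoid.i}, \ref{lem:char w_0 monoid.iii} and \ref{lem:char w_0 monoid.v} then follows by applying the anti-involution ${}^{op}$. This works because ${}^{op}$ descends to $(W(M),\star)$, coincides with inversion on $W(M)$ as a set, and fixes $w_\circ^J$, which is an involution.

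For \ref{lem:char w_0 monoid.i}$\Rightarrow$\ref{lem:char w_0 monoid.ii}, take $i\in J$. By \eqref{eq:ell w w0} with $w=s_i$ we have $\ell(s_iw_\circ^J)=\ell(w_\circ^J)-1<\ell(w_\circ^J)$. Proposition~\ref{prop:prod *} then gives $s_i\star w_\circ^J=w_\circ^J$.

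For \ref{lem:char w_0 monoid.ii}$\Rightarrow$\ref{lem:char w_0 monoid.iv}, write $x\in W_J(M)$ as a product $x=s_{j_1}\star\cdots\star s_{j_r}$ with all $j_t\in J$. Every element is such a $\star$-product, for instance via a reduced expression $x=s_{j_1}\times\cdots\times s_{j_r}$. Applying \ref{lem:char w_0 monoid.ii} repeatedly, starting from the rightmost factor, gives $x\star w=w$. The implication \ref{lem:char w_0 monoid.iv}$\Rightarrow$\ref{lem:char w_0 monoid.ii} is the special case $x=s_i$.

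The substantive step is \ref{lem:char w_0 monoid.ii}$\Rightarrow$\ref{lem:char w_0 monoid.i}. Suppose $s_i\star w=w$ for all $i\in J$. By Proposition~\ref{prop:prod *}, this means $\ell(s_iw)<\ell(w)$ for every $i\in J$. Put $v=w_\circ^Jw\in W_J(M)$. By \eqref{eq:ell w w0}, for any $u\in W_J(M)$ we have $\ell(w_\circ^J u)=\ell(w_\circ^J)-\ell(u)$. Applying this to $u=w$ and $u=s_iw$ gives
\[\ell(s_iv)=\ell(w_\circ^Js_iw)\]
after using that conjugation by $w_\circ^J$ permutes the simple reflections of $J$ (the standard fact $w_\circ^Js_iw_\circ^J=s_{i'}$ with $i'\in J$). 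A cleaner route avoids this fact: consider $v'=w^{-1}w_\circ^J$. Then $\ell(w_\circ^J)=\ell(w)+\ell(w^{-1}w_\circ^J)$, again by \eqref{eq:ell w w0} applied to $w^{-1}$. If $v'\neq1$, choose a reduced expression of $v'$ ending in some $s_i$ and reverse it, so that $\ell(s_iv'^{-1})<\ell(v'^{-1})$. Since $w_\circ^J=w\,v'$, lengths add, and hence $\ell(v'^{-1})=\ell(w_\circ^Jw^{-1})$.

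I expect this to be the main obstacle, and the simplest way through is the standard fact: an element $w\in W_J(M)$ with $\ell(s_iw)<\ell(w)$ for all $i\in J$ has left descent set all of $J$, and such an element is the longest element. I would prove it as follows. If $w\ne w_\circ^J$, then $w<w_\circ^J$, and by \eqref{eq:ell w w0} $\ell(w_\circ^Jw)=\ell(w_\circ^J)-\ell(w)>0$. Pick $i\in J$ with $\ell(s_i(w_\circ^J w)^{-1}\cdots)$; concretely, consider $u=w w_\circ^J$, which is nontrivial, and apply the exchange condition to $s_iw$ against a reduced word of $w_\circ^J=w\cdot(w^{-1}w_\circ^J)$. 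This shows that every $s_i$ is a left descent of $w$. By the subword property, Proposition~\ref{prop:Bruhat order}\ref{prop:Bruhat order.a}, together with the fact that the right cosets $W_J(M)$-minimal representative of $w$ under left multiplication is $1$, it follows that $w=w_\circ^J$ is the unique maximal element. The cleanest formulation is that, since every $i\in J$ is a left descent, $\ell(xw)=\ell(w)-\ell(x)$ for all $x\in W_J$, proved by induction on $\ell(x)$ via the exchange condition. Taking $x=w^{-1}$ gives $\ell(w)\ge\ell(x)$ for all $x\in W_J(M)$, so $w$ has maximal length and equals $w_\circ^J$ by uniqueness.

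Finally, idempotency follows from \ref{lem:char w_0 monoid.iv} with $x=w=w_\circ^J$.
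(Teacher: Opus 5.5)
There is a genuine gap in the one step you yourself call substantive: that $s_i\star w=w$ for all $i\in J$ forces $w=w_\circ^J$. Your easy implications are fine: $w_\circ^J$ absorbs generators by \eqref{eq:ell w w0} and Proposition~\ref{prop:prod *}, absorbing generators is equivalent to absorbing all of $W_J(M)$, and ${}^{op}$ handles the right-hand conditions. But nothing you write for the key step actually proves it.
\begin{itemize}
\item Your ``cleaner route'' takes $v'=w^{-1}w_\circ^J$, so $w_\circ^J=wv'$ with $w$ on the left. That is the wrong side for using left descents of $w$, and you drop the route before reaching a conclusion.
\item In the next paragraph, the exchange argument is said to show ``that every $s_i$ is a left descent of $w$''. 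That is your hypothesis, not a conclusion.
\item The remark that the minimal representative of $W_J(M)w=W_J(M)$ is $1$ carries no information.
\item The claim that $J\subset D_L(w)$ implies $\ell(xw)=\ell(w)-\ell(x)$ for all $x\in W_J(M)$, ``by induction on $\ell(x)$ via the exchange condition'', is unsupported. For $x=s_ix'$ with $\ell(x)=\ell(x')+1$, the inductive step needs $\ell(s_ix'w)<\ell(x'w)$. Your hypothesis only concerns descents of $w$ itself, and no application of the exchange condition supplies this. The claim is as strong as the lemma, so it cannot simply be asserted.
\end{itemize}

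The missing idea is short, and you already have the ingredients. From the implications you did prove, together with ${}^{op}$, you know $w_\circ^J\star x=w_\circ^J$ for every $x\in W_J(M)$. So if $x\star w=w$ for all $x\in W_J(M)$, take $x=w_\circ^J$ to get $w=w_\circ^J\star w=w_\circ^J$; this is exactly the paper's argument.

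Alternatively, your length argument works once $w$ is placed on the right. By \eqref{eq:ell w w0}, $v'=w_\circ^Jw^{-1}$ satisfies $\ell(v')=\ell(w_\circ^J)-\ell(w)$, so $w_\circ^J=v'w$ with lengths adding. If $v'\neq1$, write $v'=vs_i$ with $i\in J$ and $\ell(v)=\ell(v')-1$. Then $\ell(s_iw)\ge\ell(w_\circ^J)-\ell(v)=\ell(w)+1$, contradicting $\ell(s_iw)<\ell(w)$.
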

\begin{proof}
We may assume, without loss of generality, that~$J=I$. Clearly,
\ref{lem:char w_0 monoid.ii} (respectively, \ref{lem:char w_0 monoid.iii})
is equivalent to~\ref{lem:char w_0 monoid.iv} (respectively, \ref{lem:char w_0 monoid.v}).
Since
$\ell(s_iw_\circ^I),\ell(w_\circ^Is_i)<\ell(w_\circ^I)$ for all~$i\in I$, it follows from
Proposition~\ref{prop:prod *} that~$s_i\star w_\circ^I=w_\circ^I=
w_\circ^I\star s_i$
for all~$i\in I$ and so~\ref{lem:char w_0 monoid.i} implies~\ref{lem:char w_0 monoid.ii} and~\ref{lem:char w_0 monoid.iii}. Finally, if $x\star w=w$ for all~$x\in W(M)$
then $w_\circ^I\star w=w$. Yet since $w_\circ^I\star x=w_\circ^I$
for all~$x\in W(M)$, it follows that $w_\circ^I\star w=w_\circ^I$
hence \ref{lem:char w_0 monoid.iv} implies~\ref{lem:char w_0 monoid.i}.
Similarly, \ref{lem:char w_0 monoid.v} implies~\ref{lem:char w_0 monoid.i}.
\end{proof}
We will now prove that
$$
\{ w_\circ^J\,:\, J\in\mathscr F(M)\}=
\{ x\in (W(M),\star)\,:\, x\star x=x\}.
$$

\begin{proposition}[Absorption property]\label{prop:absorb prop}
Suppose that $u,v\in W(M)$ satisfy $\ell(u\star v)=\ell(u)$
(respectively, $\ell(u\star v)=\ell(v)$).
Then $u\star v=u$ and $\supp v\subset \supp u$
(respectively, $u\star v=v$  and $\supp u\subset \supp v$).
In particular, if $\ell(u\star v)=\ell(u)=\ell(v)$ then
$\supp u=\supp v\in\mathscr F(M)$ and
$u=v=w_\circ^{\supp u}$.
\end{proposition}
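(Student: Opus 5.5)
We argue by induction on $\ell(v)$, using Proposition~\ref{prop:prod *} and Lemma~\ref{lem:monoid len}, and treat only the case $\ell(u\star v)=\ell(u)$; the other case then follows by applying the anti-involution ${}^{op}$, which reverses $\star$-products and preserves length and support. If $\ell(v)=0$ there is nothing to prove. Otherwise write $v=v'\times s_i$ with $\ell(v')=\ell(v)-1$, so that $v=v'\star s_i$ and $u\star v=(u\star v')\star s_i$. By Lemma~\ref{lem:monoid len},
$$\ell(u)\le \ell(u\star v')\le \ell((u\star v')\star s_i)=\ell(u\star v)=\ell(u),$$
so both inequalities are equalities. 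The induction hypothesis applied to the pair $(u,v')$ gives $u\star v'=u$ and $\supp v'\subset\supp u$.

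Next we analyse the last factor. We have $\ell(u\star s_i)=\ell(u)$, and by Proposition~\ref{prop:prod *} the product $u\star s_i$ is either $u$ or $us_i$ with $\ell(us_i)=\ell(u)+1$. The equality of lengths therefore forces $u\star s_i=u$ and $\ell(us_i)<\ell(u)$. Hence $u\star v=u\star s_i=u$.

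It remains to prove $i\in\supp u$, which is the only point needing more than bookkeeping. Since $\ell(us_i)<\ell(u)$, the element $u$ has a reduced expression ending in $s_i$ (the exchange property, \cite{Bou}*{Ch.~IV, \S1.5}). As noted before Lemma~\ref{lem:extend supp}, any reduced expression exhibits the support, so $i\in\supp u$. Together with $\supp v'\subset\supp u$ and $\supp v=\supp v'\cup\{i\}$, this gives $\supp v\subset\supp u$ and completes the induction.

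For the final assertion, suppose $\ell(u\star v)=\ell(u)=\ell(v)$. Both cases apply, giving $u=u\star v=v$ and $\supp u=\supp v$. Put $J=\supp u$. From $u\star u=u$ the idea is to obtain $u\star s_i=u$ for every $i\in J$. Take a reduced expression $u=s_{i_1}\times\cdots\times s_{i_k}$ and any $j$; by Lemma~\ref{lem:monoid len} applied to the prefix,
$$\ell(u)\le \ell(u\star s_{i_1}\star\cdots\star s_{i_j})\le \ell(u\star u)=\ell(u).$$
By the absorption just proved, each step absorbs the next generator, so $u\star s_{i_j}=u$ for all $j$; this covers every $i\in J$. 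Consequently $u\star x=u$ for all $x\in W_J(M)$, so $u\star x=u$ for $x$ ranging over all of $W_J(M)$. Since $\ell(u\star x)\ge\ell(x)$ by Lemma~\ref{lem:monoid len}, lengths in $W_J(M)$ are bounded by $\ell(u)$, so $W_J(M)$ is finite (finitely many elements of bounded length) and $J\in\mathscr F(M)$. Finally, Lemma~\ref{lem:char w_0 monoid}\ref{lem:char w_0 monoid.iii} gives $u=w_\circ^J$.
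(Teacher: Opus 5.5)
Your proof is correct and follows essentially the same route as the paper. It runs an induction on $\ell(v)$ via Lemma~\ref{lem:monoid len} and Proposition~\ref{prop:prod *}, then derives $u\star s_i=u$ for all $i\in\supp u$ from $u\star u=u$, bounds lengths in $W_{\supp u}(M)$ to get finiteness, and concludes with Lemma~\ref{lem:char w_0 monoid}. The differences are only cosmetic: you peel off the last letter of $v$ rather than the first, obtain the symmetric case via ${}^{op}$ instead of repeating the argument, and use a prefix argument where the paper argues by minimal counterexample in the final assertion.
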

\begin{proof}
The argument is by induction on length of~$v$.
If $v=1$ then there is nothing to prove.

If~$v=s_j$ and $\ell(u\star s_j)=\ell(u)$ then~$u\star s_j=u$
by~\eqref{eq:prod *}. Also, if~$j\notin\supp u$ then $\ell(u\star s_j)=\ell(us_j)>\ell(u)$
which contradicts~\eqref{eq:prod *}.

For the inductive step, write $v=s_j\times v'$
for some~$j\in\supp v$ and~$v'\in W(M)$. Then
$$\ell(u)=\ell(u\star v)=\ell((u\star s_j)\star v')
\ge \ell(u\star s_j)\ge \ell(u)
$$
by Lemma~\ref{lem:monoid len}.
Thus, $\ell(u\star s_{j})=\ell(u)$ and so $u\star s_j=u$ and $j\in\supp u$
by the induction base. Then
$u\star v=u\star v'=u$ and $\supp v'\subset\supp u$ by the induction hypothesis,
whence~$\supp v=\{j\}\cup\supp v'\subset \supp u$.

The second assertion is proved verbatim.

Finally, if $\ell(u\star v)=\ell(u)=\ell(v)$ then
$u=v$ and $\supp u=\supp v=:J$. We claim that
$u\star s_j=u$ for all~$j\in J$. Suppose not. Write
$u=s_{j_1}\times\cdots\times s_{j_r}$, $r=\ell(u)$.
Then~$J=\{j_1,\dots,j_r\}$. Let~$1\le t\le r$ be minimal such that $\ell(u\star s_{j_t})>\ell(u)$.
Then by Lemma~\ref{lem:monoid len},
$\ell(u)=\ell(u\star u)\ge \ell(u\star s_{j_t})>\ell(u)$, which is a
contradiction.

In particular, for all~$w\in W_J(M)$ we have
$u\star w=u$
and so
$\ell(u)=\ell(u\star w)\ge \ell(w)$ by Lemma~\ref{lem:monoid len}. Thus, $J\in\mathscr F(M)$. It remains to apply Lemma~\ref{lem:char w_0 monoid}.
\end{proof}

\begin{corollary}\label{cor:max elts}
\begin{enmalph}
\item\label{cor:max elts.a} Let~$G$ be
a finite subsemigroup of~$(W(M),\star)$. Then~$\supp G\in\mathscr F(M)$, $w_\circ^{\supp G}\in G$ and is the unique element of~$G$ of
maximal length.
\item\label{cor:max elts.b} $w\in (W(M),\star)$ is an idempotent if
and only if $\supp w\in\mathscr F(M)$ and $w=w_\circ^{\supp w}$.
\item\label{cor:max elts.c} Let~$J\subset I$. Then~$J\in\mathscr F(M)$
if and only if $(W(M),\star)$ contains an idempotent~$w$ with~$\supp w=J$.
\end{enmalph}
\end{corollary}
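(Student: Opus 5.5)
Part~\ref{cor:max elts.a} comes first; parts~\ref{cor:max elts.b} and~\ref{cor:max elts.c} follow from it. Since $G$ is finite, pick $u\in G$ of maximal length. For any $g\in G$ we have $u\star g\in G$, so $\ell(u\star g)\le\ell(u)$. On the other hand $\ell(u\star g)\ge\ell(u)$ by Lemma~\ref{lem:monoid len}, hence $\ell(u\star g)=\ell(u)$. The Absorption property (Proposition~\ref{prop:absorb prop}) then gives $u\star g=u$ and $\supp g\subset\supp u$. Taking the union over $g\in G$ yields $\supp G=\supp u=:J$.

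Next, with $g=u$ we get $\ell(u\star u)=\ell(u)$, so by the last assertion of Proposition~\ref{prop:absorb prop}, $J\in\mathscr F(M)$ and $u=w_\circ^J$. Thus $w_\circ^{\supp G}\in G$ and it has maximal length.

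For uniqueness, let $u'\in G$ be another element of maximal length. The same argument applied to $u'$ gives $u'=w_\circ^{\supp G}=u$. Alternatively, $u'\in W_J(M)$ has length $\ell(w_\circ^J)$, and $w_\circ^J$ is the unique element of maximal length in $W_J(M)$.

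For~\ref{cor:max elts.b}, suppose $w$ is an idempotent. Then $G=\{w\}$ is a finite subsemigroup, and~\ref{cor:max elts.a} gives $\supp w\in\mathscr F(M)$ and $w=w_\circ^{\supp w}$. One could also apply Proposition~\ref{prop:absorb prop} directly to $\ell(w\star w)=\ell(w)$. Conversely, $w_\circ^J$ is an idempotent by Lemma~\ref{lem:char w_0 monoid}.

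For~\ref{cor:max elts.c}, if $J\in\mathscr F(M)$ then $w_\circ^J$ is an idempotent with support $J$. Its support is all of $J$ because a reduced expression of $w_\circ^J$ must involve every $s_j$, $j\in J$: otherwise it would lie in a proper parabolic subgroup, and $\ell(s_jw_\circ^J)<\ell(w_\circ^J)$ would fail. The converse direction is~\ref{cor:max elts.b}.

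No step poses a real obstacle, since everything reduces to the Absorption property. The only point needing care is checking $\supp w_\circ^J=J$ in~\ref{cor:max elts.c}, which follows from $s_j\star w_\circ^J=w_\circ^J$ for $j\in J$ together with $\supp(x\star y)=\supp x\cup\supp y$.
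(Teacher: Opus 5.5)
Your proof is correct and follows essentially the same route as the paper: take an element $u\in G$ of maximal length, use Lemma~\ref{lem:monoid len} and closure of $G$ to force equality of lengths, and invoke the Absorption property (Proposition~\ref{prop:absorb prop}). The paper gets the support inclusion from $\ell(w\star u)=\ell(u)$ rather than from your $\ell(u\star g)=\ell(u)$, which is an immaterial difference, and your check that $\supp w_\circ^J=J$ in part~(c) (via $s_j\star w_\circ^J=w_\circ^J$ and $\supp(x\star y)=\supp x\cup\supp y$) fills in a detail the paper leaves implicit.
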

\begin{proof}
Since~$G$ is finite, it contains an element~$u$ of maximal length.
Suppose that $v\in G$  with~$\ell(v)=\ell(u)$.
By Lemma~\ref{lem:monoid len}, $\ell(u\star v)=\ell(u)=\ell(v)$ and so by Proposition~\ref{prop:absorb prop}, $\supp u\in\mathscr F(M)$ and~$u=v=w_\circ^{\supp u}$. If $w\in G$
then, as $\ell(w\star u)\ge \ell(u)$ by Lemma~\ref{lem:monoid len}, $\ell(w\star u)=\ell(u)$ and hence $\supp w\subset \supp u$ by Proposition~\ref{prop:absorb prop}. It follows that~$\supp G=\supp u$.

Part~\ref{cor:max elts.b}
follows from~\ref{cor:max elts.a} since
the subset consisting of an idempotent
element is a subsemigroup of~$W(M)$.

The forward direction in part~\ref{cor:max elts.c} is established in Lemma~\ref{lem:char w_0 monoid} while the converse is established in Proposition~\ref{prop:absorb prop}.
\end{proof}
The following Lemma will allow us to relate products in Coxeter groups and the corresponding Hecke monoids.
\begin{lemma}\label{lem: u * u^(-1) w_0}
Let~$M\in\Cox I$, $J\in\mathscr F(M)$ and let~$u,v\in W_J(M)$.
\begin{enmalph}
\item\label{lem: u * u^(-1) w_0.a}
Suppose that~$u\vdash v$. Then~$u^{-1}\star (uv w_\circ^J)=vw_\circ^J$ and $(w_\circ^J v^{-1}u^{-1})\star u=
w_\circ^Jv^{-1}$.
\item\label{lem: u * u^(-1) w_0.a'}
If~$K\in\mathscr F(M)$ and~$u\in W_{J\cap K}(M)$ then
$w_\circ^K\star (uv w_\circ^J)=w_\circ^K\star (vw_\circ^J)$ and~$(w_\circ^J v^{-1}u^{-1})\star w_\circ^K=(w_\circ^Jv^{-1})\star w_\circ^K$.
\item\label{lem: u * u^(-1) w_0.c}
Let~$K\subset J$ and let~$u\in W_K(M)$. Then~$w_\circ^K\star (u w_\circ^J)=(w_\circ^J u)\star w_\circ^K=w_\circ^J$.
\item\label{lem: u * u^(-1) w_0.b}
Suppose that $\ell(uv)=\ell(v)-\ell(u)$. Then
$u\star (vw_\circ^J)=uvw_\circ^J$,
$(uv)\star (v^{-1}w_\circ^J)=
u w_\circ^J$, 
$w_\circ^Jv^{-1}u^{-1}=(w_\circ^J v^{-1})\star u^{-1}$ and~$(w_\circ^J v)\star (v^{-1}u^{-1})=w_\circ^Ju^{-1}$.
\end{enmalph}
\end{lemma}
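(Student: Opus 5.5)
The plan is to reduce everything to two facts. The first is the length formula \eqref{eq:ell w w0}, $\ell(ww_\circ^J)=\ell(w_\circ^Jw)=\ell(w_\circ^J)-\ell(w)$ for $w\in W_J(M)$. The second is the observation, immediate from Proposition~\ref{prop:prod *}, that $x\star y=xy$ whenever $x\vdash y$, i.e.\ whenever $\ell(xy)=\ell(x)+\ell(y)$. The second identity in each of parts \ref{lem: u * u^(-1) w_0.a}, \ref{lem: u * u^(-1) w_0.a'}, \ref{lem: u * u^(-1) w_0.b} follows from the first by the anti-involution ${}^{op}$. On $W(M)$ this is $w\mapsto w^{-1}$; it reverses $\star$-products and fixes $w_\circ^J$. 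So I only treat the first identities.

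For \ref{lem: u * u^(-1) w_0.a}, since $u\vdash v$ we have $\ell(uvw_\circ^J)=\ell(w_\circ^J)-\ell(u)-\ell(v)$ and $\ell(vw_\circ^J)=\ell(w_\circ^J)-\ell(v)$. Hence
$$\ell\bigl(u^{-1}\cdot uvw_\circ^J\bigr)=\ell(u^{-1})+\ell(uvw_\circ^J),$$
that is, $u^{-1}\vdash uvw_\circ^J$, and so $u^{-1}\star(uvw_\circ^J)=vw_\circ^J$.

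Part \ref{lem: u * u^(-1) w_0.b} is the same bookkeeping. The hypothesis $\ell(uv)=\ell(v)-\ell(u)$ gives $\ell(uvw_\circ^J)=\ell(w_\circ^J)-\ell(v)+\ell(u)=\ell(u)+\ell(vw_\circ^J)$, so $u\vdash vw_\circ^J$. Likewise $\ell(uw_\circ^J)=\ell(w_\circ^J)-\ell(u)=\ell(uv)+\ell(v^{-1}w_\circ^J)$, so $uv\vdash v^{-1}w_\circ^J$, and the product is $uw_\circ^J$. The remaining two identities in \ref{lem: u * u^(-1) w_0.b} are the inverses of these.

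For \ref{lem: u * u^(-1) w_0.a'}, I first prove a general claim: for $K\in\mathscr F(M)$, $i\in K$ and any $x\in W(M)$,
$$w_\circ^K\star x=w_\circ^K\star(s_ix).$$
By Proposition~\ref{prop:prod *}, either $s_i\star x=s_ix$ or $s_i\star(s_ix)=x$, according to whether $\ell(s_ix)>\ell(x)$ or not. Also $w_\circ^K\star s_i=w_\circ^K$ by Lemma~\ref{lem:char w_0 monoid}. In the first case $w_\circ^K\star x=w_\circ^K\star s_i\star x=w_\circ^K\star(s_ix)$. In the second case $w_\circ^K\star(s_ix)=w_\circ^K\star s_i\star(s_ix)=w_\circ^K\star x$.

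Iterating the claim along a reduced expression of $u\in W_{J\cap K}(M)\subset W_K(M)$ gives $w_\circ^K\star(uvw_\circ^J)=w_\circ^K\star(vw_\circ^J)$. So the hypothesis $u\in W_J(M)$ is not even needed here.

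Part \ref{lem: u * u^(-1) w_0.c} then follows from \ref{lem: u * u^(-1) w_0.a'} with $v=1$ (note $K\subset J$, so $u\in W_{J\cap K}(M)$): $w_\circ^K\star(uw_\circ^J)=w_\circ^K\star w_\circ^J=w_\circ^J$, the last equality by Lemma~\ref{lem:char w_0 monoid}\ref{lem:char w_0 monoid.iv}. The other identity in \ref{lem: u * u^(-1) w_0.c} follows by ${}^{op}$.

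I expect the only non-mechanical step to be part \ref{lem: u * u^(-1) w_0.a'}, since there no length additivity is available. The two-case absorption claim handles it.
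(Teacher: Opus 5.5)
Your proof is correct. Parts (a), (c) and (d), and the use of ${}^{op}$ for the second identities, follow the paper's argument essentially verbatim. The difference is in part (b), the identity $w_\circ^K\star(uvw_\circ^J)=w_\circ^K\star(vw_\circ^J)$.

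The paper first inserts $u^{-1}$ for free: $w_\circ^K\star(uvw_\circ^J)=w_\circ^K\star u^{-1}\star(uvw_\circ^J)$, since $u^{-1}\in W_K(M)$ is absorbed by $w_\circ^K$. It then cancels using part (a), namely $u^{-1}\star(uvw_\circ^J)=vw_\circ^J$. That cancellation silently needs $u\vdash v$, which is not among the stated hypotheses of (b). Part (a) really does fail without it. Take $M=A_1$, $J=\{1\}$ and $u=v=s_1$. Then $u^{-1}\star(uvw_\circ^J)=s_1\star s_1=s_1$, whereas $vw_\circ^J=1$. This causes no harm in the paper, because every application, including (c) with $v=1$, has $u\vdash v$.

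Your one-letter claim, $w_\circ^K\star x=w_\circ^K\star(s_ix)$ for all $x\in W(M)$ and $i\in K$, iterates to $w_\circ^K\star(ux)=w_\circ^K\star x$ for every $u\in W_K(M)$ and every $x$. So it proves (b) exactly as stated, with no length condition relating $u$ and $v$ and without using $u\in W_J(M)$. The paper's route is shorter once (a) is available, but it only covers $u\vdash v$. Extending it would need an extra step, such as factoring $v$ through a minimal-length coset representative of $W_{J\cap K}(M)\,v$. Your argument makes that step unnecessary.
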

\begin{proof}
Since~$u^{-1}(uvw_\circ^J)=vw_\circ^J$,
the first identity in part~\ref{lem: u * u^(-1) w_0.a} is equivalent to 
$\ell(u^{-1})+\ell(uvw_\circ^J)=\ell(vw_\circ^J)$.
By~\eqref{eq:ell w w0} 
\begin{align*}
\ell(u^{-1})+\ell(uvw_\circ^J)=
\ell(u)+\ell(w_\circ^J)-\ell(uv)=
\ell(w_\circ^J)-\ell(v)=\ell(vw_\circ^J).
\end{align*}
The second identity in part~\ref{lem: u * u^(-1) w_0.a}
follows from the first by applying~${}^{op}$. 

To prove part~\ref{lem: u * u^(-1) w_0.a'}, note that by Lemma~\ref{lem:char w_0 monoid} and 
part~\ref{lem: u * u^(-1) w_0.a} we 
have for $u\in W_{J\cap K}(M)$ 
$$w_\circ^K\star (uvw_\circ^J)=
w_\circ^K\star u^{-1}\star (uvw_\circ^J)=
w_\circ^K\star (vw_\circ^J).
$$
The second identity follows by applying~${}^{op}$. 

Part~\ref{lem: u * u^(-1) w_0.c}
follows from part~\ref{lem: u * u^(-1) w_0.a'} with~$v=1$ and Lemma~\ref{lem:char w_0 monoid}.

Since~$\ell(uvw_\circ^J)=\ell(w_\circ^J)-\ell(uv)=\ell(w_\circ^J)-\ell(v)+\ell(u)
=\ell(vw_\circ^J)+\ell(u)$, and
$u\star (vw_\circ^J)=uvw_\circ^J$.
Since~$uv(v^{-1}w_\circ^J)=uw_\circ^J$,
the second identity is equivalent to
~$\ell(uw_\circ^J)=\ell(uv)+\ell(v^{-1}w_\circ^J)$. Indeed, $\ell(uv)+\ell(v^{-1}w_\circ^J)=\ell(v)-\ell(u)+\ell(w_\circ^J)-\ell(v^{-1})=\ell(w_\circ^J)-\ell(u)=
\ell(uw_\circ^J)$.
As before, the remaining identities follows by applying~${}^{op}$.
\end{proof}

Given~$w\in W(M)$, denote\plink{DL(w)}
$$
D_L(w)=\{ i\in I\,:\, \ell(s_iw)<\ell(w)\},\quad
D_R(w)=\{ i\in I\,:\, \ell(ws_i)<\ell(w)\}.
$$
Clearly, $D_R(w)=D_L(w^{-1})$.
The following is apparently well-known (see for example~\cite{BjBr}*{Proof of Lemma~3.2.3}). We
provide a proof here since the argument is quite elegant in the setting of Hecke monoids.
\begin{lemma}\label{lem:left desc}
For any~$w\in W(M)$, $\{
x\in W(M)\,:\, x\star w=w\}$ is a finite
submonoid of~$(W(M),\star)$ and is
equal to $(W_{D_L(w)}(M),\star)$. In particular, $D_L(w)\in\mathscr F(M)$
and for any~$u\in W_{D_L(w)}(M)$, 
$w=u\times w'$ for some~$w'\in\downarrow w$.
Similarly, for any~$w\in W(M)$, $\{
x\in W(M)\,:\, w\star x=w\}$ is a finite
submonoid of~$(W(M),\star)$ and is
equal to $(W_{D_R(w)}(M),\star)$. In particular, $D_R(w)\in\mathscr F(M)$
and for any~$u\in W_{D_R(w)}(M)$, 
$w=w'\times u$ for some~$w'\in\downarrow w$.
\end{lemma}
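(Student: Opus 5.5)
We prove the first half; the second then follows by applying ${}^{op}$, since $(x\star w)^{-1}=w^{-1}\star x^{-1}$ and $D_R(w)=D_L(w^{-1})$.

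Let $S_w=\{x\in W(M)\,:\,x\star w=w\}$. This is a submonoid of $(W(M),\star)$: it contains $1$, and if $x,y\in S_w$ then $(x\star y)\star w=x\star(y\star w)=x\star w=w$. By Proposition~\ref{prop:prod *}, for $i\in I$ we have $s_i\star w=w$ if and only if $\ell(s_iw)<\ell(w)$, that is, if and only if $i\in D_L(w)$. Since $(W_{D_L(w)}(M),\star)$ is generated by these $s_i$, it is contained in $S_w$.

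Conversely, let $x\in S_w$. Then $\ell(x\star w)=\ell(w)$, so Proposition~\ref{prop:absorb prop} gives $\supp x\subset\supp w$, but that is not yet enough. Instead, write a reduced expression $x=s_{i_1}\times\cdots\times s_{i_k}$, so that $x=s_{i_1}\star\cdots\star s_{i_k}$. Then
$$w=x\star w=s_{i_1}\star(s_{i_2}\star\cdots\star s_{i_k}\star w).$$
Put $w_t=s_{i_t}\star\cdots\star s_{i_k}\star w$ for $1\le t\le k$, and $w_{k+1}=w$. Each step $w_{t}=s_{i_t}\star w_{t+1}$ either leaves the element fixed or increases its length by one, and $w_1=w_{k+1}=w$. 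Hence no step increases the length, and every $w_t$ equals $w$. Therefore $s_{i_t}\star w=w$ for every $t$, so each $i_t\in D_L(w)$ and $x\in W_{D_L(w)}(M)$. This shows $S_w=(W_{D_L(w)}(M),\star)$.

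Finiteness follows from the fact that $\ell(x)\le\ell(x\star w)=\ell(w)$ for all $x\in S_w$, by Lemma~\ref{lem:monoid len}. There are only finitely many elements of bounded length, because $I$ is finite, so $S_w$ is finite. Hence $W_{D_L(w)}(M)$ is finite, that is, $D_L(w)\in\mathscr F(M)$. (Alternatively, Corollary~\ref{cor:max elts}\ref{cor:max elts.a} applies to the finite subsemigroup $S_w$.)

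For the final claim, take $u\in W_{D_L(w)}(M)$. Then $u\in S_w$, so $u\star w=w$. By Proposition~\ref{prop:Bruhat order *}\ref{prop:Bruhat order *.b}, $u\star w=u\times w'$ for some $w'\le w$. Thus $w=u\times w'$ with $w'\in\downarrow w$.

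The only step requiring care is the converse inclusion, where one must exclude that $x$ contains letters $s_i$ not fixing $w$. This is handled by the monotonicity of length along successive left $\star$-multiplications, as above.
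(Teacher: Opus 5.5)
Your proof is correct. It differs from the paper's only in how you get the inclusion $S_w\subset W_{D_L(w)}(M)$.

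The paper first shows $S_w$ is finite using Lemma~\ref{lem:monoid len}. It then applies Corollary~\partref{cor:max elts.a} to conclude that $J=\supp S_w$ is of finite type and that $w_\circ^{J}\in S_w$. For $i\in J$ it computes $s_i\star w=(s_i\star w_\circ^{J})\star w=w_\circ^{J}\star w=w$ by Lemma~\ref{lem:char w_0 monoid}, which gives $J\subset D_L(w)$.

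You instead argue directly along a word for $x$. The chain $w=w_{k+1},w_k,\dots,w_1=w$ has weakly increasing length and equal endpoints, so every letter must fix $w$. This is essentially the induction from the proof of Proposition~\ref{prop:absorb prop}, rerun with $w$ held fixed.

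Your route is more elementary and self-contained. It proves the inclusion without using finiteness, and it does not even need the word for $x$ to be reduced, so finiteness becomes a separate afterthought. The paper's route reuses the structure of finite subsemigroups of $(W(M),\star)$ already developed in that section. It shows directly that the idempotent $w_\circ^{\supp S_w}$ absorbs into $w$, at the cost of depending on Corollary~\ref{cor:max elts}.

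The remaining steps match the paper's proof: the submonoid property, containment of the generators $s_i$ with $i\in D_L(w)$, the length bound for finiteness, and the factorization via Proposition~\partref{prop:Bruhat order *.b}. For the right-hand version the paper says only ``similarly'', while you use ${}^{op}$. That is fine, since ${}^{op}$ acts on $W(M)$ as inversion, reverses $\star$-products, and preserves the Bruhat order.
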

\begin{proof}
Let $G(w)=\{x\in W(M)\,:\, x\star w=w\}$
which is manifestly a submonoid of $(W(M),\star)$.
Let~$J(w)=\supp G(w)$.
By Lemma~\ref{lem:monoid len}, $\ell(w)=
\ell(x\star w)\ge \ell(x)$ for all~$x\in G(w)$, hence~$G(w)$ is finite. By Corollary~\partref{cor:max elts.a}, $J(w)\in\mathscr F(M)$.
By
Proposition~\ref{prop:prod *}, $\{s_i\,:\, i\in D_L(w)\}\subset G(w)$ whence~$W_{D_L(w)}(M)
\subset G(w)$ and in particular~$D_L(w)\in\mathscr F(M)$.
If $i\in J(w)$ then
$s_i\star w=s_i\star (w_\circ^{J(w)}\star w)=(s_i\star w_\circ^{J(w)})\star w=w_\circ^{J(w)}\star w=w$ by Lemma~\ref{lem:char w_0 monoid},
whence
$i\in D_L(w)$ by Proposition~\ref{prop:prod *}.
Thus, $J(w)\subset  D_L(w)$ and so~$G(w)\subset
W_{D_L(w)}(M)$. Finally, since~$u\star w=w$ for all~$u\in W_{D_L(w)}(M)$, the last assertion follows from Proposition~\partref{prop:Bruhat order *.b}.
The statements concerning~$D_R(w)$ are proven similarly.
\end{proof}
\begin{remark}
In view of Proposition~\ref{prop:absorb prop},
$\{x\in W(M)\,:\, x\star w=w\}=
\{x \in W(M)\,:\, \ell(x\star w)=\ell(w)\}$.
\end{remark}
\begin{lemma}\label{lem:DL uw0}
Let~$M\in\Cox I$, $J\in\mathscr F(M)$ and~$u\in W_J(M)$.
Then~$D_L(u w_\circ^J)=J\setminus D_L(u)$
and~$D_R(w_\circ^J u)=J\setminus D_R(u)$.
\end{lemma}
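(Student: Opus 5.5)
The plan is to reduce both statements to the length identity~\eqref{eq:ell w w0} and then pass from the first identity to the second by inversion. For $i\in I\setminus J$ we have $i\notin D_L(uw_\circ^J)$: the element $uw_\circ^J$ lies in $W_J(M)$, so $\supp(uw_\circ^J)\subset J$ and $i\notin\supp(uw_\circ^J)$. Lemma~\ref{lem:extend supp} then gives $\ell(s_i uw_\circ^J)=\ell(uw_\circ^J)+1$. Hence $D_L(uw_\circ^J)\subset J$, and it suffices to determine which $i\in J$ lie in it.

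Fix $i\in J$. Then $s_iu\in W_J(M)$, so applying~\eqref{eq:ell w w0} to both $u$ and $s_iu$ gives
$$
\ell(s_iuw_\circ^J)-\ell(uw_\circ^J)=\bigl(\ell(w_\circ^J)-\ell(s_iu)\bigr)-\bigl(\ell(w_\circ^J)-\ell(u)\bigr)=\ell(u)-\ell(s_iu).
$$
Therefore $\ell(s_iuw_\circ^J)<\ell(uw_\circ^J)$ holds exactly when $\ell(s_iu)>\ell(u)$, that is, exactly when $i\notin D_L(u)$. This yields $D_L(uw_\circ^J)=J\setminus D_L(u)$. Here~\eqref{eq:ell w w0} is used with $w_\circ^J$ acting on the right, which is one of the two equalities stated there.

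For the second identity we use inversion. Since $w_\circ^J$ is an involution, $(w_\circ^Ju)^{-1}=u^{-1}w_\circ^J$, and $D_R(w)=D_L(w^{-1})$. Applying the first part to $u^{-1}\in W_J(M)$ gives
$$
D_R(w_\circ^Ju)=D_L(u^{-1}w_\circ^J)=J\setminus D_L(u^{-1})=J\setminus D_R(u).
$$

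There is no serious obstacle here. The only point that needs care is excluding indices outside $J$, and Lemma~\ref{lem:extend supp} handles that.
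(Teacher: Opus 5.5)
Your proof is correct and is essentially the paper's argument. The paper also restricts to $i\in J$ using $D_L(uw_\circ^J)\subset J$, which you justify explicitly via Lemma~\ref{lem:extend supp}; it then compares $\ell(s_iuw_\circ^J)$ with $\ell(uw_\circ^J)$ using~\eqref{eq:ell w w0}, and obtains the $D_R$ statement by applying ${}^{op}$, which on $W(M)$ is exactly the inversion you use.
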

\begin{proof}
Since~$D_L(u w_\circ^J)\subset J$, we may assume,
without loss of generality, that~$J=I$.
Given~$w\in W(M)$, $i\in I$, let~$\epsilon_i(w)=-1$
if~$i\in D_L(w)$ and~$\epsilon_i(w)=1$ otherwise; 
thus, $\ell(s_i w)=\ell(w)+\epsilon_i(w)$.
Let~$i\in I$. By~\eqref{eq:ell w w0}, $\ell(s_i u w_\circ^I)=
\ell(w_\circ^I)-\ell(s_i u)=\ell(w_\circ^I)-\ell(u)-\epsilon_i(u)=\ell(uw_\circ^I)-\epsilon_i(u)$.
Thus, $\epsilon_i(s_i u)=-\epsilon_i(u)$ and
so~$D_L(s_i u)=I\setminus D_L(u)$. The second assertion
follows from the first by using~${}^{op}$.
\end{proof}

\subsection{Coxeter elements}\label{subs:w0J}

Given~$J\subset I$,
we say that~$c\in W_J(M)$ is a 
{\em Coxeter element}
if~$\supp c=J$ and~$\ell(c)=|J|$.
In the sequel, we will often consider
special Coxeter elements corresponding to an interval~$J=[a,b]\subset I\subset \ZZ$,
namely \plink{cab}$\cx ab=\ascprod_{a\le i\le b} s_i$, $\cxr ab=
(\cx ab)^{op}=\dscprod_{a\le i\le b}s_i$. 
We will use the convention that~$\cx ij=\cxr ij=1$ if~$i>j$.

It is well-known (see e.g.~\cite{Bou}*{Ch. V, \S6})
that if~$J\in\mathscr F(M)$ then all Coxeter elements~$c\in W_J(M)$
are conjugate in the Coxeter group~$W_J(M)$ and
in particular are of the same order~\plink{h(M)}$h(M_J)$, called the {\em Coxeter number}
of~$W_J(M)$. The Coxeter number is even for all irreducible finite types
except~$I_2(2m+1)$, $m>0$ and $A_{2m}$.
Note also that if~$J\subset I$ is self-orthogonal then $w_\circ^J$ is the unique Coxeter element of~$W_J(M)$.
Some important properties of Coxeter elements are
summarized in the following 
\begin{proposition}[cf.~\cite{BrSa}*{\S5.8}]\label{prop:Coxeter splitting}
Let~$M\in\Cox I$ and let~$J\in\mathscr F(M)$.
\begin{enmalph}
\item\label{prop:Coxeter splitting.b}
If $w_\circ^J$ is central in~$W_J(M)$ then~$h(M_J)$ is even and $w_\circ^J=
c^{\times \frac12h(M_J)}$ for any Coxeter 
element~$c\in W_J(M)$;
\item\label{prop:Coxeter splitting.c}
If~$M_J$ is irreducible and~$J=J'\cup J''$
is a partition of~$J$ into disjoint non-empty self-orthogonal subsets then
$$w_\circ^J=\brd{w_\circ^{J'}w_\circ^{J''}}{h(M_J)}
=\brd{w_\circ^{J''}w_\circ^{J'}}{h(M_J)}.
$$
\end{enmalph}
\end{proposition}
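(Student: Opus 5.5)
The plan is to work in the Artin monoid $\Br^+(M_J)$ and transport the result to $W_J(M)$ via Theorem~\ref{thm:Tits}. We may replace $I$ by $J$, so that $J=I$ and $W(M)$ is finite. For part~\ref{prop:Coxeter splitting.b}, if $M$ is reducible then $w_\circ^I$ and a Coxeter element split along the orthogonal components by Lemma~\ref{lem:orth factors}. The components must then share a common Coxeter number for the formula to make sense, as in~\cite{BrSa}, and each component can be treated separately. So the essential case is $M$ irreducible. I would prove part~\ref{prop:Coxeter splitting.c} first and deduce part~\ref{prop:Coxeter splitting.b} from it.

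For part~\ref{prop:Coxeter splitting.c}, write $w'=w_\circ^{J'}$ and $w''=w_\circ^{J''}$. Self-orthogonality gives $w'=\prod_{j\in J'}s_j$, a reduced product of commuting generators, and similarly for $w''$; a connected Coxeter graph of finite type is a tree, so such a bipartition exists. Let $c=w'w''$, a Coxeter element. Let $N=\ell(w_\circ^I)$ be the number of reflections, and recall the classical identity $N=\tfrac12|I|\,h(M)$. The claim reduces to showing that the alternating word $\brd{w'w''}{h}$, expanded into simple reflections, has length exactly $\tfrac12|I|h=N$ and is reduced. A reduced element of length $N$ must equal $w_\circ^I$, by uniqueness of the longest element. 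The same argument applies with $J'$ and $J''$ interchanged.

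The main obstacle is reducedness of this alternating word, and for it I would use Steinberg's argument. Let $V$ be the reflection representation, with simple roots $\alpha_i$. Choose the real plane $P\subset V$ on which $c$ acts as rotation by $2\pi/h$; it is spanned by eigenvectors of the Cartan-type matrix of the bipartition. The group $\la w',w''\ra$ acts on $P$ as a dihedral group of order $2h$, and $w'$ and $w''$ are reflections of $P$. Taking a suitable vector in $P$, the roots made negative by successive prefixes of the alternating word are $w'\alpha_{j'}$, $w'w''\alpha_{j''}$, and so on. These can be shown to be pairwise distinct positive roots by tracking which side of the lines in $P$ they lie on. Hence each successive factor increases length by its full number of letters, and the total is $h\cdot\tfrac12|I|$, using $|J'|+|J''|=|I|$ in the averaged count over the $h$ factors. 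If this geometric step proves too heavy, I would instead cite~\cite{BrSa}*{\S5.8} directly for the Artin-monoid identity $\Delta=\brd{T_{w'}T_{w''}}{h}$, where $\Delta=T_{w_\circ^I}$.

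For part~\ref{prop:Coxeter splitting.b}, assume $w_\circ^I$ is central. If $h$ were odd, the alternating word would end in $w'$, and comparing the two expressions in part~\ref{prop:Coxeter splitting.c} shows that conjugation by $w_\circ^I$ swaps $\{s_j:j\in J'\}$ and $\{s_j:j\in J''\}$, contradicting centrality. Hence $h$ is even. Then $\Delta=(T_{w'}T_{w''})^{h/2}=T_c^{h/2}$, and $\Delta$ is central in $\Br^+(M)$ because it induces the trivial diagram automorphism. Any two Coxeter elements are related by a chain of elementary moves $c=s_i x\mapsto x s_i=\tilde c$, with $T_c=T_iT_x$ and $T_{\tilde c}=T_xT_i$. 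From $T_i\,T_{\tilde c}^{\,k}=T_c^{\,k}\,T_i$ we get $T_iT_{\tilde c}^{\,h/2}=\Delta T_i=T_i\Delta$, and left cancellativity of $\Br^+(M)$ gives $T_{\tilde c}^{\,h/2}=\Delta$. Applying $\pi_M$ together with Theorem~\ref{thm:Tits}, and using $\Delta\in\SQF^+(M)$, yields $w_\circ^I=c^{\times\frac12 h(M)}$ for every Coxeter element $c$.
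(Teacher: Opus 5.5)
The paper gives no argument for this proposition; it simply quotes \S5.8 of Brieskorn--Saito, which rests on the Steinberg--Bourbaki analysis of the bipartite Coxeter element. Your proposal is a correct reconstruction of that standard proof. For part~(b) you use the Coxeter plane $P$ with the dihedral action of $\la w',w''\ra$. For part~(a) you use $\Delta=(T_{w'}T_{w''})^{h/2}$, centrality of $\Delta$, cyclic shifts $s_ix\mapsto xs_i$ between Coxeter elements, and left cancellation in $\Br^+(M)$. The Coxeter-plane step is only sketched, but that is acceptable: your fallback of citing Brieskorn--Saito is exactly what the paper does.

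Compared with the bare citation, your route makes the external inputs explicit:
\begin{enumerate}
\item the Coxeter-plane geometry, including $\ell(w_\circ^I)=\frac12|I|h$;
\item the fact that on a tree any two Coxeter elements are linked by cyclic shifts, which is stronger than the conjugacy recalled before the proposition, but true, and is how that conjugacy is usually proved;
\item cancellativity of Artin monoids.
\end{enumerate}
Your caveat about reducible $J$ in (a) is necessary, not merely convenient. For $M_J=B_2\times A_1$ the element $w_\circ^J$ is central and $h(M_J)=4$, yet $c=s_1s_2s_3$ gives $c^2=(s_1s_2)^2\neq w_\circ^J$. So (a) needs $M_J$ irreducible, or all components with the same Coxeter number. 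The rank-one case of (a), where no bipartition exists, is trivial but should be stated.

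Two small repairs are needed. First, the roots inverted along the alternating word are $\alpha_{j'}$, $w'\alpha_{j''}$, $w'w''\alpha_{j'},\dots$. Your list is shifted by one; note $w'\alpha_{j'}=-\alpha_{j'}$ is negative. Second, for odd $h$ your ``averaged count'' tacitly uses $|J'|=|J''|$. You can obtain this by proving reducedness for both orders: the two word lengths add up to $h|I|=2\ell(w_\circ^I)$, while each is at most $\ell(w_\circ^I)$. For the group identity in (b) alone you can avoid lengths entirely. The alternating product with $h$ factors maps the sector $C\cap P$ onto $-(C\cap P)$, hence sends the fundamental chamber $C$ to $-C=w_\circ^IC$.
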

In~$(W(A_n),\star)$, we have
the following expressions for~$w_\circ^{[a,b]}$,
$1\le a\le b\le n$ in terms of Coxeter elements,
which we will often use in the sequel
\begin{equation}
w_\circ^{[a,b]}=\ascprodst_{a\le k\le b} \cxr ak=
\dscprodst_{a\le k\le b} \cx ak=
\ascprodst_{a\le k\le b} \cxr kb=
\dscprodst_{a\le k\le b} \cx kb.\label{eq:w0 type A}
\end{equation}

\subsection{Parabolic elements}\label{subs:parab elts}
Given~$J\subset K\in\mathscr F(M)$, denote
\plink{wJ;K}$w_{J;K}=w_\circ^J w_\circ^K\in W_K(M)$. Such elements are called {\em
$K$-parabolic} or just parabolic if~$K=I$. 
The following is immediate.
\begin{lemma}\label{lem:prod orth parab}
Let~$K,K'\in\mathscr F(M)$ be orthogonal.
Then $w_{J;K}\times w_{J';K'}=w_{J\cup J'; K\cup K'}$
for all~$J\subset K$, $J'\subset K'$.
\end{lemma}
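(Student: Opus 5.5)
The plan is to reduce both claims to the orthogonal-factor decomposition of Lemma~\ref{lem:orth factors}, using Lemma~\ref{lem:extend supp} for the length count.

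\emph{Step 1: splitting the longest elements.} Since $K$ and $K'$ are orthogonal, Lemma~\partref{lem:orth factors.b} gives $W_{K\cup K'}(M)\cong W_K(M)\times W_{K'}(M)$. In particular, elements of $W_K(M)$ and $W_{K'}(M)$ commute. This also shows that $K\cup K'\in\mathscr F(M)$, so $w_{J\cup J';K\cup K'}$ makes sense.

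The length of an element $xy$ with $x\in W_K(M)$, $y\in W_{K'}(M)$ is $\ell(x)+\ell(y)$ by Lemma~\ref{lem:extend supp}, since $\supp x\subset K$, $\supp y\subset K'$ and $K\cap K'=\emptyset$. Hence the element of maximal length in $W_{K\cup K'}(M)$ is $w_\circ^K w_\circ^{K'}$, that is,
\[
w_\circ^{K\cup K'}=w_\circ^K w_\circ^{K'}.
\]
The same argument applied to the orthogonal sets $J\subset K$ and $J'\subset K'$ gives $w_\circ^{J\cup J'}=w_\circ^J w_\circ^{J'}$.

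\emph{Step 2: the group identity.} Using commutativity of $W_K(M)$ and $W_{K'}(M)$,
\[
w_{J\cup J';K\cup K'}=w_\circ^J w_\circ^{J'} w_\circ^K w_\circ^{K'}=(w_\circ^J w_\circ^K)(w_\circ^{J'} w_\circ^{K'})=w_{J;K}\,w_{J';K'}.
\]

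\emph{Step 3: length additivity.} Because $w_{J;K}\in W_K(M)$ and $w_{J';K'}\in W_{K'}(M)$ have disjoint supports, Lemma~\ref{lem:extend supp} gives
\[
\ell(w_{J;K}w_{J';K'})=\ell(w_{J;K})+\ell(w_{J';K'}).
\]
So the product is the $\times$-product, and the identity $w_{J;K}\times w_{J';K'}=w_{J\cup J';K\cup K'}$ follows.

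There is no real obstacle here. The only point requiring care is the identification of the longest element in Step~1, which rests on additivity of length across orthogonal factors.
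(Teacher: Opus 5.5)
Your proof is correct. The paper records this lemma as immediate and gives no argument. Your steps are exactly the routine verification it leaves to the reader: split $w_\circ^{K\cup K'}=w_\circ^K w_\circ^{K'}$ and $w_\circ^{J\cup J'}=w_\circ^J w_\circ^{J'}$ via Lemma~\ref{lem:orth factors}, commute the middle factors, and get length additivity from Lemma~\ref{lem:extend supp}. The disjointness $K\cap K'=\emptyset$ you use is automatic from orthogonality, since $m_{kk}=1\neq 2$.
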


\begin{lemma}\label{lem:wK absorption}
Let~$J\in\mathscr F(M)$ and let~$K\subset J$.
Then~$D_L(w_{K;J})=J\setminus K=D_R(w_{K;J}{}^{-1})$.
Moreover, $w\vdash w_{K;J}$ and~$w_{K;J}{}^{-1}\vdash w$
for all $w\in W_{K}(M)$.
\end{lemma}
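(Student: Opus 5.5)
The whole argument runs through Lemma~\ref{lem:DL uw0} and the length identity~\eqref{eq:ell w w0}, applied inside~$W_J(M)$.

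\emph{Descent sets.} Write $w_{K;J}=w_\circ^K w_\circ^J$ and apply Lemma~\ref{lem:DL uw0} with $u=w_\circ^K\in W_J(M)$. This gives
$$D_L(w_{K;J})=J\setminus D_L(w_\circ^K).$$
Since $w_\circ^K$ is the longest element of $W_K(M)$, we have $\ell(s_iw_\circ^K)<\ell(w_\circ^K)$ for every $i\in K$. Conversely, if $i\notin K=\supp w_\circ^K$, then $\ell(s_iw_\circ^K)>\ell(w_\circ^K)$ by Lemma~\ref{lem:extend supp}. Hence $D_L(w_\circ^K)=K$, and so $D_L(w_{K;J})=J\setminus K$.

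For the inverse, note that $w_{K;J}{}^{-1}=w_\circ^Jw_\circ^K$ because both longest elements are involutions. Using $D_R(w)=D_L(w^{-1})$, we get $D_R(w_{K;J}{}^{-1})=D_L(w_{K;J})$, which gives the second equality. One could also apply the $D_R$ half of Lemma~\ref{lem:DL uw0} directly.

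\emph{Length additivity.} Let $w\in W_K(M)$; we must show $\ell(ww_{K;J})=\ell(w)+\ell(w_{K;J})$. All elements involved lie in $W_J(M)$, so~\eqref{eq:ell w w0} applies:
$$\ell(ww_\circ^Kw_\circ^J)=\ell(w_\circ^J)-\ell(ww_\circ^K).$$
Applying~\eqref{eq:ell w w0} again inside $W_K(M)$ gives $\ell(ww_\circ^K)=\ell(w_\circ^K)-\ell(w)$. Combining,
$$\ell(ww_{K;J})=\ell(w_\circ^J)-\ell(w_\circ^K)+\ell(w).$$
Finally, $\ell(w_{K;J})=\ell(w_\circ^J)-\ell(w_\circ^K)$, again by~\eqref{eq:ell w w0} applied with $w_\circ^K\in W_J(M)$. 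Therefore $w\vdash w_{K;J}$.

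The statement $w_{K;J}{}^{-1}\vdash w$ follows by applying the anti-involution ${}^{op}$, i.e.\ inversion. Since $W_K(M)$ is closed under inverses, we may replace $w$ by $w^{-1}$ in the previous step to get $w^{-1}\vdash w_{K;J}$. Inverting the product then yields $\ell(w_{K;J}{}^{-1}w)=\ell(w_{K;J}{}^{-1})+\ell(w)$.

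There is no real obstacle here. The only point needing care is checking that each use of~\eqref{eq:ell w w0} is inside the correct parabolic subgroup: $W_K(M)$ for the second application, and $W_J(M)$, using $K\subset J$, for the others.
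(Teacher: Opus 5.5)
Your proof is correct and follows the paper's argument: the descent sets come from Lemma~\ref{lem:DL uw0} applied with $u=w_\circ^K$ together with $D_L(w_\circ^K)=K$, and length additivity comes from two applications of~\eqref{eq:ell w w0}. You also write out the inverse statement $w_{K;J}{}^{-1}\vdash w$ via inversion, which the paper leaves implicit.
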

\begin{proof}
The first assertion follows from Lemma~\ref{lem:DL uw0}
since~$D_L(w_\circ^K)=K$. 
Furthermore, given $w\in W_K(M)$ we have
$\ell(w w_\circ^K)=\ell(w_\circ^K)-\ell(w)$ by~\eqref{eq:ell w w0} hence $\ell(ww_{K;J})=\ell(w_\circ^J)-\ell(ww_\circ^K)
=\ell(w_\circ^J)-\ell(w_\circ^K)+\ell(w)=\ell(w_{K;J})+\ell(w)$.
\end{proof}

\begin{lemma}\label{lem:len prop wJ K}
 Let~$M\in\Cox I$ and let 
 $L\subset K\subset J\in \mathscr F(M)$. 
   Then
   $$
w_{L;J}=w_{L;K}\times w_{K;J}.
$$
\end{lemma}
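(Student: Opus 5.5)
The identity will follow from the group equality together with a length computation, the latter coming from~\eqref{eq:ell w w0}.

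First, in the Coxeter group,
$$w_{L;K}\,w_{K;J}=w_\circ^L w_\circ^K w_\circ^K w_\circ^J=w_\circ^L w_\circ^J=w_{L;J},$$
since~$w_\circ^K$ is an involution. Hence it only remains to show that $\ell(w_{L;J})=\ell(w_{L;K})+\ell(w_{K;J})$, which is exactly what the notation~$\times$ asserts.

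Next, I compute each length using~\eqref{eq:ell w w0}. Because $L\subset K\subset J$, both $w_\circ^L$ and $w_\circ^K$ lie in~$W_J(M)$, and $w_\circ^L$ lies in~$W_K(M)$. Applying~\eqref{eq:ell w w0} in~$W_J(M)$ gives
$$\ell(w_{L;J})=\ell(w_\circ^J)-\ell(w_\circ^L),\qquad \ell(w_{K;J})=\ell(w_\circ^J)-\ell(w_\circ^K).$$
Applying~\eqref{eq:ell w w0} in~$W_K(M)$ gives $\ell(w_{L;K})=\ell(w_\circ^K)-\ell(w_\circ^L)$. Adding the last two identities produces $\ell(w_\circ^J)-\ell(w_\circ^L)=\ell(w_{L;J})$, as required.

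Alternatively, Lemma~\ref{lem:wK absorption} gives $w\vdash w_{K;J}$ for all $w\in W_K(M)$, and taking $w=w_{L;K}\in W_K(M)$ yields the same conclusion directly. I do not expect any step to be an obstacle; the only point to verify is that each element lies in the appropriate parabolic subgroup so that~\eqref{eq:ell w w0} applies.
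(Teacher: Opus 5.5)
Your proof is correct and is essentially the paper's argument. The paper likewise combines the group identity $w_{L;J}=w_{L;K}w_{K;J}$ with length additivity, obtaining $w_{L;K}\star w_{K;J}=w_{L;J}$ from Lemma~\partref{lem: u * u^(-1) w_0.b} with $u=w_\circ^L$, $v=w_\circ^K$; the proof of that lemma is exactly your computation via \eqref{eq:ell w w0}, and your alternative route through Lemma~\ref{lem:wK absorption} is also valid and non-circular.
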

 \begin{proof}
 It follows from Lemma~\partref{lem: u * u^(-1) w_0.b}
 with~$u=w_\circ^L$ and~$v=w_\circ^K$ that~$w_{L;K}\star w_{K;J}=w_{L;J}$. On the other hand,
 $w_{L;J}=w_\circ^L w_\circ^J =
 (w_\circ^L w_\circ^K)(w_\circ^K w_\circ^J)=w_{L;K}w_{K;J}$.
       \end{proof}
\begin{lemma}\label{lem:too short}
Let~$M\in\Cox I$, $K\subset L\in\mathscr F(M)$,
and let~$J,J'\subsetneq K$ be such 
that $\ell(w_\circ^{J'})-\ell(w_\circ^{J'\cap J})<\ell(w_\circ^K)-\ell(w_\circ^J)$.
Then
$\ell(w_\circ^{J'}\star (w_{J;K}w_\circ^L))<
\ell(w_\circ^L)$ and~$\ell((w_\circ^L
w_{J;K}{}^{-1})\star w_\circ^{J'})<\ell(w_\circ^L)$.
\end{lemma}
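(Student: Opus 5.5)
The plan is to use Proposition~\partref{prop:Bruhat order *.b}, which writes a $\star$-product as a product of a Bruhat-smaller element and one factor, together with the length formula~\eqref{eq:ell w w0}.

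Set $x=w_{J;K}w_\circ^L$. Since $w_{J;K}\in W_K(M)\subset W_L(M)$, \eqref{eq:ell w w0} gives
$$\ell(x)=\ell(w_\circ^L)-\ell(w_{J;K})=\ell(w_\circ^L)-\ell(w_\circ^K)+\ell(w_\circ^J).$$
For the last equality I use $\ell(w_\circ^Jw_\circ^K)=\ell(w_\circ^K)-\ell(w_\circ^J)$, which is again~\eqref{eq:ell w w0}. By Proposition~\partref{prop:Bruhat order *.b}, $w_\circ^{J'}\star x=u\times x$ for some $u\le w_\circ^{J'}$, so
$$\ell(w_\circ^{J'}\star x)=\ell(u)+\ell(x).$$
The whole task is then to show that $\ell(u)\le \ell(w_\circ^{J'})-\ell(w_\circ^{J'\cap J})$. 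Granting this, the hypothesis gives
$$\ell(w_\circ^{J'}\star x)\le \ell(x)+\ell(w_\circ^{J'})-\ell(w_\circ^{J'\cap J})<\ell(x)+\ell(w_\circ^K)-\ell(w_\circ^J)=\ell(w_\circ^L).$$

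For the bound on $\ell(u)$, I would first apply Lemma~\partref{lem: u * u^(-1) w_0.a'}. Factor $w_\circ^{J'}=w_\circ^{J'\cap J}\times v$, where $v=w_\circ^{J'\cap J}w_\circ^{J'}$; this is a length-additive factorization by~\eqref{eq:ell w w0}. Then, using $w_\circ^{J'\cap J}\star w_\circ^{J'}=w_\circ^{J'}$ (Lemma~\ref{lem:char w_0 monoid}), associativity gives
$$w_\circ^{J'}\star x=v\star\bigl(w_\circ^{J'\cap J}\star x\bigr).$$
So it suffices to show that $w_\circ^{J'\cap J}$ is absorbed, that is, $w_\circ^{J'\cap J}\star x=x$. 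By Lemma~\ref{lem:left desc} this is equivalent to $J'\cap J\subset D_L(x)$.

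To compute $D_L(x)$ I use Lemma~\ref{lem:DL uw0}, which gives $D_L(x)=L\setminus D_L(w_{J;K})$. Lemma~\ref{lem:wK absorption} gives $D_L(w_{J;K})=K\setminus J$. Hence
$$D_L(x)=(L\setminus K)\cup J\supset J\cap J'.$$
Therefore $w_\circ^{J'}\star x=v\star x$. Applying Proposition~\partref{prop:Bruhat order *.b} again, $v\star x=u'\times x$ with $u'\le v$, and so
$$\ell(w_\circ^{J'}\star x)\le \ell(v)+\ell(x)=\ell(w_\circ^{J'})-\ell(w_\circ^{J'\cap J})+\ell(x).$$
This is the required bound.

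The second inequality follows by applying ${}^{op}$, that is, inversion. Indeed, $(w_\circ^L w_{J;K}{}^{-1})^{-1}=w_{J;K}w_\circ^L$, and ${}^{op}$ is an anti-automorphism of $(W(M),\star)$ that preserves length and fixes every $w_\circ^{J'}$.

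The main point needing care is the descent computation $D_L(x)\supset J$, specifically that Lemma~\ref{lem:DL uw0} applies with ambient finite set $L$ even though $w_{J;K}$ lives in $W_K(M)$. Here $D_L(w_{J;K})$ is the same whether computed in $W_K(M)$ or in $W_L(M)$, so the lemma applies. The remaining steps are routine length bookkeeping.
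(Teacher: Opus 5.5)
Your route is the paper's: compute $D_L(x)=(L\setminus K)\cup J$ for $x=w_{J;K}w_\circ^L$ via Lemmata~\ref{lem:wK absorption} and~\ref{lem:DL uw0}, split $w_\circ^{J'\cap J}$ off $w_\circ^{J'}$, absorb it into $x$ by Lemma~\ref{lem:left desc}, count lengths, and finish with ${}^{op}$. However, the splitting step is wrong as written. From $w_\circ^{J'}=w_\circ^{J'\cap J}\times v$, associativity gives $w_\circ^{J'}\star x=w_\circ^{J'\cap J}\star(v\star x)$, not $v\star(w_\circ^{J'\cap J}\star x)$. The factor you want to absorb sits on the wrong side of $v$, and the identity $w_\circ^{J'\cap J}\star w_\circ^{J'}=w_\circ^{J'}$ that you cite does not move it next to $x$. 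The identity $w_\circ^{J'}\star x=v\star x$ that you then use is false in general. Take $W(A_3)$ with $K=L=[1,3]$, $J=\{1,3\}$, $J'=\{1,2\}$; the hypothesis reads $2<4$. Then $x=s_1s_3$ and $v=s_2s_1$, yet $w_\circ^{J'}\star x=s_1s_2s_1s_3\neq s_2s_1s_3=v\star x$.

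The fix is a single inverse. Since all the $w_\circ$'s are involutions, inverting your factorization gives $w_\circ^{J'}=v^{-1}\times w_\circ^{J'\cap J}$ with $v^{-1}=w_\circ^{J'}w_\circ^{J'\cap J}$. Hence $w_\circ^{J'}\star x=v^{-1}\star(w_\circ^{J'\cap J}\star x)=v^{-1}\star x$, and so $\ell(w_\circ^{J'}\star x)\le \ell(v^{-1})+\ell(x)=\ell(w_\circ^{J'})-\ell(w_\circ^{J'\cap J})+\ell(x)$. After this, your length count and the ${}^{op}$ argument go through verbatim. The paper's own proof has the same slip: it writes $w_\circ^{J'}=w_{J'\cap J;J'}\times w_\circ^{J'\cap J}$ and $w_\circ^{J'}\star x=w_{J'\cap J;J'}\star x$, where $w_{J'\cap J;J'}=w_\circ^{J'\cap J}w_\circ^{J'}$ is exactly your $v$. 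The left factor there should be $w_{J'\cap J;J'}{}^{-1}$. Only $\ell(w_{J'\cap J;J'})=\ell(w_{J'\cap J;J'}{}^{-1})$ enters the estimate, so the conclusion is unaffected in both arguments once the inverse is inserted.
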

\begin{proof}
By Lemma~\ref{lem:wK absorption}, $D_L(w_{J;K})=
K\setminus J$ whence~$D_L( w_{J;K} w_\circ^L)=
L\setminus(K\setminus J)=(L\setminus K)\cup J$ by Lemma~\ref{lem:DL uw0}. Then~$J'\cap 
D_L(w_{J;K}w_\circ^L)=J'\cap J$.
Write~$w_\circ^{J'}=w_{J'\cap J;J'}\times w_\circ^{J'\cap J}$.
Then
$
w_\circ^{J'}\star (w_{J;K}w_\circ^L)
=w_{J'\cap J;J'}\star (w_{J;K}w_\circ^L),
$
by Lemma~\ref{lem:left desc}, 
whence
\begin{align*}
\ell(w_\circ^L)-\ell(w_\circ^{J'}\star (w_{J;K}w_\circ^L))&\ge 
\ell(w_{J;K})-\ell(w_{J'\cap J;J'})
\\&=\ell(w_\circ^K)-\ell(w_\circ^J)-
(\ell(w_\circ^{J'})-\ell(w_\circ^{J'\cap J}))>0.
\end{align*}
The second assertion follows from the first by applying~${}^{op}$.
\end{proof}

\section{General properties of homomorphisms
of Hecke monoids}\label{sec:Gen homs}

Throughout this chapter, we denote
standard generators of $(W(M'),\star)$
corresponding
to a Coxeter matrix~$M'$ over~$I'$
by~$s'_i$, $i\in I'$ and so on.

Let~\plink{A C H}$\mathscr H$ (respectively, $\mathscr C$) be the category whose objects are Coxeter matrices
and morphisms are homomorphisms of corresponding Hecke monoids (respectively, Coxeter groups).
Parabolic submonoids and subgroups are, naturally, subobjects in these categories. By 
Lemma~\ref{lem:free product}
categories $\mathscr H$ and~$\mathscr C$ admit finite products and coproducts via, respectively, $(M,M')\mapsto M\times M'$ and
$(M,M')\mapsto M\coprod M'$, $M\in\Cox I$, $M'\in\Cox{I'}$.

\subsection{Homomorphisms of Hecke monoids}\label{subs:Hecke homs}
Given $M'\in\Cox{I'}$, $M\in\Cox I$ and~$\phi\in\Hom_{\mathscr H}(M',M)$
or~$\phi\in\Hom_{\mathscr C}(M',M)$, we denote by \plink{[phi]}$[\phi]$
the map $I'\to \mathscr P(I)$ defined by $i\mapsto \supp\phi(s'_i)$, $i\in I'$ and extend it
to a map $[\phi]:\mathscr P(I')\to \mathscr P(I)$ via
$[\phi](J')=\bigcup_{j\in J'} [\phi](j')$, $J'\subset I'$.
We will usually abbreviate~$[\sigma]$ as~$\sigma$ for diagram automorphisms.
\begin{definition}\label{def:types heck hom}
We say that~$\phi\in\Hom_{\mathscr H}(M',M)$ 
or~$\phi\in\Hom_{\mathscr C}(M',M)$
is:
\begin{itemize}
\item[-] {\em disjoint} if $[\phi](i)\cap[\phi](j)=
\emptyset$ for all $i\not=j\in I'$;
\item[-] {\em fully supported} if $[\phi](I')=I$;

\item[-] {\em connected} if~$[\phi](J)$
is a connected subset of~$I$ for any connected~$J\subset I'$.
\end{itemize}
\end{definition}

\begin{lemma}\label{lem:[phi]comp}
Let~$M\in\Cox I$, $M'\in\Cox{I'}$ and~$M''\in\Cox{I''}$.
\begin{enmalph}   
\item\label{lem:[phi]comp.a}
$\supp\phi(x)=[\phi](\supp x)$
for all~$\phi\in\Hom_{\mathscr H}(M',M)$ and for
all~$x\in W(M')$;
\item\label{lem:[phi]comp.b}
$[\phi\circ\phi']=[\phi]\circ[\phi']$ as maps $\mathscr P(I'')\to
\mathscr P(I)$ for any~$\phi\in\Hom_{\mathscr H}(M',M)$
and~$\phi'\in\Hom_{\mathscr H}(M'',M')$;
\item
   \label{lem:[phi]comp.c}
   If~$\phi\in \Hom_{\mathscr H}(M',M)$ is
    disjoint
    and $[\phi](i)\not=\emptyset$ for all~$i\in I'$,
    then~$[\phi]:\mathscr P(I')\to
    \mathscr P(I)$ is
    injective.
\item\label{lem:[phi]comp.d} If~$\phi\in\Hom_{\mathscr H}(M',M)$ is disjoint then~$\bigcap\limits_{1\le t\le r}[\phi](J_t)=
    [\phi](\bigcap\limits_{1\le t\le r}J_t)$ for any $\{J_t\}_{1\le t\le r}\subset \mathscr P(I')$.

\item\label{lem:[phi]comp.e} $\phi\in\Hom_{\mathscr H}(M',M)$ is 
connected if and only if the~$[\phi](i)$, $i\in I'$ are connected and~$[\phi](i)\cup [\phi](j)$
is connected whenever $m_{ij}>2$, $i,j\in I'$.
\end{enmalph}
\end{lemma}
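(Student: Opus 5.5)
Parts (a) and (b) come directly from the support behaviour of $\star$; (c) and (d) follow from disjointness by set-theoretic bookkeeping; (e) follows from (a) plus an elementary fact about unions of connected subsets.

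For part~(a), fix $x\in W(M')$ and a reduced expression $x=s'_{i_1}\times\cdots\times s'_{i_k}$. Then $x=s'_{i_1}\star\cdots\star s'_{i_k}$ in the Hecke monoid, so $\phi(x)=\phi(s'_{i_1})\star\cdots\star\phi(s'_{i_k})$. The identity $\supp(w\star w')=\supp w\cup\supp w'$ (noted after Proposition~\ref{prop:prod *}) gives $\supp\phi(x)=\bigcup_t[\phi](i_t)$. Since the expression is reduced, $\supp x=\{i_1,\dots,i_k\}$, and the right-hand side is exactly $[\phi](\supp x)$.

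For part~(b), I would apply (a) to $x=\phi'(s''_i)$. This gives $[\phi\circ\phi'](i)=\supp\phi(\phi'(s''_i))=[\phi](\supp\phi'(s''_i))=[\phi]([\phi'](i))$. Both sides commute with unions by definition, so the equality extends to all of $\mathscr P(I'')$.

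For part~(c), if $[\phi](J)=[\phi](J')$ and $j\in J\setminus J'$, then the nonempty set $[\phi](j)$ is contained in $\bigcup_{j'\in J'}[\phi](j')$. This contradicts disjointness, since $[\phi](j)$ meets none of the $[\phi](j')$ with $j'\ne j$.

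For part~(d), the inclusion $\supseteq$ is clear. For $\subseteq$, an element $a$ of the intersection lies in some $[\phi](j_t)$ with $j_t\in J_t$ for each $t$. Disjointness forces all the $j_t$ to coincide, say $j_t=j$, so $j\in\bigcap_t J_t$ and $a\in[\phi](\bigcap_t J_t)$. (If some $[\phi](j)$ are empty this is still fine, since empty pieces contribute nothing to either side.)

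For part~(e), the forward direction is immediate: apply connectedness of $\phi$ to the connected sets $\{i\}$ and $\{i,j\}$ with $m_{ij}>2$. For the converse, let $J$ be connected and argue by induction on $|J|$, removing a vertex whose removal leaves $J$ connected (a leaf of a spanning tree). Write $J=J_0\cup\{j\}$ with $J_0$ connected and $m_{ij}>2$ for some $i\in J_0$. Then $[\phi](J)=[\phi](J_0)\cup[\phi](j)$, and $[\phi](J_0)\cup([\phi](i)\cup[\phi](j))$ is a union of two connected sets that share $[\phi](i)$. Such a union is connected provided the shared set is nonempty. The one point needing care is when $[\phi](i)$ or $[\phi](j)$ is empty. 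Then connectedness of $[\phi](i)\cup[\phi](j)$ still holds, but the gluing argument can break down. So in the plan I would handle empty images by noting that $\emptyset$ is regarded as connected, and by checking whether the paper's convention about empty supports makes the statement hold as written. This degenerate case is where I expect the only real subtlety.
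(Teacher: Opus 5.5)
Your parts (a)--(d) are correct and essentially match the paper. In (b) you check the identity on singletons and extend by unions, where the paper uses surjectivity of $\supp$. In (d) you argue elementwise for all $t$ at once, where the paper distributes the intersection over the unions and inducts on $r$.

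The problem is (e). The degenerate case you left open does not go the way you suggest. If $\emptyset$ counts as connected, as you propose and as the paper's characterization ``not the disjoint union of two non-empty orthogonal subsets'' implies, then the ``if'' direction of (e) is false. Take $\phi=p_{\{1,3\}}$, viewed as an endomorphism of $(W(A_3),\star)$. Then $[\phi](1)=\{1\}$, $[\phi](2)=\emptyset$ and $[\phi](3)=\{3\}$, so each $[\phi](i)$ is connected. The only pairs with $m'_{ij}>2$ are $\{1,2\}$ and $\{2,3\}$, and their images $\{1\}$ and $\{3\}$ are connected. Yet $\{1,2,3\}$ is connected while $[\phi](\{1,2,3\})=\{1,3\}$ is not. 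This is exactly the failure you anticipated: the attaching vertex $i=2$ has empty image, so the two connected pieces need not meet. No argument can close that case as stated.

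The fix is to add the hypothesis $[\phi](i)\ne\emptyset$ for all $i\in I'$. Alternatively, adopt the convention that $\emptyset$ is not connected; then connectedness of each $[\phi](i)$ already forces non-emptiness. The hypothesis holds everywhere the paper uses connectedness: by Proposition~\ref{prop:locally inj}, a locally injective $\phi$ with $|I'|\ge 2$ cannot send a generator to $1$.

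With non-empty images, your spanning-tree induction is a complete proof. The sets $[\phi](J_0)$ and $[\phi](i)\cup[\phi](j)$ are connected and share the non-empty set $[\phi](i)$, so their union $[\phi](J)$ is connected. The paper's own proof of (e) (``an obvious induction on the cardinality of $I'$'') silently makes the same assumption. You were right that this is the only real subtlety, but you should state the non-emptiness assumption explicitly rather than rely on $\emptyset$ being connected.
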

\begin{proof}
Since~$\supp x\star y=\supp x\,\cup\,\supp y$ for all $x,y\in (W(M),\star)$, we have for all~$x'\in (W(M'),\star)$ $$\supp\phi(x')=\bigcup_{j\in\supp x'}
\supp \phi(s'_j)=\bigcup_{j\in\supp x'} [\phi](j)=
[\phi](\supp x'),$$
which proves~\ref{lem:[phi]comp.a}.
To prove part~\ref{lem:[phi]comp.b},
note that by part~\ref{lem:[phi]comp.a} we have 
for all~$x''\in W(M'')$
$$
[\phi\circ\phi'](\supp x'')=
\supp(\phi\circ\phi')(x'')
=[\phi](\supp\phi'(x''))
=[\phi]([\phi'](\supp x'')).
$$
Since $\supp: W(M'')\to \mathscr P(I'')$
is surjective, the assertion follows. 

To prove~\ref{lem:[phi]comp.c}, suppose that~$[\phi](J)=[\phi](J')$ for some~$J\not=J'$. We may assume, without loss of generality,
that $J'\not\subset J$. Let~$j\in J\setminus J'$. Then $\emptyset\not=[\phi](j)\subset [\phi](J)=[\phi](J')=\bigcup_{j\in J'} [\phi](j')$ which is a contradiction since~$[\phi](j)\cap[\phi](j')=\emptyset$ for all~$j'\in J'$. Finally, $[\phi](J)\cap [\phi](J')=
\bigcup_{j\in J,j'\in J'}[\phi](j)\cap[\phi](j')$.
Since~$\phi$ is disjoint, $[\phi](j)\cap[\phi](j')=\emptyset$ unless~$j=j'$ and so 
$[\phi](J\cap J')=\bigcup_{j\in J\cap J'}[\phi](j)=[\phi](J\cap J')$. The general case in part~\ref{lem:[phi]comp.d} follows by an obvious induction.

One direction in part~\ref{lem:[phi]comp.e} is evident while the other follows by an obvious induction on the cardinality of~$I'$. 
\end{proof}

Our present aim is to describe~$\Hom_{\mathscr H}(M',M)$ for any Coxeter matrices~$M'$ and~$M$.
We begin with the following observation.
\begin{lemma}\label{lem:Hecke hom w0J}
Let $M\in\Cox I$, $M'\in\Cox{I'}$ and~$\phi\in\Hom_{\mathscr H}(M',M)$.
Then~$[\phi](J')\in\mathscr F(M)$ and $\phi(w_\circ^{J'})=w_\circ^{[\phi](J')}$
for all~$J'\in\mathscr F(M')$. In particular,
$[\phi](i)\in \mathscr F(M)$ for all~$i\in I'$.
\end{lemma}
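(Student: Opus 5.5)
Since $w_\circ^{J'}$ is an idempotent of $(W(M'),\star)$ (Lemma~\ref{lem:char w_0 monoid}) and $\phi$ is a monoid homomorphism, the element $\phi(w_\circ^{J'})$ is an idempotent of $(W(M),\star)$. The plan is to identify this idempotent via its support, using the classification of idempotents already proved.

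First, by Corollary~\partref{cor:max elts.b}, any idempotent $w$ of $(W(M),\star)$ satisfies $\supp w\in\mathscr F(M)$ and $w=w_\circ^{\supp w}$. Applying this to $w=\phi(w_\circ^{J'})$ gives $\supp\phi(w_\circ^{J'})\in\mathscr F(M)$ and $\phi(w_\circ^{J'})=w_\circ^{\supp\phi(w_\circ^{J'})}$.

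Second, it remains to compute the support. By Lemma~\partref{lem:[phi]comp.a}, $\supp\phi(w_\circ^{J'})=[\phi](\supp w_\circ^{J'})$. Moreover $\supp w_\circ^{J'}=J'$: one inclusion is clear since $w_\circ^{J'}\in W_{J'}(M')$. For the other, Lemma~\ref{lem:char w_0 monoid} gives $s'_j\star w_\circ^{J'}=w_\circ^{J'}$ for every $j\in J'$, so $j\in\supp(s'_j\star w_\circ^{J'})=\supp w_\circ^{J'}$. Alternatively, $\ell(s'_jw_\circ^{J'})<\ell(w_\circ^{J'})$ means some reduced expression of $w_\circ^{J'}$ begins with $s'_j$. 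Hence $\supp\phi(w_\circ^{J'})=[\phi](J')$, which yields both claims.

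Finally, the statement about single vertices follows by taking $J'=\{i\}$. Here $W_{\{i\}}(M')$ has order $2$, so $\{i\}\in\mathscr F(M')$, and $w_\circ^{\{i\}}=s'_i$. There is no real obstacle; the only point requiring care is the equality $\supp w_\circ^{J'}=J'$, handled as above.
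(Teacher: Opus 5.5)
Your proof is correct and follows essentially the paper's route: you identify $\phi(w_\circ^{J'})$ as an idempotent, apply Corollary~\partref{cor:max elts.b}, and then determine its support. The only difference is in the support computation. You get equality in one step from Lemma~\partref{lem:[phi]comp.a} together with $\supp w_\circ^{J'}=J'$. The paper instead observes $J\subset[\phi](J')$ directly and obtains the reverse inclusion by applying the absorption property (Proposition~\ref{prop:absorb prop}) to $\phi(s'_j)\star w_\circ^J=w_\circ^J$.
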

\begin{proof}
Since~$w_\circ^{J'}$ is an idempotent in~$(W(M'),\star)$,
$\phi(w_\circ^{J'})=w_\circ^J$ for some~$J\in\mathscr F(M)$
by Corollary~\partref{cor:max elts.b}. Clearly~$J\subset [\phi](J')$.
Since
$s'_j\star w_\circ^{J'}=w_\circ^{J'}$ for all~$j\in J'$
by Lemma~\ref{lem:char w_0 monoid}, it follows that
$\phi(s'_j)\star w_\circ^J=w_\circ^J$ for all~$j\in J'$ and
so $[\phi](j)=\supp\phi(s'_j)\subset J$ for all~$j\in J'$ by Proposition~\ref{prop:absorb prop}. Thus, $[\phi](J')\subset J$. The last assertion follows
by taking~$J'=\{i\}$, $i\in I'$.
\end{proof}

Let~$J,K\in\mathscr F(M)$
and define
$$
G_{J,K}:=\la w_\circ^J,w_\circ^K\ra = \{
\brd{w_\circ^J\star w_\circ^K\star}{t},\brd{w_\circ^K\star w_\circ^J\star}{t}\,:\,
t\in\ZZ_{\ge 0}\}\subset (W_{J\cup K}(M),\star).
$$
\begin{lemma}\label{lem:G JK finite}
The following are equivalent for~$J,K\in\mathscr F(M)$.
\begin{enmroman}
\item\label{lem:G JK finite.i}
$J\cup K\in\mathscr F(M)$;
\item\label{lem:G JK finite.ii} $G_{J,K}$ is finite;
\item\label{lem:G JK finite.iii} $\brd{w_\circ^J\star w_\circ^K\star}{m}
=\brd{w_\circ^K\star w_\circ^J\star}{n}$
for some $m,n\in\ZZ_{>0}$;
\item\label{lem:G JK finite.iv} $\brd{w_\circ^{J_0}\star w_\circ^{J_1}\star}{m}
=\brd{w_\circ^{J_0}\star w_\circ^{J_1}\star}{n}$
for some~$m<n\in\ZZ_{>0}$ where~$\{J_0,J_1\}=\{J,K\}$.
\end{enmroman}
In particular, if~$J\cup K\in\mathscr F(M)$
then there is a minimal~\plink{mu M}$\mu_M(J,K)\in \ZZ_{>0}$
such that~$w_\circ^{J\cup K}=\brd{w_\circ^J\star w_\circ^K\star}{\mu_{M}(J,K)}$ and
\begin{equation}\label{eq:mu_M JK KJ}
|\mu_M(J,K)-\mu_M(K,J)|\le 1.
\end{equation}
\end{lemma}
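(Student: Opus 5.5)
We prove the cycle of implications \ref{lem:G JK finite.i}$\Rightarrow$\ref{lem:G JK finite.ii}$\Rightarrow$\ref{lem:G JK finite.iv}$\Rightarrow$\ref{lem:G JK finite.iii}$\Rightarrow$\ref{lem:G JK finite.i}, and then extract $\mu_M(J,K)$ and the bound~\eqref{eq:mu_M JK KJ}. Write $a_t=\brd{w_\circ^J\star w_\circ^K\star}{t}$ and $b_t=\brd{w_\circ^K\star w_\circ^J\star}{t}$.

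\emph{\ref{lem:G JK finite.i}$\Rightarrow$\ref{lem:G JK finite.ii}.} This is immediate, since $G_{J,K}\subset W_{J\cup K}(M)$, which is finite.

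\emph{\ref{lem:G JK finite.ii}$\Rightarrow$\ref{lem:G JK finite.iv}.} The sequence $(a_t)_{t>0}$ takes finitely many values, so by pigeonhole $a_m=a_n$ for some $m<n$.

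\emph{\ref{lem:G JK finite.iv}$\Rightarrow$\ref{lem:G JK finite.iii}.} By symmetry we may assume $J_0=J$. By Lemma~\ref{lem:monoid len} the lengths $\ell(a_t)$ are weakly increasing in $t$. Hence $a_m=a_n$ gives $\ell(a_m)=\ell(a_{m+1})$. Proposition~\ref{prop:absorb prop} then yields $a_{m+1}=a_m$ and $\supp$ of the appended idempotent is contained in $\supp a_m$. Since $m\ge1$ and both $w_\circ^J$ and $w_\circ^K$ occur in $a_{m+1}$, we get $\supp a_m=J\cup K$. Because the sequence stabilizes at $a_m$ and $a_m\star w_\circ^J=a_m=a_m\star w_\circ^K$, Lemma~\ref{lem:char w_0 monoid}\ref{lem:char w_0 monoid.v} together with the generation of $W_{J\cup K}(M)$ by $s_i$, $i\in J\cup K$, gives $a_m\star s_i=a_m$ for all $i\in J\cup K$. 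Thus $a_m$ absorbs all of $W_{J\cup K}(M)$ on the right, so $J\cup K\in\mathscr F(M)$ via the length bound, exactly as in the proof of Proposition~\ref{prop:absorb prop}, and $a_m=w_\circ^{J\cup K}$. The same then holds for $b_{m+1}=w_\circ^K\star a_m$, because $w_\circ^{J\cup K}$ absorbs on the left. This gives \ref{lem:G JK finite.iii}, and in fact shows directly that \ref{lem:G JK finite.iv}$\Rightarrow$\ref{lem:G JK finite.i}.

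\emph{\ref{lem:G JK finite.iii}$\Rightarrow$\ref{lem:G JK finite.iv}.} This direction is the subtle one. Suppose $a_m=b_n=:x$. We show $x$ is stable. If $m$ is odd, $a_m$ ends with $w_\circ^J$, so $a_m\star w_\circ^J=a_m$. If $m$ is even, $a_m$ ends with $w_\circ^K$, so $a_m\star w_\circ^K=a_m$. Likewise $b_n$ begins with $w_\circ^K$, so $w_\circ^K\star x=x$, while $a_m$ begins with $w_\circ^J$, so $w_\circ^J\star x=x$. Thus $x$ is absorbed on the left by both $w_\circ^J$ and $w_\circ^K$. Applying ${}^{op}$-symmetric reasoning, or multiplying on the left, gives $a_{m+1}$ or $a_{m+2}$ expressed as $w_\circ^{J}\star$ or $w_\circ^K\star$ applied to $b_{m'}$-type words that collapse to $x$. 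Concretely, $a_{m+1}=w_\circ^J\star b_m$. I expect to need the monotonicity $a_t\le a_{t+1}$ in Bruhat order (Proposition~\ref{prop:Bruhat order *}\ref{prop:Bruhat order *.a}) together with absorption to conclude $a_{m'}=x$ for all large $m'$, which yields \ref{lem:G JK finite.iv}. The main obstacle is this bookkeeping. A cleaner route is to show $\supp x=J\cup K$ and that $x\star s_i=x=s_i\star x$ for $i\in J\cup K$ using left absorption by both idempotents, which gives $x=w_\circ^{J\cup K}$ directly by Lemma~\ref{lem:char w_0 monoid}. Left absorption handles all $s_i$ on the left, so Lemma~\ref{lem:char w_0 monoid}\ref{lem:char w_0 monoid.ii} already suffices once we know $x\in W_{J\cup K}(M)$ and $J\cup K\in\mathscr F(M)$. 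The latter follows because $\{y: y\star x=x\}\supset W_{J\cup K}(M)$ is finite by Lemma~\ref{lem:left desc}. This gives \ref{lem:G JK finite.i} directly.

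\emph{Existence of $\mu_M(J,K)$ and~\eqref{eq:mu_M JK KJ}.} If $J\cup K\in\mathscr F(M)$, then $G_{J,K}$ is finite. By Corollary~\ref{cor:max elts}\ref{cor:max elts.a}, $w_\circ^{J\cup K}\in G_{J,K}$. Since $a_t=w_\circ^{J\cup K}$ forces $a_{t+1}=w_\circ^{J\cup K}$ by absorption, the minimal $\mu_M(J,K)$ exists (with the same argument for $b_t$). Finally, $b_{\mu+1}=w_\circ^K\star a_\mu=w_\circ^{J\cup K}$, where $\mu=\mu_M(J,K)$, so $\mu_M(K,J)\le\mu_M(J,K)+1$. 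The symmetric argument gives the reverse inequality, proving~\eqref{eq:mu_M JK KJ}.
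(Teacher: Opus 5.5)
Your proof is correct, but it handles the substantive implications differently from the paper.

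\textbf{The paper's route.} It obtains (ii)$\Rightarrow$(i) and (ii)$\Rightarrow$(iii) from Corollary~\ref{cor:max elts}: $w_\circ^{J\cup K}$ is the unique longest element of the finite semigroup $G_{J,K}$, so it equals some $a_t$ or $b_t$, and one more left factor transfers it to the other sequence. The step (iii)$\Rightarrow$(iv) is a one-line left multiplication: $a_m=b_n$ gives $a_m=a_{n+1}$ if $m\le n$, and $b_n=b_{m+1}$ otherwise. Finally, (iv)$\Rightarrow$(ii) is a combinatorial induction showing that the alternating sequence is eventually periodic, using only idempotency and no lengths.

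\textbf{Your route.} You use that $\ell(a_t)$ is weakly increasing (Lemma~\ref{lem:monoid len}), so a single repetition $a_m=a_n$ forces $\ell(a_m)=\ell(a_{m+1})$. Proposition~\ref{prop:absorb prop} then gives immediate stabilization, and right absorption of both idempotents yields finiteness of $W_{J\cup K}(M)$ and $a_m=w_\circ^{J\cup K}$. For (iii) you observe that $x=a_m=b_n$ is left-absorbed by both $w_\circ^J$ and $w_\circ^K$. Hence $W_{J\cup K}(M)$ lies inside the finite left stabilizer of Lemma~\ref{lem:left desc}.

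\textbf{What each buys.} Your route gives strictly more. In a Hecke monoid the alternating sequence is eventually \emph{constant}, and any repetition $a_m=a_n$ with $m<n$ already forces $a_m=w_\circ^{J\cup K}$, hence $\mu_M(J,K)\le m$. It also bypasses the parity bookkeeping of the periodicity induction. In the paper's write-up the two parity cases are in fact interchanged, although the argument goes through once they are swapped back. On the other hand, the paper's (iv)$\Rightarrow$(ii) step is valid for alternating products of two idempotents in an arbitrary monoid.

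\textbf{Expository tidying} (not mathematical gaps):
\begin{itemize}
\item The paragraph headed (iii)$\Rightarrow$(iv) actually proves (iii)$\Rightarrow$(i), which is what your cycle requires.
\item Delete the exploratory remarks about Bruhat monotonicity.
\item Only left absorption can be read off from the shapes of $a_m$ and $b_n$. Right absorption of all $s_i$, $i\in J\cup K$, is available only after $x=w_\circ^{J\cup K}$ is known, and you do not need it.
\item In the last paragraph, upward closure of $\{t:a_t=w_\circ^{J\cup K}\}$ is not what yields $\mu_M(J,K)$. What is needed is that $w_\circ^{J\cup K}$ occurs in the $a$-sequence at a positive index. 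Your pigeonhole step already provides this, since the index $m$ found there satisfies $a_m=w_\circ^{J\cup K}$. Alternatively, if Corollary~\ref{cor:max elts} only places $w_\circ^{J\cup K}$ in the $b$-sequence, use $a_{t+1}=w_\circ^J\star b_t$.
\end{itemize}
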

\begin{proof}
All these statements obviously hold if one of the~$J$, $K$ is empty, so we may assume, without
loss of generality that~$J,K\not=\emptyset$.

The implication \ref{lem:G JK finite.i}$\implies$\ref{lem:G JK finite.ii} is immediate since $G_{J,K}$ is a submonoid of~$(W_{J\cup K}(M),\star)$.

If~$G_{J,K}$
is finite then $\supp G_{J,K}=J\cup K\in\mathscr F(M)$ by Corollary~\partref{cor:max elts.a},
that is, \ref{lem:G JK finite.ii} implies~\ref{lem:G JK finite.i}.
Furthermore, by Corollary~\partref{cor:max elts.a},
$G_{J,K}$ contains a unique element of maximal length, namely
$w_\circ^{J\cup K}$ and so either
$w_\circ^{J\cup K}=\brd{w_\circ^J\star w_\circ^K\star}{m}$ or
$w_\circ^{J\cup K}=\brd{w_\circ^K\star w_\circ^J\star}{m}
$
for some~$m> 0$.
Note that if~$m$ is even then
both equalities hold at the same time since~$w_\circ^{J\cup K}$,
$w_\circ^K$ and~$w_\circ^J$ are ${}^{op}$-invariant.
If say the first equality holds for~$m$ odd
then Lemma~\ref{lem:char w_0 monoid} implies that
$$
w_\circ^{J\cup K}=w_\circ^K\star w_\circ^{J\cup K}=
w_\circ^K\star
\brd{w_\circ^J\star w_\circ^K\star}{m}
=\brd{w_\circ^{K}\star w_\circ^{J}\star}{m+1}.
$$
Thus, \ref{lem:G JK finite.ii} implies~\ref{lem:G JK finite.iii}.

Suppose that \ref{lem:G JK finite.iii} holds.
If~$m\le n$ then~\ref{lem:G JK finite.iv}
with~$J_0=J$, $J_1=K$
follows by Lemma~\ref{lem:char w_0 monoid}.
Otherwise, \ref{lem:G JK finite.iv}
follows with~$J_0=K$ and~$J_1=J$.

It remains to prove that~\ref{lem:G JK finite.iv} implies~\ref{lem:G JK finite.ii}. 
For, it suffices to prove that
for all~$k\ge n$,
$$\brd{w_\circ^{J_0}\star w_\circ^{J_1}\star}{k}=
\brd{w_\circ^{J_0}\star w_\circ^{J_1}\star}{l(k)}$$
for some~$m\le l(k)<n$. The argument is by induction on~$k$, the case~$k=n$ being given with~$l(n)=m$.
For the inductive step, we have 
$$
\brd{w_\circ^{J_0}\star w_\circ^{J_1}\star}{k+1}
=\brd{w_\circ^{J_0}\star w_\circ^{J_1}\star}{k}
\star w_\circ^{J_{\bar k}}=
\brd{w_\circ^{J_0}\star w_\circ^{J_1}\star}{l(k)}\star w_\circ^{J_{\bar k}}
$$
by the induction hypothesis.
Thus, if~$k\equiv l(k)\pmod 2$ then
$\brd{w_\circ^{J_0}\star w_\circ^{J_1}\star}{k+1}=\brd{w_\circ^{J_0}\star w_\circ^{J_1}\star}{l(k)}$ and so~$l(k+1)=l(k)$.
Otherwise, 
$
\brd{w_\circ^{J_0}\star w_\circ^{J_1}\star}{k+1}
=\brd{w_\circ^{J_0}\star w_\circ^{J_1}\star}{l(k)+1}$. If~$l(k)+1<n$, set~$l(k+1)=l(k)+1$.
Otherwise, $l(k)+1=n$ and so
$
\brd{w_\circ^{J_0}\star w_\circ^{J_1}\star}{k+1}
=
\brd{w_\circ^{J_0}\star w_\circ^{J_1}\star}{m}$,
that is~$l(k+1)=m<n$.

The remaining assertions are immediate 
from the proof of the implication
\ref{lem:G JK finite.ii}$\implies$\ref{lem:G JK finite.iii}.
\end{proof}
Thus we obtain a well-defined map
$\mu_M:\{ (J,K)\in\mathscr F(M)\times\mathscr F(M)\,:\, J\cup K\in\mathscr F(M)\}\to \ZZ_{>0}$, which we extend to a map
$\mu_M:\mathscr F(M)\times
\mathscr F(M)\to \mathbb Z_{>0}\cup\{\infty\}$
by setting $\mu_M(J,K)=\mu_M(K,J)=\infty$
if~$J\cup K\notin\mathscr F(M)$.
In particular,
$\mu_M(J,\emptyset)=\mu_M(\emptyset,J)=
\mu_M(J,J)=1$ for all~$J\in\mathscr F(M)$ and~$\mu_M(J,K)=\mu_M(K,J)=2$
if~$J\subset K\in\mathscr F(M)$.
Note that~$w_\circ^J$ and~$w_\circ^K$
commute (that is, $\max(\mu_M(J,K),\mu_M(K,J))\le 2$)
if one of~$J$, $K$ is a subset of the other,
or if~$J$ and~$K$ are {\em weakly orthogonal},
that is $J\setminus K$, $K\setminus J$ and~$J\cap K$ are pairwise orthogonal. We expect that this exhausts all pairs
of commuting idempotents in~$(W(M),\star)$.

\begin{example}
In~$(W(A_4),\star)$ we have
$
s_3s_4s_3\star s_2s_4\star s_3s_4s_3=
w_\circ^{\{2,3,4\}}$, 
while
$
s_2s_4\star s_3s_4s_3\star s_2s_4=
s_2s_3s_4s_3s_2$ and $
(s_2s_4\star s_3s_4s_3)^{\star 2}=
w_\circ^{\{2,3,4\}}$.
Thus, $\mu_{A_4}(\{3,4\},\{2,4\})=3$,
$\mu_{A_4}(\{2,4\},\{3,4\})=4$.
\end{example}

For any~$M\in\Cox I$ and~$M'\in\Cox{I'}$
define\plink{LM'M}
$$
\Lambda(M',M):=\{\xi:I'\to \mathscr F(M)\,:\,
\max(\mu_M(\xi(i),\xi(j)),
\mu_M(\xi(j),\xi(i)))\le
m'_{ij},\,\forall\, i\not=j\in I'\}.
$$
\begin{theorem}\label{thm:Hom Heck Mon}
Let~$M'\in\Cox{I'}$, $M\in\Cox I$.
The assignments~$\phi\mapsto[\phi]$ define a bijection $\Hom_{\mathscr H}(M',M)\to \Lambda(M',M)$. 
\end{theorem}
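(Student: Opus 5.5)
The plan is to show three things: that $[\phi]\in\Lambda(M',M)$ for every $\phi$, that $\phi$ is determined by $[\phi]$, and that every $\xi\in\Lambda(M',M)$ arises as some $[\phi]$. The main tools are Lemma~\ref{lem:Hecke hom w0J}, Lemma~\ref{lem:G JK finite} and the presentation of $(W(M'),\star)$ by generators and relations.

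\emph{Well-definedness and injectivity.} Let $\phi\in\Hom_{\mathscr H}(M',M)$.
\begin{enumerate}
\item By Lemma~\ref{lem:Hecke hom w0J} applied to $J'=\{i\}$, we have $[\phi](i)\in\mathscr F(M)$ and $\phi(s'_i)=w_\circ^{[\phi](i)}$. Hence $\phi$ is determined by $[\phi]$, since the $s'_i$ generate $(W(M'),\star)$. This gives injectivity.
\item Let $i\ne j$ with $m'_{ij}<\infty$. Applying $\phi$ to the braid relation $\brd{s'_i\star s'_j\star}{m'_{ij}}=\brd{s'_j\star s'_i\star}{m'_{ij}}$ gives
\[
\brd{w_\circ^{J}\star w_\circ^{K}\star}{m'_{ij}}=\brd{w_\circ^{K}\star w_\circ^{J}\star}{m'_{ij}},\qquad J=[\phi](i),\ K=[\phi](j).
\]
By Lemma~\ref{lem:G JK finite}\ref{lem:G JK finite.iii}$\Rightarrow$\ref{lem:G JK finite.i}, this forces $J\cup K\in\mathscr F(M)$.
\item Alternatively, since $\{i,j\}\in\mathscr F(M')$ when $m'_{ij}<\infty$, Lemma~\ref{lem:Hecke hom w0J} gives directly that $J\cup K=[\phi](\{i,j\})\in\mathscr F(M)$ and $\phi(w_\circ^{\{i,j\}})=w_\circ^{J\cup K}$.
\item In the rank-two Hecke monoid, $w_\circ^{\{i,j\}}=\brd{s'_i\star s'_j\star}{m'_{ij}}$; this is Proposition~\ref{prop:Coxeter splitting}\ref{prop:Coxeter splitting.c}, or the standard fact for dihedral groups. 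Applying $\phi$ yields $\brd{w_\circ^J\star w_\circ^K\star}{m'_{ij}}=w_\circ^{J\cup K}$, and symmetrically with $J,K$ swapped.
\item By minimality in the definition of $\mu_M$, $\mu_M(J,K)\le m'_{ij}$ and $\mu_M(K,J)\le m'_{ij}$. When $m'_{ij}=\infty$ there is nothing to check. So $[\phi]\in\Lambda(M',M)$.
\end{enumerate}

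\emph{Surjectivity.} Given $\xi\in\Lambda(M',M)$, define $\phi$ on the free monoid by $s'_i\mapsto w_\circ^{\xi(i)}$ and check the defining relations of $(W(M'),\star)$.
\begin{enumerate}
\item Idempotency holds because $w_\circ^{\xi(i)}$ is an idempotent by Lemma~\ref{lem:char w_0 monoid}.
\item For the braid relation with $m=m'_{ij}<\infty$, the hypothesis says $\mu:=\mu_M(J,K)\le m$ and $\mu_M(K,J)\le m$; implicitly $J\cup K\in\mathscr F(M)$, since otherwise $\mu=\infty$. Lemma~\ref{lem:char w_0 monoid} shows that $w_\circ^{J\cup K}$ absorbs $w_\circ^J$ and $w_\circ^K$ on either side. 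Therefore once the alternating product reaches $w_\circ^{J\cup K}$ it stays there: $\brd{w_\circ^J\star w_\circ^K\star}{t}=w_\circ^{J\cup K}$ for all $t\ge\mu$.
\item Hence both sides of the braid relation equal $w_\circ^{J\cup K}$, and $\phi$ descends to a homomorphism with $[\phi]=\xi$.
\end{enumerate}

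The only point needing care, and the main obstacle, is the identity $w_\circ^{\{i,j\}}=\brd{s'_i\star s'_j\star}{m'_{ij}}$ used in the well-definedness step. It follows from the dihedral case of Proposition~\ref{prop:Coxeter splitting}\ref{prop:Coxeter splitting.c}, or directly: the braid word of length $m'_{ij}$ is reduced and of maximal length in the dihedral group, so by Theorem~\ref{thm:Tits} and Proposition~\ref{prop:prod *} it is a $\times$-product equal to $w_\circ^{\{i,j\}}$.
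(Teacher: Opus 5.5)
Your proposal is correct and follows essentially the same route as the paper: injectivity comes from $\phi(s'_i)=w_\circ^{[\phi](i)}$; the inclusion $[\phi]\in\Lambda(M',M)$ comes from applying $\phi$ to the longest element of the rank-two parabolic $W_{\{i,j\}}(M')$, written as the alternating product of length $m'_{ij}$, together with Lemma~\ref{lem:Hecke hom w0J}. The paper additionally passes through Corollary~\ref{cor:max elts} for the finite image of $\langle s'_i,s'_j\rangle$, which your step~3 shows is unnecessary, and it proves surjectivity exactly as you do, via absorption by $w_\circ^{\xi(i)\cup\xi(j)}$ (Lemma~\ref{lem:char w_0 monoid}).
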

\begin{proof}
We need the following
\begin{lemma}\label{lem:[phi]Lam M'M}
If~$\phi\in\Hom_{\mathscr H}(M',M)$ then
$[\phi]\in\Lambda(M',M)$.
\end{lemma}
\begin{proof}
Note first that~$[\phi]$ is a map~$I'\to \mathscr F(M)$ by Lemma~\ref{lem:Hecke hom w0J}.

Let~$i\not=j\in I'$.
If~$m'_{ij}=\infty$ or at least one of~$[\phi](i)$, $[\phi](j)$ is empty, then the inequality
\begin{equation}\label{eq:ineq Lam M'M}
\max(\mu_M([\phi](i),[\phi](j)),
\mu_M([\phi](j),[\phi](i)))\le m'_{ij}
\end{equation}
is trivial.
Otherwise, the submonoid $G'_{i,j}=\la s'_i,s'_j\ra$ of~$(W(M'),\star)$ is finite, its
longest element being $\brd{s'_i\times s'_j\times}{m'_{ij}}=
\brd{s'_j\times s'_i\times}{m'_{ij}}$. Then
$G_{i,j}=\phi(G'_{i,j})$ is a finite submonoid of~$(W(M),\star)$ contained
in~$G_{[\phi](i),[\phi](j)}$
and~$\supp G_{i,j}=[\phi](i)\cup [\phi](j)$. By Corollary~\partref{cor:max elts.a}, $[\phi](i)\cup [\phi](j)\in\mathscr F(M)$
and the longest element of~$G_{i,j}$ is
$
w_\circ^{[\phi](i)\cup [\phi](j)}=\brd{w_\circ^{[\phi](i)}\star
w_\circ^{[\phi](j)}\star}{\mu_M([\phi](i),[\phi](j))}=\brd{w_\circ^{[\phi](j)}\star
w_\circ^{[\phi](i)}\star}{\mu_M([\phi](j),[\phi](i))}$.
Also, since $\brd{s'_i\times s'_j\times}{m'_{ij}}=
w_\circ^{\{i,j\}}$, by Lemma~\ref{lem:Hecke hom w0J} we have
$$
\phi(\brd{s'_i\times s'_j\times}{m'_{ij}})=
\phi(\brd{s'_j\times s'_i\times}{m'_{ij}})=
\phi(w_\circ^{\{i,j\}})=w_\circ^{[\phi](i)\cup
[\phi](j)}
$$
whence
$
w_\circ^{[\phi](i)\cup [\phi](j)}=
\brd{w_\circ^{[\phi](i)}\star
w_\circ^{[\phi](j)}\star}{m'_{ij}}
=\brd{w_\circ^{[\phi](j)}\star
w_\circ^{[\phi](i)}\star}{m'_{ij}}$.
The inequality~\eqref{eq:ineq Lam M'M} is now immediate.
\end{proof}
The next step is to construct 
a map~$\Lambda(M',M)\to \Hom_{\mathscr H}(M',M)$.
\begin{lemma}\label{lem:Lam M'M HomM'M} 
For any~$\xi\in\Lambda(M',M)$,
the assignments $s'_i\mapsto w_\circ^{\xi(i)}$, $i\in I'$
define
\plink{Theta xi}$\Theta_\xi\in\Hom_{\mathscr H}(M',M)$.
\end{lemma}
\begin{proof}
Let~$\xi\in \Lambda(M',M)$.
By definition
of~$\mu_M$ we have for all $i\not=j\in I'$ with $m'_{ij}<\infty$
$$
\brd{w_\circ^{\xi(i)}\star w_\circ^{\xi(j)}\star}{m'_{ij}}=w_\circ^{\xi(i)\cup \xi(j)}\star x_{i,j}
$$
where
$$
x_{i,j}=\begin{cases}\brd{w_\circ^{\xi(i)}\star w_\circ^{\xi(j)}}{m'_{ij}-
\mu_M(\xi(i),\xi(j))},& \text{$\mu_M(\xi(i),\xi(j))$ is even,}\\
\brd{w_\circ^{\xi(j)}\star w_\circ^{\xi(i)}}{m'_{ij}-
\mu_M(\xi(i),\xi(j))},& \text{$\mu_M(\xi(i),\xi(j))$ is odd}.
\end{cases}
$$
Since $\supp x_{i,j}\subset \xi(i)\cup \xi(j)$, it follows from Lemma~\ref{lem:char w_0 monoid} that
$
\brd{w_\circ^{\xi(i)}\star w_\circ^{\xi(j)}\star}{m'_{ij}}=w_\circ^{\xi(i)\cup \xi(j)}$.
Thus, $\brd{w_\circ^{\xi(i)}\star w_\circ^{\xi(j)}\star}{m'_{ij}}=
\brd{w_\circ^{\xi(j)}\star w_\circ^{\xi(i)}\star}{m'_{ij}}$.
\end{proof}
To finish the proof of Theorem~\ref{thm:Hom Heck Mon}, it remains to observe that 
$[\Theta_\xi]=\xi$ for all~$\xi\in\Lambda(M',M)$ while
$\Theta_{[\phi]}=\phi$ for all~$\phi\in\Hom_{\mathscr H}(M',M)$.
\end{proof}

\begin{corollary}\label{cor:tautological}
Let $M, M'\in\Cox I$ and suppose that
$m'_{ij}\ge m_{ij}$ for all~$i,j\in I$. Then
the assignments $s'_i\mapsto s_i$, $i\in I$,
define a homomorphism $(W(M'),\star)\to (W(M),\star)$.
\end{corollary}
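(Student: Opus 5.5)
The plan is to apply Theorem~\ref{thm:Hom Heck Mon} with the map $\xi:I\to\mathscr F(M)$ given by $\xi(i)=\{i\}$. First, $\{i\}\in\mathscr F(M)$, since $W_{\{i\}}(M)$ has order two. Once we show $\xi\in\Lambda(M',M)$, Lemma~\ref{lem:Lam M'M HomM'M} produces $\Theta_\xi\in\Hom_{\mathscr H}(M',M)$. Because $w_\circ^{\{i\}}=s_i$, this homomorphism sends $s'_i\mapsto s_i$, as required.

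So the only thing to check is the inequality defining $\Lambda(M',M)$. That is, for all $i\ne j\in I$ we need
$$\max(\mu_M(\{i\},\{j\}),\mu_M(\{j\},\{i\}))\le m'_{ij}.$$
If $m_{ij}=\infty$, then $m'_{ij}\ge m_{ij}$ forces $m'_{ij}=\infty$, and the inequality is trivial under our conventions on $\infty$.

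Now suppose $m_{ij}<\infty$. Then $\{i,j\}\in\mathscr F(M)$, since $W_{\{i,j\}}(M)$ is dihedral of order $2m_{ij}$. Its longest element is the alternating word of length $m_{ij}$, which is reduced, so $w_\circ^{\{i,j\}}=\brd{s_i\times s_j\times}{m_{ij}}$. Because the expression is reduced, this product equals $\brd{s_i\star s_j\star}{m_{ij}}$, and likewise with $i$ and $j$ swapped. By the definition of $\mu_M$ as a minimal exponent, this gives $\mu_M(\{i\},\{j\})\le m_{ij}$ and $\mu_M(\{j\},\{i\})\le m_{ij}$. Combining with $m_{ij}\le m'_{ij}$ yields the inequality.

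The only slightly delicate point is the standard fact about dihedral groups: the alternating word of length $m_{ij}$ is a reduced expression for the longest element. For instance, it follows from Proposition~\ref{prop:Coxeter splitting}\partref{prop:Coxeter splitting.c} with $J'=\{i\}$, $J''=\{j\}$ and $h=m_{ij}$, or directly from the word combinatorics of $I_2(m)$. With that fact in hand, the rest is immediate.
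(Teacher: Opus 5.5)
Your proposal is correct and is essentially the paper's route: the paper states this as an immediate consequence of Theorem~\ref{thm:Hom Heck Mon}. The map $\xi(i)=\{i\}$ lies in $\Lambda(M',M)$ because $\mu_M(\{i\},\{j\})\le m_{ij}\le m'_{ij}$, and Lemma~\ref{lem:Lam M'M HomM'M} then gives the homomorphism. One small point: Proposition~\partref{prop:Coxeter splitting.c} assumes $M_{\{i,j\}}$ is irreducible, so the case $m_{ij}=2$ has to be handled directly, which is trivial.
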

We call such homomorphisms {\em tautological}.
For example, for any~$m'>m$ there is a
tautological homomorphism~$W(I_2(m'),\star)\to
W(I_2(m),\star)$.

\begin{definition}\label{defn:optial}
Let~$M\in\Cox I$, $M'\in \Cox{I'}$.
We say that~$\phi\in\Hom_{\mathscr H}(M',M)$ is {\em optimal} if for all~$i,j\in I'$ with~$[\phi](i)\not=[\phi](j)$
\begin{equation}\label{eq:optimal cond}
m'_{ij}=
\max(2,\mu_M([\phi](i),[\phi](j)),\mu_M([\phi](j),[\phi](i)).
\end{equation}
\end{definition}
\begin{proposition}\label{prop:optimal taut}
Every homomorphism of Hecke monoids can be written as a composition of a tautological homomorphism with an optimal one.
\end{proposition}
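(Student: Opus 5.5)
Given $\phi\in\Hom_{\mathscr H}(M',M)$ with $M'\in\Cox{I'}$, I will factor $\phi$ through a Coxeter matrix $M''$ over the same index set $I'$, with $M''\le M'$ entrywise, so that $\phi=\psi\circ\tau$. Here $\tau:(W(M'),\star)\to(W(M''),\star)$ is tautological and $\psi$ is optimal. Set $\xi=[\phi]$, so $\xi\in\Lambda(M',M)$ by Theorem~\ref{thm:Hom Heck Mon}. For $i\ne j\in I'$ define
$$
m''_{ij}=\begin{cases}\max\bigl(2,\mu_M(\xi(i),\xi(j)),\mu_M(\xi(j),\xi(i))\bigr),&\xi(i)\ne\xi(j),\\ m'_{ij},&\xi(i)=\xi(j),\end{cases}
$$
and $m''_{ii}=1$. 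This $M''$ is symmetric, and its off-diagonal entries lie in $\ZZ_{>1}\cup\{\infty\}$, so $M''\in\Cox{I'}$. Moreover $m''_{ij}\le m'_{ij}$ for all $i\ne j$: since $\xi\in\Lambda(M',M)$, the max of the two $\mu$'s is at most $m'_{ij}$, and $m'_{ij}\ge2$ for $i\ne j$. If $m'_{ij}=\infty$ the inequality is trivial; if $\xi(i)\cup\xi(j)\notin\mathscr F(M)$ then $\mu_M=\infty$ forces $m'_{ij}=\infty$, consistent with the definition.

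By Corollary~\ref{cor:tautological}, since $m'_{ij}\ge m''_{ij}$, the assignments $s'_i\mapsto s''_i$ define a tautological homomorphism $\tau$. Next, $\xi\in\Lambda(M'',M)$. When $\xi(i)\ne\xi(j)$ this holds by construction. When $\xi(i)=\xi(j)=:J$, we have $\mu_M(J,J)=1\le m''_{ij}$. So Lemma~\ref{lem:Lam M'M HomM'M} gives $\psi=\Theta_\xi\in\Hom_{\mathscr H}(M'',M)$ with $s''_i\mapsto w_\circ^{\xi(i)}$. Then $\psi\circ\tau$ and $\phi$ agree on the generators $s'_i$, because $\phi(s'_i)=w_\circ^{[\phi](i)}$ by Lemma~\ref{lem:Hecke hom w0J}. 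Hence $\phi=\psi\circ\tau$, either by the bijectivity in Theorem~\ref{thm:Hom Heck Mon} or simply because the $s'_i$ generate.

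It remains to check that $\psi$ is optimal. We have $[\psi](i)=\xi(i)$, so for $[\psi](i)\ne[\psi](j)$ the condition~\eqref{eq:optimal cond} for $M''$ is exactly the definition of $m''_{ij}$. The only potential subtlety is a possible infinite value in the max. That case occurs only when $\xi(i)\cup\xi(j)\notin\mathscr F(M)$, and then $m''_{ij}=\infty$ by definition, so the equality still holds.
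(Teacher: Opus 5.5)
Your proof is correct and is essentially the paper's argument: you use the same intermediate Coxeter matrix $M''$, the tautological map $(W(M'),\star)\to(W(M''),\star)$ from Corollary~\ref{cor:tautological}, and the optimal map $\Theta_{[\phi]}$, and agreement on generators gives $\phi=\Theta_{[\phi]}\circ\tau$. You are slightly more explicit than the paper in two places: verifying $[\phi]\in\Lambda(M'',M)$ when $[\phi](i)=[\phi](j)$ (via $\mu_M(J,J)=1$), and handling the value $\infty$.
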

\begin{proof}
Let~$M'=(m'_{ij})_{i,j\in I'}\in\Cox{I'}$, $M\in\Cox I$ and let $
\phi\in\Hom_{\mathscr H}(M',M)$.
Let $M''=(m''_{i,j})_{i,j\in I'}$ with
$$
m''_{ij}=\begin{cases}m'_{ij},&[\phi](i)=[\phi](j),\\
\max(2,\mu_M([\phi](i),[\phi](j)),\mu_M([\phi](j),[\phi](i)))),&i\not=j
\end{cases}
$$
for all~$i,j\in I'$. Clearly, $M''\in\Cox{I'}$.
Since~$[\phi]\in\Lambda(M',M)$
by Theorem~\ref{thm:Hom Heck Mon}, 
$m'_{ij}\ge 
m''_{ij}$ for all~$i,j\in I'$. 
This yields a tautological homomorphism $\phi'\in\Hom_{\mathscr H}(M',M'')$. Since $[\phi]\in\Lambda(M'',M)$ by 
definition of~$M''$, 
by Lemma~\ref{lem:Lam M'M HomM'M} the assignments $s''_i\mapsto w_\circ^{[\phi](i)}$, $i\in I'$ define 
$\phi''\in\Hom_{\mathscr H}(M'',M)$, which
is clearly optimal. Finally, $(\phi''\circ\phi')(s'_i)=\phi''(s''_i)=
w_\circ^{[\phi](i)}
=\phi(s'_i)$ for all~$i\in I'$ and so
$\phi=\phi''\circ\phi'$.
\end{proof}

\begin{definition}\label{defn:locally inj}
Let~$M\in\Cox I$, $M'\in \Cox{I'}$.
We say that~$\phi\in\Hom_{\mathscr H}(M',M)$
is {\em locally injective} if $\phi|_{W_{\{i,j\}}(M')}$ is injective for all
all~$i\not=j\in I'$.
\end{definition}
Clearly, an injective homomorphism is locally injective. The reason for introducing this notion is that it has a very simple combinatorial characterization and hence can be easily verified, thus eliminating all homomorphisms which cannot possibly be injective.
\begin{proposition}\label{prop:locally inj}   
Let~$M\in\Cox I$, $M'\in \Cox{I'}$. Then
$\phi\in\Hom_{\mathscr H}(M',M)$ is locally injective if and only if
$\mu_{M}([\phi](i),[\phi](j))=m'_{ij}$ and~$[\phi](i)\not\subset [\phi](j)$
for all~$i\not=j\in I'$. In particular,
a locally injective homomorphism is optimal.
\end{proposition}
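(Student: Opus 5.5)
Write $J=[\phi](i)$, $K=[\phi](j)$ for $i\not=j\in I'$, and set $a_t=\brd{w_\circ^J\star w_\circ^K\star}{t}$, $b_t=\brd{w_\circ^K\star w_\circ^J\star}{t}$. The plan is to reduce local injectivity to a statement about the dihedral submonoid $(W_{\{i,j\}}(M'),\star)$, and then to analyse the two chains $a_t$ and $b_t$ using the absorption results of \S\ref{subs:idemp}.

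\emph{Reduction.} Since $s'_i$ and $s'_j$ are idempotents, every element of $(W_{\{i,j\}}(M'),\star)$ is $1$ or an alternating product $\brd{s'_i\star s'_j\star}{t}$ or $\brd{s'_j\star s'_i\star}{t}$ with $1\le t\le m'_{ij}$ ($t<\infty$ if $m'_{ij}=\infty$). For $t<m'_{ij}$ these are reduced expressions, hence pairwise distinct and distinct from $1$. The only coincidence is at $t=m'_{ij}<\infty$, where both products equal $w_\circ^{\{i,j\}}$. Their images under $\phi$ are $a_t$ and $b_t$ by Lemma~\ref{lem:Hecke hom w0J}. So local injectivity at $\{i,j\}$ means exactly the following: $1$, the $a_t$ and the $b_t$ for $1\le t<m'_{ij}$, and $a_{m'_{ij}}=b_{m'_{ij}}$ are pairwise distinct.

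\emph{Necessity.} Suppose $J\subset K$. Then $\phi(s'_i\star s'_j)=w_\circ^J\star w_\circ^K=w_\circ^K=\phi(s'_j)$ by Lemma~\ref{lem:char w_0 monoid}, while $s'_i\star s'_j\not=s'_j$ since $m'_{ij}\ge2$; so $\phi$ is not locally injective. By Lemma~\ref{lem:[phi]Lam M'M} we have $\mu_M(J,K)\le m'_{ij}$. Suppose $\mu:=\mu_M(J,K)<m'_{ij}$. Then $\mu$ is finite, $a_\mu=w_\circ^{J\cup K}$, and $a_{\mu+1}=a_\mu\star w_\circ^{J\text{ or }K}=w_\circ^{J\cup K}$ by Lemma~\ref{lem:char w_0 monoid}. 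But the corresponding source elements of lengths $\mu$ and $\mu+1\le m'_{ij}$ are distinct, so again $\phi$ is not locally injective. The same argument with $i$ and $j$ swapped handles the reverse ordered pair.

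\emph{Sufficiency.} Assume $\mu_M(J,K)=\mu_M(K,J)=m'_{ij}$ (possibly $\infty$), $J\not\subset K$ and $K\not\subset J$. In particular $J,K\not=\emptyset$, so no $a_t,b_t$ with $t\ge1$ equals $1$.

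\emph{Key claim.} For $1\le t<\mu_M(J,K)$ we have $\ell(a_{t+1})>\ell(a_t)$. By Lemma~\ref{lem:monoid len}, $\ell(a_{t+1})\ge\ell(a_t)$. If equality held, Proposition~\ref{prop:absorb prop} would give $a_{t+1}=a_t$, and in fact $a_t\star w_\circ^{L}=a_t$ where $L$ is the next factor. For $t=1$ this says $\supp w_\circ^K\subset\supp w_\circ^J$, i.e.\ $K\subset J$, which is excluded. For $t\ge2$ we have $\supp a_t=J\cup K$, and $a_t$ is also right-fixed by its own last factor. Hence $a_t\star s_k=a_t$ for all $k\in J\cup K$. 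By Lemma~\ref{lem:left desc}, $J\cup K\subset D_R(a_t)\in\mathscr F(M)$, and then Lemma~\ref{lem:char w_0 monoid} gives $a_t=w_\circ^{J\cup K}$, contradicting the minimality of $\mu_M(J,K)>t$. The same holds for the chain $b_t$.

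\emph{Conclusion of sufficiency.} The claim makes the elements within each chain pairwise distinct for $t\le m'_{ij}$. It remains to rule out $a_t=b_s$ with $s,t<m'_{ij}$. Since $\ell(a_t)=\ell(b_s)$ forces $s=t$ once lengths are strictly increasing, suppose $a_t=b_t$. Then $w_\circ^J\star b_t=w_\circ^J\star a_t=a_t=b_t$, while the left-hand side is $b_{t+1}$. This contradicts the key claim applied to $b$. Finally, the condition $\mu_M=m'_{ij}$ gives optimality directly, since $J\not=K$ whenever $J\not\subset K$.

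The main obstacle is the key claim, specifically identifying a stalled element of the chain with $w_\circ^{J\cup K}$. That step relies on Lemmas~\ref{lem:left desc} and~\ref{lem:char w_0 monoid}, together with a careful treatment of the boundary case $t=1$.
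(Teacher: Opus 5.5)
Your reduction to the dihedral submonoid, the necessity direction, and the key claim (strict growth of $\ell(a_t)$ and of $\ell(b_t)$ for $t<m'_{ij}$) are correct. The overall strategy is the paper's: every collision is forced either to equal $w_\circ^{J\cup K}$ too early or to come from a containment.

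The gap is in the cross-chain step. Your claim that $\ell(a_t)=\ell(b_s)$ forces $s=t$ ``once lengths are strictly increasing'' does not follow from monotonicity of each chain separately. Lemma~\ref{lem:monoid len} only gives $\ell(a_{t+1})=\ell(w_\circ^J\star b_t)\ge\ell(b_t)$. The equality case here is precisely a coincidence $a_{t+1}=b_t$, and nothing in your key claim excludes it. So collisions $a_t=b_s$ with $|s-t|=1$ are left untreated.

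There is also an index slip in the case $s=t$: $w_\circ^J\star b_t=a_{t+1}$, not $b_{t+1}$. What you actually obtain is $a_{t+1}=a_t$, which contradicts the key claim for the chain $a$, not for $b$.

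The repair is short. Suppose $u=a_t=b_s$ with $s,t<m'_{ij}$.
\begin{enumerate}
\item $u$ is left-fixed by $w_\circ^J$ (the first factor of $a_t$) and by $w_\circ^K$ (the first factor of $b_s$). Hence $s_k\star u=u$ for all $k\in J\cup K$.
\item By Lemma~\ref{lem:left desc}, $J\cup K\subset D_L(u)\in\mathscr F(M)$.
\item Lemma~\ref{lem:char w_0 monoid} then gives $u=w_\circ^{J\cup K}$, i.e.\ $\mu_M(J,K)\le t<m'_{ij}$, a contradiction.
\end{enumerate}
This is exactly how the paper handles the case where an $s'_i$-initial and an $s'_j$-initial alternating product have the same image.

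Alternatively, staying within your own framework, multiply $a_t=b_s$ on the left by $w_\circ^K$ and by $w_\circ^J$. This gives $b_{t+1}=b_s$ and $a_t=a_{s+1}$. Injectivity of each chain on $[1,m'_{ij}]$ then forces both $s=t+1$ and $t=s+1$, which is impossible.

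With this fix your proof is complete. Your key claim in fact treats within-chain collisions more explicitly than the paper's appeal to the proof of Lemma~\ref{lem:G JK finite}.
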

\begin{proof}
Abbreviate~$\mu_{ij}=\mu_M([\phi](i),[\phi](j)$
and~$\phi_{i,j}=\phi|_{W_{\{i,j\}}(M')}$,
$i\not=j\in I$. If~$[\phi](i)\subset [\phi](j)$
then~$\phi(s'_j)=\phi(s'_i\times s'_j)$ which violates
local injectivity. 
Suppose that~$\mu_{ij}\not=m'_{ij}$ for some~$i\not=j\in I'$. Since~$\max(\mu_{ij},\mu_{ji})\le m'_{ij}$
by Theorem~\ref{thm:Hom Heck Mon},
it follows that~$\mu_{ij}<m'_{ij}$ and so
in particular $\max(\mu_{ij},\mu_{ji})<\infty$.
If~$\mu_{ji}=m'_{ij}$ then
$\mu_{ij}=\mu_{ji}-1$ by~\eqref{eq:mu_M JK KJ} and
so 
$$
\phi(\brd{s'_i\times s'_j\times}{m'_{ij}-1})=
w_\circ^{[\phi](i)\cup[\phi](j)}
=\phi(\brd{s'_j\times s'_i\times}{m'_{ij}})
$$
Since~$\brd{s'_i\times s'_j\times}{m'_{ij}-1}
\not=\brd{s'_j\times s'_i\times}{m'_{ij}}=w_\circ^{\{i,j\}}$,
it follows that~$\phi_{i,j}$
is not injective. Likewise,
if~$\mu_{ji}=\mu_{ij}<m'_{ij}$ then
$\brd{s'_i\times s'_j\times}{\mu_{ij}}
\not=\brd{s'_j\times s'_i\times}{\mu_{ij}}$
yet~$\phi(\brd{s'_i\times s'_j\times}{\mu_{ij}})=
w_\circ^{[\phi](i)\cup[\phi](j)}=
\phi(\brd{s'_j\times s'_i\times}{\mu_{ij}})$.
Thus, $\phi_{i,j}$ is not injective and therefore $\phi$ is not locally injective.

Conversely, suppose that~$\phi_{i,j}$ is not injective for some~$i\not=j\in I'$. If one of the $[\phi](i)$, $[\phi](j)$ is the empty set, or if~$[\phi](i)=[\phi](j)$ then~$\mu_{ij}=\mu_{ji}=1<m'_{ij}$.
Otherwise, either~$\phi(\brd{s'_i\times s'_j\times}{m})
=\phi(\brd{s'_j\times s'_i\times}{n})=:u_{m,n}$
for some~$m,n\in\ZZ_{>1}$ with $\min(m,n)<
m'_{ij}$
or $\phi(\brd{s'_i\times s'_j\times}{m})
=\phi(\brd{s'_i\times s'_j\times}{n})$
for some~$m<n\in\ZZ_{>0}$ with $\max(m,n)\le 
m'_{ij}$ if~$m'_{ij}<\infty$. 
In either case, 
$G_{[\phi](i),[\phi](j)}$ is finite by
Lemma~\ref{lem:G JK finite} and 
so if~$m'_{ij}=\infty$ then~$\mu_{ij}$, $\mu_{ji}<m'_{ij}$. Suppose that~$m'_{ij}<\infty$. In the first case,
since~$u_{m,n}=w_\circ^{[\phi](i)}\star x
=w_\circ^{[\phi](j)}\star y$, $x,y\in G_{[\phi](i),[\phi](j)}$, it follows that
$s_k\star u_{m,n}=u_{m,n}$
for all~$k\in[\phi](i)\cup[\phi](j)$
and so~$u_{m,n}=w_\circ^{[\phi](i)\cup [\phi](j)}$ by Lemma~\ref{lem:char w_0 monoid}. 
Therefore, $\mu_{ij}\le m$, $\mu_{ji}\le n$.
In particular, $\min(\mu_{ij},\mu_{ji})<m'_{ij}$
and at least one of the~$\mu_{ij}$, $\mu_{ji}$
is not equal to~$m'_{ij}$.
In the second
case, if~$m=1$ then~$[\phi](j)\subset [\phi](i)$. Otherwise, 
by the proof of the implication
\ref{lem:G JK finite.iii}$\implies$\ref{lem:G JK finite.iv} in Lemma~\ref{lem:G JK finite} we conclude that
$m\le \mu_{ij}<n\le m'_{ij}$. Thus,
if~$\phi_{i,j}$ is not injective then
either at least one of $\mu_{ij}$, $\mu_{ji}$
is not equal to~$m'_{ij}$ or one of the~$[\phi](i)$, $[\phi](j)$ is a subset of the other.
\end{proof}

A very important class of homomorphisms of Hecke monoids are {\em parabolic projections}.
\begin{lemma}\label{lem:p J defn}\label{lem:proj w0}
Let~$J\subset I$. The assignments
$$
s_i\mapsto \begin{cases}1,&i\in I\setminus J,\\
s_i,&i\in J,
\end{cases}
$$
for all~$i\in I$,
define surjective homomorphism \plink{pJ}$p_J:(W(M),\star)\to (W_J(M),\star)$. Moreover, $p_J(w_\circ^K)=w_\circ^{J\cap K}$
for all~$K\in\mathscr F(M)$.
\end{lemma}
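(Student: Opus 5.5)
To define $p_J$ we will use Theorem~\ref{thm:Hom Heck Mon}, or more directly Lemma~\ref{lem:Lam M'M HomM'M}, applied to $M'=M$ and the codomain $M_J$. Set $\xi(i)=\{i\}$ for $i\in J$ and $\xi(i)=\emptyset$ for $i\in I\setminus J$. Each $\xi(i)$ lies in $\mathscr F(M_J)$. We must check that $\xi\in\Lambda(M,M_J)$, that is, $\max(\mu_{M_J}(\xi(i),\xi(j)),\mu_{M_J}(\xi(j),\xi(i)))\le m_{ij}$ for all $i\ne j$. There are two cases.
\begin{enmroman}
\item If one of $\xi(i)$, $\xi(j)$ is empty, both $\mu$'s equal $1$ by the remarks after Lemma~\ref{lem:G JK finite}.
\item If $i,j\in J$, then either $m_{ij}=\infty$ and the inequality is vacuous, or $\{i,j\}\in\mathscr F(M_J)$. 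In the latter case $\brd{s_i\star s_j\star}{m_{ij}}=\brd{s_i\times s_j\times}{m_{ij}}=w_\circ^{\{i,j\}}$, so $\mu_{M_J}(\{i\},\{j\})\le m_{ij}$, and symmetrically for the other order.
\end{enmroman}
Hence $\Theta_\xi=p_J$ is a homomorphism, and it is clearly surjective since each $s_j$, $j\in J$, is in the image. One could equally verify the defining relations of Hecke monoids directly: $1\star1=1$, and a braid relation with one side mapped to $1$ collapses to $\brd{s_j\star s_j\star}{}=s_j$ on both sides.

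For the formula $p_J(w_\circ^K)=w_\circ^{J\cap K}$, we will invoke Lemma~\ref{lem:Hecke hom w0J}. It gives $p_J(w_\circ^K)=w_\circ^{[p_J](K)}$, and by construction $[p_J](K)=\bigcup_{k\in K}\xi(k)=K\cap J$. Finiteness of $J\cap K$ is automatic, since it is a subset of $K\in\mathscr F(M)$.

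There is no substantial obstacle here. The only point requiring care is the bookkeeping of the codomain: $\mu$ must be computed in $M_J$, and we use that $W_J(M)\cong W(M_J)$ with its Hecke structure, as recorded in \S\ref{subs:parab}.
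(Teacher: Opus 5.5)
Your proposal is correct and is essentially the paper's proof: both realize $p_J$ as $\Theta_\xi$ for the map $\xi$ with $\xi(i)=\{i\}$ for $i\in J$ and $\xi(i)=\emptyset$ otherwise, and both get $p_J(w_\circ^K)=w_\circ^{K\cap J}$ from Lemma~\ref{lem:Hecke hom w0J} using $[p_J](K)=K\cap J$. The one difference is that you explicitly check $\xi\in\Lambda(M,M_J)$ by your two cases, a verification the paper states without detail.
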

\begin{proof}
Define $\xi_J:I\to \mathscr F(I)$
by $\xi(i)=\{i\}$ if~$i\in J$ and~$\xi(i)=\emptyset$
if~$i\in I\setminus J$. Then~$\xi\in\Lambda(M,M_J)$
and~$p_J=\Theta_{\xi_J}$ in the notation of Lemma~\ref{lem:Lam M'M HomM'M}.
The second assertion follows from Lemma~\ref{lem:Hecke hom w0J} since~$[p_J](K)=\bigcup_{k\in K}\xi_J(k)=K\cap J$.
\end{proof}
Sometimes it is convenient to treat~$p_J$ as an endomorphism of~$(W(M),\star)$.
The following is immediate from the definition of~$p_J$.
\begin{lemma}\label{lem:p_J comp}
For any~$J,K\subset I$, $p_J\circ p_K=
p_{J\cap K}$.
\end{lemma}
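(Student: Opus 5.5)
The plan is to compare the two sides on generators. By the definition in Lemma~\ref{lem:p J defn}, $p_J$ and $p_K$ are homomorphisms of monoids $(W(M),\star)\to(W(M),\star)$ once we regard them as endomorphisms via the inclusion $\iota$. So $p_J\circ p_K$ is again an endomorphism of $(W(M),\star)$. Since the $s_i$, $i\in I$, generate $(W(M),\star)$, two homomorphisms agree as soon as they agree on every $s_i$. It therefore suffices to check $(p_J\circ p_K)(s_i)=p_{J\cap K}(s_i)$ for each $i\in I$.

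The check splits into three cases on $i$.
\begin{itemize}
\item If $i\notin K$, then $p_K(s_i)=1$, so $p_J(p_K(s_i))=p_J(1)=1$, because homomorphisms preserve the identity (\S\ref{subs:monoids}). Since $i\notin J\cap K$, also $p_{J\cap K}(s_i)=1$.
\item If $i\in K\setminus J$, then $p_K(s_i)=s_i$ and $p_J(s_i)=1$. Again $p_{J\cap K}(s_i)=1$.
\item If $i\in J\cap K$, then both $p_K$ and $p_J$ fix $s_i$, and so does $p_{J\cap K}$.
\end{itemize}
In all three cases the two sides agree, which proves the identity.

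One could instead use the bijection $\phi\mapsto[\phi]$ of Theorem~\ref{thm:Hom Heck Mon} together with Lemma~\partref{lem:[phi]comp.b}. Then $[p_J\circ p_K]=[p_J]\circ[p_K]$ sends $i$ to $\{i\}\cap J\cap K$, which equals $[p_{J\cap K}](i)$. I expect no real obstacle. The only subtle point is keeping track of codomains: $p_K$ lands in $W_K(M)$, so the composition must be read with $p_J$ restricted to $(W_K(M),\star)$, or with both maps regarded as endomorphisms of $(W(M),\star)$ as the paper allows.
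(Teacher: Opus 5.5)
Your proof is correct and follows the same approach as the paper, which simply states that the identity is immediate from the definition of $p_J$. Checking agreement on the generators $s_i$, with both maps regarded as endomorphisms of $(W(M),\star)$, is exactly the verification the paper leaves implicit.
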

\begin{lemma}\label{lem:parab prod}
Let~$J\subset I$ and suppose that~$J$ and~$I\setminus J$ are orthogonal. Then
$w=p_J(w)\times p_{I\setminus J}(w)$ for all~$w\in W(M)$.
\end{lemma}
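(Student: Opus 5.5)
We argue by induction on~$\ell(w)$ and prove the product formula directly in the Coxeter group~$W(M)$. Set $J'=I\setminus J$. Since $J$ and~$J'$ are orthogonal, Lemma~\partref{lem:orth factors.b} gives $W(M)=W_{J\cup J'}(M)\cong W_J(M)\times W_{J'}(M)$. So every $w\in W(M)$ factors uniquely as $w=xy$ with $x\in W_J(M)$ and $y\in W_{J'}(M)$. Moreover $\supp x\cap\supp y=\emptyset$, so Lemma~\ref{lem:extend supp} gives $\ell(w)=\ell(x)+\ell(y)$, that is, $w=x\times y$. It therefore suffices to show that $p_J(w)=x$ and $p_{J'}(w)=y$.

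Choose reduced expressions $x=s_{i_1}\times\cdots\times s_{i_a}$ with all $i_t\in J$ and $y=s_{k_1}\times\cdots\times s_{k_b}$ with all $k_t\in J'$. Then $w=s_{i_1}\times\cdots\times s_{i_a}\times s_{k_1}\times\cdots\times s_{k_b}$ is a reduced expression. By Proposition~\ref{prop:prod *} this means $w=s_{i_1}\star\cdots\star s_{i_a}\star s_{k_1}\star\cdots\star s_{k_b}$ in $(W(M),\star)$.

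Now apply the monoid homomorphism $p_J$, viewed as an endomorphism of $(W(M),\star)$. It sends each $s_{k_t}$ to~$1$ and each $s_{i_t}$ to~$s_{i_t}$, so
\[
p_J(w)=s_{i_1}\star\cdots\star s_{i_a}.
\]
This $\star$-product equals $s_{i_1}\cdots s_{i_a}=x$, because the expression is reduced, so each successive $\star$-multiplication increases length and Proposition~\ref{prop:prod *} applies. By the same argument, $p_{J'}(w)=s_{k_1}\star\cdots\star s_{k_b}=y$. Hence $w=p_J(w)\times p_{I\setminus J}(w)$.

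This argument does not in fact need the induction. The only point requiring care is that the reduced word for $w$ can be chosen to have all its $J$-letters before all its $J'$-letters, and that is exactly what the direct product decomposition provides. One could also avoid the decomposition: write $w$ as an arbitrary $\star$-product of generators and use that $s_j$ and $s_{j'}$ commute in $(W(M),\star)$ whenever $j\in J$, $j'\in J'$ (Lemma~\ref{lem:orth factors}). This again gives $w=p_J(w)\star p_{J'}(w)$. Then disjointness of supports, together with Lemma~\ref{lem:extend supp}, upgrades $\star$ to~$\times$.
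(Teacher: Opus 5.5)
Your proof is correct, but it is organized differently from the paper's.

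The paper argues by induction on $\ell(w)$. It writes $w=s_i\times w'$ and applies the inductive hypothesis to $w'$. It then checks the two cases $i\in J$ and $i\in I\setminus J$ separately, using in the second that $s_i$ commutes with $p_J(w')$, to get $w=p_J(w)\star p_{I\setminus J}(w)$. Finally it upgrades $\star$ to $\times$ by Lemma~\ref{lem:extend supp}.

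You avoid induction altogether. Using orthogonality (Lemma~\ref{lem:orth factors} together with Lemma~\ref{lem:extend supp}), you choose a reduced word for $w$ in which every $J$-letter precedes every $(I\setminus J)$-letter. You then read off $p_J(w)$ and $p_{I\setminus J}(w)$ by applying the projections letter by letter. The only further input is that a $\star$-product along a reduced word equals the ordinary product (Proposition~\ref{prop:prod *}).

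What your route buys is transparency. It shows directly that, in this orthogonal situation, $p_J(w)$ is just the $W_J(M)$-component of $w$ under $W(M)\cong W_J(M)\times W_{I\setminus J}(M)$. So here the Hecke-monoid projection coincides, as a map of sets, with the Coxeter-group projection. Remark~\ref{rem:parab proj Cox} notes that this coincidence fails in general.

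What the paper's route buys is that it works one generator at a time. It needs only that $s_i$ commutes with $W_J(M)$ for $i\notin J$, not the global factorization of $W(M)$.

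Your alternative sketch in the last paragraph is essentially the paper's argument with the induction unrolled. Your opening sentence announcing an induction is superfluous, since none is used, and can be dropped.
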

\begin{proof}
We use induction on~$\ell(w)$, $w\in W(M)$,
the case~$\ell(w)=0$ being obvious. For the inductive step, write
$w=s_i\times w'$ with~$i\in I$, $\ell(w')=\ell(w)-1$. Then~$w=s_i\times p_J(w')\times
p_{I\setminus J}(w')$ by the induction hypothesis. If~$i\in J$ then
$s_i\times p_J(w')=p_J(s_i\star w')=p_J(s_iw')
=p_J(w)$ and~$p_{J\setminus I}(w')=p_{J\setminus I}(s_i)\star p_{J\setminus I}(w')=p_{J\setminus I}(s_i\star w')=
p_{J\setminus I}(w)$. If~$i\in I\setminus J$ then, since~$J$ and~$I\setminus J$ are orthogonal, $s_i$ commutes with~$p_J(w')$ and $p_J(w')=p_J(s_i)\star p_J(w')=p_J(s_i\star w')=p_J(w)$ while
$s_i\star p_{I\setminus J}(w')=p_{I\setminus J}(s_i \star w')=p_{I\setminus J}(w)$. Thus, we have 
$w=p_J(w)\star p_{I\setminus J}(w)=p_J(w)\times 
p_{I\setminus J}(w)$ by Lemma~\ref{lem:extend supp}.
\end{proof}
\begin{remark}\label{rem:parab proj Cox}
Note that the assignments from Lemma~\ref{lem:p J defn}
define a homomorphism of Coxeter groups if and only
if $m_{ij}$ is even for all $i\in I\setminus J$, $j\in J$.
In particular, for irreducible Coxeter matrices of finite type, the only non-trivial examples of parabolic
projections of Coxeter groups are $p_{[1,n-1]}\in\Hom_{\mathscr C}(B_n,A_{n-1})$, 
$p_{\{1,2\}}\in\Hom_{\mathscr C}(F_4, A_2)$, $p_{\{n\}} \in\Hom_{\mathscr C}(B_n, A_1)$ and $p_{\{i\}}\in \Hom_{\mathscr C}(I_2(2m), A_1)$, $m \ge 2$, $i \in \{1, 2\}$. It should also be noted that, even when
a parabolic projection of Coxeter groups is defined,
it is not the same map of sets. 
For example, $p_{[1,n-1]}\in\Hom_{\mathscr C}(B_n,A_{n-1})$ maps $s_{n-1}s_n s_{n-1}$ to~$1$, while
its Hecke counterpart maps the same element to~$s_{n-1}$.
\end{remark}
The following Lemma allows us to produce 
additional homomorphisms of Hecke monoid from already constructed ones. We call this procedure a decoration.
\begin{lemma}\label{lem:decoration}
Let~$M\in\Cox I$, $M'\in\Cox{I'}$.
Let~$\phi\in\Hom_{\mathscr H}(M',M)$ and let
$\mathbf K=\{K_i\}_{i\in I'}\subset \mathscr F(M)$ be pairwise orthogonal. Suppose that, for each~$i\in I'$, $K_i\subset \bigcap_{j\in I'\setminus\{i\}}[\phi](j)$ 
and is orthogonal to~$[\phi](i)$.
Then 
the assignments $s'_i\mapsto w_\circ^{[\phi](i)\cup K_i}$, $i\in I'$, define a homomorphism $\phi_{\mathbf K}\in \Hom_{\mathscr H}(M',M)$. Moreover,
if~$\phi$ is (locally) injective and~$[\phi](i)\not=K_j$, $i\not=j$,
then~$\phi_{\mathbf K}$ is also injective.
\end{lemma}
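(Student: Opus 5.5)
The plan is to check the defining relations through Theorem~\ref{thm:Hom Heck Mon}, or rather directly through the absorption identities of Lemma~\ref{lem:char w_0 monoid}, and then to compare $\phi_{\mathbf K}$ with $\phi$ on words. Write $J_i=[\phi](i)$. Since $K_i$ is orthogonal to $J_i$, Lemma~\ref{lem:orth factors} gives
\[
w_\circ^{J_i\cup K_i}=w_\circ^{J_i}\times w_\circ^{K_i}=w_\circ^{K_i}\star w_\circ^{J_i}.
\]
In particular $J_i\cup K_i\in\mathscr F(M)$, and $w_\circ^{K_i}$ commutes with $w_\circ^{J_i}$ in $(W(M),\star)$. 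The key identity uses $K_i\subset J_j$ for all $j\ne i$, which by Lemma~\ref{lem:char w_0 monoid} gives $w_\circ^{K_i}\star w_\circ^{J_j}=w_\circ^{J_j}=w_\circ^{J_j}\star w_\circ^{K_i}$.

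First I prove an absorption claim. For any word $s'_{i_1}\cdots s'_{i_r}$ in which at least two distinct indices occur, I claim
\[
w_\circ^{J_{i_1}\cup K_{i_1}}\star\cdots\star w_\circ^{J_{i_r}\cup K_{i_r}}=w_\circ^{J_{i_1}}\star\cdots\star w_\circ^{J_{i_r}}.
\]
Take a factor $w_\circ^{K_{i_t}}$. Commute it past $w_\circ^{J_{i_t}}$ and past the neighbouring factors $w_\circ^{J_{i_s}}$ and $w_\circ^{K_{i_s}}$ with $i_s=i_t$. Continue until it meets a factor $w_\circ^{J_j}$ with $j\ne i_t$, and absorb it there. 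It also commutes with every $w_\circ^{K_{j}}$, because the $K$'s are pairwise orthogonal. Such a factor exists on one side, since two distinct indices occur in the word.

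Consequently, for $i\ne j$ with $m'_{ij}<\infty$ (so $m'_{ij}\ge 2$), both alternating products of length $m'_{ij}$ of the decorated idempotents coincide with the corresponding undecorated ones. The undecorated products are equal because $\phi$ is a homomorphism. Hence the braid relations hold for the assignments $s'_i\mapsto w_\circ^{J_i\cup K_i}$. The relations $s'_i\star s'_i=s'_i$ hold because the images are idempotents, so $\phi_{\mathbf K}$ is well defined.

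Equivalently, the claim shows $\phi_{\mathbf K}(x)=\phi(x)$ whenever $|\supp x|\ge 2$. One needs a little care here: $x$ has a reduced word, and $\phi_{\mathbf K}(x)$ is computed on any expression, in particular a reduced one with $\supp x$ as its index set.

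For injectivity I assume $\phi$ is injective and proceed as follows. On elements with $|\supp x|\ge 2$, $\phi_{\mathbf K}$ agrees with $\phi$, which is injective. For $x=1$ the image is $1$, and this is the only element mapped to $1$, since $\phi(y)\ne 1=\phi(1)$ for $y\ne 1$. It remains to separate $\phi_{\mathbf K}(s'_i)=w_\circ^{J_i\cup K_i}$ from the other values.

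If $\phi_{\mathbf K}(s'_i)=\phi(y)$ with $|\supp y|\ge 2$, take supports and use Lemma~\ref{lem:[phi]comp}. This gives $J_i\cup K_i=[\phi](\supp y)\supseteq J_i\cup J_j$ for some $j\ne i$ in $\supp y$. Hence $J_j\subset J_i\cup K_i$. Together with $K_i\subset J_j$, and since $J_j$ is not contained in $J_i$ (for $\phi$ is injective, so $\phi(s'_j)\ne\phi(s'_i\star s'_j)$), I will deduce $J_j=K_i$ or a contradiction with orthogonality. Since $J_j=K_i$ is excluded by hypothesis, this case cannot occur.

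If $\phi_{\mathbf K}(s'_i)=\phi_{\mathbf K}(s'_j)$, then $J_i\cup K_i=J_j\cup K_j$, and the same support comparison gives $J_i=K_j$ or $J_j=K_i$, again excluded. The locally injective version is the same argument restricted to each rank-two parabolic $W_{\{i,j\}}(M')$.

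The main obstacle I expect is the support bookkeeping in this last step. One must show that $J_j\subset J_i\cup K_i$ with $K_i\subset J_j$ forces $J_j=K_i$. The idea is to write $J_j=K_i\sqcup(J_j\cap J_i)$ and argue that a nonempty $J_j\cap J_i$ leads to $\phi(s'_j)$ being absorbed or duplicated, contradicting injectivity of $\phi$. This may require $J_j$ to be connected, or a more careful use of orthogonality; if it does not go through directly, I would fall back on the criterion of Proposition~\ref{prop:locally inj}, checking that the decorated $\mu$'s equal the undecorated ones, which holds by the absorption claim for lengths $\ge 2$.
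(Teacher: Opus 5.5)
Your treatment of the homomorphism part is correct and is the paper's argument. The paper checks with Lemma~\ref{lem:char w_0 monoid} that decorated alternating products of length $m\ge 2$ coincide with the undecorated ones. Your absorption claim for arbitrary words with two distinct letters is the same mechanism, and it also justifies the paper's unproved remark that $\phi_{\mathbf K}(w)=\phi(w)$ whenever $\ell(w)\ge 2$.

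The gap is in the injectivity part, exactly at the step you flagged, and it cannot be closed. From $[\phi](j)\subset[\phi](i)\cup K_i$ and $K_i\subset[\phi](j)$ one cannot conclude $[\phi](j)=K_i$. In fact the injectivity assertion, as stated, fails when $m'_{ij}=2$. Take $M'=A_1\times A_1$ and three pairwise orthogonal vertices $a,b,c\in I$ (e.g.\ $1,3,5$ in $A_5$). Set $\phi(s'_1)=s_as_b$, $\phi(s'_2)=s_as_c$, $K_1=\{c\}$, $K_2=\{b\}$.
\begin{itemize}
\item $\phi$ is an injective homomorphism: its values $1$, $s_as_b$, $s_as_c$, $s_as_bs_c$ are distinct.
\item Every hypothesis holds, including $[\phi](1)\ne K_2$ and $[\phi](2)\ne K_1$.
\item Yet $\phi_{\mathbf K}(s'_1)=\phi_{\mathbf K}(s'_2)=w_\circ^{\{a,b,c\}}$.
\end{itemize}
Your fallback via Proposition~\ref{prop:locally inj} breaks on the same example. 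The absorption claim says nothing about products of length one, and here the decorated value of $\mu_M$ drops from $2$ to $1$. The paper's proof passes over this point with the single sentence that the assumption ``guarantees the injectivity on elements of length~1''. Its claim that $\mu_M$ is unchanged by decoration is likewise only valid for alternating products of length $\ge 2$.

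What works is to push a collision to length two rather than do support bookkeeping. Write $J_k=[\phi](k)$.
\begin{itemize}
\item Suppose $\phi_{\mathbf K}(s'_i)=\phi_{\mathbf K}(s'_j)$, or $\phi_{\mathbf K}(s'_i)=\phi(y)$ with $|\supp y|\ge 2$. Comparing supports gives some $j\ne i$ with $J_j\subset J_i\cup K_i$.
\item Since also $K_j\subset J_i$, Lemma~\ref{lem:char w_0 monoid} gives $\phi_{\mathbf K}(s'_i)\star\phi_{\mathbf K}(s'_j)=w_\circ^{J_i\cup K_i}=\phi_{\mathbf K}(s'_j)\star\phi_{\mathbf K}(s'_i)$.
\item By your absorption claim this means $\phi(s'_is'_j)=\phi(s'_js'_i)$. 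That contradicts (local) injectivity of $\phi$ when $m'_{ij}\ge 3$, including $m'_{ij}=\infty$.
\item When $m'_{ij}=2$ it instead gives $\phi_{\mathbf K}(s'_i)=\phi_{\mathbf K}(s'_is'_j)$, which is a genuine failure of injectivity.
\end{itemize}
So your argument proves the lemma once the hypothesis $[\phi](i)\ne K_j$ is replaced by $[\phi](j)\not\subset[\phi](i)\cup K_i$ for all $i\ne j$. For (locally) injective $\phi$ this condition is automatic when $m'_{ij}\ne 2$, and it is necessary when $m'_{ij}=2$. In particular it holds in the paper's use of the lemma after Lemma~\ref{lem:B2 to D4}, where $m'_{12}=4$.
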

\begin{proof}
Let~$i\not=j$. 
By assumptions and Lemma~\ref{lem:char w_0 monoid},
\begin{align*}
\brd{w_\circ^{[\phi](i)\cup K_i}\star w_\circ^{[\phi](j)\cup K_j}\star }{m}
=\brd{w_\circ^{[\phi](i)}\star w_\circ^{K_j}\star  w_\circ^{K_i}\star w_\circ^{[\phi](j)}\star }{m}
=\brd{w_\circ^{[\phi](i)}\star w_\circ^{[\phi](j)}\star}m.
\end{align*}
Thus, $\mu_M([\phi](i)\cup K_i,
[\phi](j)\cup K_j)=\mu_M([\phi](i),[\phi](j))$ and the first assertion follows. To prove the second, note 
that our assumption guarantees the injectivity on elements of length~$1$, while for any element~$w\in W(M')$ 
of length~$\ge 2$ we have $\phi(w)=\phi_{\mathbf K}(w)$.
\end{proof}

\subsection{Parabolic and homogeneous homomorphisms}\label{subs:parab hom}\label{subs:homogeneous}
Let~$M\in\Cox I$, $M'\in\Cox{I'}$. 
\begin{definition}\label{defn:parabolic hom}
We say that~$\phi\in\Hom_{\mathscr H}(M',M)$ is 
{\em parabolic} if for each $K'\in\mathscr F(M')$
and for each~$J'\subset K'$ there exists~$J\subset [\phi](K')$ such that $\phi(w_{J';K'})=
w_{J,[\phi](K')}$;
\end{definition}
Clearly, the composition of two parabolic homomorphisms is also parabolic. 

Given $M'\in\Cox{I'}$, $M\in\Cox I$ and
a map~$f:W(M')\to W(M)$,
define \plink{ell f}$\ell_f:W(M')\to \ZZ_{\ge 0}$
by~$\ell_f(w)=\ell(f(w))$, $w\in W(M')$. We say that~$f$
is {\em homogeneous} if $\ell_f(ww')=\ell_f(w)+\ell_f(w')$ for all $w,w'\in W$ such that~$\ell(ww')=\ell(w)+\ell(w')$. This
is equivalent to~$\ell_f(w)=\sum_{1\le k\le r} \ell_f(s'_{i_k})$
for any reduced expression $w=s'_{i_1}\times\cdots\times s'_{i_r}$, $i_k\in I'$. Clearly, if~$f$ is homogeneous
then so is its restriction to~$W_J(M')$ for any~$J\subset I'$.

\begin{proposition}\label{prop:homogeneous<->Coxeter}
Let~$M'\in\Cox{I'}$, $M\in\Cox I$. The following 
are equivalent for a map~$\phi:W(M')\to W(M)$
\begin{enmalph}
\item\label{prop:homogeneous<->Coxeter.a}
$\phi\in\Hom_{\mathscr{CH}}(M',M):=\Hom_{\mathscr C}(M',M)\cap \Hom_{\mathscr H}(M',M)$;
\item\label{prop:homogeneous<->Coxeter.b}
$\phi\in\Hom_{\mathscr H}(M',M)$ and is homogeneous;
\item\label{prop:homogeneous<->Coxeter.c}
$\phi\in\Hom_{\mathscr C}(M',M)$,
satisfies $\phi(s'_i)=w_\circ^{[\phi](i)}$ for all~$i\in I'$
and is homogeneous.
\end{enmalph}
\end{proposition}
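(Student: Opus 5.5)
The plan is to prove the cycle (a)$\Rightarrow$(b)$\Rightarrow$(c)$\Rightarrow$(a). The basic tool is the following observation: for a reduced expression $w=s'_{i_1}\times\cdots\times s'_{i_r}$ and any map $f$ sending $s'_i$ to $x_i$, the Hecke value is $x_{i_1}\star\cdots\star x_{i_r}$ and the group value is $x_{i_1}\cdots x_{i_r}$. Two facts then control everything. First, by Lemma~\ref{lem:monoid len}, the length of the Hecke product is at least each factor length. Second, $x\star y=xy$ holds exactly when $\ell(xy)=\ell(x)+\ell(y)$; more generally, by Proposition~\partref{prop:Bruhat order *.b}, the Hecke product $x\star y$ equals $u\times y$ for some $u\le x$, so $\ell(x\star y)\le\ell(x)+\ell(y)$ with equality iff $x\star y=xy$.

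\textbf{(a)$\Rightarrow$(b).} Let $\phi$ be a homomorphism of both structures and take a reduced expression of $w$. Then $\phi(w)=\phi(s'_{i_1})\star\cdots\star\phi(s'_{i_r})=\phi(s'_{i_1})\cdots\phi(s'_{i_r})$. I would argue by induction on $r$, with $x=\phi(s'_{i_1})\star\cdots\star\phi(s'_{i_{r-1}})$ (also equal to the group product by induction) and $y=\phi(s'_{i_r})$, so that $x\star y=xy$. By the criterion above this gives $\ell(xy)=\ell(x)+\ell(y)$, hence $\ell_\phi(w)=\sum_k\ell_\phi(s'_{i_k})$, which is homogeneity.

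\textbf{(b)$\Rightarrow$(c).} By Lemma~\ref{lem:Hecke hom w0J}, $\phi(s'_i)=w_\circ^{[\phi](i)}$. It remains to show $\phi$ respects the Coxeter relations. By induction on $r$ as above, homogeneity forces every partial Hecke product along a reduced word to be length-additive. Hence $\phi(w)=\prod_k\phi(s'_{i_k})$ as a group product for every reduced word, and so $\phi$ agrees with the group-theoretic assignment $s'_i\mapsto w_\circ^{[\phi](i)}$ on reduced words. Since each $w_\circ^{[\phi](i)}$ is an involution, $s'_i\mapsto w_\circ^{[\phi](i)}$ respects $s_i'^2=1$. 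For the braid relations with $m'_{ij}<\infty$, the two alternating reduced words of $w_\circ^{\{i,j\}}$ map under $\phi$ to the same Hecke element, and both Hecke products equal their group products by the previous point. Thus the alternating products of $w_\circ^{[\phi](i)}$ and $w_\circ^{[\phi](j)}$ of length $m'_{ij}$ coincide in $W(M)$, which together with the involution property gives $(\phi(s'_i)\phi(s'_j))^{m'_{ij}}=1$. So the assignment extends to a group homomorphism $\psi$. Moreover $\psi$ agrees with $\phi$ on all elements, because every element has a reduced word and both maps evaluate it as the same (length-additive) product. Hence $\phi\in\Hom_{\mathscr C}$ and (c) holds.

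\textbf{(c)$\Rightarrow$(a).} Here $\phi$ is a group homomorphism with $\phi(s'_i)=w_\circ^{[\phi](i)}$ and is homogeneous, and we must show it is a Hecke homomorphism. By Proposition~\ref{prop:prod *} and the remark after it, it suffices to check $\phi(s'_i\star w)=\phi(s'_i)\star\phi(w)$.
\begin{itemize}
\item If $\ell(s'_iw)>\ell(w)$, then $\phi(s'_iw)=\phi(s'_i)\phi(w)$, and homogeneity gives length additivity, so this equals $\phi(s'_i)\star\phi(w)$.
\item If $\ell(s'_iw)<\ell(w)$, write $w=s'_i\times u$. Then $\phi(w)=\phi(s'_i)\times\phi(u)$ by the previous case, so $\phi(s'_i)\star\phi(w)=(\phi(s'_i)\star\phi(s'_i))\star\phi(u)=\phi(s'_i)\star\phi(u)=\phi(w)$, using that $w_\circ^{[\phi](i)}$ is idempotent.
\end{itemize}
This shows left multiplication by generators is respected, and since $\star$ on $W(M')$ is generated by such multiplications, $\phi$ is a monoid homomorphism.

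The main obstacle I expect is in (b)$\Rightarrow$(c): checking carefully that homogeneity forces every partial product along a reduced word to be length-additive, rather than only the total length. The argument is the chain $\ell(x\star y)\le\ell(x)+\ell(y)$ combined with equality of the totals, which forces equality at each step.
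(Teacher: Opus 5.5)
Your proof is correct and follows the paper's route. Your (a)$\Rightarrow$(b) rests, as the paper's does, on the fact that $x\star y=xy$ exactly when $\ell(xy)=\ell(x)+\ell(y)$. Your (b)$\Rightarrow$(c) is the paper's (b)$\Rightarrow$(a) argument: the images of the two alternating reduced words of $w_\circ^{\{i,j\}}$ give the braid relations, and the resulting Coxeter homomorphism agrees with $\phi$ along reduced words. You also spell out the length-chain justification that the paper compresses into ``it follows that'', and you supply the (c)$\Rightarrow$(a) direction, which the paper omits as ``similar''; your check $\phi(s'_i\star w)=\phi(s'_i)\star\phi(w)$, using idempotency of $w_\circ^{[\phi](i)}$, is valid.
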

\begin{proof}
Suppose that~$\phi\in\Hom_{\mathscr CH}(M',M)$ and 
let~$w,w'\in W(M')$ be such that that~$\ell(ww')=\ell(w)+\ell(w')$ and so
$w\star w'=ww'$. 
Since~$\phi\in\Hom_{\mathscr{CH}}(M',M)$, it follows 
$\phi(w\times w')=\phi(w)\star\phi(w')=\phi(w)\phi(w')$
which implies that~$\ell_\phi(w)+\ell_\phi(w')=
\ell(\phi(w))+\ell(\phi(w'))=\ell(\phi(w)\phi(w'))=
\ell(\phi(ww'))=
\ell_\phi(ww')$. Thus, $\phi$ is homogeneous
and so~\ref{prop:homogeneous<->Coxeter.a} implies
both~\ref{prop:homogeneous<->Coxeter.b} and~\ref{prop:homogeneous<->Coxeter.c}

We now prove that~\ref{prop:homogeneous<->Coxeter.b}
implies~\ref{prop:homogeneous<->Coxeter.a}. 
Suppose that~$\phi\in\Hom_{\mathscr H}(M',M)$ is homogeneous.
Then for all $k\not=l\in I'$ with~$m'_{kl}<\infty$
\begin{equation}
\ell_\phi(\brd{s'_k\times s'_l\times}{m'_{kl}})=
\Bfloor{\tfrac12 m'_{kl}}(\ell_\phi(s'_k)+\ell_\phi(s'_l))+\overline{m'_{kl}}\,\ell_\phi(s'_k).\label{eq:local homogen}
\end{equation}
It follows that
$
\phi(\brd{s'_k\times s'_l\times }{m'_{kl}})=
\brd{w_\circ^{[\phi](k)}\times w_\circ^{[\phi](l)}\times}{m'_{kl}}
$
and so
$
\brd{\phi(s'_k)\phi(s'_l)}{m'_{kl}}=\brd{\phi(s'_l)\phi(s'_k)}{m'_{kl}}
$
for all~$k\not=l\in I'$ with~$m'_{kl}<\infty$. 
Since the $\phi(s'_i)=w_\circ^{[\phi](i)}$, $i\in I'$ are involutions in~$W(M)$, 
the assignments~$s'_i\mapsto \phi(s'_i)$,
$i\in I'$ define~$f\in\Hom_{\mathscr C}(M',M)$.
We now use induction on~$\ell(w)$ to prove that~$ f(w)=\phi(w)$
for all~$w\in W(M')$, the case~$\ell(w)\le 1$ being trivial. For the inductive step, write
$w=s'_i\times w'$, $\ell(w')<\ell(w)$.
By the induction hypothesis, $\phi(w')=f(w')$.
Since~$\phi$ is homogeneous, $\ell_\phi(w)=\ell_\phi(s'_i)+\ell_\phi(w')$ and so
$\phi(w)=\phi(s'_i)\times \phi(w')=
f(s_i)f(w')=f(s_iw')=f(w)$.
The proof of the implication \ref{prop:homogeneous<->Coxeter.c}$\implies$\ref{prop:homogeneous<->Coxeter.a} is similar and is omitted.
\end{proof}

\begin{example}\label{ex:homg not homg}
The assignments $s'_1\mapsto s_1$, $s'_2\mapsto s_2s_3s_2$ define a homomorphism~$f$
of Coxeter groups~$W(A_2)\to W(A_3)$ which is not homogeneous since
$\ell_f(s'_1s'_2s'_1)=\ell_f(s'_2s'_1s'_2)=5\not=7=\ell_f(s'_2)+\ell_f(s'_1s'_2)$ even though~$\ell(s'_2s'_1s'_2)=1+\ell(s'_1s'_2)$. Note that it is not a homomorphism of Hecke monoids since 
$s_1\star s_2s_3s_2\star s_1=s_1s_2s_3s_2s_1$ while
$s_2s_3s_2\star s_1\star s_2s_3s_2=w_\circ^{[1,3]}$.
The assignments $s'_1\mapsto w_\circ^{\{1,2\}}$,
$s'_2\mapsto w_\circ^{\{2,3,4\}}$,
$s'_3\mapsto w_\circ^{\{1,2,3\}}$ define 
a homomorphism $\phi:(W(A_3),\star)\to 
(W(A_4),\star)$. It
is not homogeneous since~$\ell_\phi(s'_1s'_3)=
\ell(w_\circ^{\{1,2,3\}})=6<\ell_\phi(s'_1)+
\ell_\phi(s'_2)=9$ and is not a homomorphism of
Coxeter groups since $\phi(s'_1)\phi(s'_3)=w_\circ^{\{1,2\}}w_\circ^{\{1,2,3\}}=w_\circ^{\{1,2,3\}}w_\circ^{\{2,3\}}\not=\phi(s'_3)\phi(s'_1)$.
\end{example}

\begin{proposition}\label{prop:CH-prop}
Let~$M\in\Cox I$, $M'\in\Cox{I'}$ and let~$\phi\in\Hom_{\mathscr{CH}}(M',M)$.
Then
\begin{enmalph}
\item \label{prop:CH-prop.a}
$\phi$ is disjoint;
\item \label{prop:CH-prop.a'}
if~$m'_{ij}$, $i\not=j\in I'$ is odd then~$\ell_\phi(s'_i)=
\ell_\phi(s'_j)$; 
\item\label{prop:CH-prop.a''} $\phi(w_{J;K})=w_{[\phi](J);[\phi](K)}$ for 
any~$J\subset K\in \mathscr F(M')$. In particular, $\phi$ is parabolic.
\item\label{prop:CH-prop.b}
for all~$i\in I'$, $w\in W(M')$
\begin{equation}
[\phi](i)\subset \begin{cases}
D_L(\phi(w)),& i\in D_L(w),\\
I\setminus D_L(\phi(w)),& i\in I'\setminus D_L(w).
\end{cases}
\label{eq:adm part property}
\end{equation}
In particular, if~$[\phi](i)\cap D_L(\phi(w))\not=\emptyset$
then~$[\phi](i)\subset D_L(\phi(w))$;
\item \label{prop:CH-prop.c}
$\phi$ is injective if and only
if~$\phi(s'_i)\not=1$ for all~$i\in I'$.
\end{enmalph}
\end{proposition}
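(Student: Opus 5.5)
The proof rests on the key fact from Proposition~\ref{prop:homogeneous<->Coxeter}: $\phi$ is simultaneously a group homomorphism, a monoid homomorphism with $\phi(s'_i)=w_\circ^{[\phi](i)}$, and homogeneous. All five parts will come from comparing $\phi(w)$ in $W(M)$ with $\phi(w)$ in $(W(M),\star)$, together with additivity of $\ell_\phi$ along reduced expressions.

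\emph{Part~\ref{prop:CH-prop.b} first.} Suppose $i\in D_L(w)$. Then $w=s'_i\times w'$, so $\phi(w)=w_\circ^{[\phi](i)}\star\phi(w')$. Lemma~\ref{lem:char w_0 monoid} gives $s_k\star\phi(w)=\phi(w)$ for $k\in[\phi](i)$, and Proposition~\ref{prop:prod *} then yields $k\in D_L(\phi(w))$. Now suppose $i\notin D_L(w)$. Then $s'_iw=s'_i\times w$, and homogeneity gives
\[
\ell(\phi(s'_iw))=\ell(w_\circ^{[\phi](i)})+\ell(\phi(w)).
\]
Since $\phi$ is a group homomorphism, $\phi(s'_iw)=w_\circ^{[\phi](i)}\phi(w)$. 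Hence $w_\circ^{[\phi](i)}\vdash\phi(w)$, and so no $k\in[\phi](i)$ lies in $D_L(\phi(w))$: otherwise $\phi(w)=s_k\times v$ and the product would not be length additive.

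\emph{Part~\ref{prop:CH-prop.a}.} Take $i\ne j$ and suppose $k\in[\phi](i)\cap[\phi](j)$. Apply~\eqref{eq:adm part property} with $w=s'_i$. Since $j\notin D_L(s'_i)$, we get $[\phi](j)\cap D_L(\phi(s'_i))=\emptyset$. But $D_L(w_\circ^{[\phi](i)})=[\phi](i)\ni k$, a contradiction.

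\emph{Part~\ref{prop:CH-prop.a'}.} If $m'_{ij}$ is odd, then $s'_j=xs'_ix^{-1}$ with $x=\brd{s'_js'_i}{(m'_{ij}-1)}$, and $s'_j x = x s'_i$ are both reduced. Homogeneity applied to both reduced words gives $\ell_\phi(s'_j)+\ell_\phi(x)=\ell_\phi(x)+\ell_\phi(s'_i)$. (Alternatively, compare the two sides of~\eqref{eq:local homogen}.)

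\emph{Part~\ref{prop:CH-prop.a''}.} This needs $\phi(w_\circ^{J})=w_\circ^{[\phi](J)}$, which is Lemma~\ref{lem:Hecke hom w0J}. Since $\phi$ is a group homomorphism, $\phi(w_\circ^Jw_\circ^K)=\phi(w_\circ^J)\phi(w_\circ^K)=w_\circ^{[\phi](J)}w_\circ^{[\phi](K)}$, which is $w_{[\phi](J);[\phi](K)}$ because $[\phi](J)\subset[\phi](K)$. Parabolicity is immediate from this.

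\emph{Part~\ref{prop:CH-prop.c}.} This is the main step. The forward direction is trivial: if $\phi(s'_i)=1=\phi(1)$, then $\phi$ is not injective. For the converse, assume every $[\phi](i)\ne\emptyset$ and suppose $\phi(w)=\phi(v)$ with $w\ne v$. I argue by induction on $\ell(w)+\ell(v)$.

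If $w=1$, then $\ell(\phi(v))=\sum_k\ell_\phi(s'_{i_k})>0$ by homogeneity, so $\phi(v)\neq 1=\phi(w)$. Hence $w\ne1$; pick $i\in D_L(w)$. By part~\ref{prop:CH-prop.b}, $[\phi](i)\subset D_L(\phi(w))=D_L(\phi(v))$. Since $[\phi](i)$ is nonempty, the contrapositive of part~\ref{prop:CH-prop.b} for $v$ forces $i\in D_L(v)$. Write $w=s'_i\times w'$ and $v=s'_i\times v'$. Then $\phi(w')=w_\circ^{[\phi](i)}\phi(w)=\phi(v')$ in the group, while $w'\ne v'$, which contradicts the induction hypothesis. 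The delicate point throughout is using disjointness and nonemptiness to transfer descents back from $\phi(w)$ to $w$; this is exactly what part~\ref{prop:CH-prop.b} supplies.
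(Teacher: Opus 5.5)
Your proof is correct and uses the same two ingredients as the paper: homogeneity from Proposition~\ref{prop:homogeneous<->Coxeter}, and the fact that $\phi$ is a group homomorphism. It is organized differently in two places.

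\emph{Disjointness.} You prove the descent property~(d) first and then get disjointness~(a) by specializing to $w=s'_i$ and using $D_L(w_\circ^{[\phi](i)})=[\phi](i)$. The paper proves~(a) directly by a length-deficit computation, factoring $w_\circ^{K}$ with $K=[\phi](i)\cap[\phi](j)$ out of $\phi(s'_i)$ and $\phi(s'_j)$. It then repeats essentially the same computation for the second case of~(d), so your ordering avoids that duplication.

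\emph{Injectivity.} For~(e) you run an induction on $\ell(w)+\ell(v)$, transferring a left descent from $w$ to $v$ via~(d). This works, but the inductive step is unnecessary. Since $\phi$ is a group homomorphism, it suffices to show $\ker\phi$ is trivial. Your base case already does that: homogeneity gives $\ell(\phi(v))=\sum_k\ell_\phi(s'_{i_k})>0$ for $v\ne 1$, and this is the paper's whole argument. Disjointness also plays no role in~(e); only $[\phi](i)\neq\emptyset$ is used.

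\emph{A slip in~(b).} For $x=\brd{s'_js'_i}{m'_{ij}-1}=(s'_js'_i)^{(m'_{ij}-1)/2}$, the words $s'_jx$ and $xs'_i$ are not reduced, since they contain $s'_js'_j$ and $s'_is'_i$ respectively. For this $x$ the correct identity is $s'_ix=xs'_j$. Alternatively, take $x=\brd{s'_is'_j}{m'_{ij}-1}$ to get $s'_jx=xs'_i$, with both sides reduced of length $m'_{ij}$. The conclusion is symmetric in $i,j$, so your argument goes through after this fix. Your fallback via~\eqref{eq:local homogen} is exactly the paper's proof.
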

\begin{proof}
Suppose that~$K=[\phi](i)\cap[\phi](j)\not=\emptyset$
for some~$i\not=j\in I'$. Write~$w_\circ^{[\phi](i)}=
u\times w_\circ^K$,
$w_\circ^{[\phi](j)}=w_\circ^K\times v$ where~$u=w_{K;[\phi](i)}{}^{-1}$ and~$v=w_{K;[\phi](j)}$. Then~$\phi(s'_i s'_j)=uv$ and so~$\ell_\phi(s'_is'_j)
\le \ell(u)+\ell(v)= \ell_\phi(s'_i)+\ell_\phi(s'_j)-2\ell(w_\circ^K)<
\ell_\phi(s'_i)+\ell_\phi(s'_j)$, which
is a contradiction since~$\phi$ is homogeneous 
by Proposition~\ref{prop:homogeneous<->Coxeter}. This 
proves part~\ref{prop:CH-prop.a}.
Part~\ref{prop:CH-prop.a'} is immediate since, for~$i\not=j$ with~$m'_{ij}$ odd,
\begin{align*}
\tfrac12(m'_{ij}-1)(\ell_\phi(s'_i)+\ell_\phi(s'_j))
+\ell_\phi(s'_i)
&=\ell_\phi(\brd{s'_is'_j}{m'_{ij}})\\
&=
\ell_\phi(\brd{s'_js'_i}{m'_{ij}})=
\tfrac12(m'_{ij}-1)(\ell_\phi(s'_i)+\ell_\phi(s'_j))+
\ell_\phi(s'_j).
\end{align*}

By Lemma~\ref{lem:Hecke hom w0J}, $\phi(w_\circ^J)=w_\circ^{[\phi](J)}$ for any~$J\in\mathscr F(M')$.
Then for any~$J\subset K\in\mathscr F(M')$,
$\phi(w_{J;K})
=\phi(w_\circ^J w_\circ^K)=
\phi(w_\circ^J)\phi(w_\circ^K)=
w_\circ^{[\phi](J)} w_\circ^{[\phi](K)}
=w_{[\phi](J);[\phi](K)}$. This proves part~\ref{prop:CH-prop.a''}.

To prove part~\ref{prop:CH-prop.b}, let~$w\in W(M')$
and~$i\in I'$. Suppose first that~$i\in D_L(w)$, that is, $w=s'_i\times w'$, $w'\in W(M')$. Then $\phi(w)=w_\circ^{[\phi](i)}\times \phi(w')$
since~$\phi$ is homogeneous, whence~$s_j\star \phi(w)=\phi(w)$ for all~$j\in[\phi](i)$. Therefore, $[\phi](i)\subset D_L(\phi(w))$ by Lemma~\ref{lem:left desc}. Now, suppose
that~$i\in I'\setminus D_L(w)$, that is, $\ell(s'_i w)=\ell(w)+1$, yet~$[\phi](i)\cap D_L(\phi(w))=J\not=\emptyset$.
Since~$\phi$ is homogeneous, $\ell_\phi(s'_i w)=\ell_\phi(s'_i)+\ell_\phi(w)$, that is,
$\phi(s'_i\times w)=w_\circ^{[\phi](i)}\times \phi(w)$.
Since~$J\subset D_L(\phi(w))$, 
$\phi(w)=w_\circ^J\times \tilde u$, $\tilde u\in W(M)$ by
Lemma~\ref{lem:left desc}. Write~$w_\circ^{[\phi](i)}=\tilde v\times w_\circ^J$ where~$\tilde v=w_{J;[\phi](i)}{}^{-1}$. Then~$\phi(s'_i \times w)
=\tilde v\tilde u$ and so~$\ell_\phi(s'_i\times w)\le 
\ell(\tilde v)+\ell(\tilde u)=
\ell_\phi(s'_i)+\ell_\phi(w)-2\ell(w_\circ^J)<
\ell_\phi(s'_i)+\ell_\phi(w)$, which is a contradiction.

Clearly, if $\phi$ is injective then~$\phi(s'_i)\not=1$
for all~$i\in I'$. Conversely, 
since~$\phi$ is a homomorphism of Hecke monoids {\em and} Coxeter groups, it suffices to prove that~$\ker\phi$ is trivial. Suppose that~$w\in\ker\phi$ with~$\ell(w)>0$ and
write~$w=s'_i\times w'$, $i\in I'$ for some~$w'\in W(M')$, $\ell(w')=\ell(w)-1$. By Proposition~\ref{prop:homogeneous<->Coxeter},
$\phi$ is homogeneous and so
$\phi(w)=\phi(s'_i)\times \phi(w')=w_\circ^{[\phi](i')}\times\phi(w')=1$, which is clearly impossible
since~$\ell(\phi(w))=\ell_\phi(w)=
\ell_\phi(s'_i)+\ell_\phi(w')\ge \ell_\phi(s'_i)=\ell(w_\circ^{[\phi](i')})>0$.
\end{proof}

\begin{theorem}\label{thm:artin parab}\label{thm:adm finite class}
Let~$M'\in\Cox{I'}$, $M\in\Cox I$ be irreducible and 
of finite type. The following
$\phi\in\Hom_{\mathscr H}(M',M)$
are homogeneous: 
\begin{enmalph}
\item\label{thm:adm finite class.unfold}
For~$M'=B_n$, $n\ge 2$, 
\begin{alignat}{4}
&M=A_{2n-1}:\label{eq:unfold Bn A2n-1}
&\qquad&\phi(s'_i)=s_i s_{2n-i},&\quad &i\in [1,n-1],&\quad& 
\phi(s'_n)=s_n,
\\
&M=A_{2n}:\label{eq:unfold Bn A2n}
&& \phi(s'_i)=s_i s_{2n+1-i},&& i\in [1,n-1],&& \phi( s'_n)=s_n s_{n+1}s_n,
\\
&M=D_{n+1}:\label{eq:unfold Bn Dn+1}
&& \phi(s'_i)=s_i,&& i\in [1,n-1],&& \phi(s'_n)=s_n s_{n+1};
\end{alignat}
\item\label{thm:adm finite class.unfold F4}
For~$M'=F_4$, $M=E_6$ 
\begin{align}
\label{eq:unfold F4 E6}
\phi(s'_1)=s_1 s_5,\quad \phi(s'_2)
=s_2 s_4,\quad \phi(s'_3)=s_3,\quad 
\phi(s'_4)=s_6
\end{align}
\item \label{thm:adm finite class.odd}
For~$M'=I_2(2m+1)$, $m>0$, $M=A_{2m}$,
$$\phi(s'_i)=
w_\circ^{[1,2m+1-i]_2}=\prod_{j\in [1,2m+1-i]_2} s_j, \qquad i\in\{1,2\};
$$
\item\label{thm:adm finite class.even}
For $M'=I_2(2m)$, $m>1$,
any~$M$ with~$h(M)=2m$ and any
partition $I=I_1\sqcup I_2$ of~$I$ into
non-empty self-orthogonal subsets 
$$
\phi(s'_j)=w_\circ^{I_j}=
\prod_{i\in I_j} s_i, \qquad j\in\{1,2\};
$$
\item\label{thm:adm finite class.I8 F4} For~$M'=I_2(8)$, $M=F_4$,
$\phi(s'_1)=s_1 s_4,\quad \phi(s'_2)=s_2 s_3 s_2$;

\item\label{thm:adm finite class.H3}
For~$M'=H_3$, $M=D_6$, 
\begin{equation}\label{eq:unfold H3D6}
\phi(s'_1)=s_1s_5,\quad \phi(s'_2)=s_2s_4,\quad \phi(s'_3)=s_3s_6;
\end{equation}

\item\label{thm:adm finite class.H4}
For~$M'=H_4$, $M=E_8$,
\begin{equation}\label{eq:unfold H4E8}
\phi(s'_1)=s_1s_7,\quad \phi(s'_2)=s_2s_6,\quad 
\phi(s'_3)=s_3s_5,\quad \phi(s'_4)=
s_4s_8.
\end{equation}
\end{enmalph}
In particular, all these homomorphisms are 
parabolic and injective.
The homomorphisms from parts~\ref{thm:adm finite class.unfold}, \ref{thm:adm finite class.unfold F4} 
and~\ref{thm:adm finite class.I8 F4} are isomorphisms onto submonoids of $(W(M),\star)$ fixed by respective diagram automorphisms.
Moreover, every homogeneous homomorphism between finite Hecke monoids which does not map
any generators to 1
is a composition of the above ones and possibly natural inclusions.  
\end{theorem}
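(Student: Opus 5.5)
The plan is to deduce everything from Proposition~\ref{prop:homogeneous<->Coxeter} and Proposition~\ref{prop:CH-prop}. For each listed assignment, each $\phi(s'_i)$ is of the form $w_\circ^{K_i}$ with the $K_i$ pairwise disjoint, and in every case except~\eqref{eq:unfold Bn A2n} each $K_i$ is self-orthogonal. So, by Proposition~\partref{prop:homogeneous<->Coxeter.c}, it suffices to show two things. First, that $s'_i\mapsto w_\circ^{K_i}$ extends to a homomorphism of Coxeter groups. Second, that it is homogeneous. Once this is done, parabolicity and injectivity follow at once from Proposition~\partref{prop:CH-prop.a''} and Proposition~\partref{prop:CH-prop.c}, since no generator is sent to~$1$.

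To organize both checks I will first prove a reduction lemma. Suppose the $K_i$ are pairwise disjoint and, for every $i\ne j$ with $m'_{ij}<\infty$,
\begin{equation*}
\ell(w_\circ^{K_i\cup K_j})=\Bfloor{\tfrac12 m'_{ij}}\bigl(\ell(w_\circ^{K_i})+\ell(w_\circ^{K_j})\bigr)+\overline{m'_{ij}}\,\ell(w_\circ^{K_i}),
\end{equation*}
together with $\brd{w_\circ^{K_i}w_\circ^{K_j}}{m'_{ij}}=w_\circ^{K_i\cup K_j}$. Then $\phi$ is a Coxeter homomorphism and is homogeneous.

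The braid relations are immediate from these hypotheses. For homogeneity I would prove the descent property~\eqref{eq:adm part property} by induction on $\ell(w)$, in the style of Steinberg's and Lusztig's arguments for admissible partitions. Suppose $i\notin D_L(w)$ but $[\phi](i)$ meets $D_L(\phi(w))$. Pick $j\in D_L(w)$ and pass to the rank-two parabolic $\{i,j\}$ via the $W_{\{i,j\}}$-coset decomposition of $w$. Then the rank-two length identity above, combined with Lemma~\ref{lem:left desc}, yields a contradiction. Given~\eqref{eq:adm part property}, additivity of $\ell_\phi$ along reduced words follows letter by letter.

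Verifying the rank-two hypotheses is then a finite check.
\begin{itemize}
\item[(a), (b), (e):] These are the fixed-point subgroups of the diagram automorphisms~\eqref{eq:diag aut} (and of the $F_4$ graph automorphism in~(e)). Here the rank-two conditions are the classical folding computations, for example $\{n-1,n\}$ in $A_{2n-1}$ giving $B_2$. The same folding description (Steinberg) identifies the image with the $\sigma$-fixed submonoid, because a $\sigma$-fixed element has a $\sigma$-stable descent set and can be peeled off by the $w_\circ^{K_i}$.
\item[(c), (d):] Here Proposition~\partref{prop:Coxeter splitting.c} gives $w_\circ^I=\brd{w_\circ^{I_1}w_\circ^{I_2}}{h}$. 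The length condition is the count $\ell(w_\circ^I)=\tfrac12 h|I|=\tfrac12 h(|I_1|+|I_2|)$.
\item[(f), (g):] These are checked the same way on each rank-two pair, using Coxeter numbers of the relevant rank-two (or reducible) parabolics. I will recheck the labelling of $D_6$ and $E_8$ carefully here, since the pairs must produce $m'=5$.
\end{itemize}

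The last assertion, that every homogeneous homomorphism between finite Hecke monoids with no generator sent to~$1$ is a composite of the listed ones and inclusions, is the main obstacle. The plan has four steps.
\begin{enumerate}
\item By Proposition~\ref{prop:CH-prop}, $\phi$ is disjoint, so $\{[\phi](i)\}$ partitions $[\phi](I')$. Composing with a natural inclusion, we may assume $\phi$ is fully supported.
\item For each edge $m'_{ij}>2$, homogeneity forces $w_\circ^{K_i\cup K_j}$ to be a reduced alternating product of $w_\circ^{K_i}$ and $w_\circ^{K_j}$ of length $m'_{ij}$. Comparing $\ell(w_\circ^{K_i\cup K_j})$ with the count $\tfrac12 m'_{ij}(\ell_i+\ell_j)$ should force each connected component of $K_i\cup K_j$ to be either bipartitioned by $K_i$, $K_j$ into self-orthogonal halves with Coxeter number $m'_{ij}$, or one of a few small exceptions such as the $A_2$-block $s_ns_{n+1}s_n$.
\item From these local constraints, Proposition~\partref{prop:CH-prop.a'} (equal lengths across odd edges), and a case analysis over the finite-type list~\eqref{eq:Coxeter graphs}, I would recover exactly the admissible partitions, i.e.\ the foldings above.
\item For composite cases, such as $I_2(2m)$ into $B_m$ and then into $A_{2m-1}$, exhibit the factorization explicitly.
\end{enumerate}
I expect this exhaustive case analysis, especially ruling out non-self-orthogonal blocks $K_i$ beyond the $B_n\to A_{2n}$ case, to require the most work.
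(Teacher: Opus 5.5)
Your verification of the listed maps is sound, but it follows a different route from the paper's. The paper checks nothing directly. It uses Proposition~\partref{prop:CH-prop.b} to see that a homogeneous $\phi$ gives an admissible partition in M\"uhlherr's sense, and M\"uhlherr's Proposition~3.5 for the converse. It then reads the list off the known classification of LCM homomorphisms (Crisp, Godelle, Castella, M\"uhlherr).

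Your local-to-global lemma replaces the citation of M\"uhlherr by an actual proof that the rank-two conditions imply \eqref{eq:adm part property}, and it works. The point your sketch should make explicit is the following. Suppose $i\notin D_L(w)$, $j\in D_L(w)$, and $w=u\times v$ with $u\in W_{\{i,j\}}(M')$ and $v$ minimal in $W_{\{i,j\}}(M')v$. By induction $\phi(v)$ has no left descents in $K=K_i\cup K_j$. Meanwhile $\phi(u)=\brd{w_\circ^{K_j}w_\circ^{K_i}}{r}$ with $r<m'_{ij}$ is a left factor of $w_{K_i;K}=\brd{w_\circ^{K_j}w_\circ^{K_i}}{m'_{ij}-1}$, whose left descent set is $K_j$ by Lemma~\ref{lem:wK absorption}. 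Hence $D_L(\phi(w))\cap K\subset K_j$. Your route gives a self-contained check; the paper's gives brevity.

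One correction concerns part (e). As printed it is not a Hecke monoid homomorphism, because $s_2s_3s_2$ is not an idempotent (Corollary~\partref{cor:max elts.b}). The $F_4$ folding gives $\phi(s'_2)=w_\circ^{\{2,3\}}=s_2s_3s_2s_3$, so $K_2$ is not self-orthogonal, contrary to your remark. This is harmless, since Proposition~\partref{prop:homogeneous<->Coxeter.c} does not need self-orthogonality. The length count is then $4(2+4)=24=\ell(w_\circ)$ in $W(F_4)$.

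The genuine gap is the final exhaustiveness assertion, for which you give only a plan. Step~2 is asserted rather than proved, and the local picture it predicts is wrong. Connected non-bipartite blocks are not a few isolated exceptions; they occur in whole families coming from composites:
\begin{enumerate}
\item $I_2(2n)\to B_n\to A_{2n}$ splits $A_{2n}$ into a self-orthogonal part and a part containing the $A_2$-block $\{n,n+1\}$, with $m'=2n\ne h(A_{2n})$.
\item $I_2(8)\to F_4\to E_6$ gives the partition $\{1,5,6\}\sqcup\{2,3,4\}$, with $4(3+6)=36=\ell(w_\circ)$ in $W(E_6)$.
\end{enumerate}
So classifying the admissible rank-two configurations is already a large part of the whole classification. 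Step~3, gluing these configurations along the graph of $M'$, is not carried out at all. Together these steps amount to the classification of LCM homomorphisms, equivalently admissible partitions, between finite Coxeter systems.

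The paper closes this gap by citation. Proposition~\partref{prop:CH-prop.b} makes $\{[\phi](i)\}$ admissible in the sense of M\"uhlherr's Definition~3.1, and the resulting list, up to diagram automorphisms, compositions and inclusions, is the one obtained by Crisp, Godelle, Castella and M\"uhlherr. You must either invoke that classification or actually carry it out.
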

\begin{proof}
We need the following
\begin{lemma}\label{lem:diagonal}
Let $M\in\Cox I$, $M'\in\Cox{I'}$ and let $J_1,\dots,J_k\subset I$ be pairwise orthogonal. Then for any collection of parabolic (respectively, Coxeter type)
homomorphisms $\phi_t:(W(M'),\star)\to (W_{J_t}(M),\star) $, the map
$\phi:W(M')\to W(M)$,
$w\mapsto \phi_1(w)\times\cdots\times\phi_k(w)$, $w\in W(M')$, is a parabolic (of
Coxeter type) homomorphism
$(W(M'),\star)\to (W(M),\star)$.
\end{lemma}
\begin{proof}
Since images of the~$\phi_t$, $1\le t\le k$ commute in~$(W(M),\star)$, it follows that $\phi(ww')=\prod^\times_{1\le t\le k}\phi_t(ww')
=\prod^\times_{1\le t\le k}\phi_t(w)\prod^\times_{1\le k\le t}\phi_t(w')=
\phi(w)\phi(w')$ and so~$\phi$ is indeed a homomorphism of Hecke monoids and
also of Coxeter groups provided that each~$\phi_t$, $1\le t\le k$ was of Coxeter type. Finally, if each $\phi_t$ is parabolic,
we have $\phi_t(w_{K;L})=w_{K'_t;[\phi_t](L)}$
where~$K'_t\subset [\phi_t](L)\subset J_t$.
It remains to apply Lemma~\ref{lem:prod orth parab}.
\end{proof}
In view of Lemma~\ref{lem:diagonal}, it 
suffices to consider irreducible~$M'$ and~$M$. We can also assume, without loss of generality, that~$\phi$ is fully supported. By Proposition~\partref{prop:CH-prop.b},
$\phi\in\Hom_{\mathscr H}(M',M)$ gives rise to
a partition~$\{[\phi](i)\,:\,i\in I'\}$ of~$I$
which satisfies~$[\phi](i)\cap D_L(\phi(w))\not=\emptyset\implies [\phi](i)\subset 
D_L(\phi(w))$ for all~$i\in I'$, $w\in W(M')$
and hence
is admissible in the sense of~\cite{Mue}*{Definition~3.1}. Conversely, by~\cite{Mue}*{Proposition~3.5}, such a partition gives rise
to a homomorphism of Coxeter
groups $W(M')\to W(M)$ 
satisfying the assumptions of Proposition~\partref{prop:homogeneous<->Coxeter.c} which,
therefore, is a homogeneous homomorphism 
of corresponding Hecke monoids. Thus, the classification
of homogeneous homomorphisms between finite Hecke monoids coincides with that obtained in~\cites{Cri,God,Cas,Mue} where they are called LCM homomorphisms; in particular,
the list of such homomorphisms, up to
compositions with diagram automorphisms,
coincides with the one provided in the Theorem. 
\end{proof}
\begin{remark}
Curiously, homomorphisms~\eqref{eq:unfold H3D6} and~\eqref{eq:unfold H4E8} were studied in~\cite{BBO'C} in the framework of continuous crystals.
\end{remark}

\subsection{Geometric perspective}\label{subs:geom}
Given any group~$G$ and its subgroup~$H$, we denote by $C_G(H)$ (respectively, $N_G(H)$) the
centralizer (respectively, the normalizer) of~$H$ in~$G$. Clearly, $C_G(H)\le N_G(H)$, $H$ in normal in~$N_G(H)$ and for 
any group homomorphism~$f:G\to G'$,
\begin{equation}\label{eq:norm hom}
f(N_G(H))<N_{G'}(f(H)).
\end{equation}
The following
is standard.
\begin{lemma}\label{lem:norm centr}
Let~$G$ be a group and let~$H< G$. Then~$C_G(H)$
is normal in~$N_G(H)$ and hence
$N_G(H)\le N_G(C_G(H))$. Moreover, the equality
holds if and only if~$H=C_G(C_G(H))$.
\end{lemma}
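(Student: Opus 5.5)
The plan has three steps: show that $C_G(H)$ is normal in $N_G(H)$, deduce the inclusion $N_G(H)\le N_G(C_G(H))$, and then prove the equivalence for when equality holds.

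\emph{Normality.} Take $g\in N_G(H)$, $c\in C_G(H)$ and $h\in H$. Since $g^{-1}hg\in H$, the element $c$ commutes with it, so
$$(gcg^{-1})h(gcg^{-1})^{-1}=gc(g^{-1}hg)c^{-1}g^{-1}=g(g^{-1}hg)g^{-1}=h.$$
Hence $gC_G(H)g^{-1}\subseteq C_G(H)$ for every $g\in N_G(H)$. Applying this also to $g^{-1}\in N_G(H)$ gives equality, so $C_G(H)\trianglelefteq N_G(H)$. By definition of the normalizer, this says exactly that $N_G(H)\le N_G(C_G(H))$.

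\emph{The equivalence.} I do not think it holds as stated, and I would first test it against an example. Take $G$ abelian and $H$ any proper subgroup. Then $C_G(H)=G$, so $C_G(C_G(H))=G\ne H$. On the other hand $N_G(H)=G=N_G(C_G(H))$, so equality of normalizers holds while $H\ne C_G(C_G(H))$. So "equality holds only if $H=C_G(C_G(H))$" fails.

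The direction I can prove is the converse. Suppose $H=C_G(C_G(H))$. Put $C=C_G(H)$, so that $H=C_G(C)$. By the first part applied to the subgroup $C$, we get $N_G(C)\le N_G(C_G(C))=N_G(H)$. Together with $N_G(H)\le N_G(C)$ this gives $N_G(H)=N_G(C_G(H))$.

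In the write-up I would present this reverse implication as the valid content, since it is presumably what is used later, and flag the forward "only if" direction with the abelian counterexample. If the intended statement is instead "equality holds whenever $H=C_G(C_G(H))$", then the proof above settles it. The main obstacle is therefore not technical but deciding which direction is genuinely needed in the subsequent section.
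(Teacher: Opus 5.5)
Your proposal is correct, and since the paper gives no proof of this lemma (it only calls it standard), there is no alternative route to compare.

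The first part is fine. Your normality computation, together with the trivial containment $C_G(H)\le N_G(H)$, gives $N_G(H)\le N_G(C_G(H))$. Applying the first part to $C=C_G(H)$ correctly shows that $H=C_G(C_G(H))$ forces equality.

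Your counterexample to the converse is also valid. For abelian $G$ and proper $H$, both normalizers equal $G$, while $C_G(C_G(H))=G\ne H$. A simpler instance: $H=\{1\}$ in any group with nontrivial center gives $N_G(H)=G=N_G(C_G(H))$, but $C_G(C_G(H))=Z(G)\ne H$. So the ``only if'' half is false as written, and you are right to say so in your write-up.

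On which direction the paper needs:
\begin{itemize}
\item The proof of Theorem~\ref{thm:groups} uses only the first assertion. It applies it to $\rho(T)\le G'$ to get $N_{G'}(\rho(T))\le N_{G'}(L')$, where $L'=C_{G'}(\rho(T))$.
\item The equivalence is invoked only in the Remark following that theorem, and there the content rests on the false direction.
\item Indeed, $L=C_G(H)$ is itself a centralizer, so $C_G(C_G(L))=L$. Your valid direction therefore already gives $N_G(L)=N_G(C_G(L))=N_G(Z(L))$ for every $H\le T$.
\item Hence the criterion $H=Z(L)$ stated there only makes sense as a criterion for $N_G(H)=N_G(L)$. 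Its forward implication is exactly the direction your counterexample refutes in general.
\end{itemize}
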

Thus, one defines the {\em Weyl group}
$W(G,H)$ as $W(G,H)=N_G(H)/C_G(H)$. In particular, if~$H$ is the center of~$C_G(H)$
then $W(G,H)=N_G(C)/C$ where~$C=C_G(H)$.

Let~$G$ be a split reductive algebraic group and let~$T$ be its maximal torus. Then~$C_G(T)=T$ and thus the Weyl group~$W_G:=W(G,T)$ of~$G$ is $N_G(T)/T$. Recall that a reductive subgroup of~$G$ is called a {\em Levi subgroup} if it contains a maximal torus of~$G$.
\begin{theorem}\label{thm:groups}
Let~$G$ and~$G'$ be split reductive groups. Then
any group homomorphism $\rho:G\to G'$ with a discrete kernel yields 
a family of injective homomorphisms $W_G\to W_{G'}$.
\end{theorem}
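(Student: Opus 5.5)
The plan is to use the Weyl group $W(G,H)=N_G(H)/C_G(H)$ from Lemma~\ref{lem:norm centr} and the behaviour of normalizers under homomorphisms recorded in~\eqref{eq:norm hom}.

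\emph{Step 1: the maximal tori.} Let $\rho:G\to G'$ have discrete (hence finite, central-modulo-issues) kernel and fix a maximal torus $T$ of $G$. The image $\rho(T)$ is a torus in $G'$, of dimension $\dim T$ because $\ker\rho$ is finite. Choose a maximal torus $T'$ of $G'$ containing $\rho(T)$. Set $S=\rho(T)$ and $L=C_{G'}(S)$. Then $L$ is a Levi subgroup of $G'$: it is the centralizer of a torus, it is connected and reductive, and it contains $T'$. The family of homomorphisms in the statement will be indexed by the choices of $T'\supset\rho(T)$, all of which lie in $L$ and are $L$-conjugate.

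\emph{Step 2: the map $W_G\to W(G',S)$.} By~\eqref{eq:norm hom}, $\rho(N_G(T))\subset N_{G'}(S)$. We also have $\rho(T)\subset C_{G'}(S)$ because $S$ is abelian. This gives a homomorphism
\[
\bar\rho:W_G=N_G(T)/T\to N_{G'}(S)/C_{G'}(S)=W(G',S).
\]
To show $\bar\rho$ is injective, suppose $n\in N_G(T)$ and $\rho(n)$ centralizes $S$. Then for every $t\in T$ we have $\rho(ntn^{-1}t^{-1})=1$. So the map $t\mapsto ntn^{-1}t^{-1}$, whose image is connected, takes values in the discrete group $\ker\rho$. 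Since the map sends $1$ to $1$, it is trivial, so $n\in C_G(T)=T$. Hence $\bar\rho$ is injective.

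\emph{Step 3: embedding $W(G',S)$ into $W_{G'}$.} This is the main obstacle. I would use Lemma~\ref{lem:norm centr}: $L=C_{G'}(S)$ is normal in $N_{G'}(S)$. Given $g\in N_{G'}(S)$, the torus $gT'g^{-1}$ is again a maximal torus of $L$, so it is $L$-conjugate to $T'$. Thus $g=lg_0$ with $l\in L$ and $g_0\in N_{G'}(T')\cap N_{G'}(S)$. This yields
\[
W(G',S)\cong \bigl(N_{G'}(T')\cap N_{G'}(S)\bigr)/\bigl(N_{G'}(T')\cap L\bigr)=\operatorname{Stab}_{W_{G'}}(S)/W_L .
\]
Here $W_L$ is a parabolic (reflection) subgroup of $W_{G'}$ and is normal in the stabilizer. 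This is a subquotient rather than a subgroup, so an injection into $W_{G'}$ needs a splitting. I would take the complement formed by the elements of $\operatorname{Stab}_{W_{G'}}(S)$ that preserve the positive roots of $L$ for a chosen Borel subgroup of $L$ containing $T'$. This is the standard fact that the normalizer of a parabolic subgroup is $W_L\rtimes N$, where $N$ is the set of elements mapping $\Delta_L$ to itself. Different choices of $T'$ and of Borel subgroup give the asserted family.

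Composing, we get injective homomorphisms $W_G\hookrightarrow \operatorname{Stab}_{W_{G'}}(S)/W_L\cong N\subset W_{G'}$. The delicate points are the conjugacy argument, which requires $L$ to be connected (true for centralizers of tori), and the semidirect splitting (Howlett's theorem on normalizers of parabolic subgroups).
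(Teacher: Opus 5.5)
Your proposal is correct and follows essentially the same route as the paper: take the Levi subgroup $L=C_{G'}(\rho(T))$, use conjugacy of maximal tori in $L$ to identify the relevant quotient with a normalizer-type subgroup of $W_{G'}$ modulo $W_L$, and split via Howlett's theorem. The differences are minor: you land in $N_{G'}(S)/C_{G'}(S)$, which is a subgroup of the paper's $N_{G'}(L)/L$, and you prove injectivity from the connectedness of $T$ instead of first replacing $G$ by $G/\ker\rho$; also, the complement you actually obtain is $\operatorname{Stab}_{W_{G'}}(S)\cap N$, not $N$ itself.
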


\begin{proof} 
We need the following Proposition which was inspired by~\cite{BS}*{Section~2}.
\begin{proposition} 
\label{prop:Levi normalizer}
Let~$T<G$ be a maximal torus and let~$L$
be a Levi subgroup of~$G$ containing~$T$.
Then 
\begin{enmalph}
\item \label{prop:Levi normalizer.a}
$N_G(L)=L\mathcal N=\mathcal NL$ where~$\mathcal N=N_G(T)\cap N_G(L)$;
\item \label{prop:Levi normalizer.a'}
$L\cap \mathcal N=N_L(T)$;
\item \label{prop:Levi normalizer.b}
$\mathcal N$ is the normalizer of~$N_L(T)$ in~$N_G(L)$;
\item \label{prop:Levi normalizer.b'}
$\mathcal N$ is the normalizer of~$N_L(T)$ in~$N_G(T)$;
\item \label{prop:Levi normalizer.c}
$\mathcal N/T=N_{W_G}(W_L)$. In particular,
$W_G^L:=N_G(L)/L\cong \mathcal N/N_L(T)\cong N_{W_G}(W_L)/W_L$.
\end{enmalph}
\end{proposition}
\begin{proof}
Since~$L,\mathcal N<N_G(L)$, $L\mathcal N,\mathcal NL\subset N_G(L)$. Suppose that~$g\in N_G(L)$. Then $gTg^{-1}=T'$ is a maximal torus of~$L$. Since~$L$ is a split reductive algebraic group, there exists~$l\in L$ such that~$lTl^{-1}=T'$. Then~$l^{-1}g T g^{-1}l=T$, that is, $n:=l^{-1}g\in N_G(T)\cap N_G(L)$. Thus, $g\in L\mathcal N$. Similarly, $g\in \mathcal NL$. Part~\ref{prop:Levi normalizer.a} is proven.

It is well-known that~$N_L(T)=N_G(T)\cap L$. Therefore, 
$L\cap \mathcal N=L\cap N_G(L)\cap N_G(T)=L\cap N_G(T)= N_L(T)$, which proves part~\ref{prop:Levi normalizer.a'}.

To prove part~\ref{prop:Levi normalizer.b}, note
that, since the conjugation by an element of~$\mathcal N$
is an automorphism of~$T$, $\mathcal N$ normalizes~$N_L(T)$. To prove that~$N_{N_G(L)}(N_L(T))=\mathcal N$, by part~\ref{prop:Levi normalizer.a} it suffices to show that
$N_{N_G(L)}(N_L(T))\cap L=N_L(N_L(T))$ is equal to~$N_L(T)$. We need the following
\begin{lemma}\label{lem:rigidity}
With~$G$, $T$ as in~Proposition~\ref{prop:Levi normalizer}, 
let~$K$ be any subgroup of~$N_G(T)$ containing~$T$. Then~$N_G(K)=N_{N_G(T)}(K)$.
\end{lemma}
\begin{proof}
Since~$T$ is the unique maximal torus contained in~$N_G(T)$ and hence, for any $g\in N_G(K)$, $gTg^{-1}=T$, that is, $g\in N_{G}(T)$.
Thus, $N_G(K)<N_{N_G(T)}(K)$. The opposite inclusion is obvious.
\end{proof}
Applying this lemma with~$K=N_L(T)$ and~$G=L$,
yields part~\ref{prop:Levi normalizer.b}.

Part~\ref{prop:Levi normalizer.b'} follows immediately from Lemma~\ref{lem:rigidity} with~$K=N_L(T)$ and part~\ref{prop:Levi normalizer.b}. 

To prove part~\ref{prop:Levi normalizer.c}, note that $T\triangleleft\mathcal N$ by~\ref{prop:Levi normalizer.b} and so~$\mathcal N/T$ is well-defined.
We have~$\mathcal N/T=N_{N_G(T)}(N_L(T))/T$. 
The following is easily checked.
\begin{lemma}\label{lem:induced normalizer}
Let~$Y$ be a subgroup of~$Z$ and let~$\pi:Z\to Z'$ be a surjective homomorphism of groups such that~$\ker \pi<Y$. Then~$\pi^{-1}(N_{Z'}(Y'))=N_Z(Y)$ where~$Y'=\pi(Y)$. In particular,
$N_{Z'}(Y')=N_Z(Y)/\ker\pi$.
\end{lemma}
Applying this lemma with~$Z=N_G(T)$, $Y=N_L(T)$,
$Z'=W_G$ and~$\pi:N_G(T)\to W_G$ being the 
canonical surjection with~$\ker\pi=T$, we obtain
the first assertion in part~\ref{prop:Levi normalizer.c} since~$Y'=\pi(Y)=W_L$. Since~$N_L(T)=L\cap \mathcal N$, $\mathcal N/N_L(T)\cong \mathcal NL/L=N_G(L)/L$. Finally, 
\begin{equation*}\mathcal N/N_L(T)\cong (\mathcal N/T)/(N_L(T)/T)\cong N_{W_G}(W_L)/W_L.
\qedhere
\end{equation*}
\end{proof}
Now we have all necessary ingredients to finish the proof of Theorem~\ref{thm:groups}.
Since~$\ker\rho<T$, replacing $G$ with $G/\ker\rho$ we may assume, without loss of generality, that $\rho$ is injective. 

Let~$T'$ be a maximal torus of~$G'$ such that~$\rho(T)<T'$ (such a torus always exists).
Then $L':=C_{G'}(\rho(T))$ contains~$T'$ and hence is a Levi subgroup of $G'$. By~\eqref{eq:norm hom} and the first assertion of Lemma~\ref{lem:norm centr},
$\rho(N_G(T))<N_{G'}(\rho(T))\le N_{G'}(L')$. Thus, the restriction of~$\rho$
to~$N_G(T)$ is a homomorphism $N_G(T)\to N_{G'}(L')$ which, since~$\rho(T)<L'$ in turn induces a homomorphism
$$
\underline\rho: W_G\to W_{G'}^{L'}
$$
in the notation of Proposition~\ref{prop:Levi normalizer}. We claim that~$\underline\rho$ is injective.
Indeed if $gT\in \ker\underline \rho$ then $\rho(g)\in L'$. Then the injectivity of $\rho$ implies that $g\in C_G(T)=T$.

By~\cite{How}, the canonical surjection $N_{W_{G'}}(W_{L'})\to W_{G'}^{L'}$
from Proposition~\partref{prop:Levi normalizer.c}
splits. 
Composing $\underline \rho$ with these splittings yields the desired family of  homomorphisms.
\end{proof}

Thus, the injective homomorphisms $W_G$ into $W_{G'}$ are naturally parametrized by elements of $W_{L'}$.
In particular, we obtain unfoldings
\eqref{eq:unfold Bn A2n-1}, \eqref{eq:unfold Bn A2n},
\eqref{eq:unfold Bn Dn+1} and~\eqref{eq:unfold F4 E6}
from natural embeddings of algebraic groups
$Sp_{2n}\to SL_{2n}$,
$SO_{2n+1}\to SL_{2n+1}$,
$SO_{2n+1}\to SO_{2n+2}$ and~$F_4\to E_6$.
\begin{remark}
As we already mentioned, for any subgroup $H$ 
of~$T$, $L:=C_G(H)$ is a Levi subgroup of~$G$
containing~$T$. On the other hand,
$C_G(L)$ is the center~$Z(L)$ of~$L$. Thus,
by Lemma~\ref{lem:norm centr}, 
$N_G(L)=N_G(Z(L))$ if and only if~$H=Z(L)$ and
$W_G^L=W(G,Z(L))$ in that case.
\end{remark}

\subsection{Submonoid of parabolic elements}
Given~$J\in\mathscr F(M)$, we define~\plink{Psbm}$\mP_J(M)$ 
as the submonoid of~$(W_J(M),\star)$ generated by the~$w_{J';J}$ for all~$J\subset J$.
Note the following result (\cites{BK07,He09}).
\begin{proposition}\label{prop:submonoid *}
Let~$J\in\mathscr F(M)$. Then $\mP_J(M)=\{ w_{J';J}\,:\, J'\subset J\}$ and is 
abelian. More precisely, for any
$J',J''\subset J$
there exists a unique \plink{*J}$J'\star_J J''=J''\star_J J'\subset J'\cap J''$ such that $w_{J';J}
\star w_{J'';J}=w_{J'\star_J J'';J}$.
\end{proposition}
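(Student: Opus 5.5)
We prove all three claims at once (closure, the bound $K\subset J'\cap J''$, commutativity) via a descent-set characterization of $J$-parabolic elements. For $w\in W_J(M)$ put $y=ww_\circ^J$. Then $w=w_{K;J}$ for some $K\subset J$ if and only if $y=w_\circ^K$. By Corollary~\partref{cor:max elts.b} and Lemma~\ref{lem:char w_0 monoid}, this holds if and only if $\supp y\subset D_L(y)$, in which case $K=\supp y=D_L(y)$. By Lemma~\ref{lem:DL uw0}, $D_L(y)=J\setminus D_L(w)$. So $w$ is $J$-parabolic if and only if
\[
\supp(ww_\circ^J)\subset J\setminus D_L(w),
\]
and then $K=J\setminus D_L(w)$. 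Uniqueness of $K$ in the statement is then automatic, since $K\mapsto w_\circ^Kw_\circ^J$ is injective.

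Now fix $J',J''\subset J$ and set $x=w_{J';J}\star w_{J'';J}$ and $y=xw_\circ^J$. Lemma~\ref{lem:wK absorption} gives $D_L(w_{J';J})=J\setminus J'$. By Lemma~\ref{lem:left desc} and associativity, $D_L(x)\supset J\setminus J'$. Applying the same reasoning on the right, with $D_R(w_{J'';J})$ computed via Lemma~\ref{lem:DL uw0} and conjugation by $w_\circ^J$, and then transporting back, I expect $D_L(y)=J\setminus D_L(x)\subset J'\cap J''$. This yields the inclusion $K\subset J'\cap J''$ once parabolicity is known.

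The core step, and the main obstacle, is showing $\supp y\subset D_L(y)$. I would use Proposition~\partref{prop:Bruhat order *.b}: $x$ is the Bruhat-maximum of $\{ab: a\le w_{J';J},\ b\le w_{J'';J}\}$. Combining this with Proposition~\partref{prop:Bruhat order.c}, $y=xw_\circ^J$ is the Bruhat-minimum of the set $\{a\,(bw_\circ^J)\}$, where $bw_\circ^J$ ranges over $\uparrow w_\circ^{J''}$ and $a$ ranges over $\downarrow w_{J';J}$. Taking $b=w_{J'';J}$ and $a=w_\circ^{J'}\star$-type factors, I would try to show that this minimum is attained at an element of $W_{D}$, where $D$ is the set of $i$ with $s_i\star y=y$, and hence equals $w_\circ^{D}$.

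If that direct argument stalls, the fallback is induction on $|J|$. One passes to $J'\cup J''$ or a proper subset via Lemma~\ref{lem:len prop wJ K}, factoring $w_{J';J}=w_{J';L}\times w_{L;J}$ with $L=J'\cup J''$, and uses absorption (Lemma~\ref{lem: u * u^(-1) w_0}\ref{lem: u * u^(-1) w_0.c}) to reduce to the case $J'\cup J''=J$. For crystallographic $M$ one can instead cite the geometric argument of~\cite{BK07}, so only $H_3$, $H_4$ and $I_2(m)$ need treatment. $I_2(m)$ is a direct check, and $H_3$, $H_4$ reduce to finitely many verifications.

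Commutativity then follows from applying ${}^{op}$. We have $w_{J';J}{}^{op}=w_\circ^Jw_\circ^{J'}=w_{\sigma(J');J}$, where $\sigma$ is the involution of $J$ induced by conjugation by $w_\circ^J$. Hence
\[
(x)^{op}=w_{\sigma(J'');J}\star w_{\sigma(J');J}.
\]
Since $x$ is parabolic with index $K$, also $x^{op}=w_{\sigma(K);J}$. The remaining step is to check that the index $K=J'\star_J J''$, as characterized by $K=J\setminus D_L(x)$, is symmetric in $(J',J'')$ and $\sigma$-equivariant; symmetry in $(J',J'')$ gives commutativity.
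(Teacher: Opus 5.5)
The characterization of parabolicity via $\supp(ww_\circ^J)\subset J\setminus D_L(w)$ is correct, and so is $D_L(x)\supset J\setminus J'$. But the step that carries all the content of the proposition is never established: that $y=xw_\circ^J$, with $x=w_{J';J}\star w_{J'';J}$, satisfies $\supp y\subset D_L(y)$.

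Your Bruhat reformulation via Proposition~\ref{prop:Bruhat order *} only re-expresses $y$ as the Bruhat-minimum of a set of products. Showing that this minimum lies in $W_D(M)$ with $D=\{i:s_i\star y=y\}=D_L(y)$ is a restatement of $\supp y\subset D_L(y)$, not an argument for it.

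The induction on $|J|$ has no actual reduction step. After writing $w_{J';J}=w_{J';L}\times w_{L;J}$ with $L=J'\cup J''$, the factor $w_{L;J}$ stays sandwiched between $w_{J';L}$ and $w_{J'';L}$, and nothing in your argument absorbs it. Closure over $J$ also cannot simply be restricted from $L$, since $J'\star_L J''\ne J'\star_J J''$ in general: in type $A$, $[a,b]\star_{[1,m]}[a',b']=[a+a'-1,b+b'-m]$. Finally, the case $J'\cup J''=J$, to which you want to reduce, gets no argument at all.

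So the only route in your proposal that actually proves closure is the last fallback: cite \cite{BK07} for crystallographic types, check $I_2(m)$ by hand, and verify $H_3$, $H_4$. That is exactly the paper's proof. The paper does not enumerate in $H_3$, $H_4$ directly. It uses the injective homogeneous homomorphisms $H_3\to D_6$ and $H_4\to E_8$ of Theorem~\ref{thm:adm finite class}, which by Proposition~\partref{prop:CH-prop.a''} send $w_{J';I}$ to $w_{[\phi](J');\wh I}$. Closure in $D_6$, $E_8$, a computer check that $[\phi](J')\star_{\wh I}[\phi](J'')=\emptyset=[\phi](\emptyset)$ for proper $J',J''$, and injectivity of $\phi$ then give closure, commutativity and the index set at once.

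Your commutativity paragraph is circular as written: it lists symmetry in $(J',J'')$ as the thing still to be checked. However, your own ingredients finish it uniformly once closure is known. Let $\sigma$ be conjugation by $w_\circ^J$, which is an automorphism of $(W_J(M),\star)$. Every $w=w_{K;J}$ satisfies $w^{op}=w_\circ^Jw_\circ^K=\sigma(w)$. Hence for parabolic $p,q$ with $p\star q$ parabolic,
\[
\sigma(p\star q)=(p\star q)^{op}=q^{op}\star p^{op}=\sigma(q)\star\sigma(p)=\sigma(q\star p),
\]
so $p\star q=q\star p$. Then $K\subset J''$ follows from your $D_L$ argument applied to $w_{J'';J}\star w_{J';J}$, with no transporting back needed. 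This is a genuine simplification over the paper, which takes commutativity from \cite{BK07} and the computation. It does not touch closure, which remains the missing idea.
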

\begin{proof}
We may assume, without loss of generality, that~$J=I$. For~$M$ of types
$A$ through $G$ this result was proven in~\cite{BK07}*{Proposition~2.30}. For~$M$ of type~$I_2(m)$,
the $\star$ product of any two parabolic elements
is easily seen to be equal to~$w_\circ^I=w_{\emptyset}$.
Finally, if~$M$ is of type~$H_3$ or~$H_4$
then using the injective fully supported homomorphism
$\phi\in\Hom_{\mathscr{CH}}(H_3,D_6)$ (respectively,
$\phi\in\Hom_{\mathscr{CH}}(H_4,E_8)$) which is parabolic
by Theorem~\ref{thm:artin parab},
we have $\phi(w_{J'}\star w_{J''})=
\phi(w_{J'})\star
\phi(w_{J''})=w_{[\phi](J')}\star
w_{[\phi](J'')}=w_{[\phi](J')\star_{\wh I} [\phi](J'')}$
by Proposition~\partref{prop:CH-prop.a''} and by the corresponding result for~$D_6$ and~$E_8$; here we denote by $\wh I$
the respective index set of~$D_6$ or~$E_8$. If~$J',J''\not=I$ then one can check, for example using our Python program for computations in Hecke monoids, that 
$[\phi](J')\star_{\wh I}[\phi](J'')=\emptyset=[\phi](\emptyset)$. If say~$J'=I$ then~$[\phi](J')=\wh I$ and
so~$[\phi](J')\star_{\wh I}[\phi](J'')=
[\phi](J'')$. Thus, $[\phi](J')\star_{\wh I}[\phi](J'')=[\phi](K)$
for some~$K\subset I$ and so
$w_{[\phi](K)}=
\phi(w_{K})$ by Proposition~\partref{prop:CH-prop.a''}, whence $w_{J'}\star w_{J''}=
w_{K}$ since~$\phi$ is injective. 
\end{proof}
Note that, in general $J\star_I J'\not=J\star_K J'$ for $J,J'\subset K\subsetneq I$. For example, if~$M=A_n$, $I=[1,n]$, $K=[1,m]$, $1\le m<n$, $J=[a,b],
J'=[a',b']\subset K$ then
 $[a,b]\star_I [a',b']=[a+a'-1,b+b'-n]$
 while $[a,b]\star_K [a',b']=[a+a'-1,b+b'-m]$ (see Corollary~\ref{cor:A J*K})
 which are equal if and only if~$b-a+b'-a'<m-1$ in which case both
 $J\star_I J'$ and~$J\star_K J'$ are empty sets.

\begin{lemma}\label{lem:hom parab submonoid}
Let $M\in\Cox I$, $M'\in\Cox{I'}$ and let~$\phi\in\Hom_{\mathscr{CH}}(M',M)$. 
Then~$[\phi](J'\star_J J'')=[\phi](J')\star_{[\phi](J)}[\phi](J'')$ for all
$J',J''\subset J\in\mathscr F(M')$.
\end{lemma}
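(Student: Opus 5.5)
The plan is to transport the defining identity of $\star_J$ through $\phi$ using Proposition~\ref{prop:CH-prop}, and then recover the index set on the target side from the uniqueness in Proposition~\ref{prop:submonoid *}. Fix $J',J''\subset J\in\mathscr F(M')$ and set $K=J'\star_J J''\subset J'\cap J''$. By Proposition~\ref{prop:submonoid *} we have $w_{J';J}\star w_{J'';J}=w_{K;J}$.

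First I apply $\phi$. Since $\phi$ is a homomorphism of Hecke monoids, $\phi(w_{J';J})\star\phi(w_{J'';J})=\phi(w_{K;J})$. By Proposition~\partref{prop:CH-prop.a''}, each $\phi(w_{L;J})$ with $L\subset J$ equals $w_{[\phi](L);[\phi](J)}$. Therefore
$$w_{[\phi](J');[\phi](J)}\star w_{[\phi](J'');[\phi](J)}=w_{[\phi](K);[\phi](J)}.$$
By Lemma~\ref{lem:Hecke hom w0J}, $[\phi](J)\in\mathscr F(M)$, and $[\phi](J'),[\phi](J''),[\phi](K)\subset[\phi](J)$. So all three elements lie in $\mP_{[\phi](J)}(M)$.

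Next, the left-hand side equals $w_{[\phi](J')\star_{[\phi](J)}[\phi](J'');[\phi](J)}$ by Proposition~\ref{prop:submonoid *} applied to $[\phi](J)$. It remains to conclude $[\phi](J')\star_{[\phi](J)}[\phi](J'')=[\phi](K)$, that is, that $L\mapsto w_{L;[\phi](J)}$ is injective on subsets of $[\phi](J)$. This holds because $w_{L;[\phi](J)}w_\circ^{[\phi](J)}=w_\circ^L$, and $w_\circ^L$ determines $L=\supp w_\circ^L$. Equivalently, Lemma~\ref{lem:wK absorption} gives $D_L(w_{L;[\phi](J)})=[\phi](J)\setminus L$.

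The only real content is Proposition~\partref{prop:CH-prop.a''}, which is already proven. So the main (mild) obstacle is this identification of subsets, which I handle by the support/descent argument above. Disjointness of $\phi$ (Proposition~\partref{prop:CH-prop.a}) is not even needed.
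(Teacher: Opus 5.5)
Your proof is correct and follows the paper's argument: apply $\phi$ to $w_{J';J}\star w_{J'';J}=w_{J'\star_J J'';J}$, rewrite both sides with Proposition~\ref{prop:CH-prop} (the parabolic-element part), and compare via Proposition~\ref{prop:submonoid *} in $\mP_{[\phi](J)}(M)$. The one difference is that you justify the injectivity of $L\mapsto w_{L;[\phi](J)}$ through $w_{L;[\phi](J)}w_\circ^{[\phi](J)}=w_\circ^L$; the paper simply calls this final step ``immediate''.
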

\begin{proof}
Since $[\phi](w_{K;J})=
w_{[\phi](K);[\phi](J)}$ for any~$K\subset J\in\mathscr F(M')$ by Proposition~\partref{prop:CH-prop.a''}, it follows from Proposition~\ref{prop:submonoid *} that
\begin{align*}
\phi(w_{J';J}\star w_{J'';J})&=
\phi(w_{J';J})\star\phi(w_{J'';J})
=w_{[\phi](J');[\phi](J)}\star w_{[\phi](J'');[\phi](J)}=w_{[\phi](J')\star_{[\phi](J)}[\phi](J'');[\phi](J)}.
\end{align*}
On the other hand, since $w_{J';J}\star w_{J'';J}=
w_{J'\star_J J'';J}$ by Proposition~\ref{prop:submonoid *},
$
\phi(w_{J';J}\star w_{J'';J})=
w_{[\phi](J'\star_J J''); [\phi](J)}$.
The assertion is now immediate.
\end{proof}

\section{Light homomorphisms of Hecke monoids}
\label{sec:Parab proj}

In this section we describe a subcategory of~$\mathscr H$ which unifies parabolic projections,
natural inclusions of parabolic submonoids and
tautological homomorphisms. We also prove that
all such homomorphisms in finite types are parabolic.

\subsection{Light homomorphisms of Hecke monoids}\label{subs:parab proj}
Tautological homomorphisms, parabolic projections and natural inclusions of parabolic submonoids belong to a larger class of homomorphisms.
\begin{definition}\label{defn:light}
Let~$M'$ and~$M$ be Coxeter matrices
over respective index sets~$I'$, $I$.
We say that~$\phi\in\Hom_{\mathscr H}(M',M)$
is {\em light} if
$|[\phi](i)|\le 1$ for all~$i\in I'$.
\end{definition}
The following is immediate.
\begin{lemma}\label{lem:light cat}
A composition of light homomorphisms of Hecke monoids is again light. In other words, Coxeter matrices and light homomorphisms of respective Hecke monoids form a subcategory of~$\mathscr H$.
\end{lemma}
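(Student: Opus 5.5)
The claim is that light homomorphisms are closed under composition. The plan is to compute the support map of a composite using Lemma~\ref{lem:[phi]comp} and then check the size condition $|[\cdot](i)|\le 1$ pointwise.

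Let $\phi\in\Hom_{\mathscr H}(M',M)$ and $\phi'\in\Hom_{\mathscr H}(M'',M')$ be light, with $M''\in\Cox{I''}$. By Lemma~\partref{lem:[phi]comp.b}, $[\phi\circ\phi'](i)=[\phi]([\phi'](i))$ for every $i\in I''$. Since $\phi'$ is light, $[\phi'](i)$ is either $\emptyset$ or a singleton $\{j\}$ with $j\in I'$.
\begin{itemize}
\item In the first case, $[\phi](\emptyset)=\emptyset$ by definition of the extension of $[\phi]$ to $\mathscr P(I')$ as a union over an empty index set.
\item In the second case, $[\phi](\{j\})=[\phi](j)$, which has at most one element because $\phi$ is light.
\end{itemize}
Hence $|[\phi\circ\phi'](i)|\le 1$ for all $i\in I''$, so $\phi\circ\phi'$ is light.

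For the subcategory statement, it remains to check the identities. The identity homomorphism of $(W(M),\star)$ sends $s_i\mapsto s_i$, so $[\mathrm{id}](i)=\{i\}$ and it is light. Thus Coxeter matrices with light homomorphisms form a subcategory of $\mathscr H$.

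There is no genuine obstacle here; the only point requiring care is to invoke Lemma~\partref{lem:[phi]comp.b}, which rests on part~\ref{lem:[phi]comp.a}, $\supp\phi(x)=[\phi](\supp x)$, rather than arguing directly with images of generators.
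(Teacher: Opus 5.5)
Your proof is correct and is the argument the paper leaves as ``immediate'': Lemma~\partref{lem:[phi]comp.b} gives $[\phi\circ\phi'](i)=[\phi]([\phi'](i))$, and the size bound follows exactly as you say, with the identity being light. Your closing caveat is unnecessary, though. Arguing directly with generators works equally well, since by Lemma~\ref{lem:Hecke hom w0J} a light homomorphism sends each generator to $w_\circ^{[\phi](j)}$, which is either $1$ or a single generator.
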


Clearly, parabolic projections, tautological homomorphisms and natural inclusions are light.
We now describe another class of surjective light homomorphisms.
\begin{definition}\label{defn:foldable}
Let~$\varpi:I\to J$ be a surjective map.
We say that a Coxeter matrix~$M$ over~$I$ is {\em foldable along~$\varpi$} if
 $m_{ii'}=m_{ii''}$ for all
 $i,i',i''\in I$ with $\varpi(i')=\varpi(i'')\not=
\varpi(i)$.
\end{definition}
Note that any group~$G$ of automorphisms of~$\Gamma(M)$ 
gives rise to a map~$\varpi_G:I\to I/G$
such that~$M$ is foldable along~$\varpi_G$.

If~$M$ is foldable along~$\varpi$,
define~\plink{MII}$M^{\varpi}$ to be the matrix over~$J$ with~$(M^\varpi)_{jj}=1$, $j\in J$ and
$(M^{\varpi})_{jj'}=m_{ii'}$
for any~$i\in \varpi^{-1}(j)$, $i'\in \varpi^{-1}(j')$, $j\not=j'\in J$.
Clearly, $M^{\varpi}$
is a Coxeter matrix.
\begin{lemma}\label{lem:Heck orb fold}
Let~$\varpi:I\to J$ be surjective and let~$M\in\Cox I$
be foldable along~$\varpi$.
The
assignments~$s_i\mapsto s^{\varpi}_{\varpi(i)}$, $i\in I$, where the~$s^\varpi_j$, $j\in J$ are the
generators of~$(W(M^{\varpi}),\star)$,
define a surjective light optimal \plink{phi fold}$\mathbf f_{\varpi}\in\Hom_{\mathscr H}(M,M^{\varpi})$.
\end{lemma}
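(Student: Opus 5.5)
The plan is to apply Theorem~\ref{thm:Hom Heck Mon}, more precisely Lemma~\ref{lem:Lam M'M HomM'M}. Define $\xi:I\to\mathscr F(M^\varpi)$ by $\xi(i)=\{\varpi(i)\}$; each singleton is of finite type. We must check that $\xi\in\Lambda(M,M^\varpi)$, that is,
$$\max\bigl(\mu_{M^\varpi}(\xi(i),\xi(i')),\mu_{M^\varpi}(\xi(i'),\xi(i))\bigr)\le m_{ii'}$$
for all $i\ne i'\in I$.

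There are two cases.

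\emph{Case $\varpi(i)=\varpi(i')$.} Here $\xi(i)=\xi(i')$, so both values of $\mu$ equal $1$. The inequality holds because $m_{ii'}\ge 2$ for $i\ne i'$.

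\emph{Case $j=\varpi(i)\ne j'=\varpi(i')$.} We compute $\mu_{M^\varpi}(\{j\},\{j'\})$ directly. In a dihedral Hecke monoid $\langle s_j,s_{j'}\rangle$ with $m^\varpi_{jj'}<\infty$, the alternating $\star$-products $\brd{s_j\star s_{j'}\star}{m}$ are reduced alternating words for $m\le m^\varpi_{jj'}$, and they equal $w_\circ^{\{j,j'\}}$ exactly when $m=m^\varpi_{jj'}$. Hence $\mu=m^\varpi_{jj'}$ in both orders, and $\mu=\infty$ if $m^\varpi_{jj'}=\infty$. By the definition of $M^\varpi$ (well defined by foldability), $m^\varpi_{jj'}=m_{ii'}$, so equality holds. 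Lemma~\ref{lem:Lam M'M HomM'M} then gives $\mathbf f_\varpi=\Theta_\xi$.

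It remains to verify the three properties.
\begin{enumerate}
\item \emph{Light:} $|[\mathbf f_\varpi](i)|=1$ for every $i$.
\item \emph{Surjective:} the image contains every generator $s^\varpi_j$, since $\varpi$ is surjective.
\item \emph{Optimal:} take $i,i'$ with $[\mathbf f_\varpi](i)\ne[\mathbf f_\varpi](i')$, i.e.\ $\varpi(i)\ne\varpi(i')$. The computation above gives $\max(2,\mu,\mu)=\max(2,m_{ii'})=m_{ii'}$, which is exactly \eqref{eq:optimal cond}.
\end{enumerate}

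The only step requiring care is the dihedral identity $\mu_{M}(\{j\},\{j'\})=m_{jj'}$. It follows from Proposition~\ref{prop:prod *} and the standard fact that in $I_2(m)$ the two alternating words of length $k<m$ are reduced and distinct, and that $w_\circ$ has length $m$.
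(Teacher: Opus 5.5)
Your proposal is correct and follows the paper's proof. Both arguments regard $\varpi$ as the map $i\mapsto\{\varpi(i)\}\in\mathscr F(M^\varpi)$, check that it lies in $\Lambda(M,M^\varpi)$, apply Lemma~\ref{lem:Lam M'M HomM'M}, and then read off lightness, surjectivity and~\eqref{eq:optimal cond}. The only difference is that you spell out the rank-two identity $\mu_{M^\varpi}(\{j\},\{j'\})=m^\varpi_{jj'}$, which the paper leaves implicit in the phrase ``by definition of~$M^\varpi$''.
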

\begin{proof}
We may regard~$\varpi$
as a map~$I\to\mathscr F(M^{\varpi})$ in an obvious way. Then
$\varpi\in \Lambda(M,M^{\varpi})$ by definition of~$M^\varpi$ and so
$\mathbf f_{\varpi}:=\Theta_{\varpi}
\in\Hom_{\mathscr H}(M,M^\varpi)$ in the notation of Lemma~\ref{lem:Lam M'M HomM'M}.
The condition~\eqref{eq:optimal cond} is evidently satisfied.
\end{proof}
We refer to~$\mathbf f_{\varpi}$ as
the {\em folding along~$\varpi$}.
\begin{example}
Let~$M\in\Cox I$ and let~$\varpi$ be 
the unique map~$I\to\{1\}$. Then~$M$
is foldable along~$\varpi$, $M^\varpi=A_1$ and $\mathbf f_\varpi(s_i)=s^\varpi_1$ for all~$i\in I$.
\end{example}
\begin{example}\label{ex:light projection}
Let~$M=D_{n+1}$ and define~\plink{varpi (n,n+1)}$\varpi_{(n,n+1)}:[1,n+1]\to [1,n]$ by $\varpi_{(n,n+1)}(i)=i-\delta_{i,n+1}$,
$i\in [1,n+1]$.
Then~$M$ is foldable along~$\varpi=\varpi_{(n,n+1)}$,
$M^{\varpi}=A_n$
and, identifying~$W(M^\varpi)$ with~$W_{[1,n]}(M)$, we have $\mathbf f_{\varpi}(s_i)=
s_{i-\delta_{i,n+1}}$,
$i\in [1,n+1]$.

Similarly, if~$M=D_4$, define \plink{varpi (1,3,4)}$\varpi_{(1,3,4)}:[1,4]\to\{1,2\}$ by
$\varpi_{(1,3,4)}(i)=1$, $i\in\{1,3,4\}$ and~$\varpi_{(1,3,4)}(2)=2$. Then~$M$ is foldable along~$\varpi=\varpi_{(1,3,4)}$, $M^{\varpi}=A_2$ and, identifying~$W(M^\varpi)$ with~$W_{\{1,2\}}(M)$ we have $\mathbf f_\varpi(s_i)=s_1$, $i\in\{1,3,4\}$, $\mathbf f_\varpi(s_2)=s_2$.
\end{example}
\begin{example}
Let~$M\in\Cox I$
and suppose that~$I=I_1\sqcup I_2$
where 
$m_{ij}=m\ge 3$ for all~$i\in I_1$,
$j\in I_2$. Define $\varpi:I\to\{1,2\}$
by $\varpi(i)=j$ provided that~$i\in I_j$.
Then~$M$ is foldable
along $\varpi$ and
$M^{\varpi}=I_2(m)$.
\end{example}

Since tautological homomorphisms are light 
and for any light homomorphism~$\phi$ of Hecke monoids $\phi_{op}$ is also light, 
to describe all light homomorphisms it suffices to describe all optimal ones.
\begin{proposition}\label{prop:classify light}
Every optimal light homomorphism of Hecke monoids can be canonically presented as a composition of a parabolic projection, the
folding along a surjective map and a natural inclusion.
\end{proposition}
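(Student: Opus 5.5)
Let $\phi\in\Hom_{\mathscr H}(M',M)$ be optimal and light, with $M'\in\Cox{I'}$ and $M\in\Cox I$. Lightness means that for each $i\in I'$ either $[\phi](i)=\emptyset$, i.e. $\phi(s'_i)=1$, or $[\phi](i)=\{\varpi(i)\}$ is a single index, i.e. $\phi(s'_i)=s_{\varpi(i)}$. So we set $J'=\{i\in I'\,:\,[\phi](i)\not=\emptyset\}$ and $J=[\phi](I')\subset I$, and we obtain a surjective map $\varpi:J'\to J$ with $\phi(s'_i)=s_{\varpi(i)}$ for $i\in J'$. The candidate factorization is
$$
\phi=\iota_J\circ \mathbf f_{\varpi}\circ p_{J'},
$$
where $p_{J'}:(W(M'),\star)\to(W_{J'}(M'),\star)=(W(M'_{J'}),\star)$ is the parabolic projection of Lemma~\ref{lem:p J defn}, $\mathbf f_\varpi$ is the folding of Lemma~\ref{lem:Heck orb fold}, and $\iota_J$ is the natural inclusion. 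The decomposition is canonical because $J'$, $J$ and $\varpi$ are read off from $[\phi]$. Theorem~\ref{thm:Hom Heck Mon} says that homomorphisms are determined by their values on generators, so the identity of the factorization only needs to be checked on the $s'_i$. There it is immediate: for $i\notin J'$ both sides give $1$, and for $i\in J'$ both sides give $s_{\varpi(i)}$.

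The real content is to show that $M'_{J'}$ is foldable along $\varpi$ and that $(M'_{J'})^\varpi=M_J$. Then $\mathbf f_\varpi$ exists and lands in $(W(M_J),\star)=(W_J(M),\star)$. For this I use optimality~\eqref{eq:optimal cond} together with the easy computation of $\mu_M$ on singletons. For $a\not=b\in I$ we have $\mu_M(\{a\},\{b\})=\mu_M(\{b\},\{a\})=m_{ab}$, because $\brd{s_a\star s_b\star}{k}$ is reduced for $k\le m_{ab}$ and equals $w_\circ^{\{a,b\}}$ exactly at $k=m_{ab}$; this holds also when $m_{ab}=\infty$. Since $m_{ab}\ge2$, optimality then gives $m'_{ij}=\max(2,m_{\varpi(i)\varpi(j)},m_{\varpi(i)\varpi(j)})=m_{\varpi(i)\varpi(j)}$ whenever $i,j\in J'$ and $\varpi(i)\not=\varpi(j)$. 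So $m'_{ij}$ depends only on $(\varpi(i),\varpi(j))$ when the images differ. Consequently, if $\varpi(i')=\varpi(i'')\not=\varpi(i)$, then $m'_{ii'}=m_{\varpi(i)\varpi(i')}=m'_{ii''}$, which is exactly Definition~\ref{defn:foldable}, and the off-diagonal entries of $(M'_{J'})^\varpi$ are $m_{jj'}$.

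I expect the main point needing care to be the pairs where optimality imposes no condition, namely pairs with $[\phi](i)=[\phi](j)$. This covers two cases. The first is $i,j\notin J'$, which is harmless because $p_{J'}$ discards these indices. The second is $i,j\in J'$ with $\varpi(i)=\varpi(j)$, which is also harmless because foldability constrains only entries across different fibres; within a fibre $m'_{ij}$ is arbitrary, and $\mathbf f_\varpi$ is still a homomorphism by Lemma~\ref{lem:Heck orb fold}. The remaining case is $i\notin J'$ and $j\in J'$. Here the two images differ, one of them being empty, so optimality forces $m'_{ij}=\max(2,\mu_M(\emptyset,\{\varpi(j)\}),\ldots)=2$. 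This is not needed for the factorization itself, since $p_{J'}$ is defined for any Coxeter matrix; I would only record it as a remark. Finally, $\mathbf f_\varpi$ is surjective because $\varpi$ is, and the map onto $W_J(M)$ composed with $\iota_J$ recovers $\phi$, which completes the argument.
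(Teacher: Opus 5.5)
Your proposal is correct and follows essentially the paper's argument: you factor off $p_{J'}$ for the generators sent to $1$ and $\iota_J$ onto the image, then use optimality together with $\mu_M(\{a\},\{b\})=m_{ab}$ to obtain foldability and identify the middle map with $\mathbf f_\varpi$. Your extra observation that optimality forces $m'_{ij}=2$ for $i\notin J'$, $j\in J'$ is correct but, as you say, not needed for the factorization.
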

\begin{proof}
Let~$M\in\Cox I$, $M'\in\Cox{I'}$ and let $\phi:\Hom_{\mathscr H}(M',M)$
be light. Let~$I'_s=\{
i\in I'\,:\, |[\phi](i)|=s\}$, $s\in \{0,1\}$. Then
$\phi=\phi|_{W_{I'_1}(M)}\circ p_{I'_1}$. Furthermore,
if~$\phi$ is not surjective then, since~$\phi$ is light, its image
is~$(W_J(M),\star)$ for some~$J\subset I$ and so we can write
$\phi$ as a composition of a surjective
light homomorphism with the natural
inclusion~$\iota_J$. Therefore, it remains to describe light
homomorphisms with~$I'_0=\emptyset$
which are optimal and surjective.

For such a homomorphism,
$|[\phi](i)|=1$ for all~$i\in I'$ and so
we can regard~$[\phi]$ as a surjective map~$I'\to I$.
By Theorem~\partref{thm:Hom Heck Mon},
$$
m'_{ij}\ge \max(\mu_M([\phi](i),[\phi](j)),\mu_M([\phi](j),[\phi](i)))
=m_{[\phi](i)[\phi](j)},\qquad i\not=j\in I'.
$$
Since~$\phi$ is optimal, it follows
that~$m'_{ij}=m_{[\phi](i)[\phi](j)}$
for all~$i,j\in\wh I'$ such that~$[\phi](i)\not=[\phi](j)$.
Thus, $M'$ is foldable along~$[\phi]$, $M'{}^{[\phi]}=M$
and~$\phi=\mathbf f_{[\phi]}$.
\end{proof}
It turns out that every surjective homomorphism of Hecke monoids has a ``light core'', that is,
restricts to a surjective light homomorphism from a maximal parabolic submonoid.
\begin{lemma}\label{lem:surj hom Heck bas}
Let~$M\in\Cox I$, $M'\in\Cox{I'}$ and
let~$\phi\in\Hom_{\mathscr H}(M',M)$ be surjective. Then
there exists a unique maximal subset $J'=J'(\phi)$ of~$I'$ such that
$\phi|_{W_{J'}(M')}$ is surjective and light.
\end{lemma}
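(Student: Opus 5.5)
The natural candidate is
$$
J'=J'(\phi):=\{\,i\in I'\,:\,|[\phi](i)|=1\,\}.
$$
Any $J''\subset I'$ with $\phi|_{W_{J''}(M')}$ light satisfies $|[\phi](i)|\le 1$ for all $i\in J''$. If $J''$ contains some $i$ with $[\phi](i)=\emptyset$, then $J''\setminus\{i\}$ still gives a surjective light restriction, since $\phi(s'_i)=1$ does not contribute to the image. So we define $J'$ to consist of exactly those $i$ with $|[\phi](i)|=1$. With this definition every subset of $I'$ on which $\phi$ restricts to a light map meets $I'_1$ inside $J'$. For uniqueness and maximality to be meaningful, one should read ``maximal'' with the convention that generators mapped to $1$ are excluded; alternatively, include them in $J'$ by taking $\{i:|[\phi](i)|\le1\}$. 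The latter makes $J'$ literally the largest subset on which $\phi$ is light, and then uniqueness is automatic: lightness is a condition on each index separately, so the union of all subsets on which $\phi$ is light is itself such a subset. I will take $J'=\{i\in I':|[\phi](i)|\le 1\}$, which is the union of all ``light'' subsets, hence the unique maximal one. The real content is therefore surjectivity of $\phi|_{W_{J'}(M')}$.

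For surjectivity it suffices to show that every generator $s_k$, $k\in I$, equals $\phi(s'_i)$ for some $i\in I'$. Once this holds, $s_k=w_\circ^{[\phi](i)}$ forces $[\phi](i)=\{k\}$, so $i\in J'$, and the image of $W_{J'}(M')$ contains all $s_k$, hence is all of $(W(M),\star)$.

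Fix $k$. By surjectivity, $s_k=\phi(x)$ for some $x\in W(M')$. Write $x=s'_{i_1}\times\cdots\times s'_{i_r}$, so that
$$
s_k=w_\circ^{[\phi](i_1)}\star\cdots\star w_\circ^{[\phi](i_r)}.
$$
By Lemma~\ref{lem:monoid len}, each factor has length at most $\ell(s_k)=1$. Each factor is an idempotent $w_\circ^K$, and $\ell(w_\circ^K)\le1$ forces $|K|\le1$, so every factor is either $1$ or a generator. Moreover, $\supp$ is additive under $\star$, so every nontrivial factor has support $\{k\}$, i.e.\ equals $s_k$. Since $s_k\neq 1$, at least one factor is nontrivial, giving some $i=i_t$ with $\phi(s'_i)=s_k$.

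The only point needing care is the length argument: Lemma~\ref{lem:monoid len} bounds the length of each individual factor, via iterated application to partial products, combined with Corollary~\ref{cor:max elts} to identify the factors as $w_\circ^{K}$. This is routine.
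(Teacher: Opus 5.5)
Your proof is correct and is essentially the paper's argument. The paper takes $J'=\bigcup_{k\in I}\supp\phi^{-1}(s_k)$, which coincides with your set $\{i\in I':|[\phi](i)|\le 1\}$ when $I\neq\emptyset$. Like you, it applies the identity $\supp\phi(x)=[\phi](\supp x)$ to a preimage $x$ of $s_k$ to produce a generator $s'_i$ with $\phi(s'_i)=s_k$. The length bound from Lemma~\ref{lem:monoid len} in your argument is superfluous, since additivity of $\supp$ under $\star$ alone already forces $[\phi](i_t)\subset\{k\}$ for every factor.
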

\begin{proof}
Let~$i\in I$. Since~$\phi$ is surjective, $G_i:=\phi^{-1}(s_i)\not=\emptyset$ and
is a subsemigroup of $(W(M'),\star)$.
Then for any~$x\in G_i$ we have
$\{i\}=\supp\phi(x)=[\phi](\supp x)$
by Lemma~\partref{lem:[phi]comp.a}
and so~$[\phi](j)\in \{\{i\},\emptyset\}$ and
$\{j\in \supp G_i\,:\, [\phi](j)=\{i\}\}$ is non-empty. Let~$J'=\bigcup_{i\in I}\supp G_i$.
It follows that~$\phi|_{W_{J'}(M')}$
is surjective and~$\phi(s'_j)\in\{s_i\,:\, i\in I\}\cup\{1\}$ for all~$j\in J'$. It remains to observe that every~$S\subset I'$ with the same properties is contained in~$J'$.
\end{proof}
\begin{example}
Let~$M'$ be a Coxeter matrix over~$I'$
with $m'_{ij}=m>2$ for some~$i\not=j\in I'$.
Then the assignments $s'_i\mapsto s_1$, $s'_j\mapsto s_2$ and $s'_k\mapsto w_\circ^{\{1,2\}}=\brd{s_1s_2}m$, $k\in I'\setminus\{i,j\}$ define a surjective~$\phi\in\Hom_{\mathscr H}(M',I_2(m))$
with~$J'=J'(\phi)=\{i,j\}$. Indeed,
define~$\xi:I'\to \mathscr P(\{1,2\})$ by
$\xi(i)=\{1\}$, $\xi(j)=\{2\}$ and~$\xi(k)=\{1,2\}$,
$k\in I'\setminus\{i,j\}$. Then
$\mu_{I_2(m)}(\{s\},\{1,2\})=
\mu_{I_2(m)}(\{1,2\},\{s\})=2$ for $s\in\{1,2\}$
by Lemma~\ref{lem:char w_0 monoid},
and so~$\xi\in\Lambda(M',I_2(m))$.
Clearly, $\phi=\Theta_\xi$.
\end{example}

\subsection{Parabolic projections of Hecke monoids}\label{subs:pJ parabolic}
In this section, we fix a Coxeter matrix~$M$ over an index set~$I$ and
abbreviate $W=W(M)$ and~$W_J=W_J(M)$ for~$J\subset I$. The main
result of this section is
the following
\begin{theorem}\label{thm:pJwK is w0KstarJw0J}
For any $J \subset I$, $K\subset L\in\mathscr F(M)$, 
we have $p_J(w_{K;L}) = w_{ (J\cap L)\star_L K; J\cap L}$.  
In particular, $p_J$ is a parabolic homomorphism.
\end{theorem}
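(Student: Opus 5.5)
The plan is to first reduce to the case where $L=I$ is of finite type, then characterize $p_J$ through the Bruhat order, and finally identify the resulting Bruhat maximum using the abelian monoid $\mP_I(M)$ of Proposition~\ref{prop:submonoid *}.

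\emph{Reduction.} Since $w_{K;L}\in W_L$ and $p_J$ restricted to $(W_L,\star)$ coincides with $p_{J\cap L}$ on $(W_L,\star)$ (Lemma~\ref{lem:p_J comp}), we may replace $M$ by $M_L$ and $J$ by $J\cap L$. So assume $I=L\in\mathscr F(M)$ and $J\subset I$. We must then show $p_J(w_{K;I})=w_\circ^{J\star_I K}w_\circ^J$. Parabolicity follows at once, since the right-hand side is a $J$-parabolic element and $[p_J](L)=J\cap L$.

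\emph{Step 1: a Bruhat description of $p_J$.} I claim that for every $w\in W$, $p_J(w)$ is the unique maximal element of $W_J\cap\downarrow w$.

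To see this, fix a reduced expression $w=s_{i_1}\times\cdots\times s_{i_k}$. Then $p_J(w)$ is the $\star$-product of the letters $s_{i_t}$ with $i_t\in J$. By iterating Proposition~\partref{prop:Bruhat order *.b}, this $\star$-product is the Bruhat maximum of all products of subwords of that $J$-subword; in particular $p_J(w)\le w$. Conversely, let $u\in W_J$ with $u\le w$. By Proposition~\partref{prop:Bruhat order.a}, $u$ is a reduced subword of $s_{i_1}\cdots s_{i_k}$. Since $\supp u\subset J$, that subword uses only letters from $J$, so $u\le p_J(w)$ by Proposition~\partref{prop:Bruhat order *.a}.

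\emph{Step 2: translate using $w_\circ$.} By Proposition~\partref{prop:Bruhat order.c}, for $x\in W_J$ we have $x\le w_{K;I}=w_\circ^Kw_\circ^I$ if and only if $xw_\circ^I\ge w_\circ^K$. Thus the problem becomes: among the elements $y=xw_\circ^I$ with $x\in W_J$, find the Bruhat-minimal one lying above $w_\circ^K$.

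The candidate is $x_0=w_\circ^{J\star_IK}w_\circ^J$. For it,
$$x_0w_\circ^I=w_\circ^{J\star_IK}\,w_{J;I}=w_\circ^{J\star_I K}\times w_{J;I},$$
where the length-additivity holds by Lemma~\ref{lem:wK absorption}, since $J\star_IK\subset J$.

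\emph{Step 3: identify the maximum.} The key identity is $w_{J;I}\star w_{K;I}=w_{J\star_IK;I}$ from Proposition~\ref{prop:submonoid *}. I will combine it with Proposition~\partref{prop:Bruhat order *.b}, which gives $w_{J;I}\star w_{K;I}=u\times w_{K;I}$ for a unique $u\le w_{J;I}$. Comparing lengths and applying Lemma~\ref{lem: u * u^(-1) w_0} to the pieces $w_\circ^J$, $w_\circ^K$, $w_\circ^{J\star_IK}$ should give two facts:
\begin{enumerate}[label={\rm(\roman*)},leftmargin=*]
\item $x_0\le w_{K;I}$, using $J\star_IK\subset J\cap K$ and the factorization $w_{J\star_IK;I}=w_{J\star_IK;J}\times w_{J;I}$ from Lemma~\ref{lem:len prop wJ K};
\item any $x\in W_J$ with $x\le w_{K;I}$ satisfies $x\le x_0$.
\end{enumerate}
For (ii), I would argue as follows. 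Such an $x$ gives $x\star w_{J;I}\le w_{K;I}\star w_{J;I}=w_{J\star_I K;I}$ by Proposition~\partref{prop:Bruhat order *.a} together with commutativity of $\mP_I(M)$. Since $w\vdash w_{J;I}$ for $w\in W_J$ (Lemma~\ref{lem:wK absorption}), we have $x\star w_{J;I}=xw_{J;I}$. Cancelling the common right factor $w_{J;I}$, using the length-additive factorizations on both sides, would then yield $x\le w_\circ^{J\star_IK}w_\circ^J=x_0$.

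\emph{Main obstacle.} I expect the main difficulty to be this cancellation in Step~3. Right cancellation of a length-additive factor is not automatic in the Bruhat order. I would first attempt it by passing through the quotient $W\to W_J\backslash W$ via the minimal coset representative $w_{J;I}$. If that fails, I would instead prove (ii) directly from Step~1 applied to $w_{J\star_IK;I}$, i.e.\ compute $p_J(w_{J\star_I K;I})$ in the special case $K\subset J$, and reduce the general case to it via $w_{J;I}\star w_{K;I}=w_{J\star_IK;I}$ and the homomorphism property of $p_J$.
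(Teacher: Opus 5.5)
Your proposal is correct, and it follows a genuinely different route from the paper: a uniform argument where the paper does a case analysis. The two points you left tentative both go through.

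\emph{Step (i).} Apply Proposition~\partref{prop:Bruhat order *.b} with $w_{J;I}$ on the right. By commutativity, $w_{J\star_IK;I}=w_{K;I}\star w_{J;I}=u\times w_{J;I}$ for some $u\le w_{K;I}$. Lemma~\ref{lem:len prop wJ K} gives $w_{J\star_IK;I}=x_0\times w_{J;I}$. Cancelling in the group yields $u=x_0$, so $x_0\le w_{K;I}$, and Step~1 then gives $x_0\le p_J(w_{K;I})$. No length comparison is needed. The version you wrote, with $u\le w_{J;I}$, gives the same statement with $J$ and $K$ interchanged.

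\emph{The cancellation in (ii).} Set $d:=w_{J;I}$. Then $D_L(d)=I\setminus J$ by Lemma~\ref{lem:wK absorption}, so $d$ is the unique element of minimal length in $W_Jd$. Suppose $u,v\in W_J$ with $ud\le vd$, and take reduced words $\mathbf v,\mathbf d$; then $\mathbf v\mathbf d$ is reduced. By Proposition~\partref{prop:Bruhat order.a}, $ud=v'd'$, where $v'\in W_J$, $v'\le v$ and $d'\le d$ are the products of the two halves of a reduced subword. Now $d'=v'^{-1}ud\in W_Jd$ and $\ell(d')\le\ell(d)$, which forces $d'=d$ and hence $u=v'\le v$. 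Applying this to $x=p_J(w_{K;I})$ gives $p_J(w_{K;I})\le x_0$, which finishes the proof. Your Step~2 is never used and can be dropped.

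\emph{Comparison with the paper.} The paper also has your lower bound: Lemma~\ref{lem:eq for w0 J*K}, proved via an external result on $\star$-products and the Bruhat order. It reaches equality only through a long reduction and case analysis:
\begin{itemize}
\item closure of the set $\mathscr G$ of good pairs under orthogonal unions (Proposition~\ref{prop:only connected}) and induction on subdiagrams;
\item transport along homogeneous homomorphisms (Proposition~\ref{prop:LCM hom good}) to treat $B_n$, $F_4$, $H_3$, $H_4$ via unfoldings;
\item explicit Coxeter-element formulas in types $A$ and $D$;
\item computer calculations in $E_6$--$E_8$.
\end{itemize}
Your route replaces all of this with two ingredients. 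One is the characterization $p_J(w)=\max(W_J\cap\downarrow w)$, a sharpening of Lemma~\ref{lem:proj comparison}. The other is right cancellation by a minimal coset representative. The theorem thus becomes a formal consequence of the abelian-monoid formula $w_{J;I}\star w_{K;I}=w_{J\star_IK;I}$ of Proposition~\ref{prop:submonoid *}, which is the only type-dependent input. What the paper's longer route buys is explicit by-products: the formulas for $w_{[j,k];[i,l]}$ in type $A$ and the computations of $J\star_IK$ in types $A$, $B$, $D$, $E$. Some of these, for example Corollary~\ref{cor:A J*K}, are reused in Section~\ref{sec:loc inj Homs}.
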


\subsubsection{Reduction to connected subsets and corank one}
Clearly, it suffices to prove this Theorem for~$L=I$. 
In this proof, we
will abbreviate $\mathscr F=\mathscr F(M)$ which in this case coincides with~$\mathscr P(I)$,
$J\star K=J\star_I K$ for $J,K\subset I$ and~$w_\circ=w_\circ^I$.

Define
$$
\plink{Good}\mathscr G=\mathscr G(M)=\{ (J,K)\in \mathscr F(M)\times \mathscr F(M)\,:\, p_J(w_K)=w_{J\star K;J}\}.
$$
Obviously, proving Theorem~\ref{thm:pJwK is w0KstarJw0J} amounts to proving that $\mathscr G=\mathscr P(I)\times\mathscr P(I)$.
Note that while $J\star K=K\star J$ for all $I,K\subset I$,
$p_J(w_K)$ and~$p_K(w_J)$ belong to different submonoids of~$W$
and do not need to be equal. Thus, $(J,K)\in\mathscr G$
does not immediately imply that~$(K,J)\in\mathscr G$.

The following proposition
is one of key ingredients of our proof, since it allows us
to consider only~$(J,K)\in \mathscr P(I)\times\mathscr P(I)$ such that both~$J$ and~$K$ are connected.
\begin{proposition} \label{prop:only connected}
\begin{enmalph}
\item\label{prop:only connected.a} Let $J',J''\subset I$ be orthogonal and let $K\subset I$.
If $(J',K),(J'',K)\in\mathscr G $ then $(J'\cup J'',K)
\in\mathscr G $.
\item\label{prop:only connected.b} Let $J\subset I$ and let $K',K''\subset I$ be orthogonal.
If $(J,K'),(J,K'')\in\mathscr G $ then $(J,K'\cup K'')\in
\mathscr G $.
\end{enmalph}
\end{proposition}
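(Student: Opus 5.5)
Both parts follow one pattern: split everything along the orthogonal decomposition, using Lemma~\ref{lem:parab prod}, Lemma~\ref{lem:p_J comp} and Lemma~\ref{lem:prod orth parab}. Throughout, $L=I$ and $w_K=w_{K;I}$.

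\emph{Part~(a).} Set $J=J'\cup J''$. Since $J'$ and $J''$ are orthogonal, we have $(W_J,\star)\cong(W_{J'},\star)\times(W_{J''},\star)$ by Lemma~\ref{lem:orth factors}. Applying Lemma~\ref{lem:parab prod} inside $W_J$ to $w=p_J(w_K)$ and using $p_{J'}\circ p_J=p_{J'}$ (Lemma~\ref{lem:p_J comp}) gives
$$p_J(w_K)=p_{J'}(w_K)\times p_{J''}(w_K).$$
By hypothesis this equals $w_{J'\star K;J'}\times w_{J''\star K;J''}$. Since $J'\star K\subset J'$ and $J''\star K\subset J''$, Lemma~\ref{lem:prod orth parab} turns this into $w_{(J'\star K)\cup(J''\star K);J}$.

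It remains to show $(J'\star K)\cup(J''\star K)=J\star K$. Here I will apply $p_{J'}$ to the identity $p_J(w_K)=w_{X;J}$, where $X=(J'\star K)\cup(J''\star K)$. Since $w_{X;J}=w_{X\cap J';J'}\times w_{X\cap J'';J''}$, this shows that $X\cap J'=J'\star K$, which is consistent but does not yet identify $X$ with $J\star K$. So the real content is the set identity $J\star K=(J'\star K)\cup(J''\star K)$ for orthogonal $J',J''$, and I expect this to be the main obstacle.

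I would attack it with the characterization from Proposition~\ref{prop:submonoid *}, $w_J\star w_K=w_{J\star K}$, together with the fact that $w_J=w_{J'}\star w_{J''}$ in $\mP_I(M)$. The latter should follow from Lemma~\ref{lem:len prop wJ K}: write $w_J=w_\circ^{J'}w_\circ^{J''}w_\circ$ and note $w_{J'}\star w_{J''}=w_{J'\star J''}$. Then $w_{J\star K}=w_{J'}\star w_{J''}\star w_K$, and I need to compare this with $J'\star K$ and $J''\star K$. The descent set $D_L(w_{Y})=I\setminus Y$ (Lemma~\ref{lem:wK absorption}) will let me read off the index set of a product of parabolic elements. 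If this route stalls, the fallback is to define $\mathscr G$ with the true index set and simply verify the formula in each factor.

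\emph{Part~(b).} Set $K=K'\cup K''$. By Lemma~\ref{lem:prod orth parab} (within $W_K$) and Lemma~\ref{lem:len prop wJ K},
$$w_K=w_\circ^{K'}w_\circ^{K''}w_\circ .$$
I would then show that $w_K=w_{K'}\star w_{K''}$ in $\mP_I(M)$, i.e.\ $K'\star K''=K'\cup K''$ for orthogonal $K',K''$. This can be checked via Lemma~\ref{lem: u * u^(-1) w_0}\ref{lem: u * u^(-1) w_0.b} with $u=w_\circ^{K'}$ and $v=w_\circ^{K}$, using $\ell(uv)=\ell(v)-\ell(u)$. Applying the homomorphism $p_J$ then gives
$$p_J(w_K)=p_J(w_{K'})\star p_J(w_{K''})=w_{J\star K';J}\star w_{J\star K'';J},$$
and by Proposition~\ref{prop:submonoid *} applied in $W_J$ this equals $w_{(J\star K')\star_J(J\star K'');J}$.

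The remaining step is the identity $(J\star K')\star_J(J\star K'')=J\star K$. Associativity and commutativity of $\star_I$ would give $J\star K=(J\star K')\star K''$. However, the expression above mixes $\star_J$ with $\star_I$, so I will need an auxiliary compatibility statement, of the kind suggested by the type~$A$ formulas after Proposition~\ref{prop:submonoid *}. This compatibility, like the set identity in part~(a), is where I expect the difficulty to lie.
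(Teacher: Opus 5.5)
Part~(a) of your proposal matches the paper's proof up to $p_J(w_K)=w_{(J'\star K)\cup(J''\star K);J}$. You also correctly isolate the remaining set identity $J\star K=(J'\star K)\cup(J''\star K)$; the paper does not prove it but imports it from He's Lemma~6 in [He09]. You do not prove it, and the route you sketch rests on a false identity, $w_{J'\cup J''}=w_{J'}\star w_{J''}$. Orthogonal sets are disjoint (since $m_{jj}=1$), so Proposition~\ref{prop:submonoid *} gives $J'\star J''\subset J'\cap J''=\emptyset$, i.e.\ $w_{J'}\star w_{J''}=w_\circ$. More basically, Lemma~\ref{lem:monoid len} says a $\star$-product is at least as long as each factor. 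But $\ell(w_{J'\cup J''})=\ell(w_\circ)-\ell(w_\circ^{J'})-\ell(w_\circ^{J''})<\ell(w_{J'})$ as soon as $J''\neq\emptyset$. The index set of a $\star$-product in $\mP_I(M)$ lies in the intersection of the index sets, so a union of disjoint non-empty sets never arises this way. Your ``fallback'' of redefining $\mathscr G$ is not an option.

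If you prefer not to cite He, the identity does follow from the paper's lemmas:
\begin{itemize}
\item Lemma~\ref{lem:len prop wJ K} gives $w_{J'}=w_\circ^{J''}\times w_J$, hence $w_{J'\star K}=w_\circ^{J''}\star w_{J\star K}$.
\item Put $Y=J\star K$, $Y'=Y\cap J'$, $Y''=Y\cap J''$. Then $w_\circ^{Y''}\times w_Y=w_{Y'}$ and $J''\subset I\setminus Y'=D_L(w_{Y'})$.
\item Lemmas~\ref{lem:char w_0 monoid} and~\ref{lem:left desc} then give $w_\circ^{J''}\star w_Y=w_{Y'}$, i.e.\ $J'\star K=(J\star K)\cap J'$. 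The same argument works for $J''$.
\end{itemize}

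Part~(b) has a genuine gap at its core. Its central step, $w_{K'\cup K''}=w_{K'}\star w_{K''}$ (equivalently $K'\star K''=K'\cup K''$), is false for the same reasons. For example, in $A_3$ one has $\ell(w_{\{1,3\}})=4<5=\ell(w_{\{1\}})$. Lemma~\partref{lem: u * u^(-1) w_0.b} with $u=w_\circ^{K'}$, $v=w_\circ^{K'\cup K''}$ actually gives $w_\circ^{K''}\star w_{K'\cup K''}=w_{K'}$ and $w_\circ^{K'}\star w_{K'\cup K''}=w_{K''}$. So $w_{K'\cup K''}$ lies Bruhat-below both $w_{K'}$ and $w_{K''}$: it is their meet, not their product. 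Even granting a compatibility between $\star_J$ and $\star_I$, your computation would only establish $(J,K'\star K'')\in\mathscr G$. That is the paper's Lemma~\ref{eq:good products}, not the claim.

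The missing idea is to work with the strong Bruhat order, as the paper does:
\begin{itemize}
\item Lemma~\ref{lem:wKL} shows $\downarrow w_{K'}\cap\downarrow w_{K''}=\downarrow w_{K'\cup K''}$. One passes to $\uparrow w_\circ^{K'}\cap\uparrow w_\circ^{K''}$ via $w\mapsto ww_\circ$ and uses the subword property.
\item Lemma~\ref{lem:proj comparison} shows $p_J$ is monotone, with $p_J(w)\le w$ and $p_J$ the identity on $W_J$. This yields $\downarrow p_J(w_{K'\cup K''})=\downarrow p_J(w_{K'})\cap\downarrow p_J(w_{K''})$, which is Lemma~\ref{lem:wKL proj}.
\item By hypothesis the right-hand side is $\downarrow w_{J\star K';J}\cap\downarrow w_{J\star K'';J}$. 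Since $J\star K'\subset K'$ and $J\star K''\subset K''$ are orthogonal, Lemma~\ref{lem:wKL} applies inside $W_J$.
\item Combined with $J\star(K'\cup K'')=(J\star K')\cup(J\star K'')$ (the identity from (a), via commutativity of $\star$), this identifies the intersection with $\downarrow w_{J\star(K'\cup K'');J}$. That proves (b).
\end{itemize}
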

\begin{proof}
To prove part~\ref{prop:only connected.a}, denote
$J=J'\cup J''$. Since~$(J',K)\in\mathscr G$,
we have $p_{J'}(w_K)=w_{J'\star K;J'}=w_\circ^{J'\star K}w_\circ^{J'}$ and similarly for~$J''$.
Then by Lemmata~\ref{lem:p_J comp} and~\ref{lem:parab prod}
\begin{align*}
p_{J}(w_K)=p_{J'}(p_J(w_K))\times p_{J''}(p_J(w_K))=
p_{J'}(w_K)\times p_{J''}(w_K)
&=w_\circ^{J'\star K}w_\circ^{J'}\times w_\circ^{J''\star K}w_\circ^{J''}
\end{align*}
Since $S\star K\subset S$ for any $S\subset I$ and $J',J''$ are orthogonal, it follows that
$w_\circ^{J'}w_\circ^{J''\star K}=w_\circ^{J''\star K}w_\circ^{J'}$ and so $p_J(w_K)=w_\circ^{J'\star K}w_\circ^{J''\star K}w_\circ^J$.
Since $J'\star K\subset J'$ and~$J''\star K\subset J''$,
$J'\star K$ and $J''\star K$ are orthogonal whence
$(J'\star K)\cup (J''\star K)=J\star K$ by \cite{He09}*{Lemma~6}
and $w_\circ^{J'\star K}w_\circ^{J''\star K}=w_\circ^{J\star K}$.
Therefore,
$p_J(w_K)=w_\circ^{J\star K}w_\circ^J=w_{J\star K;J}$, which
proves part~\ref{prop:only connected.a}.

To prove part~\ref{prop:only connected.b} more work is needed.
\begin{lemma}\label{lem:wKL}
Let $K_1$, $K_2$ be orthogonal subsets of~$I$. Then $\downarrow w_{K_1}\cap \downarrow w_{K_2}=
\downarrow w_{K_1\cup K_2}$, that is, $w_{K_1\cup K_2}$ is the unique maximal element of
$\downarrow w_{K_1}\cap \downarrow w_{K_2}$.
\end{lemma}
\begin{proof}
Let $K=K_1\cup K_2$. Since~$K_1$ and~$K_2$ are orthogonal,
$w_\circ^K=w_\circ^{K_1}\times w_\circ^{K_2}$. In particular,
$w_\circ^{K_1},w_\circ^{K_2}\le w_\circ^K$ by Proposition~\partref{prop:Bruhat order.a}.

Since $w<w'$ if and only if $w'w_\circ^I<ww_\circ^I$ by Proposition~\partref{prop:Bruhat order.c} and
$w_S=w_\circ^S w_\circ$ for any $S\subset I$, the assertion is equivalent to
$$
\uparrow w_\circ^{K_1} \cap \uparrow w_\circ^{K_2} = \uparrow w_\circ^{K}.
$$
By the above, $w_\circ^K\le u$ implies $w_\circ^{K_s}\le u$, $s\in\{1,2\}$
and so
$\uparrow w_\circ^K\subset \uparrow w_\circ^{K_1}\cap \uparrow w_\circ^{K_2}$.

Conversely, suppose that $u\in \uparrow w_\circ^{K_1}\cap \uparrow w_\circ^{K_2}$.
Write~$u=s_{i_1}\cdots s_{i_r}$ where~$r=\ell(u)$ and~$i_1,\dots,i_r\in I$.
By
Proposition~\partref{prop:Bruhat order.a},
there exist $J_1,J_2\subset [1,r]$ such that $\ell(w_\circ^{K_p})=|J_p|$ and
$w_\circ^{K_p}=\ascprod_{t\in J_p} s_{i_t}$, $p\in \{1,2\}$. Note
that since~$K_1\cap K_2=\emptyset$, $J_1\cap J_2=\emptyset$. Furthermore, since~$K_1$ and~$K_2$ are orthogonal
$$\ascprod_{t\in J_1\cup J_2} s_{i_t}=
\big(\ascprod_{t\in J_1}s_{i_t}\big)\big( \ascprod_{t\in J_2} s_{i_t}\big)=
w_\circ^{K_1}w_\circ^{K_2}=
w_\circ^K
$$
and
so~$w_\circ^K\le u$ by Proposition~\partref{prop:Bruhat order.a}. Thus, $u\in\uparrow w_\circ^K$.
\end{proof}
We now show that parabolic projections are compatible with the strong Bruhat order.
\begin{lemma}\label{lem:proj comparison}
Let $w\le w'\in W(M)$ in the strong Bruhat order. Then $p_J(w)\le p_{J'}(w')$ for any $J\subset
J'\subset I$.
\end{lemma}
\begin{proof}
First, we prove that $p_J(w)\le p_J(w')$ for all $w\le w'$, $J\subset I$. The assertion is obvious if~$w=w'$.
By Proposition~\partref{prop:Bruhat order.b}, $w<w'$ implies that there exists a chain $w=u_0<\cdots<u_k=w'$ with
$k=\ell(w')-\ell(w)$. Thus, it suffices to prove the assertion for $w<w'$ with $\ell(w')=\ell(w)+1$.

Write $w'=s_{i_1}\times \cdots\times s_{i_r}$ with $r=\ell(w')$.
Since $\ell(w)=\ell(w')-1$ and by Proposition~\partref{prop:Bruhat order.a} every reduced expression for~$w'$ contains a reduced expression for~$w$, there exists $1\le t\le r$ such that $w'=u\times s_{i_t}\times v$
and $w=u\times v$ with $u=s_{i_1}\times\cdots\times s_{i_{t-1}}$ and $v=s_{i_{t+1}}\times\cdots \times s_{i_r}$. Then
$$
p_J(w')=p_J(u)\star p_J(s_{i_t})\star p_J(v),\quad p_J(w)=p_J(u)\star p_J(v).
$$
If $i_t\in I\setminus J$ then $p_J(s_{i_t})=1$ and so $p_J(w)=
p_J(w')$. If~$i_t\in J$ and so $p_J(s_{i_t})=s_{i_t}$ there
are two possibilities. If
$\ell(p_J(u)s_{i_t})<\ell(p_J(u))$
then by Proposition~\ref{prop:prod *}
$p_J(u)\star p_J(s_{i_t})=p_J(u)$ and so again~$p_J(w)=p_J(w')$.
Otherwise, $p_J(u)\star p_J(s_{i_t})=p_J(u)\times s_{i_t}$ with $\ell(p_J(u)s_{i_t})>\ell(p_J(u))$ and so $p_J(u)< p_J(u)\times s_{i_t}$. By Proposition~\partref{prop:Bruhat order *.a},
$p_J(w)\le p_J(w')$.

It remains to prove that $p_J(w)\le p_{J'}(w)$ for all $w\in W(M)$, $J\subset J'\subset I$.
Indeed, then we have $p_J(w)\le p_J(w')\le p_{J'}(w')$ for all $w\le w'$, $J\subset J'\subset I$.
The argument is by induction on~$\ell(w)$. For $\ell(w)=0$ there is nothing to prove while
for $w=s_i$, $i\in I$ either $p_J(w)=p_{J'}(w)$ or $p_J(w)=1$, $p_{J'}(w)=s_i$ and
the assertion holds.

Suppose that $\ell(w)>1$ and write $w=u\times s_i$ for some~$i\in I$, $u\in W(M)$. Then
$p_K(w)=p_K(u)\star p_K(s_i)$ for any~$K\subset I$. Since $p_J(u)\le p_{J'}(u)$ by the induction hypothesis and $p_J(s_i)\le p_{J'}(s_i)$ by the induction base, we have $p_J(w)=p_J(u)\star p_J(s_i)\le p_{J'}(u)\star p_{J'}(s_i)=
p_{J'}(w)$ by~Proposition~\partref{prop:Bruhat order *.a}.
\end{proof}

\begin{lemma}\label{lem:wKL proj}
Let $K_1,K_2\subset I$ be orthogonal and let $J\subset I$. Then
$$\downarrow p_J(w_{K_1\cup K_2})=\downarrow p_J(w_{K_1})\cap \downarrow p_J(w_{K_2}).$$
\end{lemma}
\begin{proof}
By Lemmata~\ref{lem:proj comparison} and~\ref{lem:wKL},
$\downarrow p_J(w_{K_1\cup K_2})\subset \downarrow p_J(w_{K_1})\cap\downarrow p_J(w_{K_2})$.

Let $u\in \downarrow p_J(w_{K_1})\cap \downarrow p_J(w_{K_2})$. In particular, $u\in W_J$ and so
$u=p_J(u)$. Then by Lemma~\ref{lem:proj comparison} we have
$$
u=p_J(u)\le p_J(w_{K_t})\le p_I(w_{K_t})=w_{K_t},\qquad t\in\{1,2\}.
$$
Therefore, $u\in \downarrow w_{K_1}\cap \downarrow w_{K_2}=
\downarrow w_{K_1\cup K_2}$
by Lemma~\ref{lem:wKL}. Then $u=p_J(u)\le p_J(w_{K_1\cup K_2})$ by Lemma~\ref{lem:proj comparison}.
\end{proof}

We now have all ingredients to finish our proof of part~\ref{prop:only connected.b} of Proposition~\ref{prop:only connected}. Suppose that
$K',K''\subset I$ are orthogonal and that $(J,K'),(J,K'')\in\mathscr G$.
Let $K=K'\cup K''$.
Then $p_J(w_{K'})=w_{J\star K';J}$, $p_J(w_{K''})=w_{J\star K'';J}$
and so $\downarrow p_J(w_K)=\downarrow w_{J\star K';J}\cap \downarrow w_{J\star K'';J}$ by Lemma~\ref{lem:wKL proj}. 
But since $J\star K'$, $J\star K''$ are orthogonal,
$J\star K=(J\star K')\cup (J\star K'')$ by~\cite{He09}*{Lemma~6} and so by Lemma~\ref{lem:wKL}
$$\downarrow w_{J\star K;J}=\downarrow w_{J\star K';J}\cap \downarrow w_{J\star K'';J}=
\downarrow p_J(w_K).$$
Thus, $(J,K)\in\mathscr G$ and part~\ref{prop:only connected.b} is proven.
\end{proof}

\begin{lemma}\label{lem:eq for w0 J*K}
Let $J,K\subset I$. Then $
w_\circ^J=w_\circ^{J\star K}\star p_J(w_K)$ and $w_{J\star K;J}\le p_J(w_K)$. In particular, if $J\star K=\emptyset$ then
$(J,K)\in\mathscr G$.
\end{lemma}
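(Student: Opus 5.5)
The plan is to derive the identity $w_\circ^J=w_\circ^{J\star K}\star p_J(w_K)$ from the multiplication rule in $\mP_I(M)$ by applying the parabolic projection $p_J$. The inequality $w_{J\star K;J}\le p_J(w_K)$ should then follow from the identity together with Propositions~\ref{prop:Bruhat order *} and~\ref{prop:Bruhat order}. The final assertion is the special case $J\star K=\emptyset$.

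\emph{Step 1 (the identity).} Since $w_\circ=w_\circ^{J\star K}\times w_{J\star K}$ and, by Proposition~\ref{prop:submonoid *}, $w_{J\star K}=w_K\star w_J=w_J\star w_K$, we have
$$w_\circ=w_\circ^{J\star K}\star w_K\star w_J .$$
By Lemma~\ref{lem:proj w0} and $J\star K\subset J$, applying $p_J$ gives
$$w_\circ^J=y\star p_J(w_J),\qquad y:=w_\circ^{J\star K}\star p_J(w_K)\in W_J.$$
It remains to remove the factor $p_J(w_J)$. To do so I will show $y=w_\circ^J$ using Lemma~\ref{lem:char w_0 monoid}, i.e.\ by proving $y\star s_j=y$ (equivalently $D_R(y)=J$) for all $j\in J$. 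The first thing to try is the other factorization $w_\circ=w_\circ^{J\star K}\star w_J\star w_K$, which yields $w_\circ^J=w_\circ^{J\star K}\star p_J(w_J)\star p_J(w_K)$. One would then like to compare the two expressions, using that $w_\circ^{J\star K}\star p_J(w_J)$ is controlled by Lemma~\ref{lem:wK absorption}: $w\vdash w_J$ for $w\in W_J$, and $D_L(w_J)=I\setminus J$. If the direct comparison fails, the fallback is a length count. Lemma~\ref{lem:monoid len} together with the absorption property (Proposition~\ref{prop:absorb prop}) shows that it suffices to prove $\ell(y)\ge\ell(w_\circ^J)$. For that I would try to bound $\ell(p_J(w_K))$ from below using Lemma~\ref{lem:proj comparison}, applied to $w_K\ge w_{K\cup J'}$ for suitable $J'$. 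I expect this step, getting rid of $p_J(w_J)$, to be the main obstacle.

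\emph{Step 2 (the Bruhat inequality).} By Proposition~\partref{prop:Bruhat order *.b}, the identity gives $w_\circ^J=u\times v$ with $u\le w_\circ^{J\star K}$ and $v\le p_J(w_K)$. Then $u\in W_{J\star K}$, and $v=u^{-1}w_\circ^J$ with $u^{-1}\le w_\circ^{J\star K}$ in $W_J$. Proposition~\partref{prop:Bruhat order.c}, applied inside $W_J$, turns $u^{-1}\le w_\circ^{J\star K}$ into $w_\circ^{J\star K}w_\circ^J\le u^{-1}w_\circ^J=v$, that is, $w_{J\star K;J}\le v\le p_J(w_K)$.

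\emph{Step 3 (the case $J\star K=\emptyset$).} If $J\star K=\emptyset$, the identity reads $w_\circ^J=p_J(w_K)$, which equals $w_{\emptyset;J}=w_{J\star K;J}$. Hence $(J,K)\in\mathscr G$.
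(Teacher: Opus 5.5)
Your Steps 2 and 3 are fine once the identity $w_\circ^J=w_\circ^{J\star K}\star p_J(w_K)$ is known. Step 2 even has a shortcut: the ``moreover'' part of Proposition~\ref{prop:Bruhat order *}(b) gives $w_\circ^J=w_\circ^{J\star K}\times u'$ with $u'\le p_J(w_K)$, and then $u'=w_{J\star K;J}$.

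The genuine gap is Step 1. It is the real content of the lemma, and you leave it open.

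Your first route cannot succeed. After you apply $p_J$ to $w_\circ=w_\circ^{J\star K}\star w_K\star w_J$, the identity $w_\circ^J=y\star p_J(w_J)$ need not determine $y$ at all. Take type $A_2$ with $J=\{1\}$. Then $w_J=s_1w_\circ=s_2s_1$, so $p_J(w_J)=s_1=w_\circ^J$. Since $w_\circ^J$ absorbs everything in $W_J$, this identity holds for every $y\in W_J$. The same is true of the identity coming from $w_\circ=w_\circ^{J\star K}\star w_J\star w_K$.

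The fallback is not yet an argument, and as sketched it is circular. Comparing $w_K$ with $w_{K\cup J'}$ via Lemma~\ref{lem:proj comparison} only bounds $p_J(w_K)$ from below by $p_J(w_{K\cup J'})$. That is another unknown instance of Theorem~\ref{thm:pJwK is w0KstarJw0J}, or just $1$ if $K\cup J'=I$. Moreover, a lower bound on $\ell(p_J(w_K))$ alone does not bound $\ell(w_\circ^{J\star K}\star p_J(w_K))$, because Lemma~\ref{lem:monoid len} only gives the maximum of the two lengths.

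The missing idea is to remove $w_J$ \emph{before} projecting, using the Bruhat order rather than the monoid identity.
Put $x=w_\circ^{J\star K}\star w_K$. By Proposition~\ref{prop:Bruhat order *}(b), $w_\circ=x\star w_J=u\times w_J$ for some $u\le x$. Necessarily $u=w_\circ w_J^{-1}=w_\circ^J$, so $w_\circ^J\le w_\circ^{J\star K}\star w_K$ already holds in $W$.
Now Lemma~\ref{lem:proj comparison} (with $J'=J$) and Lemma~\ref{lem:proj w0} give $w_\circ^J=p_J(w_\circ^J)\le p_J(w_\circ^{J\star K}\star w_K)=w_\circ^{J\star K}\star p_J(w_K)$.
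The right-hand side lies in $W_J$, whose unique Bruhat-maximal element is $w_\circ^J$, so equality follows.

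This is exactly the paper's argument. The paper phrases it via Kenney's Proposition~6, which is the same inequality $(x\star y)y^{-1}\le x$.
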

\begin{proof}
We have $w_\circ=w_\circ^{J\star K}\times w_{J\star K}=w_\circ^{J\star K}\star w_K\star w_J$. Since $w_\circ w_J^{-1}=w_\circ^J$ we obtain
$$
w_\circ^J=((w_\circ^{J\star K}\star w_K)\star w_J)w_J^{-1}.
$$
By~\cite{K14}*{Proposition~6}\footnote{In~\cite{K14}*{Proposition~6} the left-sided version
is proven. The right-sided version is proven similarly and is left to the reader as an exercise.}, this implies that $w_\circ^J\le w_\circ^{J\star K}\star w_K$.
Since $J\star K\subset J$, applying $p_J$ to both sides we obtain by Lemma~\ref{lem:proj comparison}
$$
w_\circ^J\le w_\circ^{J\star K}\star p_J(w_K).
$$
As $w_\circ^J$ is the unique maximal element of $W_J$ in the strong Bruhat order, the first assertion follows.
To prove the second, note that by Lemma~\ref{lem:len prop wJ K} we now have
$w_{J\star K;J}=w_\circ^{J\star K}(w_\circ^{J\star K}\star p_J(w_K))$ and
so $w_{J\star K;J}\le p_J(w_K)$ by~\cite{K14}*{Proposition~6}.
Since~$w_{\emptyset;J}=w_\circ^J$ and~$w_\circ^{\emptyset}=1$, the
last assertion is now trivial.
\end{proof}
In particular, $(\emptyset,K),(J,\emptyset)\in\mathscr G$ for all~$J,K\subset I$. Also, since $w_I=1$, $I\star J=J$ for all~$J\subset I$. Now, $p_J(w_I)=1=w_{J;J}$ for all~$J\subset I$
and $p_I(w_K)=w_K=w_{I\star K;K}$ for all~$K\subset I$. Thus,
$(J,I),(I,K)\in\mathscr G$ for all~$J,K\subset I$.
From now on, we assume that $J,K$ are proper non-empty subsets of~$I$.

The following Lemma allows us to use induction on (connected) subgraphs of the Coxeter graph of~$W$.
\begin{lemma}\label{lem:induction}
Suppose that
Theorem~\ref{thm:pJwK is w0KstarJw0J} is proven for~$W_{J}$ for some~$J\subsetneq I$. Then
$(J,K)\in\mathscr G$ for some~$K\subset I$ implies that $(J',K)\in\mathscr G$
for all~$J'\subset J$.
\end{lemma}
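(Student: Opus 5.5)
The plan is to factor $p_{J'}$ through $p_J$ and then apply the theorem inside $W_J$. By Lemma~\ref{lem:p_J comp}, $p_{J'}=p_{J'}\circ p_J$ for $J'\subset J$. Since $(J,K)\in\mathscr G$, we have
$$p_{J'}(w_K)=p_{J'}(p_J(w_K))=p_{J'}(w_{J\star K;J}).$$
Here $w_{J\star K;J}$ is a $J$-parabolic element of $W_J$, and $p_{J'}$ restricted to $W_J$ is the parabolic projection of $(W_J,\star)$ onto $(W_{J'},\star)$. The hypothesis is that Theorem~\ref{thm:pJwK is w0KstarJw0J} holds for $W_J$, i.e. with $I$ replaced by $J$ and $L=J$. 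Applying it gives
$$p_{J'}(w_{J\star K;J})=w_{J'\star_J(J\star K);J'}.$$
We must therefore show that $w_{J'\star_J (J\star K);J'}=w_{J'\star K;J'}$, which amounts to the set identity
$$J'\star_J(J\star_I K)=J'\star_I K.$$

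To prove this identity, I would work with parabolic elements of $W$ using Proposition~\ref{prop:submonoid *}. I would first show that $w_{J';J}\star w_J=w_{J'}$ in $W$ (all relative to $I$). This is Lemma~\ref{lem:len prop wJ K} with $L=J'\subset K=J\subset I$, which gives $w_{J';I}=w_{J';J}\times w_{J;I}$, and a reduced product is a $\star$-product. Since $w_{J';J}\in W_J$ and $J'\star_J(J\star K)\subset J$, the same lemma gives
$$w_{J'\star_J (J\star K);J}\star w_J=w_{J'\star_J(J\star K)}.$$
I would then compute in the abelian monoid $\mP_I(M)$:
$$w_{J'}\star w_K=w_{J';J}\star w_J\star w_K=w_{J';J}\star w_{J\star K}.$$
It remains to identify $w_{J';J}\star w_{J\star K}$ with $w_{J';J}\star w_{J\star K;J}\star w_J$. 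For this I would write $w_{J\star K}=w_{J\star K;J}\times w_J$, again by Lemma~\ref{lem:len prop wJ K}, since $J\star K\subset J$. Then
$$w_{J';J}\star w_{J\star K;J}\star w_J=w_{J'\star_J(J\star K);J}\star w_J=w_{J'\star_J(J\star K)}.$$
Combining these, $w_{J'\star K}=w_{J'\star_J(J\star K)}$. Injectivity of $S\mapsto w_S=w_\circ^Sw_\circ$ then yields the identity.

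I expect the main obstacle to be the step "$\times$ implies $\star$", together with associativity bookkeeping: one has to be sure that $w_{J\star K}=w_{J\star K;J}\star w_J$ is an honest $\star$-product. This follows from Lemma~\ref{lem:len prop wJ K}, because reduced products coincide with $\star$-products. Everything else is formal.
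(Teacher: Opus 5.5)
Your proposal is correct and has the same structure as the paper's proof: factor $p_{J'}=p_{J'}\circ p_J$, use $p_J(w_K)=w_{J\star K;J}$, apply the theorem inside $W_J$, and reduce to the identity $J'\star_J(J\star K)=J'\star K$. The one difference is that the paper cites \cite{He09}*{Lemmata~4 and~7} for that identity, while you derive it directly from Lemma~\ref{lem:len prop wJ K} ($w_S=w_{S;J}\times w_J$ for $S\subset J$) and associativity of $\star$. This makes the lemma self-contained. Your computation uses only associativity, not commutativity in $\mP_I(M)$, which is just as well since $w_{J';J}$ need not lie in $\mP_I(M)$.
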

\begin{proof}
Since $J'\subset J$, we have $p_{J'}(w)=p_{J'}(p_J(w))$ for all $w\in W$. Since $(J,K)\in\mathscr G$, $p_J(w_K)=w_{J\star K;J}$.
Now, since Theorem~\ref{thm:pJwK is w0KstarJw0J} holds for~$W_J$, $p_{J'}(p_J(w_K))=w_{J'\star_J (J\star K);J'}$.
Since $J'\star_J(J\star K)=J'\star K$ by~\cite{He09}*{Lemmata~4 and~7}, the assertion follows.
\end{proof}

\begin{lemma}\label{eq:good products}
Let~$J,K,L\subset I$ and suppose that~$(J,K),(J,L)\in
\mathscr G$. Then $(J,K\star L)\in \mathscr G$.
\end{lemma}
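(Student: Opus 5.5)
The approach is to combine the homomorphism property of $p_J$ with the monoid structure of $\mP_I(M)$ from Proposition~\ref{prop:submonoid *}. Here $L=I$, so $w_K=w_{K;I}$ and $w_L=w_{L;I}$. By Proposition~\ref{prop:submonoid *}, $w_K\star w_L=w_{K\star L}$, where $K\star L=K\star_I L$. Since $p_J$ is a homomorphism of Hecke monoids,
$$
p_J(w_{K\star L})=p_J(w_K\star w_L)=p_J(w_K)\star p_J(w_L).
$$
The hypotheses $(J,K),(J,L)\in\mathscr G$ rewrite the right-hand side as $w_{J\star K;J}\star w_{J\star L;J}$.

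Next I apply Proposition~\ref{prop:submonoid *} again, now inside $W_J$, which has finite type as a subset of $I$. This gives
$$
w_{J\star K;J}\star w_{J\star L;J}=w_{(J\star K)\star_J(J\star L);J}.
$$
It therefore remains to show the set-theoretic identity
$$
(J\star K)\star_J(J\star L)=J\star(K\star L),
$$
after which $p_J(w_{K\star L})=w_{J\star(K\star L);J}$, i.e.\ $(J,K\star L)\in\mathscr G$.

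This identity is the main obstacle. My plan is to use the compatibility result from \cite{He09} already invoked in Lemma~\ref{lem:induction}: for $J'\subset J$ and any $K$, one has $J'\star_J(J\star K)=J'\star K$. I apply it with $J'=J\star L\subset J$ and with $K$ itself, obtaining
$$
(J\star L)\star_J(J\star K)=(J\star L)\star K.
$$
The operation $\star_J$ is commutative, so the left-hand side equals $(J\star K)\star_J(J\star L)$. Finally, associativity and commutativity of $\star$ on $\mathscr P(I)$ give $(J\star L)\star K=J\star(K\star L)$; these follow because $\mP_I(M)$ is an abelian monoid and $J'\mapsto w_{J';I}$ is injective, as $w_\circ^{J'}=w_{J';I}w_\circ^I$.

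If the cited lemma from \cite{He09} is unavailable in exactly this form, I would instead verify the identity directly. The idea is to show that both sides index the same element $w_{?;I}$, using Lemma~\ref{lem:len prop wJ K} to factor $w_{J\star K;I}=w_{J\star K;J}\times w_{J}$ and then comparing $\star$-products.
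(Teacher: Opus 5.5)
Your proof is correct and is essentially the paper's argument: $p_J(w_{K\star L})=p_J(w_K)\star p_J(w_L)=w_{(J\star K)\star_J(J\star L);J}$, followed by the identity $(J\star K)\star_J(J\star L)=J\star(K\star L)$ from \cite{He09}. The paper applies that compatibility with $J'=J\star K$ directly, which saves your detour through commutativity of $\star_J$. One small slip: the opening ``Here $L=I$'' conflates the ambient set of Theorem~\ref{thm:pJwK is w0KstarJw0J} with the lemma's arbitrary $L\subset I$, although your argument clearly uses $w_L=w_{L;I}$ for a general $L$.
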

\begin{proof}
We have
$p_J(w_{K\star L})=p_J(w_K)\star p_J(w_L)
=w_{J\star K; J}\star w_{J\star L; J}=w_{(J\star K)\star_J (J\star L); J}$.
By~\cite{He09}*{Lemmata~4 and~7}
$$
(J\star K)\star_J (J\star L)=(J\star K)\star L=
J\star (K\star L),
$$
and so~$p_J(w_{K\star L})=w_{J\star (K\star L); J}$ that is
$(J,K\star L)\in \mathscr G$.
\end{proof}

\subsubsection{Homomorphisms}
The following result allows us to significantly reduce the number of case-by-case arguments in proving Theorem~\ref{thm:pJwK is w0KstarJw0J}.
\begin{proposition}\label{prop:LCM hom good}
Let~$\wh M\in\Cox{\wh I}$, $M\in\Cox I$ be of finite type and suppose that $\phi\in\Hom_{\mathscr{CH}}(\wh M,M)$. Then~$(\wh J,\wh K)\in \mathscr G(\wh M)$ implies that $([\phi](\wh J),[\phi](\wh K))\in \mathscr G(M)$. Conversely, if
$\phi$ is injective and
$([\phi](\wh J),[\phi](\wh K))\in\mathscr G(M)$ then $(\wh J,\wh K)
\in\mathscr G(\wh M)$.
\end{proposition}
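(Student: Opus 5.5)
The approach is to show that $\phi$ intertwines the parabolic projections, $\phi\circ p_{\wh J}=p_{[\phi](\wh J)}\circ\phi$, and then to combine this with Proposition~\partref{prop:CH-prop.a''} and Lemma~\ref{lem:hom parab submonoid}. Throughout, $\wh I$ is the index set of $\wh M$, and we may assume $\phi$ is fully supported, so $[\phi](\wh I)=I$; otherwise we replace $M$ by $M_{[\phi](\wh I)}$.

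\emph{Step 1: the intertwining identity.} Both sides of $\phi\circ p_{\wh J}=p_{[\phi](\wh J)}\circ\phi$ are homomorphisms of Hecke monoids $(W(\wh M),\star)\to(W(M),\star)$, so it suffices to check them on the generators $\hat s_i$. If $i\in\wh J$, the left side is $\phi(\hat s_i)=w_\circ^{[\phi](i)}$, and since $[\phi](i)\subset[\phi](\wh J)$ the right side is $p_{[\phi](\wh J)}(w_\circ^{[\phi](i)})=w_\circ^{[\phi](i)}$ by Lemma~\ref{lem:proj w0}. If $i\notin\wh J$, the left side is $1$. The right side is $w_\circ^{[\phi](i)\cap[\phi](\wh J)}$, which equals $1$ because $\phi$ is disjoint by Proposition~\partref{prop:CH-prop.a}.

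\emph{Step 2: the forward direction.} Suppose $(\wh J,\wh K)\in\mathscr G(\wh M)$, i.e.\ $p_{\wh J}(w_{\wh K})=w_{\wh J\star\wh K;\wh J}$. Applying $\phi$ to both sides and using Proposition~\partref{prop:CH-prop.a''} together with $[\phi](\wh I)=I$, we get
\[
p_{[\phi](\wh J)}(w_{[\phi](\wh K)})=\phi(p_{\wh J}(w_{\wh K}))=w_{[\phi](\wh J\star\wh K);[\phi](\wh J)}.
\]
Lemma~\ref{lem:hom parab submonoid} with $J=\wh I$ gives $[\phi](\wh J\star\wh K)=[\phi](\wh J)\star[\phi](\wh K)$. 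Hence $([\phi](\wh J),[\phi](\wh K))\in\mathscr G(M)$.

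\emph{Step 3: the converse.} Run the same chain backwards:
\[
\phi(p_{\wh J}(w_{\wh K}))=p_{[\phi](\wh J)}(w_{[\phi](\wh K)})=w_{[\phi](\wh J)\star[\phi](\wh K);[\phi](\wh J)}=\phi(w_{\wh J\star\wh K;\wh J}),
\]
where the last equality again uses Lemma~\ref{lem:hom parab submonoid} and Proposition~\partref{prop:CH-prop.a''}. Injectivity of $\phi$ then yields $p_{\wh J}(w_{\wh K})=w_{\wh J\star\wh K;\wh J}$.

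The main obstacle is Step 1. The point is that disjointness is what makes $p_{[\phi](\wh J)}$ kill $\phi(\hat s_i)$ for $i\notin\wh J$; without disjointness this image would be a nontrivial $w_\circ$ of the intersection. The remaining bookkeeping is the fully supported reduction, which ensures that the long element $w_\circ^{\wh I}$ maps to $w_\circ^I$ so that parabolic elements correspond.
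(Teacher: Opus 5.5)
Your proof is correct and follows the paper's argument essentially step for step. Like the paper, you first establish the intertwining identity $\phi\circ p_{\wh J}=p_{[\phi](\wh J)}\circ\phi$ on generators using disjointness (the paper's Lemma~\ref{lem:prj hom}). You then apply Proposition~\partref{prop:CH-prop.a''} and Lemma~\ref{lem:hom parab submonoid}, and use injectivity for the converse.

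One caveat concerns your opening reduction. Replacing $M$ by $M_{[\phi](\wh I)}$ is not a genuine reduction, because it changes $\star_I$ and $\mathscr G(M)$ into $\star_{[\phi](\wh I)}$ and $\mathscr G(M_{[\phi](\wh I)})$. So your argument really proves the statement for fully supported $\phi$. The paper's proof makes the same assumption $[\phi](\wh I)=I$ tacitly, and this is the only case used in its applications.
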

\begin{proof}
We need the following
\begin{lemma}\label{lem:prj hom}\label{lem:diag aut proj}
Let~$\phi\in\Hom_{\mathscr H}(M',M)$ be disjoint. Then for any~$J'\subset I'$,
$\phi\circ p_{J'}=p_{[\phi](J')}\circ \phi$. In particular,
if~$\sigma$ is a diagram automorphism of~$(W(M),\star)$ then
$\sigma\circ p_J=p_{\sigma(J)}\circ\sigma$ for all~$J\subset I$.
\end{lemma}
\begin{proof}
It suffices to prove that $\phi(p_{J'}(s_i))=
p_{[\phi](J')}(\phi(s_i))$ for all~$i\in I'$.
By Lemma~\ref{lem:Hecke hom w0J},
$$
\phi(p_{J'}(s_i))=\begin{cases}
1,&i\in I'\setminus J'\\
w_\circ^{[\phi](i)},&i\in J'.
\end{cases}
$$
On the other hand, $p_{[\phi](J')}(\phi(s_i))=
p_{[\phi](J')}(w_\circ^{[\phi](i)})=
w_\circ^{[\phi](i)\cap [\phi](J')}$ by Lemma~\ref{lem:proj w0}.
If~$i\in J'$ then $[\phi](i)\cap[\phi](J')=[\phi](i)$.
Otherwise, since~$[\phi](i)\cap[\phi](j)=\emptyset$ for all~$i\not=j$, $[\phi](i)\cap [\phi](J')=\emptyset$. In either case, the assertion follows.
\end{proof}
Since~$\phi\in\Hom_{\mathscr{CH}}(\wh M,M)$,
$\phi$ is disjoint by Proposition~\partref{prop:CH-prop.a}. Then
\begin{alignat*}{3}
\phi(p_{\wh J}(w_{\wh K}))&=
\phi(w_{\wh J\star_{\wh I}\wh K; \wh J})&\qquad &
\text{since $(\wh J,\wh K)\in\mathscr G(\wh M)$}\\
&=w_{[\phi](\wh J\star_{\wh I}\wh K);[\phi](\wh J)}&&\text{by Proposition~\partref{prop:CH-prop.a''}}\\
&=w_{[\phi](\wh J)\star_I [\phi](\wh K));[\Phi](J)}&&\text{by Lemma~\ref{lem:hom parab submonoid}}.
\end{alignat*}
On the other hand, since~$\phi\circ p_{\wh J}=
p_{[\phi](\wh J)}\circ\phi$ by Lemma~\ref{lem:prj hom}, we have by Proposition~\partref{prop:CH-prop.a''}
\begin{equation*}
\phi(p_{\wh J}(w_{\wh K}))=
p_{[\phi](\wh J)}(\phi(w_{\wh K}))=
p_{[\phi](\wh J)}(w_{[\phi](\wh K)}).
\end{equation*}
Thus, $p_{[\phi](\wh J)}(w_{[\phi](\wh K)})=w_{[\phi](\wh J)\star_I [\phi](\wh K));[\phi](J)}$. 

Conversely,
\begin{alignat*}{3}
\phi(p_{\wh J}(w_{\wh K}))&=
p_{[\phi](\wh J)}(\phi(w_{\wh K}))&&\text{by Lemma~\ref{lem:prj hom}}\\
&=p_{[\phi](\wh J)}(w_{[\phi](\wh K)})&&\text{by Proposition~\partref{prop:CH-prop.a''}}\\
&=w_{[\phi](\wh J)\star_I [\phi](\wh K));[\phi](J)}&\qquad&\text{since $([\phi](\wh J),[\phi](\wh K))\in\mathscr G(M)$}\\
&=w_{[\phi](\wh J\star_{\wh I}\wh K);[\phi](J)}&&\text{by Lemma~\ref{lem:hom parab submonoid}}\\
&=\phi(w_{\wh J\star_{\wh I}\wh K;\wh J})&&
\text{by Proposition~\partref{prop:CH-prop.a''}}.
\end{alignat*}
Since~$\phi$ is injective, it follows that
$p_{\wh J}(w_{\wh K})=w_{\wh J\star_{\wh I}\wh K;\wh J}$.
\end{proof}
\begin{corollary}\label{cor:diag aut good}
Let~$\sigma$ be a diagram automorphism of~$W$ and the corresponding permutation of~$I$. Then
\begin{enmalph}
    \item\label{cor:diag aut good.a} $(J,K)\in \mathscr G$ if and only if~$(\sigma(J),\sigma(K))\in\mathscr G$;
    \item\label{cor:diag aut good.b} Suppose that~$J\subset I$ satisfies~$(J,K)\in\mathscr G$ for all~$K\subset I$. Then $(\sigma(J),K)\in\mathscr G$ for all~$K\subset I$;
    \item\label{cor:diag aut good.c} Suppose that~$K\subset I$ satisfies~$(J,K)\in\mathscr G$ for all~$J\subset I$. Then~$(J,\sigma(K))\in\mathscr G$ for all~$J\subset I$.
\end{enmalph}
\end{corollary}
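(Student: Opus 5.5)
The plan is to apply Proposition~\ref{prop:LCM hom good} to the diagram automorphism $\sigma$, viewed as an element of $\Hom_{\mathscr{CH}}(M,M)$. This gives part~\ref{cor:diag aut good.a} at once, and parts~\ref{cor:diag aut good.b} and~\ref{cor:diag aut good.c} then follow by reindexing over the bijection $\sigma$ of $\mathscr P(I)$.

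First I check that $\sigma$ meets the hypotheses of Proposition~\ref{prop:LCM hom good}.
\begin{enmroman}
\item Since $\sigma$ permutes $I$ preserving $m_{ij}$, it induces an automorphism both of $W(M)$ as a Coxeter group and of $(W(M),\star)$, so $\sigma\in\Hom_{\mathscr{CH}}(M,M)$.
\item It is injective, being bijective.
\item We have $[\sigma](J)=\sigma(J)$ for all $J\subset I$, because $\supp\sigma(s_i)=\{\sigma(i)\}$.
\end{enmroman}
The forward direction of Proposition~\ref{prop:LCM hom good} then gives: if $(J,K)\in\mathscr G$, then $(\sigma(J),\sigma(K))\in\mathscr G$. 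The converse direction of the same proposition, which uses injectivity of $\sigma$, gives the reverse implication and completes~\ref{cor:diag aut good.a}. Alternatively, one can apply the forward direction to $\sigma^{-1}$, which is also a diagram automorphism.

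For~\ref{cor:diag aut good.b}, let $J$ satisfy $(J,K')\in\mathscr G$ for all $K'\subset I$, and let $K\subset I$ be arbitrary. Put $K'=\sigma^{-1}(K)$. Then $(J,K')\in\mathscr G$ by hypothesis, and part~\ref{cor:diag aut good.a} gives $(\sigma(J),\sigma(K'))=(\sigma(J),K)\in\mathscr G$. Part~\ref{cor:diag aut good.c} follows in the same way, taking $J'=\sigma^{-1}(J)$ for an arbitrary $J\subset I$.

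I expect no real obstacle. The only point to verify is that a diagram automorphism is simultaneously a Coxeter group and Hecke monoid homomorphism; this is immediate from the defining relations, which $\sigma$ preserves.
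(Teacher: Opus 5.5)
Your proposal is correct and follows essentially the paper's own argument. The paper likewise specializes Proposition~\ref{prop:LCM hom good} to $\sigma$, writing out $p_{\sigma(J)}(w_{\sigma(K)})=\sigma(p_J(w_K))=\sigma(w_{J\star K;J})=w_{\sigma(J)\star\sigma(K);\sigma(J)}$ via Lemmas~\ref{lem:prj hom} and~\ref{lem:hom parab submonoid}, and then deduces parts~\ref{cor:diag aut good.b} and~\ref{cor:diag aut good.c} from the fact that $\sigma$ permutes the subsets of $I$. One point worth adding is that the proposition tacitly needs $[\phi](\wh I)=I$, so that $\phi(w_{\wh K})$ is the parabolic element $w_{[\phi](\wh K)}$ taken relative to $I$ and $\star_{[\phi](\wh I)}=\star_I$; this holds for $\sigma$ because it is bijective on $I$.
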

\begin{proof}
Since~$\sigma(w_\circ)=w_\circ$, $\sigma(w_J)=w_{\sigma(J)}$ while
$\sigma(w_{K;J})=w_{\sigma(K);\sigma(J)}$ for all~$K\subset J\subset I$. We have
\begin{alignat*}{3}
p_{\sigma(J)}(w_{\sigma(K)})&=p_{\sigma(J)}(\sigma(w_K))\\
&=\sigma(p_J(w_K))&&\text{by Proposition~\ref{prop:LCM hom good}}\\
&=\sigma(w_{J\star K;J})&&\text{since $(J,K)\in\mathscr G$}\\
&=w_{\sigma(J\star K);\sigma(J)}\\
&=w_{\sigma(J)\star\sigma(K);\sigma(J)}&\qquad &\text{by Lemma~\ref{lem:hom parab submonoid}}.
\end{alignat*}
This proves part~\ref{cor:diag aut good.a}. Parts~\ref{cor:diag aut good.b} and~\ref{cor:diag aut good.c} follow from part~\ref{cor:diag aut good.a} since~$\sigma$ induces a bijection on~$\mathscr F(M)$.
\end{proof}

\subsubsection{Proof of Theorem~\ref{thm:pJwK is w0KstarJw0J} in
rank 2}
Let~$m=m_{12}=m_{21}$.
By Corollary~\partref{cor:diag aut good.c} it
suffices to prove that~$(J,\{1\})\in\mathscr G$ for~$J\in\{\{1\},\{2\}\}$. Since~$w_{\{i\}}=\brd{s_j\times s_i\times }{m-1}$
where~$\{i,j\}=\{1,2\}$,
it follows that $w_{\{1\}}\star w_{\{1\}}=w_\circ=
w_{\{1\}}\star w_{\{2\}}$ that is, $\{1\}\star \{1\}=\emptyset=\{1\}\star\{2\}$.
Thus, $(J,\{1\})\in\mathscr G$ for all~$J\subsetneq I$.\hfill\qedsymbol

\subsubsection{Proof of Theorem~\ref{thm:pJwK is w0KstarJw0J} for type \texorpdfstring{$A_n$}{An}}
We need the following
\begin{proposition}\label{prop:wK explicit}
For any $1\le i\le j\le k\le l\le n$,
$$
w_{[j,k];[i,l]}
=\cx il{}^{\star(j-i)}\star \cxr il{}^{\star (l-k)}
= \cxr il{}^{\star (l-k)}\star\cx il{}^{\star(j-i)}.
$$
\end{proposition}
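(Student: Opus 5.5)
Since everything lives in the parabolic submonoid $W_{[i,l]}(A_n)\cong W(A_{l-i+1})$, I first reduce to $i=1$, $l=n$, where the claim reads $w_{[j,k]}=\cx 1n^{\star(j-1)}\star\cxr 1n^{\star(n-k)}$, with $w_{[j,k]}=w_\circ^{[j,k]}w_\circ$. The plan is to compute the two factors separately as parabolic elements and then multiply them in $\mP_I$ using Proposition~\ref{prop:submonoid *}. This also gives commutativity of the two orders of multiplication for free.

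First I identify the $\star$-powers of the Coxeter elements. I claim
\[
\cx 1n^{\star a}=w_{[a+1,n];I}\quad\text{and}\quad \cxr 1n^{\star b}=w_{[1,n-b];I},
\]
where $w_{\emptyset;I}=w_\circ$ when the interval is empty. By ${}^{op}$ and the diagram automorphism $\sigma$, the second identity follows from the first: $w_{J;I}^{-1}=w_\circ w_\circ^J=w_\circ^{\sigma(J)}w_\circ$. I prove the first by induction on $a$. In $W(A_n)$, $w_\circ=w_\circ^{[2,n]}\,\cx 1n$ is a length-additive factorisation, which follows from~\eqref{eq:w0 type A} since $w_\circ=\cxr 1n\times w_\circ^{[1,n-1]}$ up to $\sigma$ and inversion. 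Hence $w_{[2,n];I}=\cx 1n$. For the inductive step I compute
\[
w_{[a+1,n];I}\star \cx 1n=w_\circ^{[a+1,n]}w_\circ\star s_1\star\cdots\star s_n .
\]
By Lemma~\ref{lem:wK absorption}, $D_R(w_{[a+1,n];I})$ is $\sigma$ of $D_L$ applied to the inverse picture, and I track which $s_t$ are absorbed, using~\eqref{eq:prod *} step by step. The cleaner route is to multiply inside $\mP_I$: since $\cx 1n=w_{[2,n];I}$, it suffices to know $[a+1,n]\star_I[2,n]=[a+2,n]$. This is the type-$A$ rule $[a,b]\star_I[a',b']=[a+a'-1,b+b'-n]$ quoted after Proposition~\ref{prop:submonoid *}. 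It is attributed to Corollary~\ref{cor:A J*K}, which comes later, so to avoid circularity I will verify this special case directly.

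The direct verification I have in mind is as follows. Write $w_{[a+1,n];I}$ as the product of the first $a$ factors of~\eqref{eq:w0 type A}, $w_\circ=\dscprodst_{1\le k\le n}\cx kn$, re-indexed so that $w_\circ=w_\circ^{[a+1,n]}\times x_a$ with $x_a=\cx 1n\times\cx 1{n-1}\times\cdots\times\cx 1{n-a+1}$. Then I check that $x_a\star\cx 1n=x_{a+1}$. Each additional $s_t$ with $t>n-a$ should either extend the word reducibly or be absorbed, and a descent computation in $S_{n+1}$ (one-line notation) handles this. The same kind of computation gives the $b$-powers.

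Finally, the product $w_{[j,n];I}\star w_{[1,k];I}$ lies in $\mP_I$, so it equals $w_{L;I}$ for some $L\subset[j,n]\cap[1,k]=[j,k]$ by Proposition~\ref{prop:submonoid *}. Lemma~\ref{lem:monoid len} and length bounds pin down $L$: I need $\ell$ of the product to be $\ell(w_\circ)-\ell(w_\circ^{[j,k]})$. I get this by showing, in $S_{n+1}$, that $x_{j-1}\star y_{n-k}$ equals the reduced concatenation $x_{j-1}\times y'$ for explicit $y'\le y_{n-k}$, via Proposition~\partref{prop:Bruhat order *.b}. The main obstacle is this last length count, together with the $x_a\star\cx 1n$ step. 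I would handle both uniformly by working with permutations of $[1,n+1]$, where $\star$ by $s_t$ sorts adjacent entries into descending order.
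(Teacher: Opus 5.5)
The gap is in the final step. Your computation of the two factors is correct and is the paper's Lemma~\ref{lem:*powers of coxeter}. The inductive step there needs no permutations. $x_a$ consists of the first $a$ factors of the reduced word $w_\circ=\dscprodst_{1\le m\le n}\cx{1}{m}$, not of $\dscprodst\cx{m}{n}$, so $x_{a+1}=x_a\times\cx{1}{(n-a)}$ is reduced. By Lemma~\ref{lem:DL uw0}, $D_R(x_{a+1})=D_R(w_\circ w_\circ^{[1,n-a-1]})=[n-a,n]$. Hence in $x_a\star\cx{1}{n}$ the letters $s_1,\dots,s_{n-a}$ extend the word and $s_{n-a+1},\dots,s_n$ are absorbed.

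\textbf{The final step is where the proof breaks.} Proposition~\ref{prop:submonoid *} gives $w_{[j,n];I}\star w_{[1,k];I}=w_{L;I}$ with $L\subset[j,k]$. That alone already gives $\ell(w_{L;I})=\ell(w_\circ)-\ell(w_\circ^{L})\ge\ell(w_\circ)-\ell(w_\circ^{[j,k]})$. What you still need is the reverse inequality, equivalently $[j,k]\subset L$. Neither tool you name gives it:
\begin{itemize}
\item Lemma~\ref{lem:monoid len} is only a lower bound.
\item Exhibiting some $y'\le y_{n-k}$ with $x_{j-1}\times y'$ reduced only shows $x_{j-1}\times y'\le x_{j-1}\star y_{n-k}$, again a lower bound. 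Proposition~\partref{prop:Bruhat order *.b} says the product is $x_{j-1}\times u'$ for the particular $u'\le y_{n-k}$ that makes it Bruhat-maximal. Pinning down that $u'$ \emph{is} computing the product, and you leave it as an unspecified descent computation in $S_{n+1}$.
\end{itemize}

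\textbf{How to close it.} Let one Coxeter element act at a time on a parabolic element whose left descent set you know; this also makes Proposition~\ref{prop:submonoid *} unnecessary. Fix $t<j\le k$. By \eqref{eq:w0 type A}, $w_\circ^{[t,k]}=\cxr{t}{k}\times w_\circ^{[t+1,k]}$, hence $w_{[t+1,k];I}=\cx{t}{k}\times w_{[t,k];I}$. Now evaluate $\cx{1}{n}\star w_{[t,k];I}$ letter by letter from the right:
\begin{itemize}
\item $s_n,\dots,s_{k+1}$ are absorbed, since $D_L(w_{[t,k];I})=I\setminus[t,k]$ by Lemma~\ref{lem:wK absorption};
\item $\cx{t}{k}$ then multiplies length-additively, by the same lemma, giving $w_{[t+1,k];I}$;
\item $s_{t-1},\dots,s_1$ are absorbed, since $[1,t-1]\subset D_L(w_{[t+1,k];I})$.
\end{itemize}
Start from $w_{[1,k];I}=\cxr{1}{n}{}^{\star(n-k)}$ and iterate $j-1$ times; this gives the first equality. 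The second equality follows from the diagram automorphism, or from commutativity in $\mP_I$ as you note.
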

\begin{proof}
First we prove the Proposition for the case when either $i=j$ or $k=l$.
\begin{lemma}\label{lem:*powers of coxeter}
For all $1\le a\le b\le n$, $k\ge 0$ we have
\begin{align*}
&\cxr ab{}^{\star k}=w_{[a,b-k];[a,b]}=w_{[a+k,b];[a,b]}{}^{-1},\quad 
\cx ab{}^{\star k}=w_{[a,b-k];[a,b]}{}^{-1}=w_{[a+k,b];[a,b]}.
\end{align*}
\end{lemma}
\begin{proof}
We only prove the first equality since the remaining ones can be obtained by applying~${}^{op}$ and or diagram automorphisms.
The argument is by induction on~$k$, the case~$k=0$ being trivial.
For the inductive step, note that if $0\le k\le b-a$ then $w_\circ^{[a,b-k]}=\cx a{(b-k)}\times
w_\circ^{[a,b-(k+1)]}$ by~\eqref{eq:w0 type A}
and so $w_{[a,b-(k+1)];[a,b]}=\cxr a{(b-k)}\times w_{[a,b-k];[a,b]}$. Now, by the induction hypothesis
\begin{align*}
\cxr ab{}^{\star (k+1)}&=\cxr ab\star w_{[a,b-k];[a,b]}=\cxr{(b-k+1)}b
\star \cxr a{(b-k)}\star  w_{[a,b-k];[a,b]}\\&=
\cxr{(b-k+1)}b\star w_{[a,b-(k+1)];[a,b]}=w_{[a,b-(k+1)];[a,b]}.
\end{align*}
The last equality follows from Lemma~\ref{lem:wK absorption} since
$[b-k+1,b]\subset [a,b]\setminus[a,b-k-1]$.

In particular, we proved that $\cxr ab{}^{\star (b-a+1)}=w_\circ^{[a,b]}$.
Since $\cxr ab\star w_\circ^{[a,b]}=w_\circ^{[a,b]}$ by Lemma \ref{lem:char w_0 monoid},
it follows that $\cxr ab{}^{\star k}=w_\circ^{[a,b]}=w_{\emptyset;[a,b]}$ for all~$k\ge b-a+1$.
\end{proof}
To treat the general case, we use induction on~$j-i$
to show that
\begin{equation}\label{eq:wKexplicit inter}
w_{[j,k];[i,l]}
=\cx il{}^{\star(j-i)}\star w_{[i,k];[i,l]}.
\end{equation}
Once~\eqref{eq:wKexplicit inter} is established, the Proposition follows by Lemma~\ref{lem:*powers of coxeter}.

The case $j=i$ is trivial.
For the inductive step, note that, for $j\le k$,
$
w_\circ^{[j,k]}=
w_\circ^{[j+1,k]}\times \cx jk=\cxr jk\times w_\circ^{[j+1,k]}
$ by~\eqref{eq:w0 type A},
and so
\begin{equation}\label{eq:w0step}
w_{[j+1,k];[i,l]}=\cx jk\times w_{[j,k];[i,l]}.
\end{equation}
Then
\begin{alignat*}{3}
w_{[j+1,k];[i,l]}&=\cx i{j-1}\star w_{[j+1,k];[i,l]}&&
\text{by Lemma~\ref{lem:wK absorption}}
\\
&=\cx i{(j-1)}\star \cx jk\times w_{[j,k];[i,l]}&\qquad&\text{by \eqref{eq:w0step}}\\
&=\cx ik\star w_{[j,k];[i,l]}\\
&=\cx il\star w_{[j,k];[i,l]}&&\text{by
Lemma~\ref{lem:wK absorption}}\\
&=\cx il{}^{\star (j-i+1)}\star w_{[i,k];[i,l]}&&\text{by the induction
hypothesis.}
\end{alignat*}
The inductive step is proven. The second equality is obtained from the first one using the diagram automorphism of~$W_{[i,l]}$.
\end{proof}
As an immediate byproduct, we obtain the following
\begin{corollary}[cf.~\cite{He09}]\label{cor:A J*K}
Let $J=[a',b']$, $K=[a,b]$, $1\le a\le b\le n$, $1\le a'\le b'\le n$.
Then $J\star_I K=[a+a'-1,b+b'-n]$.
\end{corollary}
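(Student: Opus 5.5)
The plan is to compute $w_{J;I}\star w_{K;I}$ directly from Proposition~\ref{prop:wK explicit} with $[i,l]=I=[1,n]$, and then read off the resulting interval. Write $c=\cx 1n$ and $c'=\cxr 1n$. For $J=[a',b']$ and $K=[a,b]$, Proposition~\ref{prop:wK explicit} gives
$$
w_{J;I}=c^{\star(a'-1)}\star c'^{\star(n-b')},\qquad
w_{K;I}=c^{\star(a-1)}\star c'^{\star(n-b)},
$$
and both orders of the two factors are allowed.

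The key step is to merge the powers. Using the second form of the first factor, $w_{J;I}=c'^{\star(n-b')}\star c^{\star(a'-1)}$, the product becomes
$$
c'^{\star(n-b')}\star c^{\star(a'-1)}\star c^{\star(a-1)}\star c'^{\star(n-b)}
= c'^{\star(n-b')}\star c^{\star(a+a'-2)}\star c'^{\star(n-b)}.
$$

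We must commute $c^{\star(a+a'-2)}$ past $c'^{\star(n-b')}$. Proposition~\ref{prop:wK explicit} says these powers commute when $(a+a'-2)+(n-b')\le n-1$, that is, when the corresponding interval $[1+(a+a'-2),\,n-(n-b')]$ is nonempty or on the boundary. The cleanest route is to prove, as a small lemma, that $c^{\star p}\star c'^{\star q}=c'^{\star q}\star c^{\star p}$ for all $p,q\ge0$. When $p+q\le n$, Proposition~\ref{prop:wK explicit} with $j=1+p$ and $k=n-q$ gives this commutation, with the value $w_{[1+p,n-q];I}$; this includes the degenerate case $j=k+1$, where both sides equal $w_\circ$ by the induction in Lemma~\ref{lem:*powers of coxeter}. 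When $p+q>n$, both sides contain $c^{\star p'}\star c'^{\star q'}$ with $p'+q'=n$, which equals $w_\circ$, and $w_\circ$ absorbs everything by Lemma~\ref{lem:char w_0 monoid}. The main obstacle is exactly this boundary bookkeeping: checking that $w_{[j,k];I}$ with $j=k+1$ is interpreted as $w_\circ$, consistent with $w_\circ^\emptyset=1$.

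With commutation in hand, the product equals $c^{\star(a+a'-2)}\star c'^{\star(2n-b-b')}$. Provided $(a+a'-2)+(2n-b-b')\le n-1$, Proposition~\ref{prop:wK explicit} applied with $j=a+a'-1$ and $k=b+b'-n$ identifies it as $w_{[a+a'-1,\,b+b'-n];I}$. Otherwise the product equals $w_\circ=w_{\emptyset;I}$, and $[a+a'-1,b+b'-n]$ is the empty interval. By the uniqueness in Proposition~\ref{prop:submonoid *}, $J\star_I K=[a+a'-1,b+b'-n]$, with the convention that an interval $[x,y]$ with $x>y$ is empty.
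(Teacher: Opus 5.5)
Your proposal is correct and is essentially the paper's proof: both write $w_{J;I}$ and $w_{K;I}$ via Proposition~\ref{prop:wK explicit} with $[i,l]=[1,n]$, merge the powers of $\cx1n$ and $\cxr1n$, and read off $[a+a'-1,b+b'-n]$ from the same proposition. The paper uses the commutation of $\cx1n{}^{\star p}$ with $\cxr1n{}^{\star q}$ tacitly, including the collapse to $w_\circ$ when $p+q\ge n$, so your explicit lemma supplies that step; note that the boundary case $p+q=n$ comes from running the induction for \eqref{eq:wKexplicit inter} one step further to $j=k+1$, not from Lemma~\ref{lem:*powers of coxeter} alone.
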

\begin{proof}
We have, by Proposition~\ref{prop:wK explicit}
\begin{align*}
w_K\star w_J&=\cx1n{}^{\star(a-1)}\star \cxr1n{}^{\star(2n-b-b')}
\star \cx1n{}^{(a'-1)}=\cx1n{}^{\star(a+a'-2)}\star \cxr1n{}^{\star(n-(b+b'-n))},
\end{align*}
which, again by Proposition~\ref{prop:wK explicit}, is equal to
$w_{[a+a'-1,b+b'-n]}$.
\end{proof}
\begin{proof}[Proof of Theorem~\ref{thm:pJwK is w0KstarJw0J},
$W$ of type~$A_n$]
By Proposition~\ref{prop:only connected}, it suffices to
prove that~$(J,K)\in\mathscr G$ for
$J=[a',b']$, $1\le a'\le b'\le n$ and $K=[a,b]$, $1\le a\le b\le n$.
Since
$$
w_K=\cx1n{}^{\star(a-1)}\star \cxr1n{}^{\star(n-b)},
$$
by Proposition~\ref{prop:wK explicit} and
Corollary~\ref{cor:A J*K}
we have
\begin{equation*}
p_J(w_K)=\cx{a'}{b'}{}^{\star(a-1)}\star
\cxr{a'}{b'}{}^{\star(n-b)}=w_{[a+a'-1,b+b'-n];[a',b']}
=w_{J\star K;J}.\qedhere
\end{equation*}
\end{proof}

\subsubsection{Proof of Theorem~\ref{thm:pJwK is w0KstarJw0J} for type \texorpdfstring{$B_n$}{Bn}}

Let~$\phi\in\Hom_{\mathscr{CH}}(B_n,A_{2n-1})$ be the injective
homomorphism from~\eqref{eq:unfold Bn A2n-1}. Let~$\wh I=[1,n]$,
$I=[1,2n-1]$.
Note the following
\begin{lemma}[cf.~\cite{He09}\footnote{We provide a proof since there is a
misprint in~\cite{He09} in the second case}]\label{lem:B J*K}
Let $J=[a',b']$, $K=[a,b]$, $1\le b\le b'\le n$.
Then
$$
J\star_{\wh I} K=\begin{cases}
\emptyset,&b'<n,\\
[a+a'-1,b-a'+1],&b<b'=n,\\
[a+a'-1,n],&b=b'=n.
\end{cases}
$$
\end{lemma}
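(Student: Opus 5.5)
We transfer the computation to type $A_{2n-1}$ along the injective homogeneous homomorphism $\phi\in\Hom_{\mathscr{CH}}(B_n,A_{2n-1})$ of~\eqref{eq:unfold Bn A2n-1}, and then read off the answer using Corollary~\ref{cor:A J*K}. The tool is Lemma~\ref{lem:hom parab submonoid}, which gives $[\phi](J\star_{\wh I}K)=[\phi](J)\star_I[\phi](K)$. Since $\phi$ is disjoint with nonempty $[\phi](i)$, Lemma~\partref{lem:[phi]comp.c} shows that $[\phi]$ is injective on subsets. So it is enough to compute $[\phi](J)\star_I[\phi](K)$ in type $A_{2n-1}$ and recognize the result as $[\phi](S)$ for the claimed $S$.

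First we compute the images. We have $[\phi](i)=\{i,2n-i\}$ for $i<n$ and $[\phi](n)=\{n\}$. Hence, for an interval $[a,b]\subset[1,n]$:
\begin{itemize}
\item if $b<n$, then $[\phi]([a,b])=[a,b]\sqcup[2n-b,2n-a]$, a union of two orthogonal intervals (they are separated by $n$);
\item if $b=n$, then $[\phi]([a,b])=[a,2n-a]$, a single interval.
\end{itemize}

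To compute $\star_I$ for unions of orthogonal pieces, we use \cite{He09}*{Lemma~6}, already invoked in Proposition~\ref{prop:only connected}: $S\star(T'\cup T'')=(S\star T')\cup(S\star T'')$ when $T',T''$ are orthogonal. Together with commutativity of $\star_I$, this reduces everything to interval–interval products. These are given by Corollary~\ref{cor:A J*K} with $N=2n-1$:
\[
[a',b']\star_I[a,b]=[a+a'-1,\;b+b'-(2n-1)],
\]
interpreted as empty when the left endpoint exceeds the right one.

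We then split into the three cases of the lemma.
\begin{itemize}
\item If $b\le b'<n$, each piece pairs one interval inside $[1,n-1]$ with one inside $[n+1,2n-1]$, or two intervals on the same side. In every pairing the total length is too small, and the formula yields an empty interval. For example, $[a',b']\star[a,b]=[a+a'-1,b+b'-2n+1]$ is empty since $b+b'\le 2n-2$. Similarly, $[a',b']\star[2n-b,2n-a]=[2n-b+a'-1,\,b'-a]$ is empty because $2n-b+a'-1>b'-a$, that is, $2n-1>b+b'-a-a'$. So $J\star K=\emptyset$.
\item If $b<b'=n$, then $[a',2n-a']$ is paired with $[a,b]$ and with $[2n-b,2n-a]$. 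This gives $[a+a'-1,\,b-a'+1]$ and $[2n-b+a'-1,\,2n-a-a'+1]$, which is exactly $[\phi]([a+a'-1,b-a'+1])$.
\item If $b=b'=n$, both sets are intervals, and $[a',2n-a']\star[a,2n-a]=[a+a'-1,\,2n-a-a'+1]=[\phi]([a+a'-1,n])$.
\end{itemize}
Injectivity of $[\phi]$ then yields the three formulas.

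The main obstacle is the bookkeeping in the first two cases. We need the empty-interval conventions to match: whenever the formula produces an empty set, the union must be empty or consist of exactly the expected symmetric pair. We also need to check that $\star_I$ distributes over the orthogonal decomposition in the first argument as well as the second, which follows from commutativity. In the second case, note that $b-a'+1<n$ automatically, so the answer is a genuine non-$n$-containing interval (possibly empty); this is consistent because $[\phi]$ of such an interval is a symmetric pair.
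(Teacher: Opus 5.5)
Your proposal is correct and is essentially the paper's proof. Both arguments transfer along the unfolding \eqref{eq:unfold Bn A2n-1}, split $[\phi](J)$ and $[\phi](K)$ into orthogonal intervals via \cite{He09}*{Lemma~6}, evaluate each piece with Corollary~\ref{cor:A J*K}, and conclude by injectivity of $[\phi]$ (Lemma~\partref{lem:[phi]comp.c}); your explicit appeal to Lemma~\ref{lem:hom parab submonoid} just makes visible the identity $[\phi](J\star_{\wh I}K)=[\phi](J)\star_I[\phi](K)$ that the paper uses implicitly.

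There is one small arithmetic slip: $[a',b']\star_I[2n-b,2n-a]=[2n-b+a'-1,\,b'-a+1]$, not $[2n-b+a'-1,\,b'-a]$. Emptiness therefore requires $b+b'-a-a'<2n-2$, which still holds because $b-a,\,b'-a'\le n-2$, so the conclusion is unaffected.
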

\begin{proof}
Note that $[\phi]([a,b])=[a,b]\sqcup [2n-b,2n-a]$ if~$1\le a\le b<n$ while
$[\phi]([a,n])=[a,2n-a]$, $1\le a\le n$, and that the intervals $[a,b]$, $[2n-b,2n-a]$
are orthogonal subsets of~$I$.

If~$b,b'<n$,
we have by~\cite{He09}*{Lemma~6} and by Corollary~\ref{cor:A J*K}
\begin{align*}
[\phi](J\star_{\wh I} K)&=([a',b']\star_I [a,b])\cup
([a',b']\star_I [2n-b,2n-a])\cup\\
&\qquad ([a,b]\star_I [2n-b',2n-a'])
\cup ([2n-b',2n-a']\star_I [2n-b,2n-a])\\
&=[a+a'-1,b+b'-2n+1]\cup [2n-b+a'-1,b'-a+1]\\
&\qquad
\cup [a+2n-b'-1,b-a'+1]\cup [4n-b-b'-1,2n-a-a'+1].
\end{align*}
All these intervals are empty
since $b-a,b'-a'\le n-2$. Since~$[\phi](i)\not=\emptyset$ for all~$i\in\wh I$, it follows that
$J\star_{\wh I} K=\emptyset$.

If~$b<b'=n$ then again by \cite{He09}*{Lemma~6} and Corollary~\ref{cor:A J*K},
\begin{align*}
[\phi](J\star_{\wh I} K)&=([a',2n-a']\star_I [a,b])\cup
([a',2n-a']\star_I [2n-b,2n-a]\\
&=[a+a'-1,b-a'+1]\cup [2n+a'-b-1,2n-a-a'+1]
\\
&=[\phi]([a+a'-1,b-a'+1]).
\end{align*}
Since~$\phi$ is disjoint and~$[\phi](i)\not=\emptyset$ for all~$i\in\wh I$, $J\star_{\wh I}K=
[a+a'-1,b-a'+1]$ by Lemma~\partref{lem:[phi]comp.c}.
Finally, if $b=b'=n$,
\begin{align*}
[\phi](J\star_{\wh I} K)&=[a',2n-a']\star [a,2n-a]
=[a+a'-1,2n-a-a'+1]=[\phi]([a+a'-1,n]),
\end{align*}
and it remains to apply Lemma~\partref{lem:[phi]comp.c}.
\end{proof}
\begin{proof}[Proof of Theorem~\ref{thm:pJwK is w0KstarJw0J},
$W$ of type~$B_n$]
By Theorem~\partref{thm:adm finite class.unfold}, $\phi$
satisfies the assumptions of Proposition~\ref{prop:LCM hom good}. The assertion is now immediate.
\end{proof}

\subsubsection{Proof of Theorem~\ref{thm:pJwK is w0KstarJw0J} for type \texorpdfstring{$D_{n+1}$}{Dn+1}}
Let~$\sigma$ be the diagram automorphism
of~$W=W(D_{n+1})$ corresponding to the permutation~$(n,n+1)$ (cf.~\eqref{eq:diag aut}).
The following identities are easily checked
\begin{gather}
w_\circ = {\ascprodst_{1\le i\le n}}
\cx i{(n+1)}\times\cxr i{(n-1)}\label{eq:w0 D}\\
\label{eq:w1,n D}
w_{[1,n]}={\ascprodst_{1\le i\le n}}\sigma^i(\cxr{i}{n})
=\sigma(\cxr1n \times w_{[2,n];[2,n+1]})
\\
\label{eq:w i,n+1 D}
w_{[i,n+1]}={\dscprodst_{1\le j\le i-1}}
(\cx j{(n+1)}\times\cxr j{(n-1)}),\qquad 1\le i\le n-1.
\end{gather}
\begin{proposition}\label{prop:1,n*K D}
Let $J=[1,n]$ and let $K\subset I$ be connected. We have
$$
J\star K=\begin{cases}
[1,n]_2,& K=J,\\
[1,n-1]_2,&K=\sigma(J),\\
[i,n-i+1],&K=[i,n+1],\,1\le i\le n-1,\\
\emptyset,&\text{otherwise}.
\end{cases}
$$
\end{proposition}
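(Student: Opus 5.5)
We compute $J\star K$ directly from its definition, $w_J\star w_K=w_{J\star K}$, where $w_J=w_{[1,n]}$ is given explicitly by \eqref{eq:w1,n D}. Since $J\star K\subset J\cap K$ by Proposition~\ref{prop:submonoid *}, the shape of the answer is already constrained. The unknown $S:=J\star K$ is read off from the left descent set. By Lemma~\ref{lem:wK absorption}, $D_L(w_S)=I\setminus S$. By Lemma~\ref{lem:left desc}, $I\setminus S$ is therefore the set of $i$ with $s_i\star(w_J\star w_K)=w_J\star w_K$. So it suffices to locate the descents of $w_J\star w_K$. Equivalently, we identify $w_J\star w_K$ with an explicit parabolic element.

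First we dispose of the easy cases.
\begin{itemize}
\item $K=I$: here $w_K=1$ and $J\star K=J$. This is consistent with the third line (empty for $i=1$? no: $[1,n]$ is $K=J$ handled separately), so these boundary cases are checked directly.
\item $K=[i,n+1]$, $1\le i\le n-1$: by \eqref{eq:w i,n+1 D}, $w_K$ is a product of the factors $\cx j{(n+1)}\times\cxr j{(n-1)}$ for $j<i$. By \eqref{eq:w0 D}, these are exactly the initial segments of the factorization of $w_\circ$. We then multiply $w_J$ by these factors one at a time. Using the twisted product form $w_J=\ascprodst\sigma^i(\cxr in)$ and the absorption Lemma~\ref{lem:wK absorption}, each factor should shrink the parabolic subset of $J$ from both ends by one. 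The claim to prove by induction on $i$ is
\[
w_{[1,n]}\star w_{[i,n+1]}=w_{[i,n-i+1]}.
\]
The step from $i$ to $i+1$ multiplies by one more block. Equivalently, we can compute $w_{[i,n-i+1]}\star w_{[2,n+1]}$ and invoke the associativity and commutativity in Proposition~\ref{prop:submonoid *} together with \cite{He09}*{Lemmata~4 and~7}.
\item $K=J$ and $K=\sigma(J)$: these are symmetric, with $\sigma$ handled via Lemma~\ref{lem:hom parab submonoid} applied to diagram automorphisms. We use $w_J=\sigma(\cxr1n\times w_{[2,n];[2,n+1]})$ and compute $w_J\star w_J$ by an induction on $n$ that restricts to the parabolic $D_n$ on $[2,n+1]$. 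The alternating set $[1,n]_2$ arises because each step flips parity via $\sigma$.
\end{itemize}

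For the remaining connected $K$ we must show $J\star K=\emptyset$, i.e.\ $w_J\star w_K=w_\circ$. A connected $K$ not of the listed forms either avoids one of $n,n+1$ and is then contained in a proper subinterval, or is an interval $[i,j]$ with $j\le n-1$. For these we use monotonicity: if $K\subset K'$ then $w_{K'}\le w_K$ in the Bruhat order, so by Proposition~\ref{prop:Bruhat order *} $w_J\star w_K\ge w_J\star w_{K'}$. Hence $J\star K\subset J\star K'$. It therefore suffices to treat maximal such $K$: namely $K=[1,n-1]$, $K=[1,n]$-type cases already done, and the sets $[i,n]$, $[i,n-1]\cup\{n+1\}$ with $i\ge2$. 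For each of these maximal ones we combine the already computed products through the associativity $J\star(K\star L)=(J\star K)\star L$. For instance, $[i,n]=[1,n]\star$-related via He's lemmas. Alternatively, we apply the folding $\mathbf f_{\varpi_{(n,n+1)}}$ of Example~\ref{ex:light projection} to reduce to type $A_n$ and Corollary~\ref{cor:A J*K}, giving emptiness whenever the $A$-interval formula is empty.

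The main obstacle is the case $K=J$ (and $\sigma(J)$) producing $[1,n]_2$. This requires genuinely tracking the twisted product \eqref{eq:w1,n D} under $\star$. I would first verify it for $D_4$ and $D_5$ by hand or by computer, then set up the induction $n\to n+1$ through the decomposition $w_{[1,n]}=\sigma(\cxr1n)\times\sigma(w_{[2,n];[2,n+1]})$, absorbing $\cxr1n$ using Lemma~\ref{lem: u * u^(-1) w_0}.
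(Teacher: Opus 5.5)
Your framing (reading $J\star K$ off $D_L(w_J\star w_K)$) is sound, and so is your plan for the nonempty cases: induction on $i$ through \eqref{eq:w i,n+1 D} for $K=[i,n+1]$, and induction on rank through \eqref{eq:w1,n D} for $K=J$. The monotonicity argument is also correct: $K\subset K'$ implies $J\star K\subset J\star K'$, so the empty case reduces to $K\in\{[1,n-1],[2,n],\sigma([2,n])\}$. All of this follows the paper's structure. But the proposal has genuine gaps.

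The one concrete tool you offer for the empty case fails. By \eqref{eq:w1,n D}, $\mathbf f_{\varpi_{(n,n+1)}}(w_{[1,n]})=\ascprodst_{1\le i\le n}\cxr in=w_\circ^{[1,n]}$ is already the longest element of $W(A_n)$. Hence $\mathbf f_{\varpi_{(n,n+1)}}(w_J\star w_K)=w_\circ^{[1,n]}$ for every $K$, including $K=J$ where $J\star J\neq\emptyset$, so the folding detects nothing. Associativity in $\mP_I(D_{n+1})$ by itself does not help either, since these three sets are not nontrivial $\star$-products.

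What works is the restriction identity $K\star L=K\star_J(J\star L)$ for $K\subset J$ (\cite{He09}*{Lemmata~4 and~7}, as in Lemma~\ref{lem:induction}). For $K=[a,b]\subsetneq[1,n]$ it gives $K\star J=K\star_{[1,n]}[1,n]_2=\bigcup_{c\in[1,n]_2}[a+c-1,b+c-n]=\emptyset$, by Corollary~\ref{cor:A J*K} and \cite{He09}*{Lemma~6}. In the same way $K\star J=K\star_{\sigma(J)}[1,n-1]_2=\emptyset$ for $K\subsetneq\sigma(J)$. This is shorter than the paper's route, which writes $w_{[2,n]}=\cx1n\star w_J$, so that $w_{[2,n]}\star w_J=\cx1n\star w_{[1,n]_2}$, and then peels off absorptions. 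Like that route, it presupposes the two nonempty cases $K=J$ and $K=\sigma(J)$.

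Those cases carry the real content, and there you only announce a computation. Three problems:

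\begin{enumerate}
\item The two cases are not symmetric. $\sigma$ maps $(J,J)$ to $(\sigma(J),\sigma(J))$ and $(J,\sigma(J))$ to $(\sigma(J),J)$, and the answers $[1,n]_2$ and $[1,n-1]_2$ differ. So $J\star\sigma(J)$ needs its own argument. The paper proves both statements in one induction on rank, and the step for $\sigma(J)\star J$ uses the lower-rank instance of itself.
\item The inductive step is not an absorption of $\cxr1n$. In $w_{\sigma(J)}\star w_{\sigma(J)}=w_{\sigma(J)}\star\cxr1n\star w_{[2,n];[2,n+1]}$ the Coxeter element must be moved to the left of $w_{\sigma(J)}$, via $w_{\sigma(J)}\star s_i=s_{i'}\star w_{\sigma(J)}$. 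This comes from conjugating $s_i$ through the two factors of $w_{\sigma(J)}=w_\circ^{\sigma(J)}w_\circ$, where $w_\circ$ is central for $n$ odd and acts by $\sigma$ for $n$ even (cf.~\eqref{eq: wsJ si D} and~\eqref{eq: wsJ si D even}). Only then does the rank-$n$ hypothesis apply to $w_{[2,n];[2,n+1]}\star w_{[2,n];[2,n+1]}$ (respectively, to $w_{[2,n];[2,n+1]}\star\sigma(w_{[2,n];[2,n+1]})$). The result is then reassembled using the (twisted) centrality of $w_\circ^{[2,n+1]}$.
\item For $K=[i,n+1]$, the assertion that each block ``should shrink the set from both ends by one'' is exactly what must be computed. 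Your variant $J\star[i+1,n+1]=(J\star[i,n+1])\star[2,n+1]$ does work for $i\ge 2$. Indeed, $[i,n-i+1]\star[2,n+1]=[i+1,n-i]$ and $[i,n+1]\star[2,n+1]=[i+1,n+1]$ follow from Lemma~\ref{lem:B J*K} via \eqref{eq:unfold Bn Dn+1} and Lemma~\ref{lem:hom parab submonoid}. But its base case $J\star[2,n+1]=[2,n-1]$ still requires a direct computation.
\end{enumerate}
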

\begin{proof}
The argument is rather long, and we split it into several
Lemmata for the reader's convenience.
\begin{lemma}\label{lem:1,n*1,n D}
$\sigma(J)\star \sigma(J)=\sigma([1,n]_2)$ and
$\sigma(J)\star J=[1,n-1]_2$, $n\ge 3$.
\end{lemma}
\begin{proof}
We use induction on~$|I|$.
The induction base is the type $A_3$ ($n=2$), which identifies with $D_3$. To make the notation consistent with $D_{n+1}$ series, we label the nodes of the Coxeter graph of type~$A_3$ as follows
\begin{equation}\label{eq:A3 as D3}
\dynkin[text style/.style={scale=1},Coxeter,root radius=0.07,expand labels={2,1,3},edge length=1.2cm]D3.
\end{equation}
Then $J=\{1,2\}$, $\sigma(J)=\{1,3\}$.
Using Corollary~\ref{cor:A J*K}, we obtain
$J\star \sigma(J)=\{1\}=[1,1]_2$ and $\sigma(J)\star \sigma(J)=\{3\}=\sigma([1,2]_2)$.

For the inductive step, assume first that $n>2$ is odd.
Then $w_\circ$ is central in~$W$ and
since $w_{\sigma(J)}=w_\circ w_\circ^{\sigma(J)}$,
it follows from Lemma~\ref{lem:wK absorption} that $w_{\sigma(J)}\star s_n=w_{\sigma(J)}$ while
\begin{align}w_{\sigma(J)}\star s_i&=w_\circ^{\sigma(J)}w_\circ s_i=
w_\circ^{\sigma(J)}s_i w_\circ \nonumber\\
&=\begin{cases}
s_{n+1-i} w_{\sigma(J)},& 2\le i\le n-1,\\
s_{n+1} w_{\sigma(J)},&i=1,\\
s_1 w_{\sigma(J)},&i=n+1,
\end{cases}=\begin{cases}
s_{n+1-i}\star w_{\sigma(J)},& 2\le i\le n-1,\\
s_{n+1}\star w_{\sigma(J)},&i=1,\\
s_1 \star w_{\sigma(J)},&i=n+1.
\end{cases}
\label{eq: wsJ si D}
\end{align}
Therefore, $
w_{\sigma(J)}\star \cxr1n=\cx{2}{(n+1)}\star w_{\sigma(J)}$,
where we used that $s_n\star w_{\sigma(J)}=w_{\sigma(J)}$
by Lemma~\ref{lem:wK absorption}.
Using~\eqref{eq:w1,n D} and the induction hypothesis
we obtain
\begin{align*}
w_{\sigma(J)}\star w_{\sigma(J)}&=\cx2{(n+1)}\star \cxr1n\star w_{[2,n];[2,n+1]}\star w_{[2,n];[2,n+1]}\\
&=\cx2{(n+1)}\times \cxr1n\times w_{[2,n]_2;[2,n+1]}.
\end{align*}
Now, $W_{[2,n+1]}$ is
of type~$D_n$ and so
$w_\circ^{[2,n+1]}$ satisfies $s_i w_\circ^{[2,n+1]}=
w_\circ^{[2,n+1]}s_{\sigma(i)}$ for $i\in[2,n+1]$. As $n$ is odd, $[2,n]_2=[3,n]_2$, and so
\begin{align*}
w_{\sigma(J)}\star w_{\sigma(J)}&=\cx2{(n+1)}\times \cxr1{(n-1)}\times w_\circ^{[2,n+1]}w_\circ^{\sigma([3,n]_2)}\\
&=s_1 w_\circ^{[1,n+1]}w_\circ^{\sigma([3,n]_2)}=s_1 w_\circ^{\sigma([3,n]_2)} w_\circ
=w_\circ^{\sigma([1,n]_2)}w_\circ=w_{\sigma([1,n])_2}.
\end{align*}

If~$n>2$ is even then $w w_\circ=w_\circ \sigma(w)$, $w\in W$ and so for $1\le i\le n$ we have by Lemma~\ref{lem:wK absorption}
\begin{align}
w_{\sigma(J)}\star s_i&=w_\circ w_\circ^J\star s_i
=w_\circ w_\circ^Js_i
=w_\circ s_{n+1-i}w_\circ^J=s_{\sigma(n+1-i)}w_{\sigma(J)}=s_{\sigma(n+1-i)}\star w_{\sigma(J)},\label{eq: wsJ si D even}
\end{align}
whence, as $s_n\star w_{\sigma(J)}=w_{\sigma(J)}$ by Lemma~\ref{lem:wK absorption},
$$
w_{\sigma(J)}\star \cxr1n=\sigma(\cx1n)\star w_{\sigma(J)}
=\sigma(\cx1n)\star s_n\star w_{\sigma(J)}=
\cx1{(n+1)}\star w_{\sigma(J)}
$$
Applying the induction hypothesis we obtain
$$
w_{\sigma(J)}\star w_{\sigma(J)}=
\cx{1}{(n+1)}\times \cxr1{(n-1)}\times w_{[2,n]_2;[2,n+1]}.
$$
Now, $w_\circ^{[2,n+1]}$ is central in~$W_{[2,n+1]}$ and
so
\begin{align*}
w_{\sigma(J)}\star w_{\sigma(J)}&=
\cx{1}{(n+1)}\times\cxr1{(n-1)}\times w_\circ^{[2,n+1]}
w_\circ^{[2,n]_2}=
w_\circ
w_\circ^{[2,n]_2}=
w_\circ^{\sigma([2,n]_2)}w_\circ
=w_{\sigma([1,n]_2)},
\end{align*}
since $[2,n]_2=[1,n]_2$ in this case.

The argument for $w_J\star w_{\sigma(J)}=w_{\sigma(J)}\star w_J$ is similar. If~$n>2$ is odd, we have 
$
w_{\sigma(J)}\star \sigma(\cxr1n)=\cx1{(n+1)}\star w_{\sigma(J)}
$ by~\eqref{eq: wsJ si D},
and so by~\eqref{eq:w1,n D} and by the induction hypothesis,
\begin{align*}
w_{\sigma(J)}\star w_J&=\cx1{(n+1)}\star \cxr1n\star w_{[2,n]_2;
[2,n+1]}\star \sigma(w_{[2,n]_2;[2,n+1]})\\
&=\cx1{(n+1)}\times \cxr1{(n-1)}\times w_{[2,n-1]_2}\\
&=\cx1{(n+1)}\times \cxr1{(n-1)}\times w_\circ^{[2,n+1]}
w_\circ^{[2,n-1]_2}\\
&=w_\circ^{[1,n+1]}w_\circ^{[2,n-1]_2}=w_\circ^{[2,n-1]_2}w_\circ^{[1,n+1]}=w_{[1,n-1]_2},
\end{align*}
as $n-1$ is even and so $[1,n-1]_2=[2,n-1]_2$. Similarly,
for $n>2$ even we obtain, using~\eqref{eq: wsJ si D even}
and $w_{\sigma(J)}\star s_{n+1}=w_{\sigma(J)}$,
$$
w_{\sigma(J)}\star \sigma(\cxr1n)=
w_{\sigma(J)}\star \cxr1{(n-1)}
=\cxr2{(n+1)} \star w_{\sigma(J)}
$$
whence, as in this case $[2,n-1]_2=[3,n-1]_2$,
\begin{align*}
w_{\sigma(J)}\star w_J&=\cx2{(n+1)}\star \cxr1n\star w_{[2,n]_2;
[2,n+1]}\star \sigma(w_{[2,n]_2;[2,n+1]})\\
&=\cx2{(n+1)}\times \cxr1{(n-1)}\times w_{[2,n-1]_2}\\
&=\cx2{(n+1)}\times \cxr1{(n-1)}\times w_\circ^{[2,n+1]}
w_\circ^{[3,n-1]_2}\\
&=s_1 w_\circ^{[1,n+1]}w_\circ^{[3,n-1]_2}=s_1 w_\circ^{[3,n-1]_2}w_\circ^{[1,n+1]}=w_{[1,n-1]_2}.\qedhere
\end{align*}
\end{proof}
\begin{lemma}\label{lem:1,n*i,j}
If $K\subsetneq J$ or $K\subsetneq \sigma(J)$ then
$w_J\star w_K=w_\circ$, that is, $J\star K=\emptyset$.
\end{lemma}
\begin{proof}
Suppose first that $K\subsetneq J$. Then
either $K\subset [1,n-1]$ or $K\subset [2,n]$ and so, by Lemma~\ref{lem:len prop wJ K},
either $w_K=w_{K;[1,n-1]}\star w_{[1,n-1]}$ or $w_K=w_{K;[2,n]}\star w_{[2,n]}$.
Thus, by Lemma~\ref{lem:char w_0 monoid} it suffices to prove that $w_{[1,n-1]}\star w_J=w_\circ$ and $w_{[2,n]}\star w_J=w_\circ$.

Using Lemmata~\ref{lem:wK absorption}, \ref{lem:len prop wJ K},  and~\ref{lem:1,n*1,n D}, we obtain
\begin{align*}
w_{[2,n]}\star w_J&=w_{[2,n];[1,n]}\star w_J\star w_J
=\cx1n\star w_{[1,n]_2}\\
&=\cx1{(n-1)}\star s_n w_{[1,n]_2}=\cx1{(n-1)}\star w_{[1,n-2]_2}=\cx1{(n-2)}\star w_{[1,n-2]_2}.
\end{align*}
Continuing this way, we obtain $w_{[2,n]}\star w_J=w_\emptyset=w_\circ$. The computation for~$w_{[1,n-1]}$ is similar,
albeit a bit longer as it depends on the parity of~$n$,
and is omitted.

It remains to consider the case when $K=\sigma([i,n])$ for some~$2\le i\le n$. The same considerations as above show that it suffices to consider $K=\sigma([2,n])$. Then, by Lemmata~\ref{lem:len prop wJ K}, \ref{lem:wK absorption} and~\ref{lem:1,n*1,n D},
\begin{align*}
w_K\star w_J&=\sigma(w_{[2,n],[1,n]})\star w_{\sigma(J)}\star w_J=\sigma(\cx1n)\star w_{[1,n-1]_2}
=\cx1{(n-1)}\star w_{[1,n-1]_2}\\
&=\cx1{(n-2)}\star s_{n-1}w_{[1,n-1]_2}=\cx1{(n-2)}\star w_{[1,n-3]_2}=\cx1{(n-3)}\star w_{[1,n-3]_2}.
\end{align*}
Continuing this way, we obtain $w_K\star w_J=w_\circ$.
\end{proof}
The last remaining case is
\begin{lemma}
We have $J\star [i,n+1]=[i,n+1-i]$, $1\le i\le n-1$.
\end{lemma}
\begin{proof}
We use induction on~$i$. The induction base is trivial
as $[1,n+1]=I$.
For the inductive step, note that $w_{[i,n+1]}=
\cx{(i-1)}{(n+1)}\times\cxr{(i-1)}{(n-1)}\times w_{[i-1,n+1]}$, $i>2$.
Therefore, using the induction hypothesis and Lemma~\ref{lem:wK absorption} we obtain for
$i\le (n+3)/2$
\begin{align*}
w_{[i,n+1]}\star w_{\sigma(J)}&=\cx{(i-1)}{(n+1)}\star\cxr{(i-1)}{(n-1)}\star w_{[i-1,n+1]}\star w_{\sigma(J)}\\
&=\cx{(i-1)}{(n+1)}\star\cxr{(i-1)}{(n-1)}\star w_{[i-1,n+2-i]}\\
&=\cx{(i-1)}{(n+1)}\star \cxr{(n+3-i)}{(n-1)}\star
\cxr{(i-1)}{(n+2-i)} w_{[i-1,n+2-i]}\\
&=\cx{(i-1)}{(n+1)}\star \cxr{(n+3-i)}{(n-1)}\star
w_{[i-1,n+1-i]}\\
&=\cx{(i-1)}{(n+1)}\star
w_{[i-1,n+1-i]}=\cx{(i-1)}{(n+1-i)}w_{[i-1,n+1-i]}=w_{[i,n+1-i]}.
\end{align*}
If $i>(n+3)/2$ then $[i-1,n+2-i]$ is empty, that is $w_{[i-1,n+2-i]}=w_\circ$, and so $w_{[i,n+1]}\star w_{\sigma(J)}=
\cx{(i-1)}{(n+1)}\star\cxr{(i-1)}{(n-1)}\star w_\circ=w_\circ$, that is $[i,n+1]\star \sigma(J)=\emptyset$. But then $i>(n+3)/2>(n+1)/2$ and so $[i,n+1-i]$ is also empty.
\end{proof}
This exhausts all connected~$K\subset I$.
\end{proof}

\begin{proposition}\label{prop:proj 1n D}
For any connected~$K\subset I$, $([1,n],K)\in\mathscr G$.
\end{proposition}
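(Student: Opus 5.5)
We need to show that $p_J(w_K)=w_{J\star K;J}$ for $J=[1,n]$ and every connected $K\subset I=[1,n+1]$. Here $W_J$ is of type $A_n$. By Proposition~\ref{prop:1,n*K D}, $J\star K$ is known in every case.

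The plan has two parts. When $J\star K=\emptyset$, the claim is automatic by Lemma~\ref{lem:eq for w0 J*K}. It remains to treat three cases: $K=J$, $K=\sigma(J)$, and $K=[i,n+1]$ with $1\le i\le n-1$. The case $K=I$ is already known. In each of these cases I will compute $p_J(w_K)$ directly, from an explicit reduced (or $\star$-) factorization of $w_K$ obtained from \eqref{eq:w0 D}, \eqref{eq:w1,n D} and \eqref{eq:w i,n+1 D}. Since $p_J$ is a homomorphism of Hecke monoids, I can apply it factor by factor, using $p_J(s_{n+1})=1$ and $p_J(s_i)=s_i$ for $i\le n$.

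\emph{Case $K=[i,n+1]$.} By \eqref{eq:w i,n+1 D}, $w_K=\dscprodst_{1\le j\le i-1}(\cx j{(n+1)}\times \cxr j{(n-1)})$. Applying $p_J$ gives
$$p_J(w_K)=\cx{(i-1)}{n}\star\cxr{(i-1)}{(n-1)}\star\cdots\star \cx1n\star\cxr1{(n-1)}.$$
In $W_J=W(A_n)$ I will rewrite this by repeatedly using Lemma~\ref{lem:wK absorption} and \eqref{eq:w0 type A}. Each $\cx jn$ is absorbed into, or commutes past, $\cx 1n$-type factors, which collapses the product to $\cx1n{}^{\star(i-1)}\star\cxr1n{}^{\star(i-1)}$, possibly after trimming $s_n$ at the ends of the $\cxr{j}{(n-1)}$. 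Proposition~\ref{prop:wK explicit} then identifies this with $w_{[i,n+1-i];[1,n]}$. As a cleaner alternative, I will argue by induction on $i$ using $w_{[i,n+1]}=\cx{(i-1)}{(n+1)}\times\cxr{(i-1)}{(n-1)}\times w_{[i-1,n+1]}$, which was used in the last lemma. The inductive step then reads
$$p_J(w_{[i,n+1]})=\cx{(i-1)}n\star\cxr{(i-1)}{(n-1)}\star w_{[i-1,n+2-i];[1,n]},$$
and Lemma~\ref{lem:wK absorption} together with Proposition~\ref{prop:wK explicit} reduces the right-hand side to $w_{[i,n+1-i];[1,n]}$. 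This mirrors the computation in that lemma, with $w_\circ$ replaced by $w_\circ^J$.

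\emph{Cases $K=J$ and $K=\sigma(J)$.} By \eqref{eq:w1,n D}, $w_{[1,n]}=\ascprodst_{1\le i\le n}\sigma^i(\cxr in)$. Under $p_J$, $\sigma^i(\cxr in)$ becomes $\cxr in$ when $\sigma^i=1$, and $\cxr i{(n-1)}$ when $\sigma^i=\sigma$ (since then $s_n\mapsto s_{n+1}\mapsto 1$). Therefore $p_J(w_K)$ is a $\star$-product of Coxeter elements $\cxr in$ and $\cxr i{(n-1)}$ alternating with the parity of $i$; for $\sigma(J)$ the roles are swapped. I will show by induction on $n$ that this product equals $w_{[1,n]_2;[1,n]}$ (respectively $w_{[1,n-1]_2;[1,n]}$). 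The inductive step peels off the first factor and uses the recursion $w_{[1,n]}=\sigma(\cxr1n\times w_{[2,n];[2,n+1]})$. Since $p_J$ commutes with the natural identification of $W_{[2,n+1]}$ with a type $D_n$ group up to $\sigma$ (Lemma~\ref{lem:prj hom} applied to $\sigma$), the induction hypothesis gives $p_{[2,n]}$ of the inner term as a parabolic element of $W_{[2,n]}$. What remains is to verify, in type $A_n$, the identity
$$\cxr1{(n-\epsilon)}\star w_{[2,n]_2;[2,n]}=w_{[1,n]_2;[1,n]},$$
with the appropriate parity shift $\epsilon$. I will check this via \eqref{eq:w0 type A}, using $w_\circ^{[1,n]}=\cxr1n\times w_\circ^{[2,n]}$ and the commutation of the self-orthogonal sets $[a,b]_2$ with the relevant simple reflections.

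I expect the main obstacle to be this last step: handling $p_J$ on the $\sigma$-twisted factors, where the parities of $n$ and $i$ interact. The equality must also hold exactly, not merely up to Bruhat comparison. Two tools should help. Lemma~\ref{lem:eq for w0 J*K} gives $w_{J\star K;J}\le p_J(w_K)$, so it would suffice to show $\ell(p_J(w_K))\le \ell(w_{J\star K;J})$. That length bound can be read off the explicit $\star$-factorization, since each factor adds at most its own length after absorption.
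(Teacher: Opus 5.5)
Your proposal is correct and follows essentially the paper's proof: reduce by Proposition~\ref{prop:1,n*K D} and Lemma~\ref{lem:eq for w0 J*K} to $K\in\{J,\sigma(J),[i,n+1]\}$, handle $K=[i,n+1]$ by induction on $i$ via $w_{[i,n+1]}=\cx{(i-1)}{(n+1)}\times\cxr{(i-1)}{(n-1)}\times w_{[i-1,n+1]}$ plus absorption, and handle $K=J,\sigma(J)$ by a simultaneous induction on $n$ via \eqref{eq:w1,n D} (base $D_3=A_3$). The one slip is your displayed type~$A$ identity, which is not correct as written for either value of $\epsilon$: the two cases cross over in the induction, so what is actually needed is $\cxr1{(n-1)}\star w_{[2,n-1]_2;[2,n]}=w_{[1,n]_2;[1,n]}$ for $K=J$ and $\cxr1n\star w_{[2,n]_2;[2,n]}=w_{[1,n-1]_2;[1,n]}$ for $K=\sigma(J)$; for these your Bruhat-order remark closes both cases at once, because the lengths of the two factors add up exactly to $\ell(w_{J\star K;J})$.
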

\begin{proof}
Let $J=[1,n]$. By Proposition~\ref{prop:1,n*K D} we only
need to consider the cases when $K=J$, $K=\sigma(J)$
and $K=[i,n+1-i]$, $i\le (n+1)/2$.

First, we use induction on rank of~$W$ to prove that $(J,J),(J,\sigma(J))\in\mathscr G$.
The case $n=2$ is actually type~$A_3$. Labeling the Coxeter graph  as in~\eqref{eq:A3 as D3}, we obtain $w_{\{1,2\}}=s_3 s_1 s_2$
$w_{\sigma(\{1,2\})}=s_2s_1s_3$ and so
$p_{\{1,2\}}(w_{\{1,2\}})=s_1 s_2=w_{\{2\};\{1,2\}}$,
$p_{\{1,2\}}(w_{\{1,3\}})=s_2s_1=w_{\{1\};\{1,2\}}$.
For the inductive step,
we have by~\eqref{eq:w1,n D}
\begin{align*}
p_{J}(w_{J})&=p_{J}(\sigma(\cxr1n))
\star p_{J}(\sigma(w_{[2,n],[2,n+1]}))=\cxr1{(n-1)}\star p_{[2,n]}(w_{\sigma([2,n]);[2,n+1]})\\
&=\cxr1{(n-1)}\times w_{[2,n-1]_2;[2,n]},
\\
\intertext{while}
p_{J}(\sigma(w_{J}))&=p_{J}(\cxr1n)
\star p_{J}(w_{[2,n];[2,n+1]})=\cxr1n\times w_{[2,n]_2;[2,n]}
\end{align*}
Now, $w_{[2,n-1]_2;[2,n]}=w_\circ^{[2,n]}w_\circ^{[3,n]_2}$
and so
$$
p_{J}(w_{J})=\cxr1{(n-1)}w_\circ^{[2,n]}
w_\circ^{[3,n]_2}=s_n w_\circ^{J}
w_\circ^{[3,n]_2}=w_\circ^{[1,n]_2}w_\circ^{J}=
w_{J\star J;J}
$$
where we used Proposition~\ref{prop:1,n*K D}.
Similarly,
\begin{equation*}
p_{J}(\sigma(w_{J}))=\cx1n w_\circ^{[2,n]}
w_\circ^{[2,n]_2}=w_\circ^{J}w_\circ^{[2,n]_2}
=w_\circ^{[1,n-1]_2}w_\circ^{J}
=w_{J\star\sigma(J);J}
\end{equation*}
also by Proposition~\ref{prop:1,n*K D}.

Now we use induction on~$i$ to prove that $(J,[i,n+1])\in\mathscr G$ for $1\le i\le (n+1)/2$. The induction base is
trivial as $[1,n+1]=I$.
For the inductive step, observe that, by~\eqref{eq:w i,n+1 D}, $w_{[i,n+1]}=\cx{(i-1)}{(n+1)}\times \cxr{(i-1)}{(n-1)}\times w_{[i-1,n+1]}$ for $i>1$.
Since by Lemma~\ref{lem:wK absorption}, $s_j\star w_{[a,n+1]}=w_{[a,n+1]}$, $1\le j<a$, we have
\begin{align*}
w_{[i,n]}&=\cx{1}{(i-2)}\star w_{[i,n]}=
\cx1{(n+1)}\star\cxr{(i-1)}{(n-1)}\star w_{[i-1,n+1]} \\
&=\cx1{(n+1)}\star\cxr1{(n-1)}\star w_{[i-1,n+1]}.
\end{align*}
Therefore,
\begin{align*}
p_J(w_{[i,n+1]})&=
\cx{1}{n}\star\cxr{1}{(n-1)}\star w_{[i-1,n+2-i],[1,n]}=\cx{1}n\star \cxr1n\star w_{[i-1,n+2-i],[1,n]}\\
&=w_{[2,n-1],[1,n]}\star w_{[i-1,n+2-i],[1,n]}=w_{[i,n-i+1],J}=w_{J\star [i,n+1],J},
\end{align*}
where we used Proposition~\ref{prop:wK explicit}, Corollary~\ref{cor:A J*K} and Proposition~\ref{prop:1,n*K D}.
\end{proof}

It remains to prove that $([2,n+1],K)\in\mathscr G$ for all
connected~$K\subset I$.
\begin{lemma}\label{lem:2,n+1*K D}
Let~$K\subset I$ be connected. Then for $J=[2,n+1]$
$$
J\star K=
\begin{cases}
[i+1,n+1],&K=[i,n+1],\,1\le i\le n-1\\
[i+1,j-1],&K=[i,j],\,1\le i\le j\le n-1\\
[i+1,n-1],&\text{$K=[i,n]$ or $K=\sigma([i,n])$, $1\le i\le n$}\\
\end{cases}
$$
\end{lemma}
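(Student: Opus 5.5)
The plan is to compute $w_J\star w_K$ directly, starting from the explicit reduced factorization of $w_J$ for $J=[2,n+1]$. Identity~\eqref{eq:w i,n+1 D} with $i=2$ gives
$$
w_J=\cx1{(n+1)}\times\cxr1{(n-1)},
$$
and Lemma~\ref{lem:wK absorption} gives $D_L(w_{[a,n+1]})=[1,a-1]$. Since $\star$ is commutative on $\mP_I(M)$ (Proposition~\ref{prop:submonoid *}), I may compute $J\star K$ as either $w_J\star w_K$ or $w_K\star w_J$, whichever is more convenient. I would treat the three families of connected $K$ separately.

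For $K=[i,n+1]$ I would argue by induction on $i$, as in the last lemma of the proof of Proposition~\ref{prop:1,n*K D}. The base case $i=1$ is trivial since $w_I=1$. For the inductive step I would use
$$
w_{[i,n+1]}=\cx{(i-1)}{(n+1)}\times\cxr{(i-1)}{(n-1)}\times w_{[i-1,n+1]}.
$$
Inside $w_J\star w_{[i,n+1]}$, I would split $\cxr1{(n-1)}=\cxr i{(n-1)}\,\cxr1{(i-1)}$ and absorb the letters $s_j$ with $j<i$ into $w_{[i,n+1]}$ via its left descent set. I would then use Lemma~\ref{lem:char w_0 monoid} and Lemma~\ref{lem:wK absorption} to reduce the expression to $\cx i{(n+1)}\times\cxr i{(n-1)}\times w_{[i,n+1]}=w_{[i+1,n+1]}$. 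The same recursion should also give the answer in the boundary range where the interval becomes $I$ or empty.

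For $K\subset[1,n]$, i.e.\ $K=[i,j]$ with $j\le n-1$ or $K=[i,n]$, I would use Lemma~\ref{lem:len prop wJ K} to write $w_K=w_{K;[1,n]}\times w_{[1,n]}$. This gives
$$
w_K\star w_J=w_{K;[1,n]}\star\bigl(w_{[1,n]}\star w_J\bigr)=w_{K;[1,n]}\star w_{[2,n-1]},
$$
where $[1,n]\star[2,n+1]=[2,n-1]$ is the case $i=2$ of Proposition~\ref{prop:1,n*K D}. Next, Proposition~\ref{prop:wK explicit} applied inside $W_{[1,n]}$ (which is of type $A_n$) gives
$$
w_{K;[1,n]}=\cx1n{}^{\star(i-1)}\star\cxr1n{}^{\star(n-j)}.
$$
It then remains to determine how single copies of $\cx1n$ and $\cxr1n$ act on left $\star$-multiplication of $w_{[a,b]}$. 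I expect that $\cxr1n\star w_{[a,b]}=w_{[a,b-1]}$ when $b\le n-1$, and $\cx1n\star w_{[a,b]}=w_{[a+1,b]}$. I would prove these two rules by the same telescoping absorption argument used in the proof of Lemma~\ref{lem:1,n*i,j}, namely repeatedly applying $s_k\star w_{[a,b]}$ with descents from Lemma~\ref{lem:wK absorption}. Iterating them starting from $[2,n-1]$ yields $[i+1,j-1]$ for $K=[i,j]$ and $[i+1,n-1]$ for $K=[i,n]$.

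The remaining case $K=\sigma([i,n])$ follows from the case $K=[i,n]$. Indeed, $\sigma(J)=J$ and $\sigma$ fixes $[i+1,n-1]$, so Lemma~\ref{lem:hom parab submonoid} (applied to $\sigma$) gives $J\star\sigma([i,n])=\sigma(J\star[i,n])=[i+1,n-1]$.

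I expect the main obstacle to be the two shifting rules for $\cx1n$ and $\cxr1n$ acting on $w_{[a,b]}$ in type $D_{n+1}$. Unlike type $A$, the element $w_\circ$ is central or twisted by $\sigma$ depending on the parity of $n$, so conjugating $s_k$ past $w_\circ$ may require the same case split on parity as in Lemma~\ref{lem:1,n*1,n D}. If the direct computation becomes too messy, I would fall back on the He associativity $J'\star_J(J\star K)=J'\star K$ together with the already established Proposition~\ref{prop:1,n*K D}.
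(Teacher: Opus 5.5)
Your proposal is correct. It differs from the paper's proof mainly in how the $\sigma$-stable subsets are handled.

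\emph{The paper's route.} It disposes of $K=[i,n+1]$ and $K=[i,j]$, $j\le n-1$, in one stroke. Since $J$ and these $K$ are $\sigma$-invariant, $w_J$ and $w_K$ lie in the $\sigma$-fixed submonoid, which is identified with $(W(B_n),\star)$ via \eqref{eq:unfold Bn Dn+1}. The answer is then read off from Lemma~\ref{lem:B J*K}, itself obtained by unfolding $B_n$ into $A_{2n-1}$. Only $K=[i,n]$ is computed inside $D_{n+1}$, by induction on $i$ starting from $[1,n]\star J=[2,n-1]$, using $w_{[i,n]}=w_{[i,n];[i-1,n]}\times w_{[i-1,n]}$ and absorption into $w_{[i,n-1]}$. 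That step is essentially your shift rule for $\cx{1}{n}$, so on this family your argument is the paper's induction unrolled. The case $\sigma([i,n])$ is handled exactly as you do.

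\emph{Your route.} You stay in $D_{n+1}$ throughout. For $[i,n+1]$ you do a direct absorption computation from \eqref{eq:w i,n+1 D}; the induction you announce is not actually needed. Every $K\subset[1,n]$ is treated uniformly via $w_K=w_{K;[1,n]}\times w_{[1,n]}$, Proposition~\ref{prop:wK explicit} and the two shift rules.

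\emph{Trade-off.} The paper's route is shorter because it recycles the type-$B$ computation. Yours is self-contained and needs neither the folding nor Lemma~\ref{lem:B J*K}.

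\emph{Two small remarks.} First, your parity worry is unfounded. The shift rules use only $D_L(w_{[a,b]})=I\setminus[a,b]$, the fact that $u\vdash w_{[a,b]}$ for $u\in W_{[a,b]}$ (Lemma~\ref{lem:wK absorption}), and the identities $\cxr{a}{b}\,w_\circ^{[a,b]}=w_\circ^{[a,b-1]}$ and $\cx{a}{b}\,w_\circ^{[a,b]}=w_\circ^{[a+1,b]}$ from \eqref{eq:w0 type A}. Nothing is ever moved past $w_\circ$. Second, the genuine boundary case is $K=[n-1,n+1]$, not an interval becoming $I$ or empty. It needs $w_{[n,n+1]}=\cx{(n-1)}{(n+1)}\times\cxr{(n-1)}{(n-1)}\times w_{[n-1,n+1]}$, which is one step outside the range in which \eqref{eq:w i,n+1 D} is stated. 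It does hold, because $w_\circ^{[n-1,n+1]}=s_{n-1}s_ns_{n+1}s_{n-1}\times s_ns_{n+1}$ in the type $A_3$ parabolic, but you should verify it explicitly.
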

\begin{proof}
Note that~$J$ is $\sigma$-invariant.
In the first two cases, $K=\sigma(K)$ and, since~$w_\circ$
is $\sigma$-invariant,
$w_K$, $w_J$ are $\sigma$-invariant. Yet the set of~$\sigma$-invariant elements in~$(W,\star)$ is isomorphic to~$(W(B_n),
\star)$ by Theorem~\ref{thm:adm finite class} and so we can apply Lemma~\ref{lem:B J*K}.

To prove the assertion for~$K=[i,n]$ we use induction on~$i$. The case $i=1$ has already been established in Proposition~\ref{prop:1,n*K D}. For the inductive step,
note that for $i>1$, $w_{[i,n]}=w_{[i,n];[i-1,n]}\star w_{[i-1,n]}$
by Lemma~\ref{lem:len prop wJ K},
whence
\begin{align*}
w_{[i,n]}\star w_J&=w_{[i,n];[i-1,n]}\star w_{[i-1,n]}\star w_J=\cx in\star w_{[i,n-1]}
=\cx i{(n-1)}\star w_{[i,n-1]}\\
&=\cx i{(n-1)}w_\circ^{[i,n-1]}w_\circ=w_\circ^{[i+1,n-1]}w_\circ=w_{[i+1,n-1]},
\end{align*}
where we used Lemma~\ref{lem:wK absorption} and
the induction hypothesis, as well as the fact that $[i-1,n]$ and $[i,n-1]$ are of type~$A$. The result for~$K=\sigma([i,n])$ is now immediate.
\end{proof}
\begin{proposition}\label{prop:proj 2n+1 D}
Let $J=[2,n+1]$. Then $(J,K)\in\mathscr G$ for all connected~$K\subset I$.
\end{proposition}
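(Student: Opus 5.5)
We follow the pattern of Proposition~\ref{prop:proj 1n D} and treat the three families of connected~$K$ listed in Lemma~\ref{lem:2,n+1*K D} separately, with $J=[2,n+1]$. In each case the target is $p_J(w_K)=w_{J\star K;J}$, where $J\star K$ is given explicitly by Lemma~\ref{lem:2,n+1*K D}.

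\emph{Step 1: $\sigma$-invariant $K$.} This covers $K=[i,n+1]$ and $K=[i,j]$ with $j\le n-1$. Here $J$, $K$ and $w_K$ are all $\sigma$-invariant. We use the unfolding $\phi\in\Hom_{\mathscr{CH}}(B_n,D_{n+1})$ from~\eqref{eq:unfold Bn Dn+1}; it is injective and its image is the $\sigma$-fixed submonoid. Then $J=[\phi]([2,n])$ and $K=[\phi](\wh K)$ for the corresponding interval $\wh K$ of~$[1,n]$. Type~$B_n$ is already settled, so $([2,n],\wh K)\in\mathscr G(B_n)$. The forward direction of Proposition~\ref{prop:LCM hom good} then gives $(J,K)\in\mathscr G(D_{n+1})$.

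\emph{Step 2: $K=[i,n]$ and $K=\sigma([i,n])$.} By Corollary~\partref{cor:diag aut good.a} together with $\sigma(J)=J$, it suffices to treat $K=[i,n]$. We proceed by induction on~$i$.

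For the base case $i=1$, write $w_{[1,n]}=\sigma(\cxr1n)\times\sigma(w_{[2,n];[2,n+1]})$ using~\eqref{eq:w1,n D}. Since $p_J$ kills $s_1$, we get
\[
p_J(w_{[1,n]})=\sigma(\cxr2n)\star p_{[2,n+1]}\bigl(\sigma(w_{[2,n];[2,n+1]})\bigr)=\sigma(\cxr2n)\star w_{\sigma([2,n]);[2,n+1]}.
\]
By Lemma~\ref{lem:wK absorption}, $D_L(w_{\sigma([2,n]);[2,n+1]})=\{n\}$ in $W_{[2,n+1]}$. The Coxeter factor $\sigma(\cxr2n)$ therefore multiplies honestly, apart from an absorbed $s_n$ or $s_{n+1}$. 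We then compare the result with $w_\circ^{[2,n-1]}w_\circ^{J}=w_{J\star[1,n];J}$, using centrality or $\sigma$-twisting of $w_\circ^{J}$ in $W(D_n)$ according to the parity of~$n$, exactly as in Lemma~\ref{lem:1,n*1,n D}.

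For the inductive step, apply $p_J$ to $w_{[i,n]}=w_{[i,n];[i-1,n]}\star w_{[i-1,n]}$ (Lemma~\ref{lem:len prop wJ K}), noting that $w_{[i,n];[i-1,n]}=\cx{(i-1)}n$ because $[i-1,n]$ is of type~$A$. This gives
\[
p_J(w_{[i,n]})=\cx{\max(i-1,2)}n\star w_{[i,n-1];J}.
\]
Type~$A$ manipulations as in the proof of Lemma~\ref{lem:2,n+1*K D}, namely absorption via Lemma~\ref{lem:wK absorption} and $\cx i{(n-1)}w_\circ^{[i,n-1]}=w_\circ^{[i+1,n-1]}$, then yield $w_{[i+1,n-1];J}$.

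\emph{Step 3: conclusion.} Connected $K$ are now covered. Proposition~\ref{prop:only connected} extends the result from connected $K$ to arbitrary $K$, so $(J,K)\in\mathscr G$ for all~$K$.

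\emph{Expected obstacle.} The main difficulty is the base case $K=[1,n]$ in Step~2. There one must track how $\sigma$ interacts with $w_\circ^{[2,n+1]}$ depending on the parity of~$n$, since $W_{[2,n+1]}$ is of type $D_n$ and its longest element is central for $n$ even and $\sigma$-twisting for $n$ odd. If the direct computation becomes messy, a fallback is to compute $p_J(w_{[1,n]})$ as $p_J(w_{[1,n]})\star p_J(w_{\sigma([1,n])})$-type products via Lemma~\ref{eq:good products}, using $[1,n]\star\sigma([1,n])=[1,n-1]_2$.
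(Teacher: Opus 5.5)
Your proposal is correct and matches the paper's proof essentially step for step. The $\sigma$-invariant $K$ are handled via the unfolding~\eqref{eq:unfold Bn Dn+1} and the type~$B$ result, $\sigma([i,n])$ is reduced to $[i,n]$ by symmetry, and $K=[i,n]$ is done by induction on~$i$, with the base case from~\eqref{eq:w1,n D} and the step from Lemma~\ref{lem:len prop wJ K} plus absorption.

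The base case is easier than you expect. Since $\sigma(\cxr2n)\in W_{\sigma([2,n])}$, Lemma~\ref{lem:wK absorption} makes its product with $w_{\sigma([2,n]);J}$ length-additive, so nothing is absorbed. Then $\sigma(\cxr2n)\,w_\circ^{\sigma([2,n])}w_\circ^{J}=\sigma(w_\circ^{[2,n-1]}w_\circ^{J})=w_{[2,n-1];J}$ because $\sigma$ fixes $w_\circ^J$, so no parity split is needed. Your fallback via Lemma~\ref{eq:good products} could not reach $K=[1,n]$ in any case, since $K'\star K''\subset K'\cap K''$.
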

\begin{proof}
If $K=\sigma(K)$ the assertion follows from the result in type~$B$ and Corollary~\ref{cor:diag aut good}. Thus, the only case to consider is that of $K=[i,n]$.
We use induction on~$i$. For~$i=1$ we have by~\eqref{eq:w1,n D}
\begin{align*}
p_{J}(w_{[1,n]})&=p_{J}(\sigma(\cxr1n))\star
p_{J}(\sigma(w_{[2,n];J}))=\sigma(\cxr2n)\star \sigma(w_{[2,n];J})
=\sigma(\cxr2n\star w_{[2,n];J})\\
&=\sigma(\cxr2n w_{[2,n];J})=\sigma( \cxr2n w_\circ^{[2,n]}w_\circ^{J})=\sigma( w_\circ^{[2,n-1]}w_\circ^{J})=w_{[2,n-1];J}=w_{J\star [1,n];J}
\end{align*}
by Lemma~\ref{lem:2,n+1*K D}.
For~$i>1$, write $w_{[i,n]}=w_{[i,n];[i-1,n]}\times w_{[i-1,n]}=\cx{(i-1)}n\times w_{[i-1,n+1]}$ using Lemma~\ref{lem:len prop wJ K}. Then using the induction hypothesis and
Lemma~\ref{lem:wK absorption} we obtain
\begin{align*}
p_{J}(w_{[i,n]})&=p_{J}(\cx{(i-1)}n)\star
p_{J}(w_{[i-1,n];J})\\
&=\cx{\max(2,i-1)}n\star w_{[i,n-1];J}=\cx{\max(2,i-1)}{(n-1)}\star w_{[i,n-1];J}.
\end{align*}
If $i=2$ then we obtain
\begin{align*}
p_{J}(w_{[2,n]})&=\cx{2}{(n-1)}\star  w_{[2,n-1];J}=\cx{2}{(n-1)}w_\circ^{[2,n-1]}w_\circ^{J}=w_\circ^{[3,n-1]}w_\circ^{J}=w_{[3,n-1];J}\\
\intertext{while for $i>2$}
p_{J}(w_{[i,n]})&=\cx{(i-1)}{(n-1)}\star  w_{[i,n-1];J}=s_{i-1}\star\cx{i}{(n-1)}w_\circ^{[i,n-1]}w_\circ^{J}\\
&=s_{i-1}\star w_\circ^{[i+1,n-1]}w_\circ^{J}=w_{[i+1,n-1];J}.
\end{align*}
In either case, $p_J(w_K)=w_{J\star K;J}$
by Lemma~\ref{lem:2,n+1*K D}.
\end{proof}

\begin{proof}[Proof of Theorem~\ref{thm:pJwK is w0KstarJw0J},
$W$ of type~$D_{n+1}$, $n\ge 3$]
By Lemma~\ref{lem:induction}, we only need to prove
that $(J,K)\in\mathscr G$ for all connected $K\subset I$ and
for all connected~$J\subset I$ with~$|J|=n$, that is for
$J\in\{[1,n],\sigma([1,n]),[2,n+1]\}$.
For~$J=[1,n]$, Theorem~\ref{thm:pJwK is w0KstarJw0J} for $W_J$
has already been proven since~$W_{[1,n]}$ is of type~$A$, while for~$J=[2,n+1]$
we can use induction~$|J|$ of~$W$,
the induction base being $D_3=A_3$. The result for~$J=\sigma([1,n])$ follows from that for~$J=[1,n]$ by Lemma~\ref{lem:diag aut proj}.
The assertion now follows from Propositions~\ref{prop:proj 1n D}
and~\ref{prop:proj 2n+1 D}.
\end{proof}

\subsubsection{Proof of Theorem~\ref{thm:pJwK is w0KstarJw0J} for
exceptional types}
By Theorem~\partref{thm:adm finite class.unfold}\ref{thm:adm finite class.H3}\ref{thm:adm finite class.H4}, Proposition~\ref{prop:LCM hom good} and Corollary~\ref{cor:diag aut good} it remains to
prove Theorem~\ref{thm:pJwK is w0KstarJw0J} for type~$E_n$,
$n\in\{6,7,8\}$.

\begin{proof}[Proof of Theorem~\ref{thm:pJwK is w0KstarJw0J},
$W$ of type~$E$]
First, let $W$ be of type~$E_6$ and let~$\sigma$ be
its diagram automorphism (cf.~\eqref{eq:diag aut}).
By Lemma~\ref{lem:induction}, it suffices to consider
all $J\subset I$
with~$|J|=5$, that is,
$J\in \mathscr J_{E_6}=\{ [1,5],\,[2,6], \sigma([2,6])\}$
and all connected~$K\subset I$ with $J\star K\not=\emptyset$.
Note that Theorem~\ref{thm:pJwK is w0KstarJw0J} has
already been proven for~$W_J$
with these~$J$ since $W_{[1,5]}$ is of type~$A_5$ while~$W_{[2,6]}$ and~$W_{\sigma([2,6])}$ are of type~$D_5$.
By Corollary~\ref{cor:diag aut good}, the assertion
for~$\sigma([2,6])$ follows from that for~$[2,6]$.

Using a Python program we developed for computations in Hecke monoids, we obtain
$J\star K=\emptyset$
for all connected~$K\subsetneq I$ and~$J\in \mathscr J_{E_6}$
except
\begin{alignat*}{3}
&[1,5]\star [2,6]=\{3,5\},
&\qquad &[2,6]\star \{2,3,4,6\}=
\{3\},\\
&[2,6]\star [2,6]=\{3, 4, 6\},&&[2,6]\star \sigma([2,6])=[2,4]
\end{alignat*}
and the products obtained from the above by applying~$\sigma$.
We have
\begin{align*}
w_{[1,5]}&=s_6s_3s_2s_1s_4s_3s_2s_5s_4s_3s_6s_3s_2s_1s_4s_3s_2s_5s_4s_3s_6,\\
w_{[2,6]}&=
s_1s_2s_3s_4s_5s_6s_3s_2s_1s_4s_3s_2s_6s_3s_4s_5,\\
w_{\{2,3,4,6\}}&=
s_1s_2s_3s_4s_5s_4s_3s_2s_1s_6s_3s_2s_1s_4s_3s_2s_5s_4s_6s_3s_2s_1s_4s_5
\end{align*}
and so
\begin{align*}
p_{[1,5]}(w_{[2,6]})
&=s_1s_2s_1s_3s_2s_4s_3s_2s_1s_5s_4s_3s_2
=w_\circ^{[1,3]}s_1 \cxr14\cxr15 s_1
=s_3 w_\circ^{[1,5]}s_1\\
&=w_{\{3,5\};[1,5]}=w_{[1,5]\star[2,6];[1,5]},\\
p_{[2,6]}(w_{[1,5]})&=s_2s_3s_4s_3s_2s_5s_4s_3s_6s_3s_2s_4s_3s_5s_4s_6s_3s_2
=s_2s_3s_4s_3s_2s_5s_4s_3 w_\circ^{[2,5]}w_\circ^{[2,6]}\\
&=\cx25 w_\circ^{[2,5]}s_4s_5s_3s_4 w_\circ^{[2,6]}
=w_\circ^{[3,5]}s_4s_5s_3s_4 w_\circ^{[2,6]}
=w_\circ^{\{3,5\}}w_\circ^{[2,6]}\\
&=w_{\{3,5\};[2,6]}=w_{[1,5]\star[2,6];[2,6]},\\
p_{[2,6]}(w_{[2,6]})&=
s_2s_3s_4s_5s_4s_3s_2s_6s_3s_2s_4s_3s_5s_6=
w_\circ^{[3,4]}w_\circ^{[2,6]}\cx24\\
&=w_\circ^{[3,4]}s_6 s_3 s_4 w_\circ^{[2,6]}
=w_\circ^{\{3,4,6\}}w_\circ^{[2,6]}=w_{\{3,4,6\};[2,6]}=w_{[2,6]\star[2,6];[2,6]},\\
p_{[2,6]}(w_{\{2.3,4,6\}})&=s_2s_3s_4s_3s_2s_5s_4s_3s_2s_6s_3s_2s_4s_3s_5s_4s_6s_3s_2\\
&=s_2 s_3 s_4 s_3 s_2 w_\circ^{[2,4]} w_\circ^{[2,6]}
=s_3 w_\circ^{[2,6]}=w_{\{3\};[2,6]}=w_{[2,6]\star \{2,3,4,6\};[2,6]},\\
p_{[2,6]}(w_{\sigma([2,6])})&=
s_5s_4s_3s_2s_6s_3s_2s_4s_3s_5s_4s_6s_3s_2=
w_\circ^{[2,4]}w_\circ^{[2,6]}\\
&=w_{[2,4];[2,6]}=w_{[2,6]\star\sigma([2,6]);[2,6]}.
\end{align*}
This completes the proof of Theorem~\ref{thm:pJwK is w0KstarJw0J}
for~$W$ of type~$E_6$.

For type~$E_7$, we only need to consider pairs $(J,K)$
with~$J$ connected and of cardinality 6, that is
$
J\in\mathscr J_{E_7}=\{[1,6],[2,7],[1,5]\cup\{7\}\}$,
which are, respectively, of types $A_6$, $D_6$ and~$E_6$,
and $K\subset I$ connected such that~$J\star K\not=\emptyset$, which are
\begin{alignat*}{3}
&([1,5]\cup\{7\})\star ([1,4]\cup\{7\})=\{3\},&\qquad
&([1,5]\cup\{7\})\star ([2,5]\cup\{7\})=\{3\},\\
&([1,5]\cup\{7\})\star ([1,5]\cup\{7\})=[2,4]\cup\{7\},
&&([1,5]\cup\{7\})\star [2,7]=
[2,4],\\
&[2,7]\star [2,7]=\{4,6,7\}.
\end{alignat*}
We have
\begin{align*}
w_{[1,5]\cup\{7\}}&=s_6s_5s_4s_3s_2s_1s_7s_3s_2s_4s_3s_5s_4s_6s_5s_7s_3s_2s_1s_4s_3s_2s_7s_3s_4s_5s_6,\\
w_{[2,7]}&=s_1s_2s_3s_4s_5s_6s_7s_3s_2s_1s_4s_3s_2s_5s_4s_3s_7s_3s_2s_1s_4s_3s_2s_5s_4s_3s_6s_5s_4s_7s_3s_2s_1,\\
w_{[1,4]\cup\{7\}}&=s_5s_4s_3s_2s_1s_6s_5s_4s_3s_2s_1s_7s_3s_2s_1s_4s_3s_2s_5s_4s_3s_6s_5s_4s_7s_3s_2s_1s_4s_3s_2s_5s_4s_3\times \\
&\qquad s_7s_3s_2s_4s_3s_5s_4s_6s_5,\\
w_{[2,5]\cup\{7\}}&=
s_1s_2s_3s_4s_5s_6s_5s_4s_3s_2s_1s_7s_3s_2s_1s_4s_3s_2s_5s_4s_3s_6s_5s_7s_3s_2s_1s_4s_3s_2s_5s_4s_3s_6\times \\
&\qquad s_5s_4s_7s_3s_2s_1s_4s_5s_6,
\end{align*}
and so for~$J=[1,5]\cup \{7\}$
\begin{align*}
p_J(w_{[1,4]\cup\{7\}})&=s_1s_2s_1s_3s_2s_4s_3s_2s_1s_5s_4s_3s_2s_1s_7s_3s_2s_1s_4s_3s_2s_5s_4s_3s_7s_3s_2s_1s_4s_3s_2s_5s_4s_3s_7\\
&=s_1s_2s_1s_3s_2w_\circ^{[1,3]}w_\circ^{J}
=s_3 w_\circ^J=w_{\{3\};J}=w_{J\star([1,4]\cup\{7\});J},\\
p_J(w_{[2,5]\cup\{7\}})&=s_1s_2s_1s_3s_2s_4s_3s_2s_1s_5s_4s_3s_2s_1s_7s_3s_2s_1s_4s_3s_2s_5s_4s_3s_7s_3s_2s_1s_4s_3s_2s_5s_4s_3s_7\\
&=s_1 s_2 s_1 s_3 s_2 w_\circ^{[1,3]}w_\circ^J=w_{\{3\};J}=w_{J\star([2,5]\cup\{7\});J},\\
p_J(w_J)&=s_1s_2s_3s_4s_5s_4s_3s_2s_1s_7s_3s_2s_1s_4s_3s_2s_5s_4s_7s_3s_2s_1s_4s_5\\
&=w_\circ^{[2,4]}w_\circ^J s_7 s_3 s_4 s_2 s_3 s_7
=w_\circ^{[2,4]}s_7 s_3 s_2 s_4 s_3 s_7 w_\circ^J
=w_\circ^{[2,4]\cup\{7\}}w_\circ^J\\
&=w_{[2,4]\cup\{7\};J}=w_{J\star J;J},\\
p_J(w_{[2,7]})
&=s_1s_2s_3s_4s_5s_4s_3s_2s_1s_7s_3s_2s_1s_4s_3s_2s_5s_4s_3s_7s_3s_2s_1s_4s_3s_2s_5s_4s_3s_7\\
&=w_{[2,4]; J}=w_{J\star [2,7];J}
\end{align*}
while for~$J=[2,7]$
\begin{align*}
p_J(w_{[1,5]\cup\{7\}})&=s_5s_4s_3s_2s_6s_5s_4s_3s_2s_7s_3s_2s_4s_3s_5s_4s_6s_5s_7s_3s_2s_4s_3s_7\\
&=w_{[2,4];[2,7]}=w_{J\star ([1,5]\cup\{7\});J},\\
p_J(w_J)&=s_2s_3s_2s_4s_3s_5s_4s_3s_2s_6s_5s_4s_3s_7s_3s_2s_4s_3s_5s_4s_6s_5s_7s_3s_2s_4s_3\\
&=s_4 w_\circ^{[2,5]}w_\circ^{[2,6]} \cx25 s_7 w_\circ^J=s_4 \cxr26\cx25 s_7 w_\circ^J=s_4s_6s_7 w_\circ^J=w_{\{4,6,7\};J}=w_{J\star J;J}.
\end{align*}
Finally, let~$W$ be of type~$E_8$. The connected~$J\subset I$
with~$|J|=7$ are
$
J\in\mathscr J_{E_8}=\{[1,7],\,[2,8],\,
[1,6]\cup\{8\}\}$,
and are, respectively, of types~$A_7$, $D_7$ and~$E_7$ and so,
in particular, Theorem~\ref{thm:pJwK is w0KstarJw0J} has already been established for~$W_J$ with~$J\in \mathscr J_{E_8}$.
We only need to consider connected~$K\subset I$ such that~$J\star K\not=\emptyset$ for some~$J\in\mathscr J_{E_8}$, which happens only for $J=K=[1,6]\cup\{8\}$
and
$$
J\star J=[2,4]\cup \{8\}.
$$
We have
\begin{align*}
w_J&=s_7s_6s_5s_4s_3s_2s_1s_8s_3s_2s_4s_3s_5s_4s_6s_5s_7s_6s_8s_3s_2s_1s_4s_3s_2s_5s_4s_3s_8s_3s_2s_1s_4s_3s_2\times\\
&\qquad
s_5s_4s_3s_6s_5s_4s_7s_6s_5s_8s_3s_2s_1s_4s_3s_2s_8s_3s_4s_5s_6s_7
\end{align*}
and
\begin{align*}
p_J(w_J)&=s_1s_2s_3s_4s_5s_4s_3s_2s_1s_6s_5s_4s_3s_2s_1s_8s_3s_2s_1s_4s_3s_2s_5s_4s_3s_6s_5s_4s_8s_3s_2s_1s_4s_3\times\\
&\qquad s_2s_5s_4s_3s_6s_5s_4s_8s_3s_2s_1s_4s_3s_5s_4s_6s_5\\
&=\cx15\cxr14 w_\circ^{[1,5]}w_\circ^J s_8s_3s_4s_2s_3s_8\\
&=w_\circ^{[2,4]}s_8s_3s_4s_2s_3s_8 w_\circ^J=
w_\circ^{[2,4]\cup\{8\}} w_\circ^J=w_{[2,4]\cup\{8\};J}=w_{J\star J;J}.
\end{align*}
This completes the proof of Theorem~\ref{thm:pJwK is w0KstarJw0J}.
\end{proof}

\begin{remark}\label{rem:surj counterex}
The restriction of~$p_J$ to~$\mP(M)$ needs not be surjective. For example, if $M=A_4$
and~$J=[1,3]$ then $\{1,3\}\subset J$ is not equal to
$J\star K$ for any~$K\subset [1,4]$ and
so $w_{\{1,3\};J}\not=p_J(w_K)$ for
any~$K\subset [1,4]$. Indeed,
by Corollary~\ref{cor:A J*K}, $J\star K$ is an interval for any connected~$K$, $J\star K=\{1\}$
if and only if~$K=[1,2]$ and $J\star K=\{3\}$
if and only if~$K=[2,3]$. Yet $[1,2]$ and~$[2,3]$
are not orthogonal.
\end{remark}

\subsection{Light homomorphisms of Hecke monoids are parabolic}\label{subs:light->parab}
We can now prove the following
\begin{theorem}\label{thm:light->parab}
Any light homomorphism of Hecke monoids is parabolic. 
\end{theorem}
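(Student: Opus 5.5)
By Proposition~\ref{prop:classify light}, every light homomorphism factors as a tautological homomorphism followed by an optimal light one. The optimal light one in turn factors as a parabolic projection, then a folding $\mathbf f_\varpi$ along a surjective map, then a natural inclusion. Parabolicity is closed under composition, so it suffices to treat each of these four types separately.

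\emph{Natural inclusions} $\iota_J$ are parabolic trivially: for $K'\subset J$ we have $[\iota_J](K')=K'$ and $\iota_J(w_{J';K'})=w_{J';K'}$.

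\emph{Parabolic projections} are parabolic by Theorem~\ref{thm:pJwK is w0KstarJw0J}. Indeed, $p_J(w_{K';L})=w_{(J\cap L)\star_L K';J\cap L}$ and $[p_J](L)=J\cap L$.

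It remains to handle tautological homomorphisms and foldings. Both are surjective light maps $\phi$ with $|[\phi](i)|=1$ for every $i$. Fix $K'\in\mathscr F(M')$ and $J'\subset K'$, and set $K=[\phi](K')$, $J=[\phi](J')$. By Lemma~\ref{lem:Hecke hom w0J}, $\phi(w_\circ^{K'})=w_\circ^{K}$ and $\phi(w_\circ^{J'})=w_\circ^J$.

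First try $J=[\phi](J')$ directly. We have the factorization $w_\circ^{K'}=w_\circ^{J'}\times w_{J';K'}$. Applying $\phi$ gives
\[
w_\circ^K=w_\circ^J\star \phi(w_{J';K'}).
\]
By Lemma~\ref{lem:wK absorption}, $D_L(w_{J';K'})=K'\setminus J'$.

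The plan is to show the following two facts.
\begin{enumerate}[label=(\roman*)]
\item $\phi(w_{J';K'})$ is the minimal element $x\in W_K$ satisfying $w_\circ^J\star x=w_\circ^K$, namely $w_{J;K}$. By Lemma~\ref{lem: u * u^(-1) w_0}, every such $x$ is at least $w_{J;K}$ in Bruhat order.
\item $\phi(w_{J';K'})\le w_{J;K}$ in Bruhat order.
\end{enumerate}

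For (ii) I would use the fact that $w_{J';K'}$ is the unique maximal element in $W_{K'}$ whose left descent set avoids $J'$. Equivalently, it is the maximal element of the form $u$ with $w_\circ^{J'}\times u$, i.e.\ the longest minimal coset representative. One then compares lengths: a reduced word for $w_{J';K'}$ maps under $\phi$ to a word for $\phi(w_{J';K'})$. One must show that no letter in $J$ can appear as a left descent, or else bound $\ell(\phi(w_{J';K'}))\le \ell(w_\circ^K)-\ell(w_\circ^J)$.

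This step is the main obstacle. The problem is that $J'$ need not equal $\phi^{-1}(J)\cap K'$. For a folding, several indices of $K'$ map to the same index, and $J'$ may contain some preimages of an index but not others. In that case the correct target subset is likely smaller than $[\phi](J')$ (something like $\{j: \varpi^{-1}(j)\cap K'\subset J'\}$), or must be computed via a $\star$-product as in Theorem~\ref{thm:pJwK is w0KstarJw0J}.

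To get around this, I would reduce foldings and tautological maps to projections. The idea is to express $\phi$ restricted to $W_{K'}$ via an embedding of $W_{K'}$ into a product in which the preimages become orthogonal, and then apply Lemma~\ref{lem:diagonal} together with Proposition~\ref{prop:only connected}. Concretely, I would first treat $K'$ and $J'$ connected, using Proposition~\ref{prop:only connected}-style arguments with Lemma~\ref{lem:wKL}. Then I would show $\phi(w_{J';K'})=w_{J'';K}$, where $J''$ is characterized as the largest subset with $\downarrow$-containment, using Lemma~\ref{lem:proj comparison}-type Bruhat monotonicity of light maps, which is proved the same way, letter by letter.

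If this abstract approach stalls, the fallback is case-by-case verification for finite irreducible $K$. The rank-two case is immediate, since every product of parabolic elements collapses to $w_\circ$. The remaining cases would be checked via the explicit formulas of Proposition~\ref{prop:wK explicit} and the type-$D$ computations above.
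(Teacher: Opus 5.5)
Your reduction to natural inclusions, parabolic projections, foldings and tautological maps agrees with the paper, and so does your treatment of inclusions and projections. The gap is that the last two cases, which carry all the difficulty, are not proved, and your direct plan is false, not merely hard. In general you only get $\phi(w_{J';K'})\ge w_{[\phi](J');[\phi](K')}$, which follows from $w_\circ^{J}\star x=w_\circ^{K}$ and Proposition~\ref{prop:Bruhat order *}. Your step (ii) already fails for tautological maps, where no fibres collapse:
\begin{itemize}
\item For the tautological map $(W(I_2(m')),\star)\to(W(I_2(m)),\star)$ with $m'>m$, the element $w_{\{1\};\{1,2\}}$ has length $m'-1\ge m$. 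It therefore maps to $w_\circ^{\{1,2\}}=w_{\emptyset;\{1,2\}}$.
\item For the tautological map $H_n\to B_n$, every $w_{J;[1,n]}$ with $J\subsetneq[1,n]$ maps to $w_\circ^{[1,n]}$.
\end{itemize}
Your guessed target $\{j:\varpi^{-1}(j)\cap K'\subset J'\}$ for foldings is also wrong. For $\mathbf f=\mathbf f_{\varpi_{(n,n+1)}}$, \eqref{eq:w1,n D} and \eqref{eq:w0 type A} give $\mathbf f(w_{[1,n]})=w_\circ^{[1,n]}$, so the target is $\emptyset$, not $[1,n-1]$.

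The proposed rescue does not work as stated either. Lemma~\ref{lem:diagonal} only builds homomorphisms into products of orthogonal parabolic submonoids. Proposition~\ref{prop:only connected} relies on properties specific to $p_J$, such as $p_J(w)\le w$ (Lemma~\ref{lem:proj comparison}), which have no analogue for a folding. Bruhat monotonicity of light maps is true, but it yields only inequalities; it never shows that the image has the form $w_\circ^{J''}w_\circ^{K}$.

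What is missing is the paper's reduction to a finite list of cases, together with one key trick.
\begin{itemize}
\item Restricting to $W_{K'}(M')$, one may assume $M'$ is finite and irreducible and $K'=I'$.
\item A tautological map then either deletes edges or keeps the underlying graph. Edge deletion is reduced to the connected components of $\Gamma(M)$ via Lemma~\ref{lem:taut pJ commute} and Lemma~\ref{lem:parab prod}. With the same graph, the map is one of $I_2(m')\to I_2(m)$, $B_n\to A_n$, $F_4\to A_4$, $H_n\to B_n$, $H_n\to A_n$.
\item The only foldings are $D_{n+1}\to A_n$ and $D_4\to A_2$.
\end{itemize}
The key trick is that the tautological $B_n\to A_n$ (resp.\ $F_4\to A_4$) is the homogeneous unfolding \eqref{eq:unfold Bn A2n-1} (resp.\ \eqref{eq:unfold F4 E6}) followed by $p_{[1,n]}$ (resp.\ $p_{\{1,2,3,6\}}$). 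It is therefore parabolic by Proposition~\ref{prop:CH-prop} and Theorem~\ref{thm:pJwK is w0KstarJw0J}. The map $H_n\to B_n$ is a direct computation, and $H_n\to A_n$ is a composition.

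For $D_{n+1}\to A_n$, let $\sigma$ be the diagram automorphism swapping $n$ and $n+1$; then $\mathbf f\circ\sigma=\mathbf f$.
\begin{itemize}
\item If $\sigma(J)=J$, then $w_J$ lies in the image of \eqref{eq:unfold Bn Dn+1}, on which $\mathbf f$ restricts to the tautological $B_n\to A_n$.
\item Otherwise one may take $J\cap\{n,n+1\}=\{n\}$, write $w_J=w_{J;[1,n]}\times w_{[1,n]}$, and use the computation of $\mathbf f(w_{[1,n]})$ above.
\end{itemize}
The folding $D_4\to A_2$ is checked by hand. Your fallback of case-by-case verification names neither this list nor the factorization through an unfolding, so as written the tautological and folding cases remain unproven.
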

\begin{proof}
In view of Proposition~\ref{prop:classify light},
Theorem~\ref{thm:pJwK is w0KstarJw0J} and
Lemma~\ref{lem:[phi]comp},
it remains to prove that~$\phi$ is parabolic when~$\phi$ is either a folding along some surjective map
or is tautological. Note that 
since a restriction of a light $\phi\in\Hom_{\mathscr H}(M',M)$ 
to any parabolic submonoid of~$W(M')$ is again light,
it suffices to consider the case when~$M'$ is of finite type and irreducible and to consider only $w_{J;K}$
with~$K=I'$, that is, there is no need to consider $K$-parabolic elements for~$K\subsetneq I'$.

We begin with tautological homomorphisms. Note that if~$M=I_2(m)$ then
for any~$m'>m$ we have a tautological
$\phi\in\Hom_{\mathscr H}(I_2(m'),I_2(m))$.
Since
$\phi(\brd{s'_is'_j}{k})=\brd{s_i\star s_j\star}{k}$, $k\le m'$,
and non-identity parabolic elements $w_{J;\{1,2\}}$
in~$I_2(m')$
correspond to~$k\in \{m'-1,m'\}\ge m$,
it follows that
$\phi(w_{J})=w_\circ^{\{1,2\}}$ and so~$\phi$ is parabolic.

Suppose now that~$|I|>2$ and that~$M'$ is irreducible. Note that if~$\phi\in\Hom_{\mathscr H}(M',M)$ is tautological
then~$\Gamma(M)$ is obtained from~$\Gamma(M')$
by either decreasing some labels or removing edges.

Suppose that the underlying graphs of~$\Gamma(M')$ and~$\Gamma(M)$ are isomorphic.
Then the only possibilities are:
\begin{enumerate}[label={$\arabic*^\circ$.},
ref={$\arabic*^\circ$}]
\item\label{taut-case-BnAn}
$M'=B_n$, $M=A_n$ and so~$\phi$
the composition of the homomorphism $(W(B_n),\star)\to
(W(A_{2n-1}),\star)$ defined by~\eqref{eq:unfold Bn A2n-1} with~$p_{[1,n]}:(W(A_{2n-1}),\star)\to (W_{[1,n]}(A_{2n-1}),\star)\cong (W(A_n),\star)$
and hence is parabolic
by Theorems~\ref{thm:adm finite class}
Theorem~\ref{thm:pJwK is w0KstarJw0J};

\item\label{taut-case-F4A4}
$M'=F_4$, $M=A_4$ and so~$\phi$ is the composition of the homomorphism $(W(F_4),\star)\to (W(E_6),\star)$ defined by~\eqref{eq:unfold F4 E6}
with the parabolic projection $p_{\{1,2,3,6\}}:(W(E_6),\star)\to (W_{\{1,2,3,6\}}(E_6),\star)\cong (W(A_4),\star)$
hence the assertion holds in this case
by Theorems~\ref{thm:adm finite class} and~\ref{thm:pJwK is w0KstarJw0J};

\item\label{taut-case-HnBn}
$M'=H_n$, $M=B_n$, $n\in\{3,4\}$.
We claim
that~$\phi(w_{J})=w_\circ^{[1,n]}$ for all~$J\subsetneq I$. By Lemma~\ref{lem:len prop wJ K},
it suffices to prove
the claim for~$J$ with~$|J|=n-1$.

If~$n=3$, let~$c=s_1s_3s_2$. Then~$w_\circ^{[1,3]}=c^{\times 5}$ in~$W(H_3)$ 
by Proposition~\partref{prop:Coxeter splitting.b}
and so
\begin{align*}
&w_{\{1,2\}}=(s_1 s_2 s_3 s_2)\times c^{\times 3},
\quad w_{\{1,3\}}=s_2 \times c^{\times 4},\\
&w_{\{2,3\}}=s_1s_2s_3s_2s_1s_3s_2s_3s_2s_1.
\end{align*}
Since~$c^{\star 3}=w_\circ^{[1,3]}$ in~$(W(B_3),\star)$, the claim is obvious
for~$J\in\{\{1,2\},\{1,3\}\}$
while
\begin{align*}
\phi(w_{\{2,3\}})&=s'_1\star s'_2\star s'_3\star s'_2\star s'_1\star s'_3\star s'_2\star s'_3\star s'_2\star s'_1
\\
&=s'_1\star s'_2\star s'_3\star s'_2\star s'_1\star s'_2\star s'_3\star s'_2\star s'_3\star s'_1\\
&=s'_1\star s'_2\star s'_1\star s'_3\star s'_2\star s'_1\star s'_3\star s'_2\star s'_3\star s'_1
=w_\circ^{[1,3]}\star s'_1=w_\circ^{[1,3]}.
\end{align*}
Similarly, for~$n=4$ we have
\begin{align*}
&w_{[1,3]}=(w_\circ^{[1,3]}\cx14^3)\times \cx14^{\times 12}=s_4s_3s_4s_2s_3s_4\times \cx14^{\times 12},\\
&w_{\{1,2,4\}}=
s_3s_2s_1\times \cxr14^{\times 12}\times s_4s_3s_2s_4s_3,\\
&w_{\{1,3,4\}}=
s_2s_1s_3s_2\times \cxr14^{\times10}
\times \cxr24\times\cxr14\times\cxr24\\
&w_{\{2,3,4\}}=\cx13\times (\cxr14\times
\cxr24\times \cxr14)^{\times 3}\times \cxr24
\times\cxr34\times\cxr14.
\end{align*}
Since~$\cxr14^{\star 4}=w_\circ^{[1,4]}$ in~$(W(B_4),\star)$, the claim is immediate
for~$J\not=\{2,3,4\}$ with~$|J|=3$. Since
$\cxr14\star\cxr24\star\cxr14^{\star 2}\star \cxr24
=w_\circ^{[1,4]}$ in~$(W(B_3),\star)$,
the claim follows for~$J=\{2,3,4\}$ as well.

\item\label{taut-case-HnAn}
$M'=H_n$, $M=A_n$, $n\in\{3,4\}$.
Then~$\phi$ is the composition
of a homomorphism from~\ref{taut-case-HnBn} with
the respective homomorphism from~\ref{taut-case-BnAn}
and hence is parabolic.
\end{enumerate}

It remains to consider the case when~$\Gamma(M)$
is obtained from~$\Gamma(M')$ by removing some
edges. Let~$I_1,\dots,I_r$ be vertex sets 
of connected components of~$\Gamma(M)$. 
The following is immediate.
\begin{lemma}\label{lem:taut pJ commute}
Let~$M,M'\in\Cox I$ and let~$\phi\in\Hom_{\mathscr H}(M',M)$
be tautological. Let~$J\subset I$ and let 
$p_J:(W(M),\star)\to (W_J(M),\star)$,
$p'_J:(W(M'),\star)\to (W_J(M'),\star)$ be
respective parabolic projections. 
Then $p_J\circ \phi=\phi\circ p'_J$.
\end{lemma}
Let~$J\subset I$. Then by Lemmata~\ref{lem:parab prod} and~\ref{lem:taut pJ commute} and Theorem~\ref{thm:pJwK is w0KstarJw0J} 
\begin{align*}\phi(w_J)=
\prodst_{1\le i\le r} p_{I_i}(\phi(w_J))=
\prodst_{1\le i\le r}\phi(p'_{I_i}(w_J))
=\prodst_{1\le i\le r}\phi(w_{J\star I_i;I_i})
\end{align*}
where~$J\star I_i$, $1\le i\le r$ is taken in~$W(M')$. Since the restriction of~$\phi$ to~$W_{I_i}(M')$, $1\le i\le k$ is light and hence parabolic, as $\Gamma_{I_i}(M')$ and~$\Gamma_{I_i}(M)$ are connected, it follows that $\phi(w_{J\star I_i;I_i})=
w_{J'_i;I_i}$, $J'_i\subset I_i$ and so~$\phi(w_J)=
w_{J'_1\cup\cdots\cup J'_r}$ by Lemma~\ref{lem:prod orth parab}.

We now turn our attention to foldings. 
The only foldings in finite types with irreducible~$M'$
are $\mathbf f_{\varpi_{(n,n+1)}}:(W(D_{n+1}),\star)\to (W(A_n),\star)$
and $\mathbf f_{\varpi_{(1,3,4)}}:(W(D_4),\star)\to (W(A_2),\star)$ where $\varpi_{(n,n+1)}:[1,n+1]\to[1,n]$ and~$\varpi_{(1,3,4)}:
[1,4]\to [1,2]$ are defined as in Example~\ref{ex:light projection}. 

Consider first~$\mathbf f_{\varpi(n,n+1)}$. 
Let~$\sigma$ be the diagram automorphism of~$W(D_{n+1})$
which corresponds to the permutation $(n,n+1)$ of~$I=[1,n+1]$.
Since, obviously,
$\mathbf f_{\varpi(n,n+1)}(\sigma(w))=\mathbf f_{\varpi(n,n+1)}(w)$, it suffices to consider
the case when either $\sigma(J)=J$ or
$\{n,n+1\}\cap J=\{n\}$.
In the first
case~$w_{J}=\sigma(w_{J})$ and hence
is contained in the image of~$(W(B_n),\star)$ in~$(W(D_{n+1}),\star)$ under the injective
parabolic homomorphism provided by~\eqref{eq:unfold Bn Dn+1}. Then the assertion
follows since the restriction of $\mathbf f_{\varpi(n,n+1)}$ to the image of~$(W(B_n),\star)$
coincides with the tautological homomorphism
$(W(B_n),\star)\to (W(A_n),\star)$ which
is parabolic by~\ref{taut-case-BnAn}. 
Suppose now that~$J\cap \{n,n+1\}=\{n\}$.
Then $w_{J}=w_{J;[1,n]}\times 
w_{[1,n]}$ by Lemma~\ref{lem:len prop wJ K} and, since 
$\mathbf f_{\varpi(n,n+1)}(w_{J;[1,n]})
=w_{J;[1,n]}$ as the restriction of~$\mathbf f_{\varpi(n,n+1)}$ to~$W_{[1,n]}(D_{n+1})$ is just the 
identity map, it suffices to prove that
$\mathbf f_{\varpi(n,n+1)}(w_{[1,n]})=w_{K;[1,n]}$
for some~$K\subset [1,n]$. Then~$\mathbf f_{\varpi(n,n+1)}(w_J)=w_{J;[1,n]}\star w_{K;[1,n]}=w_{J\star_{[1,n]}K;[1,n]}$.
By~\eqref{eq:w1,n D}, we have
$$
\mathbf f_{\varpi(n,n+1)}(w_{[1,n]})=
\ascprodst_{1\le i\le n} \cxr in=w_\circ^{[1,n]}=w_{\emptyset;[1,n]}.
$$

It remains to consider~$\mathbf f_{\varpi(1,3,4)}$.
If~$|J|<3$ then~$J\subset J_i:=[1,4]\setminus\{i\}$
for some~$i\in [1,4]$ and so~$w_J=w_{J;J_i}\times 
w_{J_i}$ by Lemma~\ref{lem:len prop wJ K}. Since
the restriction of~$\mathbf f_{\varpi(1,3,4)}$ 
to~$W_{J_i}(D_4)$ with~$i\not=2$ is parabolic
by the previous case, while its restriction 
to~$W_{\{1,3,4\}}(D_4)$ is obviously parabolic,
it suffices to consider~$J=J_i$. Using
the diagram automorphism corresponding
to the permutation $(1,3,4)$ of~$[1,4]$
we may assume, without loss of generality, that either~$J=[1,3]$ or~$J=\{1,3,4\}$.
By~\eqref{eq:w1,n D}, we have
$$
\mathbf f_{\varpi(1,3,4)}(w_{[1,3]})=
\mathbf f_{\varpi(1,3,4)}(s_4\times s_2\times s_1
\times s_3\times s_2\times s_4)=
(s_1\star s_2 \star s_1)^{\star 2}=w_\circ^{[1,2]}.
$$
Since~$w_\circ^{[1,4]}=(s_1\times s_3\times s_4\times s_2)^{\times 3}$, it follows that
$w_{\{1,3,4\}}=s_2\times (s_1\times s_3\times s_4\times s_2)^{\times 2}$ and so
$$
\mathbf f_{\varpi(1,3,4)}(w_{\{1,3,4\}})
=s_2\star s_1\star s_2\star s_1\star s_2=
w_\circ^{[1,2]}.
$$
This completes the proof of Theorem~\ref{thm:light->parab}.
\end{proof}

\subsection{Connection with Lie theory}\label{subs:Lie}
Let~$I$ be a finite set and let~$A=(a_{i,j})_{i,j\in I}$ be a (generalized) Cartan matrix over~$I$, that is $a_{i,i}=2$, $i\in I$,
$a_{i,j}\in\mathbb Z_{\le 0}$ if $i\not=j\in I$ and
$a_{i,j}=0$ implies that~$a_{j,i}=0$. 
The Lie algebra~$\lie n(A)$ associated with~$A$
is generated by the $e_i$, $i\in I$ subject to 
the Serre relations
$$
(\ad e_i)^{1-a_{i,j}}(e_j)=0,\qquad i\not=j\in I.
$$
We say that $M\in \Cox I$
is of {\em Weyl type} if $m_{ij}\in \{2,3,4,6,\infty\}$
for all~$i\not=j\in I$. 
Given a Cartan matrix~$A$ over~$I$, define 
\plink{C(A)}$\mathsf C(A)\in\Cox I$ of Weyl type via $\mathsf C(A)_{i,i}=1$
for all~$i\in I$ and
$$
\mathsf C(A)_{i,j}=\begin{cases}
2+a_{i,j}a_{j,i},& a_{i,j}a_{j,i}\le 1,\\
2a_{i,j}a_{j,i},& a_{i,j}a_{j,i}\in\{2,3\},\\
\infty,&a_{i,j}a_{j,i}>3
\end{cases}
$$
for all~$i\not=j\in I$. 
\begin{lemma}\label{lem:Cartan->Cox}
Let~$M\in\Cox I$ be of Weyl type. Then~$M=\mathsf C(A)$
for some Cartan matrix~$A$ over~$I$.
\end{lemma}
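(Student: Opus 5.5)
We need to show: given M of Weyl type (m_ij in {2,3,4,6,∞}), there is a Cartan matrix A with C(A)=M. The construction is direct.

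We construct $A$ entry by entry. Fix a total order on $I$ (say identify $I$ with a subset of $\ZZ$), set $a_{i,i}=2$ for all $i\in I$, and for each pair $i<j$ define $(a_{i,j},a_{j,i})$ according to $m_{ij}$:
\begin{equation*}
(a_{i,j},a_{j,i})=\begin{cases}
(0,0),& m_{ij}=2,\\
(-1,-1),& m_{ij}=3,\\
(-1,-2),& m_{ij}=4,\\
(-1,-3),& m_{ij}=6,\\
(-2,-2),& m_{ij}=\infty.
\end{cases}
\end{equation*}
The key point is that the product $a_{i,j}a_{j,i}$ is the only thing $\mathsf C(A)_{i,j}$ depends on, and it is symmetric in $i,j$. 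So defining the pair once for each unordered pair $\{i,j\}$ gives a well-defined matrix, and the definition of $\mathsf C(A)_{j,i}$ automatically agrees with $\mathsf C(A)_{i,j}$.

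Next we check that $A$ is a generalized Cartan matrix. The diagonal entries equal $2$. The off-diagonal entries lie in $\ZZ_{\le0}$. Moreover, $a_{i,j}=0$ happens only in the case $m_{ij}=2$, and there we also have $a_{j,i}=0$.

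Finally we compute $\mathsf C(A)_{i,j}$ for $i\ne j$, case by case:
\begin{itemize}
\item If $m_{ij}=2$, the product is $0\le1$, so $\mathsf C(A)_{i,j}=2+0=2$.
\item If $m_{ij}=3$, the product is $1\le1$, so $\mathsf C(A)_{i,j}=2+1=3$.
\item If $m_{ij}=4$, the product is $2\in\{2,3\}$, so $\mathsf C(A)_{i,j}=2\cdot2=4$.
\item If $m_{ij}=6$, the product is $3\in\{2,3\}$, so $\mathsf C(A)_{i,j}=2\cdot3=6$.
\item If $m_{ij}=\infty$, the product is $4>3$, so $\mathsf C(A)_{i,j}=\infty$.
\end{itemize}
Diagonal entries agree because both matrices have $1$ there. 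Hence $\mathsf C(A)=M$.

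There is no real obstacle in this argument. The only subtlety is making sure the choice of $(a_{i,j},a_{j,i})$ is made consistently for each unordered pair, which the fixed ordering on $I$ guarantees. The value $a_{i,j}a_{j,i}=4$ (for instance with $(-2,-2)$, or alternatively $(-1,-4)$) is just a convenient representative of the "$>3$" range.
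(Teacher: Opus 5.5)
Your proof is correct and is essentially the paper's construction: fix a total order on $I$, put $a_{i,i}=2$, and for $i<j$ choose $(a_{i,j},a_{j,i})$ so that $a_{i,j}a_{j,i}$ equals $0,1,2,3$, or a value $>3$, according as $m_{ij}=2,3,4,6,\infty$. The only difference is cosmetic: for $m_{ij}=\infty$ the paper takes $(a_{i,j},a_{j,i})=(-1,-4)$ rather than your $(-2,-2)$, which does not affect the argument.
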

\begin{proof}
Fix a total order on~$I$.
Set $a_{i,i}=2$, $i\in I$, $a_{i,j}=a_{j,i}=0$ if 
$m_{i,j}=2$, $i\not=j\in I$. Finally, for all~$i<j$
with~$m_{i,j}>2$ set $a_{i,j}=-1$ and 
$$
a_{j,i}=\begin{cases}
-\floor{\frac12m_{i,j}},& m_{i,j}\in\{3,4,6\},\\
-4,& m_{i,j}=\infty.
\end{cases}
$$
It follows from the definition that~$\mathsf C(A)=M$.
\end{proof}
\begin{remark}
If~$m_{ij}\le 3$ for all~$i,j\in I$ then the Cartan
matrix~$A$ such that~$M=\mathsf C(A)$ is unique. If 
all entries of~$M$ are finite, such a matrix is unique up to the choice of the total order on~$I$; in particular,
for~$M=B_n$, $M=F_4$ and~$M=G_2$, $A$ is unique up to the transpose, and for $M=F_4$ (respectively, $M=G_2$) the corresponding Lie algebras are isomorphic.
\end{remark}
\begin{theorem}\label{thm:Lie light}
Let~$M\in\Cox I$, $M'\in\Cox{I'}$ and let 
$\phi\in\Hom_{\mathscr H}(M',M)$ be light.
Then for any Cartan matrix~$A$ such that~$M=\mathsf C(A)$
there exist a Cartan matrix~$A'$ 
such that $M'=\mathsf C(A')$
and the assignments
$$
e_{i'}\mapsto \begin{cases}
e_i,& [\phi](i')=\{i\}\not=\emptyset,\\
0,& [\phi](i')=\emptyset,
\end{cases}
$$
$i'\in I'$,
define a homomorphism of Lie algebras
$\wh{[\phi]}:\lie n(A')\to \lie n(A)$.  
\end{theorem}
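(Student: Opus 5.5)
Since $\lie n(A)$ is defined by generators and Serre relations, it suffices to choose $A'$ so that each Serre relation of $\lie n(A')$ is sent to a relation holding in $\lie n(A)$. The plan is to define $A'$ entry by entry, according to the images of the pair $(i',j')$ under $[\phi]$, and then check each pair.

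\textbf{Construction of $A'$.} Fix a total order on $I'$ and let $i'\ne j'\in I'$.
\begin{enumerate}[label=(\roman*)]
\item If $m'_{i'j'}=2$, set $a'_{i',j'}=a'_{j',i'}=0$.
\item If $m'_{i'j'}>2$ and $[\phi](i')=\{i\}$, $[\phi](j')=\{j\}$ with $i\ne j$, then Theorem~\ref{thm:Hom Heck Mon} gives $m'_{i'j'}\ge m_{ij}$, because $\mu_M(\{i\},\{j\})=m_{ij}$. In this case I take $a'_{i',j'}=a_{i,j}$ and $a'_{j',i'}=a_{j,i}$ whenever $m'_{i'j'}=m_{ij}$. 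If $m'_{i'j'}>m_{ij}$, I choose entries with $a'_{i',j'}\le a_{i,j}$, $a'_{j',i'}\le a_{j,i}$ and $\mathsf C(A')_{i'j'}=m'_{i'j'}$. This is possible because the map $a_{i,j}a_{j,i}\mapsto \mathsf C$ is monotone. For example, from $(0,0)$ one may take $(-1,-1)$, $(-1,-2)$, $(-1,-3)$ or $(-1,-4)$, and from $(-1,-2)$ one may take $(-1,-3)$ or $(-1,-4)$. The one point to watch is orientation: when the order already forces $a_{j,i}\le -2$, I decrease that same entry further. In every case, making the exponents $1-a'$ at least as large as $1-a$ is all that is needed.
\item In the remaining cases, namely $[\phi](i')=[\phi](j')$ or one of them empty, choose any $a'_{i',j'},a'_{j',i'}\le -1$ with $\mathsf C(A')_{i'j'}=m'_{i'j'}$. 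Such a choice exists by the construction in Lemma~\ref{lem:Cartan->Cox}.
\end{enumerate}

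\textbf{Checking the Serre relations.} Consider $(\ad e_{i'})^{1-a'_{i',j'}}(e_{j'})$ for each pair.
\begin{enumerate}[label=(\roman*)]
\item If either generator maps to $0$, the image of the relation is $0$.
\item If both map to the same $e_i$, the image is $(\ad e_i)^{k}(e_i)$ with $k=1-a'_{i',j'}\ge 1$, and this vanishes since $[e_i,e_i]=0$.
\item If $i\ne j$ and $m'_{i'j'}=2$, then $m_{ij}\le 2$ by Theorem~\ref{thm:Hom Heck Mon}, so $a_{i,j}=0$ and $[e_i,e_j]=0$ in $\lie n(A)$. The image is therefore $0$.
\item If $i\ne j$ and $m'_{i'j'}>2$, then $1-a'_{i',j'}\ge 1-a_{i,j}$, so the image $(\ad e_i)^{1-a'_{i',j'}}(e_j)$ lies in the ideal generated by the Serre relation for $(i,j)$ and vanishes.
\end{enumerate}
Hence the assignment extends to a Lie algebra homomorphism $\wh{[\phi]}$.

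\textbf{Main obstacle.} The delicate step is case (ii) of the construction. There one must produce integer entries satisfying three things at once: $\mathsf C(A')_{i'j'}=m'_{i'j'}$, componentwise domination $a'\le a$, and the Cartan symmetry condition that $a'_{i',j'}=0$ exactly when $a'_{j',i'}=0$. I would settle it by a short case table over $m_{ij}\in\{2,3,4,6\}$, together with the observation that $\infty$ can always be reached by decreasing $a$ to $-4$. The other point to record explicitly is that $A'$ must also be constructed when $m'_{i'j'}=\infty$ and $m_{ij}$ is finite; the monotonicity remark in (ii) covers this case.
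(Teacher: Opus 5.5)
Your proposal is correct and is essentially the paper's own proof. The paper likewise builds $A'$ entry by entry so that $a'_{i',j'}\le a_{i,j}$ whenever $[\phi](i')=\{i\}$ and $[\phi](j')=\{j\}$ are nonempty: it keeps $a'=a$ when $m'_{i'j'}=m_{ij}$ and lowers the entries by an explicit case rule otherwise. It then disposes of the Serre relations via $(\ad e_i)^{1-a'_{i',j'}}(e_j)=(\ad e_i)^{a_{i,j}-a'_{i',j'}}\bigl((\ad e_i)^{1-a_{i,j}}(e_j)\bigr)=0$. One caveat you share with the paper, whose auxiliary lemma states it explicitly: your construction tacitly requires $M'$ to be of Weyl type, since otherwise no $A'$ with $\mathsf C(A')=M'$ exists at all (e.g.\ for the light homomorphism $s'_1\mapsto s_1$, $s'_2\mapsto 1$ from $(W(I_2(5)),\star)$ to $(W(A_1),\star)$).
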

\begin{proof}
We need the following
\begin{lemma}\label{lem:subord Lie hom}
Let~$I$, $I'$ be finite sets, let $f:I'\to \mathscr P(I)$
with~$|f(i)|\le 1$, $i\in I'$ and let~$A$ (respectively, $A'$) be Cartan matrices over~$I$ (respectively, $I'$).
Suppose that~$A'$ is $f$-subordinate to~$A$, that 
is $a'_{i',j'}\le a_{i,j}$ whenever $f(i')=\{i\}$ and~$f(j')=\{j\}$ are non-empty, $i'\not=j'\in I'$.
Then the assignments
$$
e_{i'}\mapsto \begin{cases}
e_i,&f(i')=\{i\}\not=\emptyset,\\
0,&f(i')=\emptyset,
\end{cases}
$$
$i'\in I'$, 
define a homomorphism of Lie algebras~$\wh f:\lie n(A')\to \lie n(A)$. 
\end{lemma}
\begin{proof}
Let~$i'\not=j'\in I'$. Clearly, if at least one of $f(i')$, $f(j')$ is empty then the images of $e_{i'}$ 
and~$e'_{j'}$ in~$\lie n(A)$ trivially satisfy the Serre relations. Suppose that~$f(i')=\{i\}$, $f(j')=\{j\}$ for some~$i,j\in I$.
If~$i=j$ then $[e_i,e_j]=0$ and so
$(\ad e_i)^{1-a'_{i',j'}}(e_j)=0=(\ad e_j)^{1-a'_{j',i'}}(e_i)$. Otherwise, we have
$$
(\ad e_i)^{1-a'_{i',j'}}(e_j)=(\ad e_i)^{a_{i,j}-a'_{i,j}}((\ad e_i)^{1-a_{i,j}}(e_j))=0,
$$
and similarly with the role of~$i$ and~$j$ interchanged.
\end{proof}

\begin{lemma}\label{lem:Lie light}
Let~$M\in\Cox I$, $M'\in\Cox{I'}$ be of Weyl type and let
$\phi\in\Hom_{\mathscr H}(M',M)$ be a light homomorphism.
Let~$A$ be a Cartan matrix such that~$M=\mathsf C(A)$.
Then there exists a Cartan matrix~$A'$ such that
$M'=\mathsf C(A')$ and $A'$ is $[\phi]$-subordinate to $A$.
\end{lemma}
\begin{proof}
Fix a total order on~$I'$ and let
$i'<j'\in I'$ be such that~$[\phi](i')=\{i\}$,
$[\phi](j')=\{j\}$ are non-empty. Since~$\phi$ is a homomorphism of Hecke monoids, it follows that~$m'_{i',j'}\ge 
m_{i,j}$. 

Suppose that~$m'_{i',j'}=m_{i,j}$ then, in particular, $i\not=j$, and we set $a'_{i',j'}=a_{i,j}$ and~$a'_{j',i'}=a_{j,i}$.
Suppose that~$m'_{i',j'}>m_{i,j}$. If~$i=j$,
define $a'_{i',j'}$ and~$a'_{j',i'}$ as in Lemma~\ref{lem:Cartan->Cox}.
If~$m_{i,j}\in\{2,3\}$,
let~$a'_{i',j'}=-1$ and let~$a'_{j',i'}=-1$
if $m'_{i',j'}=3$, $a'_{j',i'}=-\frac12 m'_{i',j'}$
if~$m'_{i',j'}\in\{4,6\}$ and~$a'_{j',i'}=-4$ if~$m'_{i',j'}=\infty$. If~$m_{i,j}\in\{4,6\}$
let~$a'_{i',j'}=a_{i,j}-\chi(i,j)$,
$a'_{j',i'}=a_{j,i}-\chi(j,i)$
where $\chi(s,t)=0$ if~$a_{s,t}=-1$, $\chi(s,t)=1$
if~$a_{s,t}<-1$ and~$m'_{i',j'}<\infty$ and~$\chi(s,t)=2$ otherwise. 
By construction, $A'$ is $[\phi]$-subordinate to~$A$ and~$M'=\mathsf C(A')$.
\end{proof}
\begin{remark}
It is not always possible to reverse this procedure. For example,
let~$M'=\left(\begin{smallmatrix}1&\infty\\\infty&1
\end{smallmatrix}\right)$, $M=I_2(6)$ and let $\phi\in\Hom_{\mathscr H}(M',M)$ be the tautological homomorphism. Then $A'=\left(\begin{smallmatrix}2&-2\\-2&2
\end{smallmatrix}\right)$ satisfies~$M'=\mathsf C(A')$.
Yet~$A$ such that~$M=\mathsf C(A)$ is either $\left(\begin{smallmatrix}2&-3\\-1&2
\end{smallmatrix}\right)$ or its transpose and
hence~$A'$ is not $[\phi]$-subordinate to~$A$.
\end{remark}
The assertion is an immediate consequence 
of Lemmata~\ref{lem:Lie light} and~\ref{lem:subord Lie hom}.
\end{proof}

Exponentiating, we obtain a homomorphism of the corresponding unipotent (ind)groups $U(A')\to U(A)$. 
If~$\lie n(A)$ and~$\lie n(A')$ are finite-dimensional,
this is a homomorphism of ordinary Lie groups. The study of such homomorphisms brought light homomorphisms of Hecke monoids to our attention and to the discovery of their remarkable property vis-\`a-vis parabolic elements (Theorem~\ref{thm:mthm I}). 

\section{Locally injective
homomorphisms for classical series}\label{sec:loc inj Homs}

In this section we classify all connected (Definition~\ref{def:types heck hom}) 
locally injective (Definition~\ref{defn:locally inj})  homomorphisms for classical series.

\subsection{From type~\texorpdfstring{$A$}{A}
to type~\texorpdfstring{$A$}{A}}\label{subs:A->A}
Given~$m,r\in\ZZ_{>0}$, let
$\mathcal A_r(m)$ be the set of all integer
partitions of~$m$ with exactly~$r$ parts, such that
the largest $r-2$ parts are equal to each other, that is\plink{A_r(m)}
$$
\mathcal A_r(m)=\{ (\lambda_1,\dots,\lambda_r)\in\ZZ_{>0}^r\,:\,
\lambda_1=\dots=\lambda_{r-2}\ge \lambda_{r-1}\ge \lambda_r,\,\sum_{1\le j\le r}\lambda_j=m\}.
$$
Clearly, $\mathcal A_r(m)=\emptyset$ if~$r>m$. Using transposition
of partitions, it is easy to see that~$\mathcal A_r(m)$ is
in bijection with the set of all partitions of~$m$ with
parts equal to $r$, $r-1$ and~$r-2$ and the largest part being~$r$, whence
$$
\sum_{m\ge 0} |\mathcal A_r(m)|x^m=\frac{x^r}{(1-x^r)(1-x^{r-1})(1-x^{r-2})}.
$$
In particular,
$\mathcal A_3(m)$ is the set of partitions of~$m$ with exactly 3 parts.
By~\cite{Hardy},
$|\mathcal A_3(m)|$ is the nearest integer
to~$\frac1{12}m^2$ (the sequence~\OEIS{A001399} from~\cite{OEIS} up to the shift).

Given~$\boldsymbol\lambda=(\lambda_1,\dots,\lambda_r)\in\ZZ^r$, define
\plink{a_i(l)}$a_i(\boldsymbol\lambda)$, $i\in[0,r]$ by
$$
a_0(\boldsymbol\lambda)=0,\qquad
a_i(\boldsymbol\lambda)=\lambda_r+\sum_{1\le j\le i-1}\lambda_j,\qquad i\in[1,r].
$$
In particular, $a_r(\boldsymbol{\lambda})=\sum_{1\le i\le r}\lambda_r$.
Furthermore, if~$\boldsymbol{\lambda}=(\lambda_1,\dots,\lambda_r)\in \ZZ_{\ge 0}^r$ we set\plink{J_i(l)}
$$
J_i(\boldsymbol{\lambda}):=[a_{i-1}(\boldsymbol{\lambda})+1,a_{i+1}(\boldsymbol{\lambda})-1]\subset [1,a_r(\boldsymbol{\lambda})-1],\qquad i\in[1,r-r1].
$$
Thus, $|J_i(\boldsymbol\lambda)|=\lambda_i+\lambda_{i-1}-1$, $i\in[1,r-1]$, where we set~$\lambda_0=0$.
The main result of this section is the following
\begin{theorem}\label{thm:Ak to An}
For any~$\boldsymbol\lambda\in \mathcal A_{k+1}(n+1)$,
the assignments $s'_i\mapsto w_\circ^{J_i(\boldsymbol\lambda)}$, $i\in [1,k]$, $i\in[1,k]$, define
a locally injective  connected $\psi_{\boldsymbol\lambda}
\in\Hom_{\mathscr H}(A_k,A_n)$.
Moreover, if~$\phi\in\Hom_{\mathscr H}(A_k,A_n)$
is locally injective, connected and fully supported then, up to compositions with diagram automorphisms, $\phi=\psi_{\boldsymbol{\lambda}}$
for some~$\boldsymbol{\lambda}\in\mathcal A_{k+1}(n+1)$.
\end{theorem}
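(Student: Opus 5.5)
The plan is to reduce everything to Proposition~\ref{prop:locally inj}. It says a homomorphism $\phi$ with $[\phi](i)=J_i$ is locally injective if and only if $\mu_{A_n}(J_i,J_j)=m'_{ij}$ and $J_i\not\subset J_j$ for all $i\ne j$. Together with Theorem~\ref{thm:Hom Heck Mon}, this turns both the existence and the uniqueness part into a statement about pairs of intervals in $[1,n]$.

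\emph{Geometry of the $J_i$.} Write $\lambda_0:=\lambda_{k+1}$, so that $J_1=[1,\lambda_{k+1}+\lambda_1-1]$. First I record three facts about consecutive intervals.
\begin{itemize}
\item $J_i\cap J_{i+1}=[a_i+1,a_{i+1}-1]$ has $\lambda_i-1$ elements.
\item $J_i\setminus J_{i+1}$ has $\lambda_{i-1}$ elements and $J_{i+1}\setminus J_i$ has $\lambda_{i+1}$ elements.
\item $J_i$ ends at $a_{i+1}-1$ and $J_{i+2}$ starts at $a_{i+1}+1$.
\end{itemize}
Hence $J_i$ and $J_j$ are orthogonal for $|i-j|\ge2$, so $\mu(J_i,J_j)=\mu(J_j,J_i)=2$ there. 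No $J_i$ is contained in another, since all the differences above are nonempty. Also $J_i\cup J_{i+1}$ is an interval, so $\psi_{\boldsymbol\lambda}$ is connected by Lemma~\partref{lem:[phi]comp.e}. It is fully supported because $a_{k+1}-1=n$.

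\emph{Key lemma (rank-two computation).} Let $J=[a,c]$ and $K=[b,d]$ with $a<b\le c<d$ and $L=J\cup K$. Put $p=b-a$, $q=c-b+1$ and $r=d-c$. I would prove:
\begin{enumerate}
\item $\mu(J,K)\ge3$;
\item $w_\circ^J\star w_\circ^K\star w_\circ^J=w_\circ^L$ if and only if $\max(p,r)\le q+1$;
\item the same criterion holds for $w_\circ^K\star w_\circ^J\star w_\circ^K$.
\end{enumerate}
The first item follows because $J,K$ are not weakly orthogonal: $w_\circ^J\star w_\circ^K=w_\circ^J\times w_{J\cap K;K}$ by Lemma~\ref{lem:len prop wJ K} and Proposition~\partref{prop:Bruhat order *.b}, and this is strictly shorter than $w_\circ^L$. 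For the second item, I would express everything inside $W_L\cong W(A_{p+q+r-1})$ via the Coxeter-element factorizations~\eqref{eq:w0 type A} and Proposition~\ref{prop:wK explicit}. Then I would track $\star$-multiplication by $\cx{}{}$-type words using Lemma~\ref{lem:wK absorption}, exactly as in the proof of Corollary~\ref{cor:A J*K}. Concretely, I would compute the right descent set of $w_\circ^J\times w_{J\cap K;K}$ and check when $\star w_\circ^J$ absorbs enough to reach $\ell(w_\circ^L)$. I expect the outcome to be that the length deficit vanishes exactly when $p\le q+1$ and $r\le q+1$. This computation is the main obstacle. I would try first to reduce it to Corollary~\ref{cor:A J*K} by writing $w_\circ^J=w_{J;L}w_\circ^L$ and using Lemma~\ref{lem: u * u^(-1) w_0}.

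\emph{Existence.} For the pair $(J_i,J_{i+1})$ the lemma applies with $p=\lambda_{i-1}$, $q=\lambda_i-1$ and $r=\lambda_{i+1}$. The defining inequalities of $\mathcal A_{k+1}(n+1)$ give $\lambda_{i\pm1}\le\lambda_i$ for $1\le i\le k-1$; this uses $\lambda_0=\lambda_{k+1}\le\lambda_1$ and $\lambda_1=\dots=\lambda_{k-1}\ge\lambda_k$. So $\mu=3=m'_{i,i+1}$ in both orders, and Theorem~\ref{thm:Hom Heck Mon} together with Proposition~\ref{prop:locally inj} gives the first assertion.

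\emph{Uniqueness.} Let $\phi$ be locally injective, connected and fully supported. Each $[\phi](i)$ is then a nonempty interval, and the $[\phi](i)$ are pairwise non-nested.
\begin{itemize}
\item For $|i-j|\ge2$ we need $\mu=2$, i.e.\ $w_\circ^{[\phi](i)}$ and $w_\circ^{[\phi](j)}$ commute. By the first item of the key lemma, two overlapping non-nested intervals have $\mu\ge3$, so they must be orthogonal (disjoint and non-adjacent).
\item For consecutive indices, $[\phi](i)\cup[\phi](i+1)$ is connected and $\mu=3$, so the intervals overlap.
\end{itemize}
Hence the intervals form a chain from left to right; this ordering is unique up to the flip of $[1,n]$, i.e.\ a diagram automorphism. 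Full support forces the left end $1$ and the right end $n$. Now define $\lambda_i-1$ as the overlap sizes and $\lambda_0,\lambda_k$ as the outer pieces of $[\phi](1)$ and $[\phi](k)$. Orthogonality of $[\phi](i-1)$ and $[\phi](i+1)$ says exactly that these pieces tile $[1,n]$ with a single point between consecutive overlaps. The criterion of the key lemma then forces $\lambda_{i\pm1}\le\lambda_i$ for every interior $i$. Every interior index being a local maximum forces $\lambda_1=\dots=\lambda_{k-1}\ge\lambda_0,\lambda_k$. After possibly applying the diagram automorphism so that $\lambda_k\ge\lambda_0$, and setting $\lambda_{k+1}=\lambda_0$, we get $\boldsymbol\lambda\in\mathcal A_{k+1}(n+1)$ with $\phi=\psi_{\boldsymbol\lambda}$.
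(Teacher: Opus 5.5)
Your strategy matches the paper's. Theorem~\ref{thm:Hom Heck Mon} and Proposition~\ref{prop:locally inj} reduce both directions to values of $\mu_{A_n}$ on pairs of intervals, and the paper computes these in Lemma~\ref{lem:mu J1 J2}. The paper then classifies by induction on $k$, while you analyze the whole chain of intervals at once; that is a legitimate alternative.

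\textbf{The key lemma is false as stated.} For a single triple product the criterion is one-sided. With your $p,q,r$,
\[
w_\circ^J\star w_\circ^K\star w_\circ^J=w_\circ^L\iff r\le q+1,\qquad w_\circ^K\star w_\circ^J\star w_\circ^K=w_\circ^L\iff p\le q+1.
\]
This is Lemma~\ref{lem:mu J1 J2}(b) with $(p,q,r)=(\lambda_3,\lambda_1-1,\lambda_2)$.
\begin{itemize}
\item A counterexample to your item (2) is $J=[1,4]$, $K=[4,5]$ in $A_5$, where $p=3$ and $q=r=1$.
\item Yet $w_\circ^J\star w_\circ^K\star w_\circ^J=w_\circ^J\star s_5\star w_\circ^J=w_\circ^{[1,5]}$. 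By Proposition~\ref{prop:Bruhat order *}, its lower set is $W_J\cup W_Js_5W_J$. Here $W_J$ is the stabilizer of $6$ in $S_6$ and $W_Js_5W_J$ is its complement, so this lower set is all of $W(A_5)$.
\item Only $\mu_{A_5}(K,J)$ exceeds $3$; it equals $4$.
\end{itemize}
So the computation you plan, expecting the deficit to vanish exactly when $p\le q+1$ and $r\le q+1$, cannot succeed. Your argument survives only because you always use both orders together. The conjunction $\mu(J,K)=\mu(K,J)=3\iff\max(p,r)\le q+1$ is true, and that is all existence and uniqueness need. Restate the lemma in the one-sided form and prove it that way.

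\textbf{Two steps of the uniqueness part also need repair.}
\begin{itemize}
\item Consecutive intervals need not overlap. If $[\phi](i)\cup[\phi](i+1)$ is connected and $[\phi](i)\cap[\phi](i+1)=\emptyset$, then $\mu=3$ in both orders happens exactly for two adjacent singletons; for example $\psi_{(1,\dots,1)}=\mathrm{id}$. So your lemma must include the case $q=0$. Lemma~\ref{lem:mu J1 J2} covers this by allowing $\lambda_1=1$.
\item Orthogonality of $[\phi](i-1)$ and $[\phi](i+1)$ only gives \emph{at least} one point between them. That there is exactly one comes from $\mu$. Let $g\ge1$ be the number of points of $[\phi](i)$ outside $[\phi](i-1)\cup[\phi](i+1)$, and let $q_{i-1},q_i$ be the two overlaps. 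Then $\mu([\phi](i-1),[\phi](i))=3$ gives $g+q_i\le q_{i-1}+1$, and $\mu([\phi](i+1),[\phi](i))=3$ gives $g+q_{i-1}\le q_i+1$. Adding these yields $g=1$, and then $q_{i-1}=q_i$.
\end{itemize}
With these corrections, your parametrization by $\boldsymbol\lambda$ and the rest of your argument go through.
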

\begin{proof}
The case~$k=2$, which is the most important step in this argument,
is established in the following
\begin{proposition}\label{prop:A2->An}
For any~$n\ge 2$ and $\boldsymbol\lambda=(\lambda_1,\lambda_2,\lambda_3)\in\mathcal A_3(n+1)$ the assignments~$s'_i\mapsto w_\circ^{J_i(\boldsymbol\lambda)}$,
$i\in\{1,2\}$,
define injective~$\psi_{\boldsymbol\lambda}\in\Hom_{\mathscr H}(A_2,A_n)$,
which is parabolic if and only if~$\boldsymbol\lambda=(n-1,1,1)$.
Conversely, if~$\phi\in\Hom_{\mathscr H}(A_2,A_n)$ is injective, connected and fully supported then, up to compositions with diagram automorphisms, $\phi=\psi_{\boldsymbol\lambda}$ for 
some~$\boldsymbol{\lambda}\in\mathcal A_3(n+1)$.
\end{proposition}
\begin{proof}
First, we need to establish two technical results.
\begin{lemma}\label{lem: w [1,k]*w[l,n]}
For any~$\boldsymbol\nu=
(\nu_1,\nu_2,\nu_3)\in 
\ZZ_{\ge0}\times\ZZ_{>0}\times\ZZ_{>0}$
\begin{align}
w_\circ^{J_1(\boldsymbol\nu)}\star w_\circ^{J_2(\boldsymbol\nu)}
&=w_{[\nu_1+1,\nu_1+\nu_2+\nu_3-1]\setminus\{\nu_1+\nu_3\};[\nu_1+1,\nu_1+\nu_2+\nu_3-1]}w_\circ^{[1,\nu_1+\nu_2+\nu_3-1]}
\label{eq: w [1,k]*w[l,n]}
\\
&=w_\circ^{[1,\nu_1+\nu_2+\nu_3-1]}w_{[1,\nu_2+\nu_3-1]\setminus\{\nu_3\};[1,\nu_2+\nu_3-1]}{}^{-1},
\label{eq: w [1,k]*w[l,n] II}
\end{align}
\end{lemma}

\begin{proof}
Abbreviate~$m=\nu_1+\nu_2+\nu_3-1$.
It suffices to prove~\eqref{eq: w [1,k]*w[l,n]}.
Then~\eqref{eq: w [1,k]*w[l,n] II}
follows from~\eqref{eq: w [1,k]*w[l,n]} since
$w w_\circ^{[1,r]}=w_\circ^{[1,r]}\sigma(w)$
for any~$w\in W(A_r)$ where~$\sigma$ is the diagram
automorphism of~$W(A_r)$.

If~$\nu_3=1$ then~$J_1(\boldsymbol{\nu})=[1,\nu_1]$, $J_2(\boldsymbol{\nu})=[2,m]$.
Write, using~\eqref{eq:w0 type A},
$
w_\circ^{[1,\nu_1]}=\cxr1{\nu_1}\times w_\circ^{[2,\nu_1]}=w_\circ^{[2,\nu_1]}\times\cx1{\nu_1}$, 
whence by Lemma~\ref{lem:char w_0 monoid}
$$
w_\circ^{J_1(\boldsymbol{\nu})}\star w_\circ^{J_2(\boldsymbol{\nu})}=
\cxr1{\nu_1}\star w_\circ^{[2,m]}.$$
Since~$w_\circ^{[2,m]}=\cx1m w_\circ^{[1,m]}=
w_\circ^{[1,m]}\cxr1m$ by~\eqref{eq:w0 type A},
it follows from Lemma~\partref{lem: u * u^(-1) w_0.a} that
\begin{align}\label{eq: l=2}
w_\circ^{J_1(\boldsymbol\nu)}\star w_\circ^{J_2(\boldsymbol\nu)}&=
\cx{(\nu_1+1)}m w_\circ^{[1,m]}
=w_{[\nu_1+2,m];[\nu_1+1,m]}w_\circ^{[1,m]}\nonumber\\
&=w_{[\nu_1+1,m]\setminus\{\nu_1+\nu_3\};[\nu_1+1,m]}w_\circ^{[1,m]},
\end{align}
and so the assertion holds in this case.

We now proceed by induction on~$m\ge 2$.
The induction base is immediate since
$m=2$, $\boldsymbol\nu=(1,1,1)$ with~$\nu_3=1$. For the inductive step, by~\eqref{eq: l=2} we may
assume that~$\nu_3>1$. Then by the induction hypothesis
$
w_\circ^{[1,m-\nu_2-1]}\star w_\circ^{[\nu_3,m-1]}=
w_{[\nu_1+1,m-1]\setminus\{\nu_1+\nu_3-1\};[\nu_1+1,m-1]}w_\circ^{[1,m-1]}
$
whence, since
$(W(A_{m-1}),\star)\cong (W_{[2,m]}(A_n),\star)$
via $s_i\mapsto s_{i+1}$, $i\in[1,m-1]$,
$$
w_\circ^{[2,\nu_1+\nu_3-1]}\star w_\circ^{J_2(\boldsymbol{\nu})}=
w_{[\nu_1+2,m]\setminus\{\nu_1+\nu_3\};[\nu_1+2,m]}w_\circ^{[2,m]}
=w_{[\nu_1+2,m]\setminus\{\nu_1+\nu_3\};[\nu_1+2,m]}\cx1m w_\circ^{[1,m]}.
$$
Then by~\eqref{eq:w0 type A} and
Lemmata~\partref{lem: u * u^(-1) w_0.a}
and~\ref{lem:wK absorption}
\begin{align*}
w_\circ^{J_1(\boldsymbol{\nu})}&\star w_\circ^{J_2(\boldsymbol{\nu})}
=\cxr1{(\nu_1+\nu_3-1)}\star (w_{[\nu_1+2,m]\setminus\{\nu_1+\nu_3\};[\nu_1+2,m]}\cx1m w_\circ^{[1,m]})\\
&=\cxr1{(\nu_1+\nu_3-1)}\star ((\cx1{\nu_1}\times (w_{[\nu_1+2,m]\setminus\{\nu_1+\nu_3\};[\nu_1+2,m]}\cx{(\nu_1+1)}m)) w_\circ^{[1,m]})\\
&=\cxr{(\nu_1+1)}{(\nu_1+\nu_3-1)}\star (w_\circ^{[\nu_1+2,m]\setminus\{\nu_1+\nu_3\}}w_\circ^{[\nu_1+1,m]} w_\circ^{[1,m]})
\\
&=\cxr{(\nu_1+1)}{(\nu_1+\nu_3-1)}\star (w_\circ^{[\nu_1+2,\nu_1+\nu_3-1]}w_\circ^{[\nu_1+\nu_3+1,m]}w_\circ^{[\nu_1+1,m]} w_\circ^{[1,m]})\\
&=\cxr{(\nu_1+1)}{(\nu_1+\nu_3-1)}\star (\cx{(\nu_1+1)}{(\nu_1+\nu_3-1)} w_\circ^{[\nu_1+1,\nu_1+\nu_3-1]}w_\circ^{[\nu_1+\nu_3+1,m]}w_\circ^{[\nu_1+1,m]} w_\circ^{[1,m]})\\
&=\cxr{(\nu_1+1)}{(\nu_1+\nu_3-1)}\star ((\cx{(\nu_1+1)}{(\nu_1+\nu_3-1)}\times w_{[\nu_1+1,m]\setminus\{\nu_1+\nu_3\};[\nu_1+1,m]}) w_\circ^{[1,m]})\\
&=w_{[\nu_1+1,m]\setminus\{\nu_1+\nu_3\};[\nu_1+1,m]} w_\circ^{[1,m]}.\qedhere
\end{align*}
\end{proof}
\begin{lemma}\label{lem:mu J1 J2}
Let~$\boldsymbol\lambda=(\lambda_1,\lambda_2,\lambda_3)\in\ZZ_{>0}^3$
and let~$m=\lambda_1+\lambda_2+\lambda_3-1$. Then
\begin{enmalph}
\item \label{lem:mu J1 J2.a}
$\min(\mu_{A_m}(J_1(\boldsymbol\lambda),J_2(\boldsymbol\lambda)),\mu_{A_m}(J_2(\boldsymbol\lambda),J_1(\boldsymbol\lambda)))\ge 3$;
\item \label{lem:mu J1 J2.b}
$\mu_{A_m}(J_1(\boldsymbol\lambda),J_2(\boldsymbol\lambda))
=3$ 
(respectively, 
$\mu_{A_m}(J_2(\boldsymbol\lambda),J_1(\boldsymbol\lambda))
=3$) if and only if~$\lambda_1\ge \lambda_2$
(respectively, $\lambda_1\ge \lambda_3$).
\item\label{lem:mu J1 J2.c}
If~$\lambda_1<\min(\lambda_2,\lambda_3)$
then
$\min(\mu_{A_m}(J_1(\boldsymbol\lambda),J_2(\boldsymbol\lambda)),\mu_{A_m}(J_2(\boldsymbol\lambda),J_1(\boldsymbol\lambda)))>4$.
\end{enmalph}
\end{lemma}
\begin{proof}
By~\eqref{eq: w [1,k]*w[l,n]},
$\ell(w_\circ^{J_1(\boldsymbol{\lambda})}\star 
w_\circ^{J_2(\boldsymbol{\lambda})})
=\ell(w_\circ^{[1,n]})-\lambda_2\lambda_3<
\ell(w_\circ^{[1,n]})$,
whence 
$$\min(\mu_{A_n}(J_1(\boldsymbol{\lambda}),
J_2(\boldsymbol{\lambda})),
\mu_{A_n}(J_2(\boldsymbol{\lambda}),
J_1(\boldsymbol{\lambda})))> 2.
$$
If~$\lambda_1\ge \lambda_2$ then
$
w_\circ^{J_1(\boldsymbol\lambda)}\star w_\circ^{J_2(\boldsymbol{\lambda})}\star w_\circ^{J_1(\boldsymbol\lambda)}=w_\circ^{[1,n]}$  by~\eqref{eq: w [1,k]*w[l,n]} and Lemma~\partref{lem: u * u^(-1) w_0.c},
whence $\mu_{A_n}(J_1(\boldsymbol{\lambda}),
J_2(\boldsymbol{\lambda}))=3$. 

Suppose
that~$\lambda_1<\lambda_2$. 
Then by Lemma~\partref{lem: u * u^(-1) w_0.b} with~$u=w_\circ^{[1,m-\lambda_1]
\setminus\{\lambda_3\}}$ and~$v=w_\circ^{[1,m-\lambda_1]}$ as well as Lemma~\ref{lem:char w_0 monoid}
\begin{align*}
w_\circ^{J_1(\boldsymbol\lambda)}\star w_\circ^{J_2(\boldsymbol\lambda)}\star w_\circ^{J_1(\boldsymbol\lambda)}
&=w_\circ^{[1,m]}w_{[1,m-\lambda_1]\setminus\{\lambda_3\};[1,m-\lambda_1]}{}^{-1}
\star w_\circ^{[1,\lambda_1+\lambda_3-1]}\\
&=
(w_\circ^{[1,m]}w_\circ^{[1,m-\lambda_1]})
\star w_\circ^{[1,\lambda_3-1]}\star 
w_\circ^{[\lambda_3+1,\lambda_2+\lambda_3-1]}
\star w_\circ^{[1,\lambda_1+\lambda_3-1]}
\\
&=(w_\circ^{[1,m]}w_\circ^{[1,m-\lambda_1]})
\star 
w_\circ^{[\lambda_3+1,\lambda_2+\lambda_3-1]}
\star w_\circ^{[1,\lambda_1+\lambda_3-1]}.
\end{align*}
Using~\eqref{eq: w [1,k]*w[l,n]} with
$\boldsymbol{\nu}=(\lambda_1,\lambda_2-\lambda_1,\lambda_3)$ and applying~${}^{op}$
and Lemmata~\partref{lem: u * u^(-1) w_0.b} and~\ref{lem:len prop wJ K}, we 
obtain 
\begin{align}
w_\circ^{J_1(\boldsymbol\lambda)}&\star w_\circ^{J_2(\boldsymbol\lambda)}\star 
w_\circ^{J_1(\boldsymbol\lambda)}\nonumber
\\
&=
(w_\circ^{[1,m]}w_\circ^{[1,\lambda_2+\lambda_3-1]})
\star (
w_\circ^{[1,\lambda_2+\lambda_3-1]}w_{[\lambda_1+1,\lambda_2+\lambda_3-1]\setminus\{\lambda_1+\lambda_3\};[\lambda_1+1,\lambda_2+\lambda_3-1]}{}^{-1})\nonumber\\
&=w_{[1,\lambda_2+\lambda_3-1];[1,m]}{}^{-1}
\star w_{
[\lambda_1+1,\lambda_2+\lambda_3-1];[1,\lambda_2+\lambda_3-1]}{}^{-1}
\star w_\circ^{[\lambda_1+1,\lambda_2+\lambda_3-1]\setminus\{\lambda_1+\lambda_3\}}\nonumber\\
&=
w_\circ^{[1,m]}w_{[\lambda_1+1,\lambda_2+\lambda_3-1]\setminus\{\lambda_1+\lambda_3\};[\lambda_1+1,\lambda_2+\lambda_3-1]}{}^{-1}.
\label{eq:1st brd Am}
\end{align}
Since~$\lambda_1<\lambda_2$ and~$\lambda_3>0$
it follows that~$\lambda_1+1\le \lambda_1+\lambda_3\le\lambda_2+\lambda_3-1$ and so 
$\ell(w_\circ^{J_1(\boldsymbol\lambda)}
\star w_\circ^{J_2(\boldsymbol\lambda)}
\star w_\circ^{J_1(\boldsymbol\lambda)})<
\ell(w_\circ^{[1,m]})$, that is
$\mu_{A_m}(J_1(\boldsymbol{\lambda}),
J_2(\boldsymbol\lambda))>3$.
This proves
the first assertion in part~\ref{lem:mu J1 J2.b}. The second follows by observing
that the role of~$\lambda_2$ and~$\lambda_3$
is interchanged by applying the diagram
automorphism of~$W(A_m)$.

Finally, if~$\lambda_1<\min(\lambda_2,\lambda_3)$,
we obtain, using~\eqref{eq: w [1,k]*w[l,n]},
\eqref{eq: w [1,k]*w[l,n] II}, Lemma~\partref{lem: u * u^(-1) w_0.b}
and Corollary~\ref{cor:A J*K}
\begin{align*}
(w_\circ^{J_1(\boldsymbol\lambda)}&\star w_\circ^{J_2(\boldsymbol\lambda)})^{\star 2}
=w_\circ^{[\lambda_1+1,m]\setminus\{\lambda_1+\lambda_3\}}\star (w_\circ^{[\lambda_1+1,m]}w_\circ^{[1,m]})
\star (w_\circ^{[1,m]}w_\circ^{[1,m-\lambda_1]})
\star w_\circ^{[1,m-\lambda_1]\setminus\{\lambda_3\}}\\
&=w_\circ^{[\lambda_1+1,m]\setminus\{\lambda_1+\lambda_3\}}\star w_{[\lambda_1+1,m];[1,m]}\star w_{[\lambda_1+1,m];[1,m]}
\star w_\circ^{[1,m-\lambda_1]\setminus\{\lambda_3\}}\\
&=w_\circ^{[\lambda_1+1,\lambda_1+\lambda_3-1]}
\star w_\circ^{[\lambda_1+\lambda_3+1,m]}\star w_{[2\lambda_1+1,m];[1,m]}
\star w_\circ^{[1,\lambda_3-1]}
\star w_\circ^{[\lambda_3+1,m-\lambda_1]}.
\end{align*}
Using Lemma~\partref{lem: u * u^(-1) w_0.b}
we can write
\begin{align*}
w_{[2\lambda_1+1,m];[1,m]}
&\star w_\circ^{[1,\lambda_3-1]}
=(w_\circ^{[1,m]}w_\circ^{[1,\lambda_2-\lambda_1+\lambda_3-1]})\star w_\circ^{[1,\lambda_3-1]}
\\&=w_\circ^{[1,m]}w_\circ^{[1,\lambda_2-\lambda_1+\lambda_3-1]}
w_\circ^{[1,\lambda_3-1]}\\
&=w_\circ^{[2\lambda_1+1,2\lambda_1+\lambda_3-1]}
w_\circ^{[2\lambda_1+1,m]}w_\circ^{[1,m]}
=w_\circ^{[2\lambda_1+1,2\lambda_1+\lambda_3-1]}
\star w_{[2\lambda_1+1,m];[1,m]}.
\end{align*}
Using~\eqref{eq: w [1,k]*w[l,n] II}
with~$\boldsymbol\nu=(\lambda_1,\lambda_2-\lambda_1,\lambda_3-\lambda_1)$, we obtain
\begin{align*}
u:&=w_\circ^{[\lambda_1+\lambda_3+1,m]}\star w_{[2\lambda_1+1,m];[1,m]}
\star w_\circ^{[1,\lambda_3-1]}\\
&=w_\circ^{[\lambda_1+\lambda_3+1,m]}\star 
w_\circ^{[2\lambda_1+1,2\lambda_1+\lambda_3-1]}
\star w_{[2\lambda_1+1,m];[1,m]}\\
&=(w_{[2\lambda_1+1,\lambda_2+\lambda_3-1]\setminus\{\lambda_1+\lambda_3\};
[2\lambda_1+1,\lambda_2+\lambda_3-1]}
w_\circ^{[2\lambda_1+1,m]})\star w_{[2\lambda_1+1,m];[1,m]}
\\
&=w_\circ^{[2\lambda_1+1,\lambda_2+\lambda_3-1]\setminus\{\lambda_1+\lambda_3\}}
\star w_{[2\lambda_1+1,\lambda_2+\lambda_3-1];
[2\lambda_1+1,m]}
\star w_{[2\lambda_1+1,m];[1,m]}\\
&=w_\circ^{[2\lambda_1+1,\lambda_2+\lambda_3-1]\setminus\{\lambda_1+\lambda_3\}}
\star w_{[2\lambda_1+1,\lambda_2+\lambda_3-1];[1,m]}\\
&=w_{[2\lambda_1+1,\lambda_2+\lambda_3-1]\setminus\{\lambda_1+\lambda_3\};
[2\lambda_1+1,\lambda_2+\lambda_3-1]}w_\circ^{[1,m]}
\\
&=w_\circ^{[1,m]}
w_{[\lambda_1+1,\lambda_2-\lambda_1+\lambda_3-1]\setminus\{\lambda_3\};[\lambda_1+1,\lambda_2-\lambda_1+\lambda_3-1]}{}^{-1},
\end{align*}
where we used Lemmata~\partref{lem: u * u^(-1) w_0.b} and~\ref{lem:len prop wJ K}.
Since~$D_L(u)=[1,m]\setminus\{\lambda_1+\lambda_3\}$ and~$D_R(u)=[1,m]\setminus \{\lambda_3\}$ by
Lemmata~\ref{lem:DL uw0} and~\ref{lem:wK absorption}, 
it follows that
\begin{align*}
(w_\circ^{J_1(\boldsymbol\lambda)}&\star w_\circ^{J_2(\boldsymbol\lambda)})^{\star 2}
=w_\circ^{[\lambda_1+1,\lambda_1+\lambda_3-1]}
\star u
\star w_\circ^{[\lambda_3+1,m-\lambda_1]}=u.
\end{align*}
In particular, since~$\lambda_1+1\le \lambda_3\le \lambda_2-\lambda_1+\lambda_3-1$, 
it follows that~$\ell(u)=\ell(u^{-1})<\ell(w_\circ^{[1,m]})$ and so
$\min(\mu_{A_m}(J_1(\boldsymbol\lambda),J_2(\boldsymbol{\lambda})),\mu_{A_m}(J_1(\boldsymbol\lambda),J_2(\boldsymbol{\lambda})))>4$.
\end{proof}

By Lemma~\ref{lem:mu J1 J2} and Theorem~\ref{thm:Hom Heck Mon},
if~$\boldsymbol\lambda\in \mathcal A_3(n+1)$
then the assignments~$s'_i\mapsto w_\circ^{J_i(\boldsymbol{\lambda})}$, $i\in\{1,2\}$, define a homomorphism~$\psi_{\boldsymbol{\lambda}}\in
\Hom_{\mathscr H}(A_2,A_n)$, which is injective
by Proposition~\ref{prop:locally inj}.
By Lemma~\ref{lem: w [1,k]*w[l,n]} it is parabolic if and only if~$\lambda_1+1=\lambda_1+\lambda_3=\lambda_1+\lambda_2+\lambda_3-1=n$, which yields~$\boldsymbol\lambda=(n-1,1,1)$. Conversely, let~$\phi\in\Hom_{\mathscr H}(A_2,A_n)$ be injective and fully supported. By
by applying the diagram automorphism of~$A_2$ we may assume that~$1\in[\phi](1)$. Then~$n\notin[\phi](1)$ since otherwise~$[\phi](2)\subset[\phi](1)$ which contradicts the injectivity by Proposition~\ref{prop:locally inj}. Thus, we can write~$[\phi](i)=J_i(\boldsymbol\lambda)$, $i\in\{1,2\}$, for some
$(\lambda_1,\lambda_2,\lambda_3)\in\ZZ_{\ge 0}^3$ with~$\lambda_1+\lambda_2+\lambda_3=n+1$. Since~$[\phi](1)\cup [\phi](2)=[1,n]$,
$\min [\phi](2)-\max[\phi](1)\le 1$ and so~$\lambda_1\ge 0$. Also, $\lambda_2>0$ and~$\lambda_3>0$ for 
otherwise one of~$[\phi](1)$, $[\phi](2)$ is
a subset of the other and we get a contradiction by Proposition~\ref{prop:locally inj}. Thus,~$\boldsymbol\lambda\in \ZZ_{>0}^3$.
By applying the diagram automorphism of~$A_n$ if necessary we may assume, without loss of generality, that $n-\lambda_2=|J_1(\boldsymbol\lambda)|\le 
|J_2(\boldsymbol\lambda)|=n-\lambda_3$, that is
$\lambda_2\ge \lambda_3$. It follows from
Lemma~\ref{lem:mu J1 J2}, 
Theorem~\ref{thm:Hom Heck Mon}
that~$\lambda_1\ge\max(\lambda_2,\lambda_3)$.
Thus, $\boldsymbol\lambda\in \mathcal A_3(n+1)$.
\end{proof}
We now proceed by induction on~$k\ge 2$, the induction
base being Proposition~\ref{prop:A2->An}.
For the inductive step, let~$\boldsymbol\lambda\in\mathcal A_{k+1}(n+1)$.
Then~$\boldsymbol\lambda'=(\lambda_1,\dots,
\lambda_{k-2},\lambda_{k-1},\lambda_{k+1})\in 
\mathcal A_{k+1}(n+1-\lambda_k)$
and so the assignments~$s'_i\mapsto 
w_\circ^{J_i(\boldsymbol{\lambda'})}$, $i\in[1,k-1]$ define a locally injective connected~$\psi_{\boldsymbol{\lambda'}}\in 
\Hom_{\mathscr H}(A_{k-1},A_{n+1-\lambda_k})$.
Clearly, 
$J_i(\boldsymbol{\lambda'})=
J_i(\boldsymbol{\lambda})$, $i\in[1,k-1]$. 
In particular,
$$
\mu_{A_n}(J_i(\boldsymbol{\lambda}),
J_l(\boldsymbol{\lambda}))=
\begin{cases}
2,& |i-l|>2,\\
3,& |i-l|=1.
\end{cases}
$$
Furthermore, since~$\min J_k(\boldsymbol{\lambda})=a_{k-1}(\boldsymbol{\lambda})+1=
a_{i+1}(\boldsymbol\lambda)+\sum\limits_{i+1\le j\le k-2}\lambda_j+1\ge \max J_i(\boldsymbol{\lambda})$, $i\in[1,k-2]$,
it follows that all the~$J_i(\boldsymbol{\lambda})$, $i\in[1,k-2]$ are orthogonal to~$J_k(\boldsymbol\lambda)$ whence
$$
\mu_{A_n}(J_i(\boldsymbol{\lambda}),
J_k(\boldsymbol{\lambda}))=\mu_{A_n}(J_k(\boldsymbol{\lambda}),
J_i(\boldsymbol{\lambda}))=2,\qquad i\in[1,k-2].
$$
Finally, write~$J_{k-1}(\boldsymbol{\lambda})
=a_{k-2}(\boldsymbol{\lambda})+[1,\lambda_{k-2}+
\lambda_{k-1}-1]$ 
and~$J_k(\boldsymbol{\lambda})=
a_{k-2}(\boldsymbol{\lambda})+[\lambda_{k-2}+1,
\lambda_{k-2}+\lambda_{k-1}+\lambda_k-1]$.
Thus, $J_{k-2+i}(\boldsymbol{\lambda})
=a_{k-2}(\boldsymbol{\lambda})+J_i(\boldsymbol{\nu})$, $i\in \{1,2\}$ where~$\boldsymbol{\nu}=(\lambda_{k-1},\lambda_k,\lambda_{k-2})
=(\lambda_{k-2},\lambda_k,\lambda_{k-2})$.
By Lemma~\ref{lem: w [1,k]*w[l,n]},
$$
\mu_{A_{\lambda_{k-2}+\lambda_{k-1}+\lambda_k-1}}(
J_1(\boldsymbol{\nu}),J_2(\boldsymbol{\nu}))=
\mu_{A_{\lambda_{k-2}+\lambda_{k-1}+\lambda_k-1}}(
J_2(\boldsymbol{\nu}),J_1(\boldsymbol{\nu}))=3.
$$
Since~$(W_{[a_{k-2}(\boldsymbol{\lambda})+1,n]}(A_n),\star)\cong (W(A_{\lambda_{k-2}+\lambda_{k-1}+\lambda_k}),\star)$ via 
$s_i\mapsto s_{i-a_{k-2}(\boldsymbol{\lambda})}$,
$i\in [a_{k-2}(\boldsymbol{\lambda})+1,n]$, it follows that $
\mu_{A_n}(J_{k-1}(\boldsymbol{\lambda}),
J_k(\boldsymbol{\lambda}))=\mu_{A_n}(J_{k}(\boldsymbol{\lambda}),
J_{k-1}(\boldsymbol{\lambda}))=3$.
It remains to apply Theorem~\ref{thm:Hom Heck Mon} and Proposition~\ref{prop:locally inj}.

To prove the converse, we also use induction on~$k$,
the induction base being Proposition~\ref{prop:A2->An}. For the inductive step, using the diagram automorphism
of~$(W(A_k),\star)$
if necessary we may assume, without loss of generality, that~$1\in[\phi](1)$. Since the restriction
of~$\phi$ to~$(W_{[1,k-1]}(A_k),\star)
\cong (W(A_{k-1}),\star)$ 
is also locally injective and connected
homomorphism in~$\Hom_{\mathscr H}(A_{k-1},A_m)$
for some~$m\le n$,
by inudction hypothesis
$[\phi](i)=J_i(\boldsymbol\lambda)$, $i\in[1,k-1]$ for 
some~$\boldsymbol\lambda=(\lambda_1,\dots,\lambda_{k})\in\ZZ_{>0}^{k}$
which is in~$\mathcal A_{k}(m+1)$ up to reordering of~$\lambda_{k}$ and~$\lambda_{k-1}$; that permutation accounts for the diagram automorphism of~$W(A_m)$.
Write~$[\phi](k)=[x,y]$, $1\le x\le y\le n$.
By Proposition~\ref{prop:locally inj} and
Lemma~\ref{lem: w [1,k]*w[l,n]}, $[\phi](k)$
must be orthogonal to~$[\phi](i)$ for all~$i\in[1,k-2]$ and hence to~$[\phi]([1,k-2])
=[1,a_{k-1}(\boldsymbol\lambda)-1]$,
whence~$x\ge a_{k-1}(\boldsymbol\lambda)+1$. If~$m=n$
then~$[\phi](k)\subset [a_{k-1}(\boldsymbol{\lambda})+1,n]\subset 
J_{k-1}(\boldsymbol\lambda)=[\phi](k-1)$,
which is a contradiction by Proposition~\ref{prop:locally inj}. Thus, 
$m<n$ and~$y=n$. We now consider the restriction of~$\phi$ to~$(W_{[k-1,k]}(A_k),\star)
\cong (W(A_2),\star)$. Since~$J_{k-1}(\boldsymbol\lambda)=a_{k-2}(\boldsymbol\lambda)+
[1,\lambda_{k-2}+\lambda_{k-1}-1]$,
write~$[x,n]=a_{k-2}(\boldsymbol\lambda)+[x'+1,y'-1]$ where, since~$n>m=a_k(\boldsymbol\lambda)-1$, $y'>\lambda_{k-2}+\lambda_{k-1}$, while, 
as~$x\ge a_{k-1}(\boldsymbol\lambda)+1$,
$x'\ge \lambda_{k-2}$. In particular, $(W_{[a_{k-2}(\boldsymbol\lambda)+1,n]}(A_n),\star)\cong (W(A_{y'-1}),\star)$. Since~$|J_1(\boldsymbol{\lambda})|=\lambda_1+\lambda_k-1=\lambda_{k-2}+\lambda_k-1$
and~$|[\phi](k)|=y'-x'-1$, by applying the diagram automorphism of~$A_n$ if necessary we may assume that~$\lambda_{k-2}+\lambda_k\le y'-x'$.
Write, as in Proposition~\ref{prop:A2->An},
$[1,\lambda_{k-2}+\lambda_{k-1}-1]=J_1(\boldsymbol{\nu})$, 
$[x'+1,y'-1]=J_2(\boldsymbol{\nu})$, where
$\boldsymbol{\nu}=(\nu_1,\nu_2,\nu_3)=(\lambda_{k-2}+\lambda_{k-1}-x',y'-\lambda_{k-1}-\lambda_{k-2},x')$.
By Proposition~\ref{prop:locally inj}, $\mu_{A_{y'-1}}(J_i(\boldsymbol{\nu}),J_{3-i}(\boldsymbol{\nu}))
=\mu_{A_n}([\phi](k+i-2),[\phi](k-i+1))=3$,
$i\in\{1,2\}$. Therefore,
by Lemma~\ref{lem:mu J1 J2}, 
$(\nu_1,\nu_2,\nu_3)\in\ZZ_{>0}^3$
and~$\nu_1\ge \max(\nu_2,\nu_3)$. The inequality~$\nu_1\ge \nu_3$ then yields~$\lambda_{k-2}\le x'\le \lambda_{k-1}$ which, since~$\lambda_{k-1}\le \lambda_{k-2}$,
forces~$\lambda_{k-1}=\lambda_{k-2}=x'$.
Then~$\lambda_{k-1}=\nu_1\ge \nu_2=y'-2\lambda_{k-1}$ which, together with~$y'>\lambda_{k-1}+\lambda_{k-2}=2\lambda_{k-1}$,
implies that~$2\lambda_{k-1}<y'\le 3\lambda_{k-1}$ and so~$y'=\lambda_{k-2}+\lambda_{k-1}+\mu$
with~$0<\mu\le \lambda_{k-1}$. 
Finally, as $\lambda_{k-2}+\lambda_k\le y'-x'=\lambda_{k-2}+\mu$, it follows that~$\mu\ge \lambda_k$. Therefore,
$\boldsymbol\mu=(\lambda_1,\dots,\lambda_{k-1},\mu,\lambda_k)
\in\mathcal A_{k+1}(n+1)$,
$[\phi](i)=J_i(\boldsymbol{\mu})=J_i(\boldsymbol{\lambda})$, $i\in[1,k-1]$ while~$[\phi](k)=[a_{k-2}(\boldsymbol\lambda)+\lambda_{k-2}+1,
a_{k-2}(\boldsymbol{\lambda})+\lambda_{k-2}+\lambda_{k-1}+\mu-1]=
J_k(\boldsymbol\mu)$. Therefore, $\phi=\psi_{\boldsymbol{\mu}}$.
\end{proof}
\begin{remark}\label{rem:u JK 1}\label{rem:B2 A2n-1 parab noninj}
Let~$\boldsymbol{\lambda}=(\lambda_1,\lambda_2,\lambda_3)\in\ZZ_{>0}^3$ and set~$n=\lambda_1+\lambda_2+\lambda_3-1$.
By the proof of Lemma~\ref{lem:mu J1 J2},
if~$\lambda_3\le \lambda_1<\lambda_2$ or
$\lambda_2\le \lambda_1<\lambda_3$ then
the assignments~$s'_i\mapsto w_\circ^{J_i(\boldsymbol{\lambda})}$, $i\in\{1,2\}$ define a non-injective homomorphism
in~$\Hom_{\mathscr H}(B_2,A_n)$ which is 
parabolic if and only if~$n=2l-1$, $l>1$, and~$\boldsymbol\lambda
\in\{(l-1,l,1),(l-1,1,l)\}$. More generally,
one can show, along the lines of the 
proof of Lemma~\ref{lem: w [1,k]*w[l,n]}, that
$\mu_{A_n}(J_1(\boldsymbol\lambda),
J_2(\boldsymbol\lambda))=2(r+2)$
(respectively, $\mu_{A_n}(J_1(\boldsymbol\lambda),
J_2(\boldsymbol\lambda))=2r+3$), $r\ge 0$
if and only if~$r\lambda_1<\lambda_3\le (r+1)\lambda_1<\lambda_2$
(respectively, $r\lambda_1<\lambda_2\le (r+1)\lambda_1$ and~$\lambda_3>r\lambda_1$),
and then we have for~$\lambda_3>r\lambda_1$
\begin{align*}
u_{J_1(\boldsymbol{\lambda}),
J_2(\boldsymbol{\lambda})}=
\begin{cases}
w_{[(r+1)\lambda_1+1,\lambda_2+\lambda_3-r\lambda_1-1]\setminus\{\lambda_2\};[(r+1)\lambda_1+1,\lambda_2+\lambda_3-r\lambda_1-1]},&\lambda_3\le (r+1)\lambda_1<\lambda_2,
\\
w_{[r\lambda_1+1,\lambda_2+\lambda_3-r\lambda_1-1]\setminus\{\lambda_2\};[r\lambda_1+1,\lambda_2+\lambda_3-r\lambda_1-1]},&r\lambda_1<\lambda_2\le(r+1)\lambda_1.
\end{cases}
\end{align*}
The inequalities for~$\mu_{A_n}(J_2(\boldsymbol\lambda),J_1(\boldsymbol\lambda))$ are obtained by interchanging~$\lambda_2$ and~$\lambda_3$, while
the expressions for~$u_{J_2(\boldsymbol\lambda),J_1(\boldsymbol\lambda)}$ are obtained by applying the diagram
automorphism of~$W(A_n)$ to~$u_{J_1(\boldsymbol\lambda),J_2(\boldsymbol\lambda)}$ and then interchanging the role of~$\lambda_2$ and~$\lambda_3$. Then
it is easy to see that if one of
$\mu_{A_n}(J_1(\boldsymbol\lambda),J_2(\boldsymbol\lambda))$, $\mu_{A_n}(J_2(\boldsymbol\lambda),J_1(\boldsymbol\lambda))$ is even then the other one is odd and 
so there are no connected {\em injective} fully supported homomorphisms in~$\Hom_{\mathscr H}(I_2(2r),A_n)$, $r\ge 2$. On the other hand,
the assignments~$s'_i\mapsto w_\circ^{J_i(\boldsymbol\lambda)}$,
$i\in\{1,2\}$, define an injective 
connected fully supported homomorphism in~$\Hom_{\mathscr H}(I_2(2r+3),A_n)$, $r\ge 0$ provided that
$r\lambda_1<\lambda_2\le\lambda_3\le (r+1)\lambda_1$
and, up to diagram automorphisms, all such homomorphisms are obtained this way. They are
parabolic if and only if~$\lambda_2=\lambda_3=r\lambda_1+1$ and so~$n=(1+2r)\lambda_1+1$.
\end{remark}

\subsection{From type \texorpdfstring{$B$}{B}
to type~\texorpdfstring{$B$}{B}}\label{subs:B->B}
First, we collect some standard facts about parabolic
elements of~$W(B_n)$ which will be used
in the sequel. Their proof is an easy exercise
which is left to the reader.
\begin{lemma}\label{lem:Bn facts}
Let~$m\ge 2$, $a\in [1,m-1]$. Then
\begin{enmalph}
\item $w_\circ^{[a,m]}$ is central in~$W_{[a,m]}(B_m)$
and~$(W_{[a,m]}(B_m),\star)$;
\item $\cx am\times\cxr a{(m-1)}$
centralizes~$W_{[a+1,m]}(B_m)$ both 
in the Coxeter group and in the Hecke monoid. Moreover,
$w_\circ^{[a,m]}=
\cx am\cxr a{(m-1)}\times w_\circ^{[a+1,m]}=
\cxr a{(m-1)}\times
w_\circ^{[a+1,m]}\times \cx a{(m-1)}$
and so~$w_{[a+1,m];[a,m]}=\cx am\cxr a{(m-1)}$.
\end{enmalph}
\end{lemma}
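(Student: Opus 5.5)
Both parts reduce to standard facts about $W(B_m)$ with the labelling of~\eqref{eq:Coxeter graphs}, in which $m_{m-1,m}=4$. The subgroup $W_{[a,m]}(B_m)$ is then of type $B_{m-a+1}$.

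For part (a), I use the fact recalled in \S\ref{subs:Br(M)W(M)}: for irreducible finite $M$, $w_\circ^I$ is central unless $M$ is $A_n$, $D_{n+1}$ with $n$ even, $E_6$, or $I_2(2m+1)$. Type $B$ is not on this list, so $w_\circ^{[a,m]}$ is central in the Coxeter group $W_{[a,m]}(B_m)$. For the Hecke monoid, Lemma~\ref{lem:char w_0 monoid} gives $x\star w_\circ^{[a,m]}=w_\circ^{[a,m]}=w_\circ^{[a,m]}\star x$ for all $x\in W_{[a,m]}(B_m)$. Hence $w_\circ^{[a,m]}$ is central there as well.

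For part (b), set $c=\cx am\times\cxr a{(m-1)}=s_a\cdots s_m s_{m-1}\cdots s_a$. First I check that this product is reduced of length $2(m-a)+1$. One way is to note that $c$ is the reflection in the long root $\varepsilon_a$ (signed permutation model), whose length is $2(m-a)+1$.

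Next I show that $c$ centralizes $W_{[a+1,m]}$ in the Coxeter group. For $j\in[a+1,m]$, the relation $s_jc=cs_j$ is checked directly. Either use the signed-permutation model, where $c$ negates coordinate $a$ and $s_j$ only touches coordinates in $[a+1,m]$. Or use braid moves: for $j<m$, $s_j$ passes through using $s_js_{j+1}s_j\cdots$ braids, and for $j=m$ the relation $s_{m-1}s_ms_{m-1}s_m=s_ms_{m-1}s_ms_{m-1}$ is used.

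For the decomposition, $w_\circ^{[a,m]}$ is the sign change $-1$ on coordinates $[a,m]$. This factors as $c\cdot w_\circ^{[a+1,m]}$. Lengths add, since $\ell(w_\circ^{B_k})=k^2$ and $(m-a+1)^2=2(m-a)+1+(m-a)^2$. This gives $w_\circ^{[a,m]}=c\times w_\circ^{[a+1,m]}$, and hence $w_{[a+1,m];[a,m]}=w_\circ^{[a+1,m]}w_\circ^{[a,m]}=c$ using centrality.

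The second factorization follows by rewriting $c\,w_\circ^{[a+1,m]}=\cxr a{(m-1)}\,w_\circ^{[a+1,m]}\,\cx a{(m-1)}$. Here I conjugate by $\cx a{(m-1)}$ and use that conjugation by $w_\circ^{[a+1,m]}$ fixes $s_{a+1},\dots,s_m$. Lengths again add, to $2(m-a)+(m-a)^2+1$, so the expression is reduced.

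For the Hecke monoid version of the centralizing claim, I take $j\in[a+1,m]$ and argue as follows. We have $\ell(s_jc)>\ell(c)$ because $D_L(c)=\{a\}$; indeed $c$ is a reflection with unique left descent $a$ by the root computation. The same holds on the right, $\ell(cs_j)>\ell(c)$. So $s_j\star c=s_jc=cs_j=c\star s_j$ by Proposition~\ref{prop:prod *}, and this extends to all of $(W_{[a+1,m]},\star)$ by generation.

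The main obstacle is the descent-set claim $D_L(c)=D_R(c)=\{a\}$. I would verify it in the signed permutation model, where it follows from $c(\alpha_j)>0$ for $j\ne a$.
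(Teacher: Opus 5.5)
Everything in your proposal is sound except the step for the second factorization, and that step fails because the identity you claim there is false. You correctly handle part (a), the reducedness of $c=\cx{a}{m}\cxr{a}{(m-1)}$, the centralizing claims (including the descent computation $D_L(c)=D_R(c)=\{a\}$ for the Hecke-monoid version), $w_\circ^{[a,m]}=c\times w_\circ^{[a+1,m]}$, and $w_{[a+1,m];[a,m]}=c$.

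The failing claim is $c\,w_\circ^{[a+1,m]}=\cxr{a}{(m-1)}\,w_\circ^{[a+1,m]}\,\cx{a}{(m-1)}$, and your length count hides the problem. The three factors on the right have lengths $m-a$, $(m-a)^2$ and $m-a$. So the product has length at most $(m-a)^2+2(m-a)=(m-a+1)^2-1<\ell(w_\circ^{[a,m]})$, and the ``$+1$'' in your total $2(m-a)+(m-a)^2+1$ is spurious. Already for $m=2$, $a=1$ the claim reads $s_1s_2s_1=s_1s_2s_1s_2$. In your signed-permutation model, conjugating the sign change on $[a+1,m]$ by the cycle $\cx{a}{(m-1)}$ gives the sign change on $[a,m-1]$, i.e.\ $w_\circ^{[a,m]}s_m$. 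The slip in your conjugation is a dropped $s_m$: with $u=\cx{a}{(m-1)}$ one has $c=us_mu^{-1}$, hence $u^{-1}c=s_mu^{-1}=\cxr{a}{m}$, not $u^{-1}$.

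So the second displayed identity in the statement is itself a misprint and cannot be proved as written. Your approach gives the true identity once the $s_m$ is kept:
\[
w_\circ^{[a,m]}=u^{-1}w_\circ^{[a,m]}u=u^{-1}c\,w_\circ^{[a+1,m]}\,u=\cxr{a}{m}\times w_\circ^{[a+1,m]}\times\cx{a}{(m-1)}=\cxr{a}{(m-1)}\times w_\circ^{[a+1,m]}\times\cx{a}{m}.
\]
The first equality uses part (a). The last uses that $s_m$ commutes with the sign change on $[a,m-1]$. The lengths $(m-a+1)+(m-a)^2+(m-a)=(m-a+1)^2$ now add up.

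The paper leaves this lemma as an exercise. As far as I can see, its later applications (in the proof of Lemma~\ref{lem:B2 Bn all}) use only the first factorization, the formula for $w_{[a+1,m];[a,m]}$ and the centralizing property. Your argument establishes all of these.
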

First we describe all connected injective
homomorphisms $(W(B_2),\star)\to 
(W(B_n),\star)$. Unlike in the case of type~$A$,
they do not fit in the series. 
\begin{proposition}\label{prop:B2 Bn all}
Let~$n\ge 2$, $2\le l\le \ceil{\frac{n+1}2}$ and
$k\in[2l-3,n-1]$. The
assignments $s'_1\mapsto w_\circ^{[1,k]}$,
$s'_2\mapsto w_\circ^{[l,n]}$ define an
injective homomorphism
$(W(B_2),\star)\to (W(B_n),\star)$,
which is parabolic if and only if~$k=n-1$, $l=2$.
Moreover, up to the diagram automorphism 
of~$(W(B_2),\star)$, this exhausts
all injective connected homomorphisms
$(W(B_2),\star)\to (W(B_n),\star)$.
\end{proposition}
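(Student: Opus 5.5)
By Theorem~\ref{thm:Hom Heck Mon} and Proposition~\ref{prop:locally inj}, the assignments define a locally injective homomorphism $\phi$ exactly when
\[
\mu_{B_n}([1,k],[l,n])=\mu_{B_n}([l,n],[1,k])=4
\]
and neither support is contained in the other. Containment is excluded because $k\le n-1$ and $l\ge 2$. For $W(B_2)$, local injectivity on the unique pair $\{1,2\}$ is the same as injectivity, so the first assertion reduces to computing these two numbers $\mu$.

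The plan is to compute $x:=w_\circ^{[1,k]}\star w_\circ^{[l,n]}$ explicitly as $w_{S;[1,n]}$-type data, in the spirit of Lemma~\ref{lem: w [1,k]*w[l,n]}. I would use Lemma~\ref{lem:Bn facts}: iterating $w_\circ^{[a,n]}=\cx an\cxr a{(n-1)}\times w_\circ^{[a+1,n]}$ gives
\[
w_\circ^{[1,n]}=\Big(\dscprodst_{1\le a\le l-1}\cx an\cxr a{(n-1)}\Big)\times w_\circ^{[l,n]}.
\]
Then Lemma~\partref{lem: u * u^(-1) w_0.a} expresses $w_\circ^{[1,k]}\star w_\circ^{[l,n]}$ as $u\,w_\circ^{[1,n]}$, where $u$ is a product of type~$A$ longest elements.

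Next I would check that $\ell(x),\ell(x\star w_\circ^{[1,k]})<\ell(w_\circ^{[1,n]})$, so $\mu\ge 4$, and that $x\star w_\circ^{[1,k]}\star w_\circ^{[l,n]}=w_\circ^{[1,n]}$, which holds by absorption via Lemma~\ref{lem:DL uw0} and Lemma~\partref{lem: u * u^(-1) w_0.c}. The symmetric order is handled the same way, and \eqref{eq:mu_M JK KJ} controls the difference between the two values. The inequalities $k\ge 2l-3$ and $l\le\ceil{\frac{n+1}2}$ should appear precisely as the condition for the descent sets of $x\star w_\circ^{[1,k]}$ to cover $[1,n]$. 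This is analogous to the condition $\lambda_1\ge\max(\lambda_2,\lambda_3)$ in Lemma~\ref{lem:mu J1 J2}.

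Parabolicity concerns the four nontrivial parabolic elements of $W(B_2)$: $w_{\{i\};\{1,2\}}$ is the alternating product of length $3$, and the longest element maps to $w_\circ^{[1,n]}$. So I need $x\star w_\circ^{[1,k]}$ and $w_\circ^{[l,n]}\star x$ to be of the form $w_{J;[1,n]}$. From the explicit formula, this happens only if the length-$3$ images are $w_{\{j\};[1,n]}$-type, which forces $k=n-1$ and $l=2$. For the converse in that case I would verify directly that the images are $w_{\{n\};[1,n]}$ and $w_{\{1\};[1,n]}$.

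For the exhaustion statement, take a connected injective $\phi$; its supports are connected intervals $[\phi](1),[\phi](2)\subset[1,n]$. After the $B_2$ diagram automorphism and restriction to the support (natural inclusions), I would argue as follows. If neither interval contains $n$, both lie in a type~$A$ parabolic and $\mu\le 3$ or the union is not of type $B$, so the equal values $4$ cannot occur (compare Remark~\ref{rem:u JK 1}: type $A$ never yields even $\mu$ on both sides). If both contain $n$, they are nested, contradicting Proposition~\ref{prop:locally inj}. Hence one interval is $[l,n]$ and the other is $[a,k]$ with $k<n$. Connectedness together with $\mu=4$ forces $a=1$ in the fully supported case. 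The same computation of $\mu$ then forces the stated bounds on $l$ and $k$. The main obstacle is the explicit computation of $x\star w_\circ^{[1,k]}$ and the exact threshold $k\ge 2l-3$. I would first test small $n$ (say $n=3,4$) to pin down the formula, and then prove it by induction on $n$ via the centralizing elements $\cx an\cxr a{(n-1)}$ of Lemma~\ref{lem:Bn facts}.
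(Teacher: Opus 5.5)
You have the reduction right, and it matches the paper. By Theorem~\ref{thm:Hom Heck Mon} and Proposition~\ref{prop:locally inj}, everything comes down to showing $\mu_{B_n}([1,k],[l,n])=\mu_{B_n}([l,n],[1,k])=4$ exactly when $k\in[2l-3,n-1]$. The paper also gets this from an explicit braid formula proved by induction on $n$.

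\textbf{The missing formula.} The proposal stops where the proof begins: you never determine the third braid product. The value $4$, the threshold, the parabolicity criterion and the bounds in the exhaustion part all depend on it. The missing statement is Lemma~\ref{lem:B2 Bn all}: for all $l\in[2,n]$ and $k\in[l-1,n-1]$,
\[
y:=w_\circ^{[1,k]}\star w_\circ^{[l,n]}\star w_\circ^{[1,k]}=u\,w_\circ^{[1,n]},\qquad u=w_{[k-l+3,n]\setminus\{k+1\};[k-l+3,n]},
\]
where $w_\circ^{[1,n]}$ is central. Given this, parabolicity is easy. Since $D_L(u)=\{k+1\}$, $y$ is parabolic iff $u=s_{k+1}$, i.e.\ $\ell(u)=\tfrac12(l-1)(4(n-k-1)+l)=1$, i.e.\ $(k,l)=(n-1,2)$.

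\textbf{Your threshold mechanism fails.} By Lemmas~\ref{lem:wK absorption} and~\ref{lem:DL uw0}, $D_L(y)=[1,n]\setminus\{k+1\}$. Since $k+1\in[l,n]$ for every $k\in[l-1,n-1]$, the descent set of $y$ together with $[l,n]$ always covers $[1,n]$. This holds also when $k<2l-3$, where $\mu>4$. For example, for $n=3$, $k=2$, $l=3$ one gets $y=w_\circ^{[1,3]}s_3s_2s_3$ with $D_L(y)=\{1,2\}$, yet $s_3\star y=s_3y\neq w_\circ^{[1,3]}$. What actually matters is whether $u\in W_{[l,n]}$, i.e.\ $[k-l+3,n]\subseteq[l,n]$, i.e.\ $k\ge 2l-3$.
\begin{itemize}
\item If $k\ge 2l-3$, then $w_\circ^{[l,n]}\star y=w_\circ^{[1,n]}$ by the absorption identity $w_\circ^K\star(vw_\circ^J)=w_\circ^J$ for $v\in W_K$, $K\subset J$.
\item If $k<2l-3$, you need a genuine length estimate. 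The paper gets it from Lemma~\ref{lem:too short} with $J'=[l,n]\subsetneq K=[k-l+3,n]$ and $J=K\setminus\{k+1\}$.
\end{itemize}

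\textbf{The reverse order needs its own argument.} It is not ``handled the same way''. Once $\mu_{B_n}([1,k],[l,n])=4$, applying ${}^{op}$ and \eqref{eq:mu_M JK KJ} only gives $\mu_{B_n}([l,n],[1,k])\in\{3,4\}$. Excluding $3$ requires a separate computation. The paper shows $w_\circ^{[2,n]}\star w_\circ^{[1,k]}\star w_\circ^{[2,n]}=s_1w_\circ^{[1,n]}$. It then uses
\[
w_\circ^{[l-1,n]}=w_{[l,n];[l-1,n]}\times w_\circ^{[l,n]}=w_\circ^{[l,n]}\times w_{[l,n];[l-1,n]}
\]
to see that $w_\circ^{[l,n]}\star w_\circ^{[1,k]}\star w_\circ^{[l,n]}=w_\circ^{[1,n]}$ would propagate down to $l=2$, which is impossible. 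The same bound $\ge 4$, combined with ${}^{op}$, gives $\mu_{B_n}([l,n],[1,k])>4$ when $k<2l-3$.

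\textbf{Minor points in the exhaustion part.}
\begin{itemize}
\item ``$\mu\le3$'' is false in type $A$. What you need is Lemma~\ref{lem:mu J1 J2}, which shows the two values cannot both equal $4$.
\item $a=1$ is forced by full support alone, not by $\mu=4$.
\end{itemize}
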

\begin{remark}
The number of pairs~$(k,l)\in[1,n]^2$ satisfying the assumptions of the Proposition is
\begin{align*}\sum_{2\le l\le \ceil{\frac{n+1}2}} (n-2l+3)=\floor{\tfrac12(n+1)}(\ceil{\tfrac12(n+1)}-1)
=\ceil{\tfrac12 n}\floor{\tfrac12 n}=\floor{\tfrac14 n^2}.
\end{align*}
\end{remark}
\begin{proof}
We need the following
\begin{lemma}\label{lem:B2 Bn all}
For all~$n\ge 2$, $l\in[2,n]$ and~$k\in[l-1,n-1]$
\begin{equation}\label{eq:first brd B2 Bn}
w_\circ^{[1,k]}\star w_\circ^{[l,n]}\star
w_\circ^{[1,k]}=w_\circ^{[1,n]}w_{[k-l+3,n]\setminus\{k+1\};[k-l+3,n]}.
\end{equation}
In particular, 
$\mu_{B_n}([1,k],[l,n])=
\mu_{B_n}([l,n],[1,k])=4$
if and only
if~$2\le l\le  \ceil{\frac12(n+1)}$, $k\in [2l-3,n-1]$ and
$\min(\mu_{B_n}([1,k],[l,n]),
\mu_{B_n}([l,n],[1,k]))>4$ otherwise.
\end{lemma}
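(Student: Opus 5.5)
We first establish \eqref{eq:first brd B2 Bn}, and then read off the values of $\mu_{B_n}$ from it.

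\textbf{Step 1: simplifying the left factor.} Since $[l,n]\cap[1,k]=[l,k]$, we write $w_\circ^{[1,k]}=w_{[l,k];[1,k]}{}^{-1}\times w_\circ^{[l,k]}$ by Lemma~\ref{lem:len prop wJ K}. Lemma~\ref{lem:char w_0 monoid} then gives
$$
w_\circ^{[1,k]}\star w_\circ^{[l,n]}=w_{[l,k];[1,k]}{}^{-1}\star w_\circ^{[l,n]}.
$$
Here $W_{[1,k]}$ is of type $A_k$, so $w_{[l,k];[1,k]}$ is explicit by Proposition~\ref{prop:wK explicit}. It is a $\star$-power of a Coxeter element, namely $\cxr1k{}^{\star(l-1)}$ or its inverse. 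Using Lemma~\ref{lem:Bn facts}, namely $w_{[a+1,m];[a,m]}=\cx am\cxr a{(m-1)}$ and the centrality of $w_\circ^{[a,m]}$, we compute $w_\circ^{[1,k]}\star w_\circ^{[l,n]}$ in the form $x\,w_\circ^{[1,n]}$, with $x$ a product of parabolic-type elements.

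\textbf{Step 2: induction.} We argue by induction on $l$, imitating Lemma~\ref{lem: w [1,k]*w[l,n]}.

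The base case is $l=2$, where $[l,n]=[2,n]$. By Lemma~\ref{lem:Bn facts}, $w_\circ^{[1,n]}=\cxr1{(n-1)}\times w_\circ^{[2,n]}\times\cx1{(n-1)}$. Hence $w_\circ^{[2,n]}=\cx1{(n-1)}{}^{-1}\cdots$ is expressed through $w_\circ^{[1,n]}$. Lemma~\partref{lem: u * u^(-1) w_0.a} absorbs the $\cxr1k$-parts, and Lemma~\partref{lem: u * u^(-1) w_0.c} handles the final right multiplication by $w_\circ^{[1,k]}$.

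For the inductive step we use the isomorphism $W_{[2,n]}(B_n)\cong W(B_{n-1})$. Exactly as in the type $A$ argument, we peel off $\cxr1{\ast}$ via Lemmata~\partref{lem: u * u^(-1) w_0.a}, \partref{lem: u * u^(-1) w_0.b} and~\ref{lem:wK absorption}. The index shift $k-l+3$ comes from the fact that the overlap $[l,k]$ shrinks by one at each step while $n$ stays fixed.

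\textbf{Step 3: reading off $\mu$.} The right-hand side $w_\circ^{[1,n]}w_{K';K}$, with $K=[k-l+3,n]$ and $K'=K\setminus\{k+1\}$, is never $w_\circ^{[1,n]}$ when $k+1\in K$. This holds because $k\le n-1$ and $k+1\ge k-l+3$ (as $l\ge2$). Therefore $\mu\ge 4$ in both orders. Note also that $\mu_{B_n}([1,k],[l,n])=\mu_{B_n}([l,n],[1,k])$ once either equals the even value $4$ (by \eqref{eq:mu_M JK KJ} and ${}^{op}$-invariance).

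To decide when $\mu=4$, we multiply \eqref{eq:first brd B2 Bn} on the right by $w_\circ^{[l,n]}$. We write $w_\circ^{[1,n]}w_{K';K}=w_{K';K}{}^{-1}w_\circ^{[1,n]}$, using centrality in type $B$, and use $D_R$ computations via Lemmata~\ref{lem:DL uw0} and~\ref{lem:wK absorption}. The result equals $w_\circ^{[1,n]}$ if and only if $D_L(w_{K';K})=K\setminus K'=\{k+1\}$ is absorbed, that is, $\{k+1\}\cup$ (the remaining descents) $\subset[l,n]$. More precisely, Lemma~\ref{lem:too short}, with $J'=[l,n]$, $J=K'$ and the given $K$, gives a strict length drop unless $K\subset[l,n]$, i.e.\ $k-l+3\ge l$, i.e.\ $k\ge 2l-3$. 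Conversely, when $K\subset[l,n]$, Lemma~\partref{lem: u * u^(-1) w_0.c} yields $w_\circ^{[1,n]}$. The bound $l\le\lceil\frac{n+1}2\rceil$ is then exactly the condition that $[2l-3,n-1]$ is nonempty.

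\textbf{Main obstacle.} The hardest part is the inductive computation in Step 2. In type $B$ the element $w_\circ^{[1,n]}$ is central, but $w_\circ^{[1,k]}$ lives in type $A$, so the two kinds of Coxeter-element factorizations (Lemma~\ref{lem:Bn facts} and~\eqref{eq:w0 type A}) must be interleaved carefully. I would first verify the formula for small $n$ (say $n=3,4$) to pin down the exact index bookkeeping before writing the general induction.
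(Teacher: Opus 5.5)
Your overall strategy is the paper's: induction through $W_{[2,n]}(B_n)\cong W(B_{n-1})$, then reading off $\mu$ via Lemma~\partref{lem: u * u^(-1) w_0.c} and Lemma~\ref{lem:too short}. There is, however, a genuine gap in Step~3, and Step~2 is not actually carried out.

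\textbf{Step 3: the reversed braid.} Identity~\eqref{eq:first brd B2 Bn} only controls the braid that \emph{starts} with $w_\circ^{[1,k]}$. It therefore gives $\mu_{B_n}([1,k],[l,n])\ge 4$, and your ``therefore $\mu\ge 4$ in both orders'' does not follow. From~\eqref{eq:mu_M JK KJ} you only get $\mu_{B_n}([l,n],[1,k])\ge 3$. The anti-involution ${}^{op}$ does not help either, because it fixes the palindromic element $w_\circ^{[l,n]}\star w_\circ^{[1,k]}\star w_\circ^{[l,n]}$.

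Your supporting claim that the two values of $\mu$ coincide once one of them is the even value $4$ is false in general. In the example in $(W(A_4),\star)$ following Lemma~\ref{lem:G JK finite}, $\mu_{A_4}(\{2,4\},\{3,4\})=4$ but $\mu_{A_4}(\{3,4\},\{2,4\})=3$. So for $k\ge 2l-3$ your argument only gives $\mu_{B_n}([l,n],[1,k])\in\{3,4\}$. The ``otherwise'' case is fine: $\mu_{B_n}([1,k],[l,n])\ge 5$ together with~\eqref{eq:mu_M JK KJ} and ${}^{op}$ forces $\mu_{B_n}([l,n],[1,k])\ge 5$.

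What is missing is an independent computation of the reversed $3$-braid:
\begin{enumerate}
\item For $l=2$, Lemma~\ref{lem:char w_0 monoid} and~\eqref{eq:w0 type A} reduce $w_\circ^{[2,n]}\star w_\circ^{[1,k]}\star w_\circ^{[2,n]}$ to $w_\circ^{[2,n]}\star(s_1w_\circ^{[2,n]})$. Write $s_1w_\circ^{[2,n]}=(\cx2n\times\cxr1{(n-1)})w_\circ^{[1,n]}$. Then Lemma~\partref{lem: u * u^(-1) w_0.a'} and Lemma~\ref{lem:left desc} (using $D_L(s_1w_\circ^{[1,n]})=[2,n]$) give $s_1w_\circ^{[1,n]}\neq w_\circ^{[1,n]}$.
\item For $l>2$, suppose $w_\circ^{[l,n]}\star w_\circ^{[1,k]}\star w_\circ^{[l,n]}=w_\circ^{[1,n]}$. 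By Lemma~\ref{lem:Bn facts}, $w_\circ^{[l-1,n]}=w_{[l,n];[l-1,n]}\times w_\circ^{[l,n]}=w_\circ^{[l,n]}\times w_{[l,n];[l-1,n]}$. So $\star$-multiplying both sides by $w_{[l,n];[l-1,n]}$ gives the same equality with $l-1$ in place of $l$.
\item Descending to $l=2$ contradicts the first item.
\end{enumerate}
Only with this bound does ${}^{op}$ upgrade $\mu_{B_n}([1,k],[l,n])=4$ to equality in both orders.

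\textbf{Step 2: the identity itself.} This is only gestured at, and the specific details are wrong.

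In the base case, Lemma~\partref{lem: u * u^(-1) w_0.c} cannot handle the final factor. It always outputs $w_\circ^J$, which would give $w_\circ^{[1,n]}$ and contradict the formula you are proving. What works is the following. Reduce to $\cxr1k\star w_\circ^{[2,n]}\star\cx1k$ and write $w_\circ^{[2,n]}=(\cx1n\times\cxr1{(n-1)})w_\circ^{[1,n]}$. Strip $\cxr1k$ on the left with Lemma~\partref{lem: u * u^(-1) w_0.a}, move $w_\circ^{[1,n]}$ across by centrality, and strip $\cx1k$ on the right with the ${}^{op}$-form of the same lemma. This yields $w_\circ^{[1,n]}\cx{(k+1)}n\cxr{(k+1)}{(n-1)}=w_\circ^{[1,n]}w_{[k+2,n];[k+1,n]}$.

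For $l>2$, passing to $W_{[2,n]}(B_n)\cong W(B_{n-1})$ is an induction on $n$ via $(n,k,l)\mapsto(n-1,k-1,l-1)$. Under this map the overlap $[l,k]$ keeps its size; it is not an induction on $l$ with $n$ fixed. The real content of the step is the factorization
$w_\circ^{[1,n]}=u\times(w_\circ^{[2,n]}w_{[k-l+4,n]\setminus\{k+1\};[k-l+4,n]})$, where $u=\cx1{(k-l+2)}\times w_{[k-l+4,n]\setminus\{k+1\};[k-l+3,n]}\times\cxr1{(k-l+2)}$, obtained from Lemma~\ref{lem:Bn facts}. After that:
\begin{enumerate}
\item Lemma~\partref{lem: u * u^(-1) w_0.a} and its ${}^{op}$-form strip $\cxr1{(k-l+2)}$, $\cx1{(k-l+2)}$, and then $\cxr{(k-l+3)}k$.
\item The leftover right factor $\cx{(k-l+3)}k$ is absorbed by Lemma~\ref{lem:left desc}, since $D_R(w_\circ^{[1,n]}w_{[k-l+3,n]\setminus\{k+1\};[k-l+3,n]})=[1,n]\setminus\{k+1\}$.
\end{enumerate}
None of this is in your sketch.

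Finally, when you invoke Lemma~\ref{lem:too short} you must verify its length hypothesis. Here it amounts to $\tfrac12(2l-3-k)(4n-3k-2)>0$.
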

\begin{proof}
To prove~\eqref{eq:first brd B2 Bn}, we use induction on~$n$, the case~$n=2$ being obvious.

For the inductive step, we first prove that
\begin{align}\label{eq: B2 l=2}
&w_\circ^{[1,k]}\star w_\circ^{[2,n]}\star w_\circ^{[1,k]}=w_\circ^{[1,n]}w_{[k+2,n];[k+1,n]}=w_\circ^{[1,n]}\cx{(k+1)}n\cxr{(k+1)}{(n-1)},\\
&w_\circ^{[2,n]}\star w_\circ^{[1,k]}\star w_\circ^{[2,n]}=s_1 w_\circ^{[1,n]}.
\label{eq: B2 l=2'}
\end{align}
Indeed, using~\eqref{eq:w0 type A} and Lemma~\ref{lem:char w_0 monoid} we obtain
$$
w_\circ^{[1,k]}\star w_\circ^{[2,n]}\star w_\circ^{[1,k]}=\cxr1k
\star w_\circ^{[2,k]}\star w_\circ^{[2,n]}
\star w_\circ^{[2,k]}\star \cx1k
=\cxr1k
\star w_\circ^{[2,n]}\star \cx1k
$$
and similarly
$$
w_\circ^{[2,n]}\star w_\circ^{[1,k]}\star w_\circ^{[2,n]}=w_\circ^{[2,n]}\star s_1\star w_\circ^{[2,n]}
=w_\circ^{[2,n]}\star (s_1w_\circ^{[2,n]}).
$$
Since~$w_\circ^{[2,n]}=\cx1n\cxr1{(n-1)}w_\circ^{[1,n]}$ and~$w_\circ^{[1,n]}$ is central, by Lemma~\partref{lem: u * u^(-1) w_0.a}
\begin{align*}
w_\circ^{[1,k]}&\star w_\circ^{[2,n]}\star w_\circ^{[1,k]}=\cxr1k\star
((\cx1n\times \cxr1{(n-1)})w_\circ^{[1,n]})\star \cx1k
\\
&=((\cx{(k+1)}n\times\cxr1{(n-1)})w_\circ^{[1,n]})
\star \cx1k=(w_\circ^{[1,n]}(\cx{(k+1)}n\times\cxr1{(n-1)}))
\star \cxr1k\\
&=w_\circ^{[1,n]}\cx{(k+1)}n\cxr{(k+1)}{(n-1)}.
\end{align*}
On the other hand, since
$s_1w_\circ^{[2,n]}=(\cx2n\times\cxr1{(n-1)})w_\circ^{[1,n]}$, we obtain
by Lemma~\partref{lem: u * u^(-1) w_0.a'}
\begin{align*}
w_\circ^{[2,n]}\star w_\circ^{[1,k]}\star w_\circ^{[2,n]}&=w_\circ^{[2,n]}\star (\cx2n\times\cxr1{(n-1)})w_\circ^{[1,n]}\\
&=w_\circ^{[2,n]}\star (\cxr1{(n-1)}w_\circ^{[1,n]})
=w_\circ^{[2,n]}\star (s_1w_\circ^{[1,n]}).
\end{align*}
Since~$D_L(s_1w_\circ^{[1,n]})=[2,n]$,
it remains to apply Lemma~\ref{lem:left desc}.

Thus, we may assume that~$l>2$. Then by the induction hypothesis
$$
w_\circ^{[1,k-1]}\star w_\circ^{[l-1,n-1]}\star
w_\circ^{[1,k-1]}=w_\circ^{[1,n-1]}w_{[k-l+3,n-1]\setminus\{k\};[k-l+3,n-1]},
$$
whence, using the natural
isomorphism $(W(B_{n-1}),\star)\cong (W_{[2,n]}(B_n),\star)$
$$
w_\circ^{[2,k]}\star w_\circ^{[l,n]}\star
w_\circ^{[2,k]}=w_\circ^{[2,n]}w_{[k-l+4,n]\setminus\{k+1\};[k-l+4,n]}
$$
Then by~\eqref{eq:w0 type A}
\begin{align*}
w_\circ^{[1,k]}\star w_\circ^{[l,n]}\star
w_\circ^{[1,k]}
=\cxr1k \star (w_\circ^{[2,n]}w_{[k-l+4,n]\setminus\{k+1\};[k-l+4,n]})\star\cx 1k
\end{align*}
Write~$w_\circ^{[1,n]}=u\times w_\circ^{[2,n]}w_{[k-l+4,n]\setminus\{k+1\};[k-l+4,n]}$
where by Lemma~\partref{lem:Bn facts}
\begin{align*}
u&=w_\circ^{[1,n]}(w_\circ^{[2,n]}w_{[k-l+4,n]\setminus\{k+1\};[k-l+4,n]})^{-1}
=
\cx1n\times\cxr1{(n-1)}\times w_{[k-l+4,n]\setminus\{k+1\};[k-l+4,n]}\\
&=\cx1{(k-l+2)}\times\cx{(k-l+3)}n\cxr{(k-l+3)}{(n-1)}
w_{[k-l+4,n]\setminus\{k+1\};[k-l+4,n]}\times
\cxr1{(k-l+2)}\\
&=\cx1{(k-l+2)}\times
w_{[k-l+4,n]\setminus\{k+1\};[k-l+3,n]}\times
\cxr1{(k-l+2)}.
\end{align*}
Since~$w_\circ^{[1,n]}$ is central, by~\eqref{eq:w0 type A} and Lemmata~\partref{lem: u * u^(-1) w_0.a} and~\ref{lem:wK absorption},
\begin{align*}
w_\circ^{[1,k]}&\star w_\circ^{[l,n]}\star
w_\circ^{[1,k]}
=\cxr1k \star (w_\circ^{[1,n]}u)\star\cx 1k\\
&=\cxr{(k-l+3)}k\star (w_\circ^{[1,n]}w_{[k-l+4,n]\setminus\{k+1\};[k-l+3,n]})\star \cx{(k-l+3)}k
\\
&=\cxr{(k-l+3)}k\star (w_\circ^{[1,n]}(\cx{(k-l+3)}k\times
w_{[k-l+3,n]\setminus\{k+1\};[k-l+3,n]}))\star \cx{(k-l+3)}k\\
&=(w_\circ^{[1,n]}
w_{[k-l+3,n]\setminus\{k+1\};[k-l+3,n]})\star \cx{(k-l+3)}k.
\end{align*}
Then, since~$D_R(w_\circ^{[1,n]}w_{[k-l+3,n]\setminus\{k+1\};[k-l+3,n]})=[1,n]\setminus\{k+1\}$ by Lemmata~\ref{lem:wK absorption} and~\ref{lem:DL uw0}, this product
is equal to its first factor by Lemma~\ref{lem:left desc}.

Next, we prove that
\begin{equation}
\label{eq: mu[l,n][1,k]}
\mu_{B_n}([l,n],[1,k])\ge4.
\end{equation}
For~$l=2$ this follows from~\eqref{eq: B2 l=2'}. Suppose
that~$\mu_{B_n}([l,n],[1,k])\le 3$ for some~$l>2$ whence
$$
w_\circ^{[l,n]}\star w_\circ^{[1,k]}\star w_\circ^{[l,n]}=w_\circ^{[1,n]}.
$$
It follows from Lemmata~\ref{lem:Bn facts} 
and~\ref{lem:char w_0 monoid} that
\begin{align*}
w_\circ^{[l-1,n]}\star w_\circ^{[1,k]}\star w_\circ^{[l-1,n]}&=
w_{[l,n];[l-1,n]}\star w_\circ^{[l,n]}\star w_\circ^{[1,k]}\star w_\circ^{[l,n]}\star w_{[l,n];[l-1,n]}
=w_\circ^{[1,n]}.
\end{align*}
By an obvious induction we obtain
$w_\circ^{[2,n]}\star w_\circ^{[1,k]}\star w_\circ^{[2,n]}=w_\circ^{[1,n]}$ which contradicts~\eqref{eq: B2 l=2'}.

Since~$\ell(w_{[k-l+3,n]\setminus\{k+1\};[k-l+3,n]})=\frac12(l-1)(4(n-k-1)+l)>0$, 
it follows from~\eqref{eq:first brd B2 Bn} 
that~$\mu_{B_n}([1,k],[l,n])\ge4$. If~$l\le k-l+3$,
that is, if $k\ge 2l-3$, which, since~$k\le n-1$,
forces~$2l-3\le n-1$ or~$l\le \ceil{\frac12(n+1)}$, then~\eqref{eq:first brd B2 Bn} and Lemma~\partref{lem: u * u^(-1) w_0.c}
imply that~$\mu_{B_n}([1,k],[l,n])=4$. 
Applying~${}^{op}$, we conclude that
$\mu_{B_n}([l,n],[1,k])\le 4$ 
and so~$\mu_{B_n}([l,n],[1,k])=4$ by~\eqref{eq: mu[l,n][1,k]}.

Suppose that~$k<2l-3$. Let
$K=[k-l+3,n]$, $J=[k-l+3,n]\setminus\{k+1\}$
and~$J'=[l,n]\subsetneq K$. Then
$J\cap J'=[l,n]\setminus \{k+1\}$ and
\begin{align*}
\ell(w_\circ^K)&-\ell(w_\circ^J)-\ell(w_\circ^{J'})+\ell(w_\circ^{J'\cap J})=
\ell(w_\circ^{[k-l+3,n]})-\ell(w_\circ^{[k-l+3,k]})-\ell(w_\circ^{[l,n]})+\ell(w_\circ^{[l,k]})\\
&=\tfrac12(2l-3-k)(4n-3k-2)>0
\end{align*}
since~$n\ge 2$ and~$k<n$. Applying Lemma~\ref{lem:too short} we conclude that~$\mu_{B_n}([1,k],[l,n])>4$. In particular,
this also forces~$\mu_{B_n}([l,n],[1,k])>4$
since $\mu_{B_n}([l,n],[1,k])\ge4$ by~\eqref{eq: mu[l,n][1,k]} and the equality would imply,
by applying~${}^{op}$, that~$\mu_{B_n}([1,k],[l,n])\le 4$, which is a contradiction.
\end{proof}
It follows from the Lemma and Theorem~\ref{thm:Hom Heck Mon}
that the assignments
$s'_1\mapsto w_\circ^{[1,k]}$, $s'_2\mapsto 
w_\circ^{[l,n]}$ define 
a homomorphism~$(W(B_2),\star)\to (W(B_n),\star)$
if and only if~$k\in[2l-3,n-1]$
and hence~$2\le l\le \ceil{\frac12(n+1)}$.
By Proposition~\ref{prop:locally inj},
all of these homomorphisms are injective.
For the converse, given injective fully supported connected~$\phi\in\Hom_{\mathscr H}(B_2,B_n)$,
we may assume, without loss of generality, that~
$1\in[\phi](1)$ and~$n\notin[\phi](2)$, for
otherwise we immediately get a contradiction by Proposition~\ref{prop:locally inj}. Thus,
$[\phi](1)=[1,k]$ and~$[\phi](2)=[l,n]$ 
with~$l\in[2,n]$ and~$k\in[l-1,n-1]$.
Then by Lemma~\ref{lem:B2 Bn all},
it follows that~$2\le l\le \ceil{\frac12(n+1)}$
and~$k\in[2l-3,n-1]$.
Finally, note that the image of
the parabolic element~$s'_1s'_2s'_1$ is parabolic
if and only if~$[k-l+3,n]=\{k+1\}$ which is equivalent to $(k,l)=(n-1,2)$, and in that
case the image of~$s'_2s'_1s'_2$ is parabolic by~\eqref{eq: B2 l=2'}.
\end{proof}
\begin{remark}\label{rem: uJK 2}
If~$2\le l\le \ceil{\frac12(n+1)}$ and~$k\in[2l-3,n-1]$ then it follows from~\eqref{eq:first brd B2 Bn} that $u_{[1,k];[l,n]}=w_{[k-l+3,n]\setminus\{k+1\};[k-l+3,n]}$.
One can check that~$u_{[l,n];[1,k]}=w_{[1,2l-3]\setminus\{l-1\};[1,2l-3]}$ in that case. 
\end{remark}
\begin{lemma}\label{lem:B2 An do not exist}
A connected homomorphism between $(W(B_r),\star)$ and $(W(A_n),\star)$, $r,n\ge 2$
cannot be locally injective.
\end{lemma}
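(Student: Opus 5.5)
The plan is to reduce to a homomorphism from the rank-two parabolic $(W_{\{r-1,r\}}(B_r),\star)\cong(W(B_2),\star)$ into $(W(A_n),\star)$ and to use the rank-two analysis of type $A$ already carried out. A locally injective connected $\phi\in\Hom_{\mathscr H}(B_r,A_n)$ restricts, on the pair $\{r-1,r\}$ with $m'_{r-1,r}=4$, to a locally injective connected homomorphism from $(W(B_2),\star)$. Hence it suffices to treat $r=2$. We may also replace $A_n$ by the connected support $[\phi](\{1,2\})$, which is an interval $[a,b]$, and shift it so that the codomain is $(W(A_m),\star)$ with $[\phi]$ fully supported. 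If the statement is meant in the other direction, from $(W(A_n),\star)$ to $(W(B_r),\star)$, the same restriction to a rank-two parabolic gives the analogous reduction, so I would focus on $B_2\to A_m$.

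By Proposition~\ref{prop:locally inj}, local injectivity means
\[
\mu_{A_m}([\phi](1),[\phi](2))=\mu_{A_m}([\phi](2),[\phi](1))=4,
\]
and neither support is contained in the other. Both supports are connected, so they are intervals, and their union is $[1,m]$. Exactly as in the converse part of Proposition~\ref{prop:A2->An}, after possibly applying the diagram automorphism of $A_m$ and swapping the generators, we can write $[\phi](i)=J_i(\boldsymbol\lambda)$ with $\boldsymbol\lambda\in\ZZ_{>0}^3$. Non-containment forces $\lambda_2,\lambda_3>0$, and the union being all of $[1,m]$ forces $\lambda_1\ge 0$. The case $\lambda_1=0$ is disjoint adjacent intervals, where both $\mu$ values equal $3$, so we may take $\lambda_1\ge1$.

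The parity dichotomy then finishes the argument. By Lemma~\ref{lem:mu J1 J2}, if $\lambda_1\ge\max(\lambda_2,\lambda_3)$ both $\mu$ values equal $3\ne4$. If $\lambda_1<\min(\lambda_2,\lambda_3)$ both exceed $4$. In the remaining case, say $\lambda_3\le\lambda_1<\lambda_2$, part~(b) of the lemma gives $\mu(J_2,J_1)=3\ne4$; symmetrically, $\lambda_2\le\lambda_1<\lambda_3$ gives $\mu(J_1,J_2)=3$. So in no case are both values $4$, and local injectivity fails.

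The main obstacle is making the normalization to $J_i(\boldsymbol\lambda)$ airtight, including the degenerate cases of non-overlapping or adjacent intervals and the bookkeeping under diagram automorphisms. The essential computation is already contained in Lemma~\ref{lem:mu J1 J2}, so this is where the care is needed rather than in any new calculation.
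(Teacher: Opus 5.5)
Your argument for maps $(W(B_r),\star)\to(W(A_n),\star)$ is correct and is the paper's proof. You restrict to $W_{\{r-1,r\}}(B_r)\cong W(B_2)$, pass to the interval support, write $[\phi](i)=J_i(\boldsymbol\lambda)$ with $\boldsymbol\lambda\in\ZZ_{>0}^3$, and read off from Lemma~\ref{lem:mu J1 J2} that $\mu_{A_m}(J_1(\boldsymbol\lambda),J_2(\boldsymbol\lambda))$ and $\mu_{A_m}(J_2(\boldsymbol\lambda),J_1(\boldsymbol\lambda))$ are never both $4$. Your four-case split just unpacks the paper's ``$\min\ne 4$''.

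There is one harmless slip. Since $J_1(\boldsymbol\lambda)=[1,\lambda_1+\lambda_3-1]$ and $J_2(\boldsymbol\lambda)=[\lambda_3+1,m]$, the value $\lambda_1=0$ leaves a gap at $\lambda_3$ and is already excluded by connectedness. Disjoint adjacent intervals correspond to $\lambda_1=1$, and there the two $\mu$'s need not both be $3$: for $\{1\}$ and $\{2,3\}$ in $A_3$ they are $4$ and $3$. This does not matter, because $\lambda_1=1$ is covered by your case analysis.

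The genuine omission is the opposite direction. The paper's proof covers it, and it is used later in Proposition~\ref{prop:negative} for maps $(W(A_2),\star)\to(W(B_n),\star)$.

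Your remark that ``the same restriction gives the analogous reduction'' is not right as stated. Passing to the support of $\phi$ may land in a type-$A$ parabolic of $B_n$, where connected locally injective maps from $(W(A_2),\star)$ do exist. For example, the natural inclusion $(W(A_{n-1}),\star)\hookrightarrow(W(B_n),\star)$ is connected and injective. So this direction holds only for fully supported $\phi$, which is exactly the situation in Proposition~\ref{prop:negative}.

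To finish that direction:
\begin{enumerate}
\item Pick $i$ with $n\in[\phi](i)$ and a neighbour $j$ of $i$. The restriction to $W_{\{i,j\}}(A_r)\cong W(A_2)$ is then fully supported in $W_{[a,n]}(B_n)\cong W(B_{n-a+1})$. Here $a<n$, since $a=n$ would violate non-containment.
\item After relabelling and swapping generators, $[\phi](1)=[1,k]$ and $[\phi](2)=[l,n]$ with $2\le l\le k+1\le n$.
\item The needed input is Lemma~\ref{lem:B2 Bn all}, not Lemma~\ref{lem:mu J1 J2}. It shows that both $\mu_{B_n}([1,k],[l,n])$ and $\mu_{B_n}([l,n],[1,k])$ are at least $4$, so neither can equal $m'_{12}=3$. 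This contradicts Proposition~\ref{prop:locally inj}.
\end{enumerate}
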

\begin{proof}
It suffices to consider the case when
the domain has rank~$2$.
Let~$\phi\in\Hom_{\mathscr H}(B_2,A_n)$
be injective and connected. We may assume, without loss of generality, that~$\phi$ is fully supported. Then, as in the proof of Proposition~\ref{prop:A2->An},
$[\phi](i)=J_i(\boldsymbol\lambda)$
for some~$\boldsymbol\lambda=(\lambda_1,\lambda_2,\lambda_3)\in\ZZ_{>0}^3$
and $\mu_{A_n}(J_1(\boldsymbol{\lambda}),
J_2(\boldsymbol{\lambda}))=
\mu_{A_n}(J_2(\boldsymbol{\lambda}),
J_1(\boldsymbol{\lambda}))=4$
by Theorem~\ref{thm:Hom Heck Mon}
and Proposition~\ref{prop:locally inj}.
Yet 
$\min(\mu_{A_n}(J_1(\boldsymbol{\lambda}),
J_2(\boldsymbol{\lambda})),
\mu_{A_n}(J_2(\boldsymbol{\lambda}))\not=4$
by Lemma~\ref{lem:mu J1 J2}, which is a contradiction.
The argument for~$\Hom_{\mathscr H}(A_2,B_n)$ is similar with
Lemma~\ref{lem:mu J1 J2} replaced by Lemma~\ref{lem:B2 Bn all}.
\end{proof}

Let~$\mathcal B_r(m)$ be the set of 
all partitions of~$m$ with $r$ parts such that
all parts except may be the maximal and the minimal are equal, that is
$$
\mathcal B_r(m)=\{ (\lambda_1,\dots,\lambda_r)\in\ZZ_{>0}^r\,:\,
\lambda_1\ge \lambda_2=\cdots=\lambda_{r-1}\ge \lambda_r,\, \sum_{1\le j\le r}\lambda_j=m\}.
$$
Using the transposition of partitions,
it is easy to see that~$\mathcal B_r(m)$ is in bijection
with the set of partitions of~$m$ whose parts
are in~$\{r,r-1,1\}$ and the maximal part is~$r$. In particular,
$$
\sum_{m\ge 0} |\mathcal B_r(m)|x^m=\frac{x^r}{(1-x)(1-x^{r-1})(1-x^r)}.
$$
Note that~$\mathcal B_3(m)=\mathcal A_3(m)$.
\begin{theorem}\label{thm:Bk Bn}
For all~$m\ge r\ge 3$, $\boldsymbol\mu\in \mathcal B_r(m)$, the assignments
$s'_i\mapsto w_\circ^{J_i(\boldsymbol\mu')}$, $i\in [1,r-1]$ and~$s'_r\mapsto 
w_\circ^{[m-\lambda_1+1,m]}$,
where~$\boldsymbol\mu'=(\mu_2,\mu_2,\dots,\mu_r)\in \mathcal A_r(m-\mu_1+\mu_2)$,
define a locally injective connected~$\theta_{\boldsymbol\mu}\in
\Hom_{\mathscr H}(B_r,B_m)$. Conversely,
if~$\phi\in\Hom_{\mathscr H}(B_r,B_m)$
is locally injective and connected 
then~$\phi=\theta_{\boldsymbol{\mu}}$
for some~$\boldsymbol{\mu}\in\mathcal B_r(m)$.
\end{theorem}
\begin{proof}
Abbreviate~$J_i=J_i(\boldsymbol{\mu}')$, $i\in[1,r-1]$ and~$J_r=[m-\mu_1+1,m]$.
By Theorem~\ref{thm:Ak to An},
the assignments $s'_i\mapsto w_\circ^{J_i}$,
$i\in [1,r-1]$ define an locally injective connected
homomorphism from~$(W_{[1,r-1]}(B_r),\star)
\cong (W(A_{r-1}),\star)$ to 
$(W_{[1,m-\mu_1+\mu_2-1]}(B_m),\star)$,
which in turn is isomorphic to~$
(W(A_{m-\mu_1+\mu_2-1}),\star)$. 
Since~$\min J_r=m-\mu_1+1=
\sum_{2\le j\le r}\mu_j+1
=a_{r-1}(\boldsymbol{\mu}')+1
\ge 
a_i(\boldsymbol{\mu'})+1$ for all~$i\in[1,r-1]$,
it follows that~$J_r$ is orthogonal to
all the~$J_i$ with~$i\in[1,r-2]$ and 
so~$\mu_{B_m}(J_i,J_r)=
\mu_{B_m}(J_r,J_i)=2$ for all~$i\in[1,r-2]$.
We now write~$J_{r-1}=a_{r-2}(\boldsymbol\mu')+[1,k]$, 
$J_r=a_{r-2}(\boldsymbol{\mu}')+[l,n]$ where
$k=\mu'_{r-2}+\mu'_{r-1}-1=2\mu_2-1$,
$l=\mu_2+1$ and~$n=\mu_1+\mu_2$. Then~$l\ge 2$,
$n\ge 2(l-1)$, whence~$l\le\ceil{\frac{n+1}2}$, and~$k=2l-3\le n-1$. By Lemma~\ref{prop:B2 Bn all}
$\mu_{B_n}([1,k],[l,n])=
\mu_{B_n}([l,n],[1,k])=4$. Since~$(W(B_n),\star)\cong 
W_{[a_{r-2}(\boldsymbol\mu')+1,m]}(B_m)$
via $s_i\mapsto s_{i+a_{r-2}(\boldsymbol\mu')}$, $i\in [1,n]$,
it follows that~$\mu_{B_m}(J_{r-1},J_r)=
\mu_{B_m}(J_r,J_{r-1})=4$.
It remains to apply Theorem~\ref{thm:Hom Heck Mon}.

To prove the converse, we use induction on~$r\ge 3$.
Let~$\phi\in \Hom_{\mathscr H}(B_r,B_m)$ be locally injective, connected and fully supported. If $m\notin [\phi](r)$,
then the restriction
of~$\phi$ to~$(W_{[r-1,r]}(B_r),\star)
\cong (W(B_2),\star)$ would
be an injective connected 
homomorphism from $(W(B_2),\star)$ to 
$(W(A_k),\star)$ for some~$k\le m-1$, which
is a contradiction by Lemma~\ref{lem:B2 An do not exist}. 
Thus $m\in[\phi](r)$ and~$\min [\phi]([2,r])\in [\phi](2)$. 

Suppose first that~$r=3$. If~$1\in [\phi](2)$
then by Lemma~\ref{lem:mu J1 J2} and~Proposition~\ref{prop:B2 Bn all},
$[\phi](2)=[1,\nu'_1+\nu'_3-1]$,
$[\phi](1)=[\nu'_3+1,\nu'_1+\nu'_2+\nu'_3-1]$
and~$[\phi](3)=[l,m]$, $l\ge 2$ where
$\nu'_1\ge\max(\nu'_2,\nu'_3)>0$ and~$2l-3\le \nu'_1+\nu'_3-1$. On the other hand,
by Lemma~\ref{lem:B2 Bn all}, $[\phi](1)$ and~$[\phi](3)$ must be orthogonal, whence~$\nu'_1+\nu'_2+\nu'_3+1\le l$.
It follows that~$2l-3\le l-\nu'_2-2$ or~$l\le 1-\nu'_2$, which is a contradiction. Thus, $1\in[\phi](1)$
and we can write
$[\phi](1)=[1,\nu_2+\nu_4-1]$,
$[\phi](2)=[\nu_4+1,\nu_2+\nu_3+\nu_4-1]$
and $[\phi](3)=[m-\nu_1+1,m]$
for some~$(\nu_1,\nu_2,\nu_3,\nu_4)\in\ZZ_{\ge0}^4$. 
Since~$\mu_{B_m}([\phi](1),[\phi](2))=3
=\mu_{B_m}([\phi](2),[\phi](1))=3$, 
$\nu_2\ge \max(\nu_3,\nu_4)>0$
by Lemma~\ref{lem: w [1,k]*w[l,n]}. 
Since~$\mu_{B_m}([\phi](2),[\phi](3))=
4=\mu_{B_m}([\phi](3),[\phi](2))$,
it follows from Lemma~\ref{lem:B2 Bn all}
that~$2(m-\nu_1-\nu_4+1)-3\le \nu_2+\nu_3-1\le m-\nu_4-1$ and~$m-\nu_1-\nu_4+1\ge 2$.
 Finally, if~$[\phi](1)$ and~$[\phi](3)$ are not orthogonal then~$\mu_{B_m}([\phi](1),[\phi](3))>2$ by Lemma~\ref{lem:B2 Bn all}, which is a contradiction. Therefore, $[\phi](1)$ and~$[\phi](3)$ must be orthogonal which yields~$m\ge \nu_1+\nu_2+\nu_4$. Thus,
$m\ge \max(\nu_1,\nu_3)+\nu_2+\nu_4$
and~$2m\le 2\nu_1+\nu_2+\nu_3+2\nu_4$,
whence $2\max(\nu_1,\nu_3)+\nu_2
\le 2\nu_1+\nu_3$. This implies that~$\nu_2\le \nu_3$ whence~$\nu_2=\nu_3$, and $\nu_2\le\nu_1$.
Then~$m=\nu_1+\nu_2+\nu_4$, $(\nu_1,\nu_2,\nu_4)\in \mathcal B_3(m)$ 
and the assertion follows. 

For the inductive step, note that 
since the restriction of~$\phi$ to
$(W_{[2,r]}(B_r),\star)$ is locally injective and connected, there is~$\boldsymbol\mu=
(\mu_1,\mu_2,\dots,\mu_{r-1})
\in \mathcal B_{r-1}(m-t)$, $t\ge 0$ such
that~$[\phi](i)=t+J_{i-1}(\boldsymbol\mu')$, $i\in [2,r]$,
and~$[\phi](r)=[m-\mu_1+1,m]$, where
$\boldsymbol{\mu}'=(\mu_2,\mu_2,\dots,\mu_{r-1})
\in\mathcal A_{r-1}(m-t-\mu_1+\mu_2)$.
Suppose first that~$t=0$.
Since the restriction of~$\phi$
to~$(W_{\{1,2\}}(B_r),\star)\cong (W(A_2),\star)$
is injective and connected, by Lemma~\ref{lem: w [1,k]*w[l,n]}, $[\phi](2)=[1,\mu_2+\mu_{r-1}-1]=[1,\nu'_1+\nu'_3-1]$,
while~$[\phi](1)=[\nu'_3+1,\nu'_1+\nu'_2+\nu'_3-1]$, $\nu'_1\ge \max(\nu'_2,\nu'_3)>0$,
and~$[\phi](1)$, $[\phi](3)=[\mu_{r-1}+1,2\mu_2+\mu_{r-1}-1]$ must be orthogonal.
Thus, $\mu_{r-1}\ge \nu'_1+\nu'_2+\nu'_3=\mu_2+\mu_{r-1}$
or~$\nu'_3\ge 2\mu_2+\mu_{r-1}=\mu_2+\nu'_1+\nu'_3$, which is a contradiction.

Thus, $t>0$, $1\in[\phi](1)$ and, applying
Lemma~\ref{lem: w [1,k]*w[l,n]}
to~$(W_{\{1,2\}}(B_r),\star)\cong (W(A_2),\star)$, we conclude that
$[\phi](i)=J_i(\boldsymbol\nu)$, $i\in\{1,2\}$ for some~$\boldsymbol\nu=(\nu_1,\nu_2,\nu_3)\in\ZZ_{> 0}^3$
with~$\nu_1\ge \max(\nu_2,\nu_3)$.
Since~$[\phi](2)=t+J_1(\boldsymbol\mu')$,
$\nu_3=t$ and~$\mu_2+\mu_{r-1}=\nu_1+\nu_2$. 
Then~$[\phi](1)$ and~$[\phi](3)$ must be orthogonal, for otherwise~$\mu_{B_m}([\phi](1),[\phi](3))>2$ by Lemma~\ref{lem: w [1,k]*w[l,n]},
which yields~$\mu_{r-1}\ge \nu_1$ and
$\nu_1\ge \nu_2\ge \mu_2\ge \mu_{r-1}\ge \nu_1$,
forcing all of them to be equal. 
Then~$\mu_r:=\nu_3\le \nu_1=\mu_2$ and
so~$\hat{\boldsymbol\mu}=(\mu_1,\mu_2,\dots,\mu_{r-1},\mu_r)\in 
\mathcal B_r(m)$. It remains 
to observe that~$[\phi](i)=J_i(\hat{\boldsymbol{\mu}}')$, $i\in[1,r-1]$
where, as before, $\hat{\boldsymbol{\mu}}'=
(\mu_2,\mu_2,\dots,\mu_{r-1},\mu_r)$.
\end{proof}

\subsection{From type~\texorpdfstring{$B$}{B} to type
\texorpdfstring{$D$}{D}}\label{subs:B->D}
Let~$\sigma$ be the diagram automorphism
of~$W(D_{n+1})$ corresponding to the 
permutation~$(n,n+1)$. We begin with the following immediate consequence of Theorem~\ref{thm:adm finite class}.
\begin{lemma}\label{lem:sigma equiv}
Let~$M\in\Cox I$, $\phi\in\Hom_{\mathscr H}(M,D_{n+1})$ and
suppose that~$\phi$ is $\sigma$-invariant, that is~$\sigma([\phi](i))=[\phi](i)$, $i\in I$. Then~$\phi$ is the composition
of~\eqref{eq:unfold Bn Dn+1} with
some $\phi'\in\Hom_{\mathscr H}(M,B_n)$.
Moreover, if $\phi$ is connected or (locally) injective then so is~$\phi'$. 
\end{lemma}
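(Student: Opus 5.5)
The plan is to realise $\phi$ as landing inside the $\sigma$-fixed submonoid of $(W(D_{n+1}),\star)$ and then pull it back along the unfolding $\iota:(W(B_n),\star)\to(W(D_{n+1}),\star)$ of~\eqref{eq:unfold Bn Dn+1}. That map is $s'_i\mapsto s_i$ for $i<n$ and $s'_n\mapsto s_ns_{n+1}=w_\circ^{\{n,n+1\}}$. By Theorem~\ref{thm:adm finite class} it is injective, homogeneous, and an isomorphism onto the submonoid of $\sigma$-fixed elements.

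First, since $\sigma([\phi](i))=[\phi](i)$, each generator image $\phi(s_i)=w_\circ^{[\phi](i)}$ is $\sigma$-fixed. This holds because $\sigma(w_\circ^K)=w_\circ^{\sigma(K)}$. The $\sigma$-fixed elements form a submonoid, so the whole image of $\phi$ is $\sigma$-fixed. Hence $\phi'=\iota^{-1}\circ\phi$ is a well-defined homomorphism $(W(M),\star)\to(W(B_n),\star)$ with $\phi=\iota\circ\phi'$.

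Next, I will make $[\phi']$ explicit. Since $\iota$ is homogeneous and disjoint, $[\iota](j)=\{j\}$ for $j<n$ and $[\iota](n)=\{n,n+1\}$. A $\sigma$-invariant $K\subset[1,n+1]$ contains both or neither of $n,n+1$, so it equals $[\iota](K')$ for a unique $K'\subset[1,n]$. By Lemma~\ref{lem:[phi]comp}, $[\phi](i)=[\iota]([\phi'](i))$, which determines $[\phi'](i)$ as this $K'$.

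Then the transfer of properties follows. Injectivity and local injectivity pass to $\phi'$ because $\phi=\iota\circ\phi'$: any pair identified by $\phi'$ (on all of $W(M)$, or on a rank-two parabolic) is also identified by $\phi$. For connectedness, take $J\subset I$ connected, so $[\phi](J)=[\iota]([\phi'](J))$ is connected in $D_{n+1}$. A subset $K'\subset[1,n]$ is connected in $B_n$ if and only if $[\iota](K')$ is connected in $D_{n+1}$, since $n-1$ is joined to $n$ in $B_n$ exactly as it is joined to both $n$ and $n+1$ in $D_{n+1}$. This check, comparing connectedness of subsets of $B_n$ with their unfoldings in $D_{n+1}$, is the only step needing a small case analysis, and I expect it to be the main, if mild, obstacle; the single exception is $K'=\{n\}$, whose unfolding $\{n,n+1\}$ is disconnected. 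So I will handle singletons separately, noting that $\{n\}$ is connected in $B_n$, and then conclude via Lemma~\partref{lem:[phi]comp.e}.
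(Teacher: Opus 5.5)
Your proposal is correct and is essentially the paper's argument: the image of $\phi$ lies in the $\sigma$-fixed submonoid, which \eqref{eq:unfold Bn Dn+1} identifies isomorphically with $(W(B_n),\star)$ by Theorem~\ref{thm:adm finite class}, so $\phi'$ is the pullback and (local) injectivity and connectedness transfer directly. Your separate treatment of $K'=\{n\}$ is harmless but unnecessary, since only the implication ``$[\iota](K')$ connected $\Rightarrow$ $K'$ connected'' is used, and that implication holds with no exceptions.
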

\begin{proof}
By Theorem~\ref{thm:adm finite class}, 
the homomorphism~\eqref{eq:unfold Bn Dn+1} is
an isomorphism from $(W(B_n),\star)$ to
the submonoid of~$(W(D_{n+1}),\star)$ 
consisting of~$\sigma$-invariant elements.
Since the image of~$\phi$ is manifestly contained in that submonoid, all assertions follow.
\end{proof}
Thus, in this section we only consider homomorphisms which are {\em not} $\sigma$-invariant.

The case of~$D_4$ must be treated separately since 
its group of diagram automorphisms is~$S_3$. 
The following is easily checked (for example, using
our Python program for computations in Hecke monoids or Sage, or even by hands)
\begin{lemma}\label{lem:B2 to D4}
Up to diagram automorphisms,
the only injective fully supported homomorphisms
$(W(B_2),\star)\to (W(D_4),\star)$ which 
are not~$\sigma$-invariant, are 
given by assignments $s'_1\mapsto w_\circ^{[1,3]}$,
$s'_2\mapsto w_\circ^{J}$ where
$J\in \{ \{4\},\{2,4\},\{3,4\},\{2,3,4\},\{1,3,4\}\}$.
Of these, the ones corresponding to~$J=\{\{2,4\},\{2,3,4\}\}$ are parabolic and
the ones corresponding to~$J\in \{\{4\},\{2,4\},
\{2,3,4\}\}$ are connected. Furthermore,
there are no such homomorphisms
from $(W(A_2),\star)$ to~$(W(D_4),\star)$.
\end{lemma}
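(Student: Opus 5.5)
The statement is a finite verification, so I would organize it as an exhaustive but heavily pruned search based on Theorem~\ref{thm:Hom Heck Mon} and Proposition~\ref{prop:locally inj}. A homomorphism $\phi:(W(B_2),\star)\to(W(D_4),\star)$ is determined by the pair $([\phi](1),[\phi](2))$ of nonempty subsets of $[1,4]$; here every subset is of finite type. By Proposition~\ref{prop:locally inj}, $\phi$ is injective if and only if
\[
\mu_{D_4}([\phi](1),[\phi](2))=\mu_{D_4}([\phi](2),[\phi](1))=4
\]
and neither set contains the other. Rank $2$ means local injectivity is the same as injectivity. Full support means $[\phi](1)\cup[\phi](2)=[1,4]$.

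First I would reduce by symmetry. The group $S_3$ of diagram automorphisms of $D_4$ acts on the pair. Together with the requirement that $\phi$ not be $\sigma$-invariant (the $\sigma$-invariant ones being handled by Lemma~\ref{lem:sigma equiv} and Proposition~\ref{prop:B2 Bn all}), this leaves only a few orbits of unordered pairs of incomparable sets covering $[1,4]$. Orthogonal pairs have $\mu=2$ and are discarded, as are pairs with $\mu>4$. For the remaining pairs one computes the alternating products $w_\circ^{J}\star w_\circ^{K}\star\cdots$. Lemma~\ref{lem:too short} and the absorption arguments of Lemma~\ref{lem:B2 Bn all} can exclude many pairs by a length count without full multiplication. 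For the survivors, such as $([1,3],\{2,4\})$, one would exhibit the length-$4$ braid identity explicitly using~\eqref{eq:w1,n D}. This computation is where I would lean on the computer (the authors' Python program), and it is the main bulk of the proof.

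Next, for the list $J\in\{\{4\},\{2,4\},\{3,4\},\{2,3,4\},\{1,3,4\}\}$ with $[\phi](1)=[1,3]$, I would check connectedness via Lemma~\partref{lem:[phi]comp.e}. Connectedness requires each image set and their union to be connected. $\{3,4\}$ and $\{1,3,4\}$ are disconnected in $D_4$, since $3$ and $4$ are both adjacent only to $2$. This leaves $\{4\}$, $\{2,4\}$ and $\{2,3,4\}$. For parabolicity, Definition~\ref{defn:parabolic hom} requires checking that the images of $w_{\{1\};\{1,2\}}=s'_2s'_1s'_2$ and of $s'_1s'_2s'_1$ are of the form $w_{K;[1,4]}$. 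I would compute $\ell$ and $D_L$ of these images using Lemma~\ref{lem:wK absorption} as the criterion: an element $x$ is parabolic if and only if $x=w_\circ^{K}w_\circ$, where $K$ is the complement of $D_L(x)$. Lemma~\ref{lem:hom parab submonoid} is not available since $\phi$ is not of Coxeter type.

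Finally, for $A_2$, I would run the same search with target value $\mu=3$ in both directions. The expected obstacle is showing that no non-$\sigma$-invariant pair in $D_4$ achieves $\mu=3$ in both directions. I would first attempt this via the restriction argument of Lemma~\ref{lem:B2 An do not exist}. If one image set lies in a type $A_3$ parabolic, Lemma~\ref{lem:mu J1 J2} constrains it. Otherwise both sets contain $2$ and at least two of the leaves $1,3,4$, and I would rule these out by direct computation.
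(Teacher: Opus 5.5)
Your proposal is correct and follows essentially the paper's route. The paper gives no written argument beyond saying the lemma is easily checked (by the authors' Python program, Sage, or by hand), and your reduction via Theorem~\ref{thm:Hom Heck Mon} and Proposition~\ref{prop:locally inj}, the connectedness test via Lemma~\partref{lem:[phi]comp.e}, and the parabolicity test on the images of $s'_1s'_2s'_1$ and $s'_2s'_1s'_2$ through $D_L$ and length are all sound.

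The only misstep is in your $A_2$ paragraph. There, Lemma~\ref{lem:mu J1 J2} cannot be invoked, because a fully supported pair never lies in a common type-$A$ parabolic. Your case split is also off: a subset of $[1,4]$ avoids all three $A_3$ parabolics only if it contains $\{1,3,4\}$. A cleaner pruning uses
$$\ell(\brd{w_\circ^J\star w_\circ^K\star}{3})\le 2\ell(w_\circ^J)+\ell(w_\circ^K)-2\ell(w_\circ^{J\cap K}).$$
Compared against $\ell(w_\circ^{[1,4]})=12$, this leaves only pairs of two distinct $A_3$ subsets. These are permuted transitively by the diagram automorphisms, so a single direct computation excludes them all.
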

Note that homomorphisms corresponding to
$J=\{3,4\}$ and~$J=\{1,3,4\}$ are obtained from
the one corresponding to~$J=\{4\}$ using Lemma~\ref{lem:decoration}. However, one of 
them turns out to be a part of a series, even
though it is the only non-connected one in that series. 

The following is immediate from~\eqref{eq:w0 D}.
\begin{equation}\label{eq:w0 D rec}
w_\circ^{[a,n+1]}= \cx a{(n+1)}\cxr a{(n-1)}\times 
w_\circ^{[a+1,n]}=w_\circ^{[a+1,n]}\times \cx a{(n+1)}\cxr a{(n-1)}.
\end{equation}
\begin{proposition}\label{prop:B2 Dn+1}
Let~$n\ge 4$. For all $2\le k\le
\ceil{\frac14(n+2)}+\floor{\frac14(n+2)}$
the assignments $s'_1\mapsto w_\circ^{[1,n]}$,
$s'_2\mapsto w_\circ^{[k,n+1]}$ define 
a homomorphism $(W(B_2),\star)\to 
(W(D_{n+1}),\star)$. This homomorphism
is injective except when~$n$ is even and~$k=2$,
and is parabolic if $k=2$. Moreover, up
to diagram automorphisms
all injective and connected~$\phi\in\Hom_{\mathscr H}(B_2,D_{n+1})$
which are not~$\sigma$-invariant are obtained this way.
\end{proposition}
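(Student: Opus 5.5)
We follow the template of Lemma~\ref{lem:B2 Bn all}: compute the braid products
$w_\circ^{[1,n]}\star w_\circ^{[k,n+1]}\star w_\circ^{[1,n]}$ and $w_\circ^{[k,n+1]}\star w_\circ^{[1,n]}\star w_\circ^{[k,n+1]}$ explicitly, in the form $w_\circ^{[1,n+1]}$ times (or times the diagram twist of) a parabolic-type element. From these we read off $\mu_{D_{n+1}}([1,n],[k,n+1])$ and $\mu_{D_{n+1}}([k,n+1],[1,n])$, and then apply Theorem~\ref{thm:Hom Heck Mon} and Proposition~\ref{prop:locally inj}.

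\textbf{Base case $k=2$.} Write $w_\circ^{[1,n]}=\cxr1n\times w_\circ^{[2,n]}$ by~\eqref{eq:w0 type A}, and use Lemma~\ref{lem:char w_0 monoid} to absorb $w_\circ^{[2,n]}$ into $w_\circ^{[2,n+1]}$. This reduces the first product to $\cxr1n\star w_\circ^{[2,n+1]}\star\cx1n$. Next write $w_\circ^{[2,n+1]}=u\,w_\circ^{[1,n+1]}$ with $u=\cx1{(n+1)}\cxr1{(n-1)}$ via~\eqref{eq:w0 D rec}, and simplify with Lemma~\partref{lem: u * u^(-1) w_0.a}. This is where the parity of $n$ enters: $w_\circ^{[1,n+1]}$ is central for $n$ odd, while for $n$ even conjugation by it acts as $\sigma$. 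For $n$ even we expect one of the two products with $k=2$ to already equal $w_\circ^{[1,n+1]}$ after three factors, while the other needs four. That gives $\max\mu=4$ but $\min\mu=3\neq m'_{12}$, so the map is a homomorphism but not injective, by Proposition~\ref{prop:locally inj}. Parabolicity at $k=2$ then follows because the three-fold products have the form $w_\circ^{[1,n+1]}$ times $\cx{}{}$-type factors, which are exactly $w_{J;[1,n+1]}$ by~\eqref{eq:w0 D}, \eqref{eq:w1,n D} and Lemma~\ref{lem:Bn facts}-style identities for type~$D$. The images of $s'_1$, $s'_2$ and $w_\circ$ are automatically parabolic.

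\textbf{Induction on $k$.} For $k>2$ we induct as in Lemma~\ref{lem:B2 Bn all}. Using the isomorphism $W_{[2,n+1]}(D_{n+1})\cong W(D_n)$, the inductive hypothesis gives $w_\circ^{[2,n]}\star w_\circ^{[k,n+1]}\star w_\circ^{[2,n]}$ in closed form. We then conjugate by $\cxr1n$ / $\cx1n$ and use the descent computations of Lemmata~\ref{lem:DL uw0}, \ref{lem:wK absorption} and \ref{lem:left desc}. The goal is a formula of the shape
$w_\circ^{[1,n]}\star w_\circ^{[k,n+1]}\star w_\circ^{[1,n]}=w_\circ^{[1,n+1]}\,w_{J_k;K_k}$, with explicit intervals $J_k\subset K_k$ of length governed by $2k$ relative to $n$. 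Lemma~\partref{lem: u * u^(-1) w_0.c} then gives $\mu=4$ exactly when $[k,n+1]\subset K_k$ in the appropriate sense. Otherwise Lemma~\ref{lem:too short} gives $\mu>4$, as in the $B_n$ case. The point where these two regimes switch is the bound $k\le \ceil{\frac14(n+2)}+\floor{\frac14(n+2)}$. Its mixed ceiling/floor form reflects the parity-dependent $\sigma$-twist accumulated over the $k-1$ induction steps. The reverse product is controlled by applying ${}^{op}$ together with a monotonicity argument in $k$ analogous to~\eqref{eq: mu[l,n][1,k]}.

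\textbf{Converse.} Let $\phi\in\Hom_{\mathscr H}(B_2,D_{n+1})$ be injective, connected, fully supported and not $\sigma$-invariant. We use Proposition~\ref{prop:locally inj} (no containments and $\mu=4$), the exclusion of type-$A$ targets by Lemma~\ref{lem:B2 An do not exist}, and Lemma~\ref{lem:sigma equiv}. Together these force one support to contain exactly one of $n,n+1$, which we may take to be $[1,n]$ after applying $\sigma$, and force the other support to be $\sigma$-stable of the form $[k,n+1]$. We then finish with the $\mu$-computation above.

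The main obstacle is the general-$k$ closed formula. The parity-dependent $\sigma$-twist must be tracked carefully through the induction, and the exact cutoff in $k$ depends on getting it right; I would first tabulate small $n$ computationally to pin down $J_k,K_k$.
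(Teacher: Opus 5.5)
\textbf{The converse has a genuine gap.} You say that Proposition~\ref{prop:locally inj}, Lemma~\ref{lem:B2 An do not exist} and Lemma~\ref{lem:sigma equiv} together force one support to be $\sigma$-stable. None of them addresses the case where \emph{neither} support contains both $n$ and $n+1$. By connectedness and full support, this case is, up to diagram automorphisms, $[\phi](1)=[1,n]$ and $[\phi](2)=\sigma([r,n])=[r,n-1]\cup\{n+1\}$ with $r\in[1,n]$. Each of your tools fails to exclude it:
\begin{itemize}
\item there is no containment between the two supports;
\item their union is all of $[1,n+1]$, so no type-$A$ reduction applies;
\item $\phi$ is not $\sigma$-invariant, so Lemma~\ref{lem:sigma equiv} says nothing;
\item Proposition~\ref{prop:locally inj} gives nothing until the two $\mu$'s are computed.
\end{itemize}
Soft arguments cannot dismiss this case. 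For $n=3$, the pairs $[1,3],\{4\}$ and $[1,3],\{2,4\}$ \emph{are} injective connected homomorphisms (Lemma~\ref{lem:B2 to D4}).

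The paper excludes the case for $n\ge 4$ with a separate computation, Lemma~\partref{lem:first braid B2 Dn+1.a}:
\begin{itemize}
\item Absorption reduces $w_\circ^{[1,n]}\star w_\circ^{\sigma([r,n])}\star w_\circ^{[1,n]}$, for every $r$, to $w_\circ^{[1,n]}\star s_{n+1}\star w_\circ^{[1,n]}=w_{[3,n];[3,n+1]}w_\circ^{[1,n+1]}$.
\item Then $(w_\circ^{\sigma([1,n])}\star w_\circ^{[1,n]})^{\star 2}=w_{\sigma([4,n]);[4,n+1]}w_\circ^{[1,n+1]}\ne w_\circ^{[1,n+1]}$. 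This is exactly where $n\ge 4$ is used.
\item A descending induction on $r$, combined with ${}^{op}$, gives $\min(\mu_{D_{n+1}}([1,n],\sigma([r,n])),\mu_{D_{n+1}}(\sigma([r,n]),[1,n]))>4$ for all $r$.
\end{itemize}
Only after this step is the reduction to the pair $[1,n]$, $[k,n+1]$ justified.

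\textbf{The forward direction follows the paper's route} (Lemma~\partref{lem:first braid B2 Dn+1.b}), but needs two corrections.

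First, your inclusion is reversed. Lemma~\partref{lem: u * u^(-1) w_0.c} gives $\mu=4$ when $K_k\subset[k,n+1]$. Lemma~\ref{lem:too short} gives $\mu>4$ when $[k,n+1]\subsetneq K_k$.

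Second, the closed formula you leave open is $K_k=[n+2-l,n+1]$ and $J_k=[n+2-l,n]$ with $l=l(\bar n,k)$, where
\begin{itemize}
\item $l(\epsilon,2)=\epsilon$,
\item $l(\epsilon,3)=3-2\epsilon$,
\item $l(\epsilon,k)=k-\overline{k+\epsilon+1}$ for $k\ge 4$.
\end{itemize}
The condition $k\le n+2-l(\bar n,k)$ is what becomes $k\le \floor{\frac14(n+2)}+\ceil{\frac14(n+2)}$. The induction runs on $n$ via $W_{[2,n+1]}(D_{n+1})\cong W(D_n)$ with $k\mapsto k-1$. The inductive shift of $l$ is not uniform for small $k$, so the cases $k=3$, and $k=4$ with $n$ even, have to be computed separately.
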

\begin{proof}
We need the following
\begin{lemma}\label{lem:first braid B2 Dn+1}
\begin{enmalph}
\item\label{lem:first braid B2 Dn+1.a} 
Let~$n\ge 4$, $r\in [1,n]$. Then
\begin{equation}\label{eq: first braid D'}
w_\circ^{[1,n]}\star w_\circ^{\sigma([r,n])}\star w_\circ^{[1,n]}=
w_{[3,n];[3,n+1]}w_\circ^{[1,n+1]}.
\end{equation}
In particular,
$\min(\mu_{D_{n+1}}([1,n],\sigma([r,n])),
\mu_{D_{n+1}}(\sigma([r,n]),[1,n]))>4$
for all~$r\in[1,n]$.
\item\label{lem:first braid B2 Dn+1.b}
Let~$n\ge 3$, $k\in[2,n]$. Then
\begin{align}
w_\circ^{[1,n]}\star w_\circ^{[k,n+1]}\star w_\circ^{[1,n]}
=w_{[n+2-l(\bar n,k),n];[n+2-l(\bar n,k),n+1]}w_\circ^{[1,n+1]}\label{eq: first braid D}
\end{align}
where~$l(\epsilon,2)=\epsilon$, $l(\epsilon,3)=3-2 \epsilon$ and $l(\epsilon,k)=k-\overline{k+\epsilon+1}$,
$k\ge 4$, $\epsilon\in\{0,1\}$.
In particular, 
$\mu_{D_{n+1}}([1,n],[k,n+1])
=\mu_{D_{n+1}}([k,n+1],[1,n])=4$ if and only
if~$3-\bar n\le k\le \floor{\frac14(n+2)}+\ceil{\frac14(n+2)}$
and~$\min(\mu_{D_{n+1}}([1,n],[k,n+1]),
\mu_{D_{n+1}}([k,n+1],[1,n]))>4$ otherwise.
\end{enmalph}
\end{lemma}
\begin{proof}
Since~$(W_{\sigma([r,n])}(D_{n+1}),\star)
\cong (W(A_{n-r+1}),\star)$, we have
by~\eqref{eq:w0 type A} and Lemma~\ref{lem:char w_0 monoid}
$$
w_\circ^{[1,n]}\star w_\circ^{\sigma([r,n])}
\star w_\circ^{[1,n]}
=w_\circ^{[1,n]}\star \cx r{(n-1)}\star s_{n+1}\star  w_\circ^{[r,n-1]}
\star w_\circ^{[1,n]}
=w_\circ^{[1,n]}\star s_{n+1}
\star w_\circ^{[1,n]}.
$$
Thus, it suffices to prove~\eqref{eq: first braid D'} for~$r=n$. 
We have by~\eqref{eq:w0 type A} and Lemma~\ref{lem:char w_0 monoid}
\begin{align*}
w_\circ^{[1,n]}&\star s_{n+1}
\star w_\circ^{[1,n]}
=\cx1n\cx1{(n-1)}\star w_\circ^{[1,n-2]}
\star s_{n+1}\star w_\circ^{[1,n]}\\
&=\cx1n\cx1{(n-1)}
\star s_{n+1}\star w_\circ^{[1,n]}.
\end{align*}
Note that the element~$\cx1n\cx1{(n-1)}$ is reduced hence is the same both in the Hecke monoid and in the Coxeter group. Since for all~$i\in [1,n-1]$ 
\begin{equation}\label{eq:comm cox}
\cx1n s_i=\cx1{(i-1)} s_is_{i+1}s_i \cx{(i+2)}n
=\cx1{(i-1)} s_{i+1}\cx in =s_{i+1}\cx 1n,
\end{equation}
it follows that
$\cx1n \cx1{(n-1)}=\cx2n\cx1n$ and so
\begin{align*}
w_\circ^{[1,n]}&\star s_{n+1}
\star w_\circ^{[1,n]}
=\cx2n\cx1{(n+1)}\star w_\circ^{[1,n]}=\cx2n\sigma(\cx1n)\star s_n
\star w_\circ^{[1,n]}=
\cx2n\sigma(\cx1n)\star w_\circ^{[1,n]}.
\end{align*}
By~\eqref{eq:w1,n D},
\begin{align*}
w_\circ^{[1,n]}&=
\sigma(\cxr1n\times w_{[2,n];[2,n+1]})w_\circ^{[1,n+1]}
=(s_{n+1}\cxr1{(n-1)}\times \sigma(w_{[2,n];[2,n+1]}))w_\circ^{[1,n+1]}\\&
=
(s_{n+1}\cxr1{(n-1)}\times \cxr2n\times w_{[3,n];[3,n+1]})w_\circ^{[1,n+1]},
\end{align*}
and it remains to apply Lemma~\partref{lem: u * u^(-1) w_0.a}.

We now prove that
\begin{equation}\label{eq: long braid D}
(w_\circ^{\sigma([1,n])}\star w_\circ^{[1,n])})^{\star 2}=w_{\sigma([4,n]);[4,n+1]}w_\circ^{[1,n+1]}.
\end{equation}
Indeed,
by~\eqref{eq: first braid D'}
and~\eqref{eq:w0 type A},
\begin{align*}
(w_\circ^{\sigma([1,n])}\star w_\circ^{[1,n])})^{\star 2}
&=w_\circ^{\sigma([1,n])}\star 
(w_{[3,n];[3,n+1]}w_\circ^{[1,n+1]})\\&
=w_\circ^{[1,n-1]}\star s_{n+1}\star 
\cxr1{(n-1)}\star (w_{[3,n];[3,n+1]}w_\circ^{[1,n+1]})
\\&=w_\circ^{[1,n-1]}\star (s_{n+1}w_{[3,n];[3,n+1]}w_\circ^{[1,n+1]}),
\end{align*}
since~$D_L(w_{[3,n];[3,n+1]}w_\circ^{[1,n+1]})=
[1,n]$ by Lemmata~\ref{lem:DL uw0} and~\ref{lem:wK absorption}. By~\eqref{eq:w1,n D}, $s_{n+1}w_{[3,n];[3,n+1]}=
\cxr3{(n-1)}\times \sigma(w_{[4,n];[4,n+1]})
=\cxr3{(n-1)}\times w_{\sigma([4,n]);[4,n+1]}$.
Then by Lemma~\partref{lem: u * u^(-1) w_0.a'},
\begin{align*}
(w_\circ^{\sigma([1,n])}\star w_\circ^{[1,n])})^{\star 2}
&=w_\circ^{[1,n-1]}\star ((\cxr3{(n-1)}\times w_{\sigma([4,n]);[4,n+1]})w_\circ^{[1,n]})\\
&
=w_\circ^{[1,n-1]}\star (w_{\sigma([4,n]);[4,n+1]}w_\circ^{[1,n]}),
\end{align*}
and~\eqref{eq: long braid D} follows from Lemma~\ref{lem:left desc}
since~$D_L(w_{\sigma([4,n]);[4,n+1]}w_\circ^{[1,n]})=\sigma([1,n])\supset[1,n-1]$.

By~\eqref{eq: long braid D},
$\mu_{D_{n+1}}(\sigma([1,n]),[1,n])>4$. 
Suppose that~$\mu_{D_{n+1}}(\sigma([r,n]),[1,n])=4$ for some~$r\in[2,n]$. Then
\begin{align*}
w_\circ^{\sigma([r-1,n]}
&\star w_\circ^{[1,n]}\star w_\circ^{\sigma([r-1,n]}\star w_\circ^{[1,n]}
=w_\circ^{\sigma([r-1,n]}
\star (w_\circ^{[1,n]}\star w_\circ^{\sigma([r,n]}\star w_\circ^{[1,n]})\\
&=\sigma(\cxr rn)\star w_\circ^{\sigma([r,n]}
\star (w_\circ^{[1,n]}\star w_\circ^{\sigma([r,n]}\star w_\circ^{[1,n]})
=w_\circ^{[1,n+1]},
\end{align*}
and so~$\mu_{D_{n+1}}(\sigma([r-1,n]),[1,n])\le 4$.
Then it follows, by an obvious descending induction, that $\mu_{D_{n+1}}(\sigma([1,n]),[1,n])\le 4$ which contradicts~\eqref{eq: long braid D}. Thus, $\mu_{D_{n+1}}(\sigma([r,n]),[1,n])>4$ for all~$r\in[1,n-1]$. In particular,
by~\eqref{eq:mu_M JK KJ},
$\mu_{D_{n+1}}([1,n],\sigma([r,n]))\ge 4$.
If~$\mu_{D_{n+1}}([1,n],\sigma([r,n]))=4$ then,
by applying~${}^{op}$, we conclude 
that~$\mu_{D_{n+1}}(\sigma([r,n]),[1,n])\le 4$
which is a contradiction.
Part~\ref{lem:first braid B2 Dn+1.a} is proven.

We now prove part~\ref{lem:first braid B2 Dn+1.b}. First,
let~$k=2$. We claim that $w_\circ^{[1,n]}\star w_\circ^{[2,n+1]}
\star w_\circ^{[1,n]}=w_\circ^{[1,n+1]}$ if~$n$ is even
and~$w_\circ^{[1,n]}\star w_\circ^{[2,n+1]}
\star w_\circ^{[1,n]}=s_{n+1}w_\circ^{[1,n+1]}$
if~$n$ is odd. By~\eqref{eq:w0 type A}, \eqref{eq:w0 D rec} and Lemmata~\ref{lem:char w_0 monoid} and~\partref{lem: u * u^(-1) w_0.a} 
\begin{align*}
w_\circ^{[1,n]}&\star w_\circ^{[2,n+1]}
\star w_\circ^{[1,n]}
=\cxr1n\star w_\circ^{[2,n]}\star w_\circ^{[2,n+1]}
\star w_\circ^{[2,n]}\star \cx1n
=\cxr1n\star w_\circ^{[2,n+1]}\star \cx1n\\
&=\cxr1n\star (w_\circ^{[1,n+1]}\cx1{(n-1)}\cxr1{(n+1)})
\star \cx1n
=\cxr1n\star (w_\circ^{[1,n+1]}\sigma(\cx1n)).
\end{align*}
If~$n$ is even then $w_\circ^{[1,n+1]}\sigma(\cx1n)
=\cx1n w_\circ^{[1,n+1]}$, whence 
again by Lemma~\partref{lem: u * u^(-1) w_0.a}
$$
w_\circ^{[1,n]}\star w_\circ^{[2,n+1]}
\star w_\circ^{[1,n]}=w_\circ^{[1,n+1]}.
$$
If~$n$ is odd then~$w_\circ^{[1,n+1]}$ is central. Therefore, by Lemma~\partref{lem: u * u^(-1) w_0.a}
\begin{align*}
w_\circ^{[1,n]}\star w_\circ^{[2,n+1]}
\star w_\circ^{[1,n]}
&=\cxr1n\star (\cx1{(n-1)}s_{n+1}w_\circ^{[1,n+1]})=s_n\star (s_{n+1}w_\circ^{[1,n+1]}).
\end{align*}
Since~$D_L(s_{n+1}w_\circ^{[1,n+1]})=[1,n]$ it follows
that~$s_n\star (s_{n+1}w_\circ^{[1,n+1]})=
s_{n+1}w_\circ^{[1,n+1]}$.

For~$k>2$ we use induction on~$n$, the case~$n=3$
being easy to check. By the induction hypothesis,
$
w_\circ^{[2,n]}\star w_\circ^{[k,n+1]}\star w_\circ^{[2,n]}
=w_{[n+2-l(1-\bar n,k-1),n];[n+2-l(1-\bar n,k-1),n+1]}w_\circ^{[2,n+1]}$.
Therefore,
\begin{align*}
w_\circ^{[1,n]}\star w_\circ^{[k,n+1]}\star w_\circ^{[1,n]}=\cxr1n \star 
(w_{[n+2-l(1-\bar n,k-1),n];[n+2-l(1-\bar n,k-1),n+1]}w_\circ^{[2,n+1]})\star \cx1n.
\end{align*}
If~$k=3$ then 
\begin{align*}
w_\circ^{[1,n]}\star w_\circ^{[3,n+1]}\star w_\circ^{[1,n]}=
\cxr1n\star (w_\circ^{[n+1+\bar n,n+1]}w_\circ^{[2,n+1]})\star \cx1n.
\end{align*}
If~$n$ is odd then~$w_\circ^{[1,n+1]}$ is
central and, using Lemma~\partref{lem: u * u^(-1) w_0.a}
we obtain
\begin{align*}
w_\circ^{[1,n]}\star w_\circ^{[3,n+1]}\star w_\circ^{[1,n]}&=
\cxr1n\star w_\circ^{[2,n+1]}\star \cx1n
=\cxr1n \star (w_\circ^{[1,n+1]}\cx1{(n-1)}\cxr1{(n+1)})\star \cx1n
\\
&=\cxr1n \star (w_\circ^{[1,n+1]}\cx1{(n-1)}s_{n+1})
=s_n\star (s_{n+1}w_\circ^{[1,n+1]})=s_{n+1}w_\circ^{[1,n+1]}\\
&=w_{[n+2-l(1,3),n];[n+2-l(1,3),n+1]}w_\circ^{[1,n+1]}.
\end{align*}
If~$n$ is even then by Lemma~\partref{lem: u * u^(-1) w_0.a} 
\begin{align*}
w_\circ^{[1,n]}&\star w_\circ^{[3,n+1]}\star w_\circ^{[1,n]}=
\cxr1n\star (s_{n+1}w_\circ^{[2,n+1]})\star \cx1n
\\
&=\cxr1n\star (s_{n+1}w_\circ^{[1,n+1]}\cx1{(n-1)}\cxr1{(n+1)})\star \cx1n\\
&=\cxr1n\star (w_\circ^{[1,n+1]}s_n\cx1{(n-1)}\cxr1{(n+1)})\star \cx1n
=\cxr1n\star (w_\circ^{[1,n+1]}s_n\cx1{(n-1)}s_{n+1})\\
&=\cxr1n\star (\cx1{(n-2)}s_{n+1}s_{n-1}s_n w_\circ^{[1,n+1]})
=s_n s_{n-1}\star (s_{n+1}s_{n-1}s_n w_\circ^{[1,n+1]}).
\end{align*}
Since~$D_L(s_{n+1}s_{n-1}s_n)=\{n+1\}$, 
$D_L(s_{n+1}s_{n-1}s_n w_\circ^{[1,n+1]})=
[1,n]$ by Lemma~\ref{lem:DL uw0} and 
so~$s_ns_{n-1}\star (s_{n+1}s_{n-1}s_n w_\circ^{[1,n+1]})
=s_{n+1}s_{n-1}s_n w_\circ^{[1,n+1]}$ by
Lemma~\ref{lem:left desc}. It remains to
observe that~$s_{n+1}s_{n-1}s_n=
w_{[n-1,n];[n-1,n+1]}=
w_{[n+2-l(0,3),n];[n+2-l(0,3),n+1]}$.

Before we pass to the general case,
note that if~$k>4$ then~$l(1-\bar n,k-1)=k-1-\overline{k-n+1}=l(\bar n,k)-1$, while
$l(1-\bar n,3)-l(\bar n,4)=
1+2\bar n-(3+\bar n)=\bar n-2$
and so the case when~$k=4$ and~$n$ is even requires a separate treatment.  But then we 
have
\begin{align*}
w_\circ^{[1,n]}\star w_\circ^{[4,n+1]}
\star w_\circ^{[1,n]}&=
\cxr1n \star (s_{n+1}w_\circ^{[2,n+1]})\star \cx1n=w_{[n-1,n];[n-1,n+1]}w_\circ^{[1,n+1]}
\end{align*}
as shown above. Since~$l(0,4)=3$, the assertion follows
in this case. Thus, we assume that either~$k>4$
or~$k=4$ and~$n$ is odd. Then
\begin{align*}
w_\circ^{[1,n]}\star w_\circ^{[k,n+1]}
\star w_\circ^{[1,n]}
&=\cxr1n \star  (w_{[n+3-l,n];[n+3-l,n+1]}w_\circ^{[2,n+1]})\star \cx1n, 
\end{align*}
where we abbreviate~$l=l(\bar n,k)\ge 4$. 
Write~$w_\circ^{[1,n+1]}=u\times (w_{[n+3-l,n];[n+3-l,n+1]}w_\circ^{[2,n+1]})$. Then
by~\eqref{eq:w0 D rec} and~\eqref{eq:w0 type A}
\begin{align*}
u&=w_\circ^{[1,n+1]}w_\circ^{[2,n+1]}
w_{[n+3-l,n];[n+3-l,n+1]}{}^{-1}\\
&=\cx1{(n+1)}\cxr1{(n-1)} w_\circ^{[n+3-l,n+1]}
w_\circ^{[n+3-l,n]}=\cx1{(n+1-l)} w_\circ^{[n+2-l,n+1]}w_\circ^{[n+3-l,n]}
\cxr1{(n+1-l)}\\
&=\cx1{(n+1-l)}\times (w_\circ^{[n+2-l,n+1]}
w_\circ^{[n+2-l,n]})\times \cxr 1n,
\end{align*}
where we replace the usual product in the 
Coxeter group with~$\times$ in the last equality by the comparison of lengths.
Therefore,
by Lemma~\partref{lem: u * u^(-1) w_0.a}
\begin{align*}
w_\circ^{[1,n]}&\star w_\circ^{[k,n+1]}
\star w_\circ^{[1,n]}
=\cxr1n \star (u^{-1}w_\circ^{[1,n+1]})\star \cx1n\\
&
=(w_{[n+2-l,n];[n+2-l,n+1]}\cxr1{(n+1-l)}w_\circ^{[1,n+1]})\star\cx1n\\
&=(w_{[n+2-l,n];[n+2-l,n+1]}w_\circ^{[1,n+1]}\cxr1{(n+1-l)})\star\cx1n\\
&=(w_{[n+2-l,n];[n+2-l,n+1]}w_\circ^{[1,n+1]})
\star \cx{(n+2-l)}n.
\end{align*}
Note that~$(W_{[n+2-l,n+1]}(D_{n+1}),\star)
\cong (W(D_l),\star)$. Also, 
$l=k-\overline{k+n+1}$
has the same parity as~$n+1$. Thus, if~$n$ is odd
then $w_\circ^{[n+1]}$ is central in~$(W(D_{n+1}),\star)$, $w_\circ^{[n+2-l,n+1]}$ is central in~$(W_{[n+2-l,n+1]}(D_{n+1}),\star)$ and so
$$
w_{[n+2-l,n];[n+2-l,n+1]}w_\circ^{[1,n+1]}
=w_\circ^{[1,n+1]}w_\circ^{[n+2-l,n+1]}w_\circ^{[n+2-l,n]}
=w_\circ^{[1,n+1]}w_{[n+2-l,n];[n+2-l,n+1]}{}^{-1}.
$$
If~$n$ is even then
\begin{align*}
w_{[n+2-l,n];[n+2-l,n+1]}w_\circ^{[1,n+1]}
&=w_\circ^{[1,n+1]}w_\circ^{[n+2-l,n-1]\cup\{n+1\}}
w_\circ^{[n+2-l,n+1]}\\&=
w_\circ^{[1,n+1]}w_\circ^{[n+2-l,n+1]}
w_\circ^{[n+2-l,n]}=
w_\circ^{[1,n+1]}w_{[n+2-l,n];[n+2-l,n+1]}{}^{-1}.
\end{align*}
In either case, $D_R(w_{[n+2-l,n];[n+2-l,n+1]}w_\circ^{[1,n+1]})=[1,n]$
by Lemmata~\ref{lem:DL uw0} and~\ref{lem:wK absorption} hence contains~$[n+2-l,n]$. Then
$
(w_{[n+2-l,n];[n+2-l,n+1]}w_\circ^{[1,n+1]})
\star \cx{(n+2-l)}n=w_{[n+2-l,n];[n+2-l,n+1]}w_\circ^{[1,n+1]}$
by Lemma~\ref{lem:left desc},
which completes the proof of the inductive step.

We now prove that~$\mu_{D_{n+1}}([k,n+1],[1,n])\ge 4$ for all~$2\le k\le n$. 
The argument is rather similar to that in type~$B$.
Writing~$w_\circ^{[1,n]}=\cxr1n\star w_\circ^{[2,n]}$
we obtain by \eqref{eq:w0 D rec}, Lemmata~\ref{lem:char w_0 monoid}
and~\partref{lem: u * u^(-1) w_0.a'}
\begin{align*}
w_\circ^{[2,n+1]}\star 
w_\circ^{[1,n]}\star w_\circ^{[2,n+1]}
&=w_\circ^{[2,n+1]}\star \cxr1n\star w_\circ^{[2,n+1]}
=w_\circ^{[2,n+1]}\star s_1\star w_\circ^{[2,n+1]}\\
&=w_\circ^{[2,n+1]}\star (s_1 w_\circ^{[2,n+1]})
=w_\circ^{[2,n+1]}\star (\cx2{(n+1)}\cxr{1}{(n-1)}
w_\circ^{[1,n+1]})\\
&=w_\circ^{[2,n+1]}\star (\cxr1{(n-1)}w_\circ^{[1,n+1]})
=s_1 w_\circ^{[1,n+1]}.
\end{align*}
Thus, $\mu_{D_{n+1}}([2,n+1],[1,n])=4$
by Lemma~\partref{lem: u * u^(-1) w_0.c}.
Now, if $w_\circ^{[k,n+1]}\star 
w_\circ^{[1,n]}\star w_\circ^{[k,n+1]}=w_\circ^{[1,n+1]}$
for some~$k\in[3,n]$ then by~\eqref{eq:w0 D rec}
\begin{align*}
&w_\circ^{[k-1,n+1]}\star w_\circ^{[1,n]}\star w_\circ^{[k-1,n+1]}\\
&=
\cx{(k-1)}{(n+1)}\cxr{(k-1)}{(n-1)}\star 
w_\circ^{[k,n+1]}\star w_\circ^{[1,n]}\star w_\circ^{[k,n+1]}\star \cx{(k-1)}{(n+1)}\cxr{(k-1)}{(n-1)}=w_\circ^{[1,n+1]},
\end{align*}
whence
$s_1 w_\circ^{[1,n+1]}=w_\circ^{[2,n+1]}\star 
w_\circ^{[1,n]}\star w_\circ^{[2,n+1]}=w_\circ^{[1,n+1]}$
by an obvious induction, which is a contradiction.

We now prove that~$\mu_{D_{n+1}}([1,n],[k,n+1])=4$
if and only if~$3-\bar n\le 
k\le \floor{\frac14(n+2)}+
\ceil{\frac14(n+2)}$. The lower 
bound is immediate. For the upper,
note first that~$k\le n+2-l(\bar n,k)$
is equivalent to~$k\le \floor{\frac14(n+2)}+
\ceil{\frac14(n+2)}$. Indeed, if we write~$n=4m+r$, $r\in\{-1,0,1,2\}$, $m\in\ZZ_{>0}$, then
$$
\floor{\tfrac14(n+2)}+
\ceil{\tfrac14(n+2)}=2m+1+\delta_{2,r}.
$$ 
If~$k=3$, 
then~$n+2-l(\bar n,3)=n-1$ if~$n$ is even
and~$n+1$ if~$n$ is odd. Since~$n\ge 3$,
$3\le n+2-l(\bar n,3)$ and also
$3\le \floor{\frac14(n+2)}+\ceil{\frac14(n+2)}$.
For~$k>4$, we have~$l(\bar n,k)=
k-\overline{k+n+1}$ and so the inequality becomes~$2k\le n+2+\overline{n+k+1}
=n+3-\overline{n+k}$
or~$k\le 2m+1+\frac12(1+r-\overline{r+k})$.
It is now easy to see that the precise
upper bound for~$k$ is~$2m+1+\delta_{r,2}$.

By~\eqref{eq: first braid D}, $\mu_{D_{n+1}}([1,n],[k,n+1])>3$ if~$k\ge 3-\bar n$.  
If~$k\le \floor{\frac14(n+2)}+
\ceil{\frac14(n+2)}$ then
$[n+2-l(\bar n,k),n+1]\subset [k,n+1]$
and so~$\mu_{D_{n+1}}([1,n],[k,n+1])=4$
by Lemma~\partref{lem: u * u^(-1) w_0.c} and~\eqref{eq: first braid D}. Applying~${}^{op}$ and taking into account
that~$\mu_{D_{n+1}}([k,n+1],[1,n])\ge 4$, 
we conclude that~$\mu_{D_{n+1}}([k,n+1],[1,n])=4$.

Suppose that~$k>\floor{\frac14(n+2)}+
\ceil{\frac14(n+2)}$, that is,
$J'=[k,n+1]\subset [n+2-l,n+1]=K$, where we
abbreviate~$l=l(\bar n,k)$. Let~$J=[n+2-l,n]$.
Then~$J'\cap J=[k,n]$ and so
$$
\ell(w_\circ^{K})-\ell(w_\circ^J)-
(\ell(w_\circ^{J'})-\ell(w_\circ^{J'\cap J}))=\tfrac12 (k-(n+2-l))(n+l-k+1)>0
$$
since~$k>n+2-l$ and~$l\in \{k,k-1\}$.
Then~$\mu_{D_{n+1}}([k,n+1],[1,n])>4$ by
Lemma~\ref{lem:too short}.
Since~$\mu_{D_{n+1}}([1,n],[k,n+1])\ge 4$,
the equality,
by applying~${}^{op}$, would yield
$\mu_{D_{n+1}}([k,n+1],[1,n])\le 4$ which is
a contradiction.
\end{proof}
The first assertion is immediate from Lemma~\partref{lem:first braid B2 Dn+1.b}
and Theorem~\ref{thm:Hom Heck Mon}.
The injectivity follows from Proposition~\ref{prop:locally inj}, and the parabolicity for~$k=2$ is obvious.

For the converse, suppose that~$\phi\in\Hom_{\mathscr H}(B_2,D_{n+1})$ 
is injective, connected and not~$\sigma$-invariant. Then
we have at least one~$i\in\{1,2\}$ such that~$\{n,n+1\}\not\subset [\phi](i)$. 
If both~$[\phi](1)$ and~$[\phi](2)$
have that property then we must have
$[\phi](1)=[s,n]$, $[\phi](2)=\sigma([r,n])$
for some~$r,s\in[1,n]$. Since~$\phi$
is fully supported, by applying diagram automorphisms we may assume that~$[\phi](1)=[1,n]$ and~$[\phi](2)=\sigma([r,n])$
for some~$r\in [1,n]$. But 
then $\max(\mu_{D_{n+1}}([\phi](1),[\phi](2)),
\mu_{D_{n+1}}([\phi](2),[\phi](1)))>4$
by Lemma~\partref{lem:first braid B2 Dn+1.a},
which is a contradiction by Theorem~\ref{thm:Hom Heck Mon}.
If one of the~$[\phi](i)$ is $\sigma$-invariant
then it cannot contain~$1$ and~$n,n+1$
at the same time, since otherwise
the image of~$s'_i$ is~$w_\circ^{[1,n+1]}$ and~$\phi$ cannot possibly be injective. 
Thus, in that case, without loss of generality,
we must have~$[\phi](1)=[1,n]$ and~$[\phi](2)=
[r,n+1]$ for some~$r\in[2,n-1]$. It
remains to apply Lemma~\partref{lem:first braid B2 Dn+1.b}.
\end{proof}

\begin{proposition}\label{prop:negative}
A connected fully supported homomorphisms between
$(W(A_r),\star)$, $r\ge 2$ and~$(W(D_{n+1}),\star)$, $n\ge 3$
cannot be locally injective.
\end{proposition}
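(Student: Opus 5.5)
As in the proof of Lemma~\ref{lem:B2 An do not exist}, the plan is to reduce to rank two. A locally injective connected homomorphism restricts, for each edge $\{i,i+1\}$ of the domain, to a locally injective connected homomorphism of rank-two Hecke monoids, and its image lies in a parabolic submonoid whose type is connected. So it suffices to rule out the rank-two pieces. Both directions have to be handled.

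\emph{From $A_r$ to $D_{n+1}$.} Restrict to $s'_1,s'_2$ and compose with the inclusion of the connected support $[\phi](\{1,2\})$. This support is a connected subset of $D_{n+1}$, so it is either of type $A$ or of type $D_{m+1}$ with $m\le n$. If it is of type $A$, Proposition~\ref{prop:A2->An} describes the pair and the restriction is fine. We therefore cannot argue edge by edge alone; the obstruction has to come from the global structure.

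\begin{enumerate}
\item Suppose first that the image of $\phi$ is $\sigma$-invariant. Then Lemma~\ref{lem:sigma equiv} factors $\phi$ through $(W(B_n),\star)$ with a connected locally injective map from $A_r$. Lemma~\ref{lem:B2 An do not exist} (its $A_2\to B_n$ part) forbids this.
\item Otherwise, some $[\phi](i)$ contains exactly one of $n,n+1$. For $n=3$ one checks directly, using Lemma~\ref{lem:B2 to D4} and its $A_2$ clause. For $n\ge 4$, apply $\sigma$ so that $[\phi](i)\ni n$ and $n+1\notin[\phi](i)$. Because $\phi$ is fully supported and connected, some neighbour $j$ has $n+1\in[\phi](j)$. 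The pair $[\phi](i),[\phi](j)$ must then have $\mu=3$ in both orders (Proposition~\ref{prop:locally inj}).
\item Treat the subcases according to whether $[\phi](j)$ is $\sigma$-invariant. If it is, $[\phi](j)=[k,n+1]$ with $[\phi](i)=[s,n]$. Inside $W_{[\min,n+1]}$ this is Lemma~\ref{lem:first braid B2 Dn+1}(b), which gives $\mu\ge 4$. If it is not, $[\phi](j)=\sigma([r,n])$, and Lemma~\ref{lem:first braid B2 Dn+1}(a) gives $\mu>4$.
\end{enumerate}

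There is a gap in step 3: these lemmas are stated with one of the sets equal to $[1,n]$, whereas here we only know $[\phi](i)=[s,n]$. The reduction is to pass to $W_{[s,n+1]}\cong W(D_{n+2-s})$, where $[\phi](i)$ becomes the full $[1,n]$-type set. If $[\phi](i)$ is not an interval ending at $n$, then $n\in[\phi](i)$ together with connectivity forces an interval anyway. The residual case is $[\phi](j)=\sigma([r,n])$ with $r>s$ after relabelling. There Lemma~\ref{lem:first braid B2 Dn+1}(a), valid for every $r$, again gives $\mu>4\ne 3$. The supports $\{n+1\}$ versus $[s,n]$ are covered by $r=n$.

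\emph{From $D_{n+1}$ to $A_r$.} This direction is easier. Restrict $\phi$ to the $A_3$ parabolic $\{n-1,n,n+1\}$, or more simply to the $A_2$-edges $\{n-1,n\}$ and $\{n-1,n+1\}$. By Proposition~\ref{prop:A2->An}, $[\phi](n)$ and $[\phi](n+1)$ are intervals adjacent to $[\phi](n-1)$ in the pattern $J_1(\boldsymbol\lambda),J_2(\boldsymbol\lambda)$. Since $m_{n,n+1}=2$ and $\phi$ is locally injective, $\mu_{A_r}([\phi](n),[\phi](n+1))=2$ by Proposition~\ref{prop:locally inj}, so these two intervals must be weakly orthogonal. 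Both must overlap $[\phi](n-1)$ from the side opposite to $[\phi](n-2)$ (the case $n\ge 3$ guarantees $n-2$ exists). This forces them onto the same side, hence nested or overlapping non-weakly-orthogonally, which contradicts Lemma~\ref{lem:mu J1 J2}(a) or the nesting condition of Proposition~\ref{prop:locally inj}.

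I expect the main obstacle to be the reduction in the $A\to D$ direction, that is, showing in step 3 that the general-position pair always reduces to the normalized forms treated in Lemma~\ref{lem:first braid B2 Dn+1}.
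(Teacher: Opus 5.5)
You are right that, for $A_r\to D_{n+1}$, restricting to an edge can land in a type~$A$ parabolic. The paper's reduction to fully supported $A_2\to D_{n+1}$ passes over this point.

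Your $D_{n+1}\to A_r$ half is correct and shorter than the paper's. The paper restricts to $D_4\to A_n$, writes $[\phi](i)=J_i(\boldsymbol\lambda)$ for $i\le 3$, and pins down $[\phi](4)$ until it is nested with $[\phi](3)$. Your pigeonhole needs only two facts. First, $[\phi](n-2)$, $[\phi](n)$, $[\phi](n+1)$ are pairwise orthogonal intervals (non-nesting plus Lemma~\ref{lem:mu J1 J2}(a)). Second, each is non-nested with $[\phi](n-1)=[a,b]$ and has connected union with it, so each contains $a-1$ or $b+1$.

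The $A_r\to D_{n+1}$ half has two genuine gaps.

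\emph{Adjacency.} The claim that, because $\phi$ is fully supported and connected, some neighbour $j$ has $n+1\in[\phi](j)$ does not follow. Full support only gives some node whose support contains $n+1$, and nothing forces it to be adjacent to $i$. For instance, the locally injective connected map $s'_1\mapsto s_n$, $s'_2\mapsto s_{n-1}$, $s'_3\mapsto s_{n+1}$ puts them at distance two.

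\emph{Small ambient parabolics.} Even for adjacent $i,j$, Lemma~\ref{lem:first braid B2 Dn+1}(a) needs $W_{[\min(r,s),n+1]}$ to be of type $D_{n'+1}$ with $n'\ge 4$. This fails when $\min(r,s)\ge n-2$.
\begin{itemize}
\item For $\min(r,s)=n-2$, Lemma~\ref{lem:B2 to D4} rescues you.
\item For $\min(r,s)=n-1$, the pair $[\phi](i)=\{n-1,n\}$, $[\phi](j)=\{n-1,n+1\}$ is exactly $\psi_{(2,1,1)}$ of Proposition~\ref{prop:A2->An} inside $W_{\{n-1,n,n+1\}}\cong W(A_3)$. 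So $\mu=3$ in both orders, and no argument confined to the edge $\{i,j\}$ can exclude it.
\end{itemize}
Your phrase ``for $n=3$ one checks directly'' hides the same configuration.

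Here is one way to close both gaps.
\begin{itemize}
\item Wherever Lemma~\ref{lem:first braid B2 Dn+1} applies, it gives $\mu\ge 4$ in some order, which excludes $\mu=2$ as well as $\mu=3$. In the $\sigma$-invariant subcase $[\phi](j)=[t,n+1]$, non-nesting forces $s<t\le n-1$, so part (b) always applies.
\item You may therefore compare \emph{any} $i$ with $n\in[\phi](i)\not\ni n+1$ against \emph{any} $j$ with $n+1\in[\phi](j)$, whether adjacent or not.
\item The few pairs inside $[n-2,n+1]$ are checked by hand, using Lemma~\ref{lem:B2 to D4}, Proposition~\ref{prop:A2->An}, Lemma~\ref{lem:mu J1 J2}(a), and a length count to exclude $\mu=2$ in $D_4$.
\item After this, only two configurations survive: $\{n-1,n\},\{n-1,n+1\}$ on adjacent nodes, or $\{n\},\{n+1\}$ on non-adjacent nodes.
\item In the first configuration, take a further node $h$ adjacent to $i$ (after applying $\sigma$ if needed). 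Its support is $[c,d]$ with $c\le n-2\le d\le n-1$. Lemma~\ref{lem:mu J1 J2}(a) then gives $\mu([\phi](h),\{n-1,n+1\})\ge 3$, contradicting $m'_{hj}=2$.
\item In the second configuration, every neighbour of $i$ or $j$ is forced to have support $\{n-1\}$. Hence $r=3$, and $\phi$ is not fully supported.
\end{itemize}
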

\begin{proof}
Since the restriction of a locally injective connected homomorphism to a parabolic submonoid corresponding to a connected subset is also locally injective and connected, it suffices to prove that there are no
injective connected fully supported homomorphisms 
from $(W(A_2),\star)$ to~$(W(D_{n+1}),\star)$, $n\ge 3$ and from~$(W(D_4),\star)$ to~$(W(A_n),\star)$, $n\ge 4$.

Let~$\phi\in\Hom_{\mathscr H}(A_2,D_{n+1})$
be connected and locally injective. If~$\phi$ is $\sigma$-invariant then, by Theorem~\ref{thm:adm finite class}, it is a composition of~\eqref{eq:unfold Bn Dn+1} with an injective
connected homomorphism from $(W(A_2),\star)$ to 
$(W(B_n),\star)$, which does not exist by Lemma~\ref{lem:B2 An do not exist}. 
If~$\phi$ is not $\sigma$-invariant, the case~$n=3$ was already discussed in Lemma~\ref{lem:B2 to D4}, while for~$n\ge 4$
the assertion follows from Lemma~\ref{lem:first braid B2 Dn+1}.

Let~$\phi\in\Hom_{\mathscr H}(D_4,A_n)$ 
be locally injective and connected. We may assume, without loss of generality, that~$1\in[\phi](1)$ and then,
since~$(W_{[1,3]}(D_4),\star)
\cong (W(A_3),\star)$, $[\phi](i)=J_i(\boldsymbol\lambda)$, $i\in[1,3]$,
for some~$\boldsymbol{\lambda}=(\lambda_1,\lambda_2,\lambda_3,\lambda_4)$
which is in~$\mathcal A_4(r+1)$
for some~$r\le n$ up to interchanging~$\lambda_3$ and~$\lambda_4$. Let~$[\phi](4)=[a,b]$.
By Lemma~\ref{lem: w [1,k]*w[l,n]}
and Proposition~\ref{prop:locally inj},
$[\phi](1)$ and $[\phi](4)$ must be orthogonal.
Therefore, $a\ge \max J_1(\boldsymbol\lambda)+2
=\lambda_1+\lambda_4+1$. Consider
now the restriction of~$\phi$ to~$(W_{\{2,4\}}(D_4),\star)$, which is isomorphic to~$(W(A_2),\star)$. Write~$J_2(\boldsymbol\lambda)=\lambda_4+[1,\lambda_1+\lambda_2-1]=\lambda_4+J_1(\boldsymbol\nu)$,
and~$[\phi](4)=\lambda_4+J_2(\boldsymbol\nu)$
where~$\boldsymbol{\nu}=(\nu_1,\nu_2,\nu_3)=(\lambda_1+\lambda_2+\lambda_4-a+1,
1+b-\lambda_1-\lambda-\lambda_4,a-\lambda_4-1)$.
By Lemma~\ref{lem: w [1,k]*w[l,n]} we must 
have~$\lambda_1+\lambda_2+\lambda_4-a+1\ge 
a-\lambda_4-1$, which implies that~$a=\lambda_1+\lambda_4+1$ and so~$[\phi](4)=[\lambda_1+\lambda_4+1,b]$. Now, 
either~$[\phi](3)\subset [\phi](4)$
or~$[\phi](4)\subset [\phi](3)$ 
which contradicts the local injectivity of~$\phi$ by Proposition~\ref{prop:locally inj}.
\end{proof}
\begin{theorem}\label{thm:Br D_n+1}
For all~$r\ge 3$,
the assignments 
$s'_i\mapsto w_\circ^{[2i-1,2i+1]}$,
$i\in[1,r-1]$, $s'_r\mapsto s_{2r}$,
define a locally injective connected
homomorphism~$(W(B_r),\star)\to
(W(D_{2r}),\star)$. Moreover, up to
diagram automorphisms, 
all homomorphisms of this type
from~$(W(B_r),\star)\to (W(D_{n+1}),\star)$
which are not $\sigma$-invariant
are obtained this way.
\end{theorem}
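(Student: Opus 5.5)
The plan has two parts: the existence of $\phi$ and its classification. Both are reduced to rank-two computations via Theorem~\ref{thm:Hom Heck Mon} and Proposition~\ref{prop:locally inj}.

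\textbf{Existence.} Set $K_i=[2i-1,2i+1]$ for $i\in[1,r-1]$ and $K_r=\{2r\}$. By Theorem~\ref{thm:Hom Heck Mon} and Proposition~\ref{prop:locally inj}, it suffices to compute $\mu_{D_{2r}}(K_i,K_j)$ and check it equals $m'_{ij}$; the non-containment conditions are obvious.

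First, for $|i-j|\ge2$ with $i,j<r$, the sets $K_i$ and $K_j$ are disjoint and orthogonal, so $\mu=2$. Also, $K_r=\{2r\}$ is orthogonal to all $K_i$ with $i\le r-2$, since the only neighbour of $2r$ in $D_{2r}$ is $2r-2$, and $2r-2\in K_i$ only for $i=r-1$.

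Second, for $j=i+1\le r-1$, $K_i\cup K_{i+1}=[2i-1,2i+3]$ is of type $A_5$ (for $i+1=r-1$, $[2r-3,2r-1]$ lies in the $A$-part, since $2r-1$ is one fork end). Translating into $W(A_5)$, this is $J_1(\boldsymbol\lambda)=[1,3]$, $J_2(\boldsymbol\lambda)=[3,5]$ with $\boldsymbol\lambda=(2,2,2)\in\mathcal A_3(6)$. Proposition~\ref{prop:A2->An} and Lemma~\ref{lem:mu J1 J2} then give $\mu=3$ in both orders.

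Third, for the pair $(r-1,r)$, $K_{r-1}\cup K_r=\{2r-3,2r-2,2r-1,2r\}$ is of type $D_4$, with $2r-2$ the central node. In the $D_4$ labelling, $K_{r-1}=\{1,2,3\}$ (a type $A_3$ subset) and $K_r=\{4\}$. Lemma~\ref{lem:B2 to D4} with $J=\{4\}$ says $s'_1\mapsto w_\circ^{[1,3]}$, $s'_2\mapsto s_4$ is an injective homomorphism from $(W(B_2),\star)$, so both $\mu$'s equal $4$.

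Connectedness follows from Lemma~\partref{lem:[phi]comp.e}.

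\textbf{Classification.} Let $\phi\in\Hom_{\mathscr H}(B_r,D_{n+1})$ be locally injective, connected, fully supported and not $\sigma$-invariant. I would argue by induction on $r$.

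\emph{Restriction to the $A$-part.} The restriction to $W_{[1,r-1]}(B_r)\cong W(A_{r-1})$ is locally injective and connected. By Proposition~\ref{prop:negative} its support cannot be a connected set of type $D$ with full support, so its image lies in a type-$A$ parabolic. Hence Theorem~\ref{thm:Ak to An} describes $[\phi](i)=J_i(\boldsymbol\lambda)$, possibly after a diagram automorphism.

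\emph{Restriction to the $B_2$ edge.} The restriction to $W_{\{r-1,r\}}(B_r)\cong W(B_2)$ is injective and connected, and it cannot land in type $A$ by Lemma~\ref{lem:B2 An do not exist}. It is therefore either $\sigma$-invariant (the $B$-case of Proposition~\ref{prop:B2 Bn all}, via Lemma~\ref{lem:sigma equiv}) or given by Proposition~\ref{prop:B2 Dn+1}/Lemma~\ref{lem:B2 to D4}, namely $[\phi](r-1)=[1,n]$ restricted to an appropriate parabolic, with $[\phi](r)$ one of the listed sets.

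\emph{Matching the two descriptions.} Combine these with the requirement that $[\phi](r)$ be orthogonal to $[\phi](r-2)$ (otherwise $\mu>2$, as in the proof of Theorem~\ref{thm:Bk Bn}) and that $[\phi](r-1)$ be a $J_{r-1}(\boldsymbol\lambda)$ ending at the fork. The $D_4$-subdiagram analysis of Lemma~\ref{lem:B2 to D4} forces $[\phi](r-1)=[n-2,n]$, up to $\sigma$, and $[\phi](r)=\{n+1\}$, because the other options either contain $n-1$ or $n-3$, breaking orthogonality with $[\phi](r-2)$, or are not connected. Given this, Lemma~\ref{lem:mu J1 J2} with $\mu=3$ forces every $\lambda_i=2$, so the $A$-part is exactly $[2i-1,2i+1]$ and $n+1=2r$.

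The main obstacle is excluding the Proposition~\ref{prop:B2 Dn+1} family (with $[\phi](r-1)$ a long interval $[a,n]$ and $[\phi](r)=[k,n+1]$) and the $\sigma$-invariant $B$-type pieces when $r\ge3$. I would rule these out by showing that $[\phi](r-2)$ would then have to be orthogonal to a set containing $n-1$ while meeting $[\phi](r-1)$ with $\mu=3$, and check this against Lemma~\ref{lem:mu J1 J2}. This step needs care, and in particular the case $r=3$ should be verified directly.
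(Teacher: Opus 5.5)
Your existence argument is correct and is essentially the paper's. The paper gets the type-$A$ part at once from Theorem~\ref{thm:Ak to An} with $\boldsymbol\lambda=(2,\dots,2)\in\mathcal A_r(2r)$, and treats the edge $\{r-1,r\}$ with Lemma~\ref{lem:B2 to D4} on the $D_4$ block $[2r-3,2r]$, just as you do.

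The classification has a gap that cannot be closed: the uniqueness assertion is false as stated.

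\emph{Your final step fails.} From $[\phi](r)=\{n+1\}$, $[\phi](r-1)=[n-2,n]$, the $\mathcal A_r$ condition and orthogonality of $\{n+1\}$ with $[\phi](r-2)$, you do get $\lambda_1=\dots=\lambda_{r-1}=2$. But $\lambda_r$ is only constrained by $\lambda_r\le\lambda_{r-2}=2$, because every condition involving $[\phi](r)$ lives at the right end of the chain. The choice $\boldsymbol\lambda=(2,\dots,2,1)\in\mathcal A_r(2r-1)$ gives a locally injective connected map $B_r\to D_{2r-1}$. For $r=3$ it is $s'_1\mapsto w_\circ^{\{1,2\}}$, $s'_2\mapsto w_\circ^{\{2,3,4\}}$, $s'_3\mapsto s_5$ into $D_5$:
\begin{itemize}
\item $(\{1,2\},\{2,3,4\})$ is the pair $J_1,J_2$ of $(2,2,1)\in\mathcal A_3(5)$ in the $A_4$-subset $[1,4]$, so both $\mu$'s are $3$ by Lemma~\partref{lem:mu J1 J2.b};
\item $\{2,3,4,5\}$ is a $D_4$ with centre $3$, in which $(\{2,3,4\},\{5\})$ is the case $J=\{4\}$ of Lemma~\ref{lem:B2 to D4}, so both $\mu$'s are $4$;
\item $\{1,2\}\perp\{5\}$.
\end{itemize}

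\emph{The configurations you planned to exclude survive.} Not all configurations from Proposition~\ref{prop:B2 Dn+1} go away, so your claimed forcing of $[\phi](r)=\{n+1\}$ also fails. The assignment $s'_1\mapsto w_\circ^{[1,4]}$, $s'_2\mapsto w_\circ^{[2,8]}$, $s'_3\mapsto w_\circ^{[6,9]}$ is locally injective and connected into $D_9$:
\begin{itemize}
\item $([1,4],[2,8])$ comes from $(4,4,1)\in\mathcal A_3(9)$;
\item $[1,4]\perp[6,9]$;
\item inside $W_{[2,9]}(D_9)\cong W(D_8)$ the last pair becomes $([1,7],[5,8])$, and Lemma~\partref{lem:first braid B2 Dn+1.b} gives $\mu=4$ in both orders since $2\le 5\le\lfloor 9/4\rfloor+\lceil 9/4\rceil=5$.
\end{itemize}
Neither example is $\sigma$-invariant, and neither has the stated form up to diagram automorphisms.

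\emph{The paper's proof has the same two holes.}
\begin{itemize}
\item It stops after its three excluded cases and never analyses the residual case $[\phi](r)=\{n+1\}$.
\item In the case $\{n,n+1\}\subset[\phi](r)$, set $m=\lambda_{r-2}+\lambda_{r-1}-1$. The inequality $\lambda_{r-2}+1\le\lfloor\frac14(m+2)\rfloor+\lceil\frac14(m+2)\rceil$ reduces, for $m\equiv 3\pmod 4$, only to $\lambda_{r-2}\le\lambda_{r-1}$, not to $\lambda_{r-2}+3\le\lambda_{r-1}$. This is compatible with $\lambda_{r-2}=\lambda_{r-1}$ even, and connectedness of $[\phi](r)$ then only needs $\lambda_{r-2}\ge 4$.
\end{itemize}

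A correct execution of your case analysis would therefore end with a strictly longer list than the one in the statement.
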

\begin{proof}
Let~$\boldsymbol\lambda=(2,\dots,2)\in 
\mathcal A_r(2r)$. Then~$J_i(\boldsymbol\lambda)=
[2i-1,2i+1]$, $i\in[1,r-1]$ and 
so by Theorem~\ref{thm:Ak to An} the assignments $s'_i\mapsto w_\circ^{[2i-1,2i+1]}$, $i\in[1,r-1]$, define 
a locally injective connected 
$\phi:(W(A_r),\star)\cong 
(W_{[1,r-1]}(B_r),\star)\to 
(W(A_{2r-1}),\star)\cong (W_{[1,2r-1]}(D_{2r}),\star)$. Since~$[\phi](r-1)=[2r-3,2r-1]=
2r-4+[1,3]$ and~$(W_{[2r-3,2r]}(D_{n+1}),\star)
\cong (W(D_4),\star)$ via $s_i\mapsto s_{i-2r+4}$, $i\in[2r-3,2r]$, it follows 
from Lemma~\ref{lem:B2 to D4}
that~$\mu_{D_{n+1}}([2r-3,2r-1],\{2r\})=
\mu_{D_{n+1}}(\{2r\},[2r-3,2r-1])=4$.
Finally, $\{2r\}$ is obviously
orthogonal to~$[2i-1,2i+1]$, $i\le r-2$.

Conversely, let~$\phi\in\Hom_{\mathscr H}(B_r,D_{n+1})$ be locally injective, connected
and not~$\sigma$-invariant. Then its restriction to~$(W_{[1,r-1]}(B_r),\star)$ is a locally injective 
and connected homomorphism 
to $(W_J(D_{n+1}),\star)$ where~$J=[\phi]([1,r-1])$
is connected. By Proposition~\ref{prop:negative}
$W_J(D_{n+1})$ must be of type~$A$. Suppose
first that~$J$ is~$\sigma$-invariant. Then
$[\phi](r)$ cannot be~$\sigma$-invariant, and we may assume without loss of generality 
that~$n\in [\phi](r)$, $n+1\notin[\phi](r)$.
Then $n+1\in J$ and, since~$J$ is invariant,
$J=\{n-1,n,n+1\}$, which, since a locally injective connected homomorphism
from~$(W(A_{r-1}),\star)$ to~$(W(A_k),\star)$
exists only if~$k\ge r-1$, implies that~$r\in \{3,4\}$. If~$r=4$ then~$[\phi](1)=\{n\}$
or~$[\phi](3)=\{n\}$ which contradicts 
the local injectivity by Proposition~\ref{prop:locally inj}. 
If~$r=3$ then by Proposition~\ref{prop:A2->An},
either
$[\phi](1)=\{n-1,n\}$
or~$[\phi](3)=\{n-1,n\}$, which 
again contradicts the local injectivity by Proposition~\ref{prop:locally inj}. 

We conclude that~$J$ is not $\sigma$-invariant.
In particular, exactly one of~$n$, $n+1$ is in~$J$ and we may assume without loss of generality that~$n\in J$. Then~$n+1\in[\phi](r)$.
If~$n\in [\phi](r)$ then~$[\phi](r)=[k,n+1]$
for some~$k\in [2,n-1]$ and then~$J=[1,n]$.
By Theorem~\ref{thm:Ak to An}, 
$[\phi](i)=J_i(\boldsymbol\lambda)$, $i\in[1,r-1]$ for some~$\boldsymbol\lambda=(\lambda_1,\dots,\lambda_r)$ which is in $\mathcal A_r(n+1)$ up to interchanging~$\lambda_{r-1}$ and~$\lambda_r$. In particular, $[k,n+1]$ 
must be orthogonal to~$[\phi](r-2)$
since otherwise we obtain a contradiction
by using Lemma~\ref{lem:B2 Bn all} and 
applying~\eqref{eq:unfold Bn Dn+1},
and so~$k\ge a_{r-1}(\boldsymbol\lambda)+1
=a_{r-2}(\boldsymbol\lambda)+\lambda_{r-2}+1$. Write~$[\phi](r-1)=a_{r-2}(\boldsymbol\lambda)+[1,m]$ and~$[\phi](r)=a_{r-2}(\boldsymbol\lambda)+
[k',m+1]$, where~$m=\lambda_{r-2}+\lambda_{r-1}-1$. Since~$\mu_{D_{n+1}}([\phi](r-1),[\phi](r))=4$,
$3-\bar m\le k'\le \floor{\frac14(m+2)}+\ceil{\frac14(m+2)}$
by Lemma~\partref{lem:first braid B2 Dn+1.b}.
On the other hand, $k'\ge \lambda_{r-2}+1$.
Therefore, $\lambda_{r-2}+1\le \floor{\frac14(m+2)}+\ceil{\frac14(m+2)}$
which yields~$a+\lambda_{r-2}\le \lambda_{r-1}$,
where~$a=3$ if~$m\equiv\pm1\pmod4$,
$a=2$ if~$m\equiv 2\pmod 4$ and~$a=4$ if~$m\equiv0\pmod 4$. 
Since~$\lambda_{r-2}\ge\lambda_{r-1}$,
we obtain a contradiction.

Finally, suppose that~$n\notin [\phi](r)$
and that~$[\phi](r)\not=\{n+1\}$. Then,
since~$[\phi](r)$ is connected,
$[\phi](r)=\sigma([s,n])$ for some~$s\le n-1$.
Then, by Lemmata~\partref{lem:first braid B2 Dn+1.a}
and~\ref{lem:B2 to D4}, 
$(W_{[\phi](r-1)\cup[\phi](r)}(D_{n+1}),\star)$
must be isomorphic to~$(W(D_4),\star)$,
that is $\lambda_{r-2}+\lambda_{r-1}=4$,
and $s=a_{r-2}(\boldsymbol\lambda)+2$.
Since~$[\phi](r)$ must be orthogonal
to~$[\phi](r-2)$, $s-a_{r-2}(\boldsymbol\lambda)=2
\ge \lambda_{r-2}+1$. Thus, $\lambda_{r-2}\le 1$
which is impossible since~$\lambda_{r-2}\ge 
\lambda_{r-1}$ and~$\lambda_{r-2}+\lambda_{r-1}=4$. 
\end{proof}
\begin{lemma}\label{lem:Dr Dn}
Up to diagram automorphisms,
the only connected locally injective homomorphism $(W(D_{r+1}),\star)\to 
(W(D_{n+1}),\star)$ is the isomorphism when~$r=n$.
\end{lemma}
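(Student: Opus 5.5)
The strategy is to reduce to rank-two and rank-three restrictions, exactly as in the proofs of Theorem~\ref{thm:Bk Bn} and Theorem~\ref{thm:Br D_n+1}. Let $\phi\in\Hom_{\mathscr H}(D_{r+1},D_{n+1})$ be locally injective and connected. We may assume it is fully supported, by replacing $D_{n+1}$ with the parabolic submonoid of $[\phi]([1,r+1])$, which is connected and of type $A$ or $D$. Type $A$ is excluded by Proposition~\ref{prop:negative}, so the target can be taken to be $D_{n+1}$ itself.

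Consider first the type-$A$ part. The restriction of $\phi$ to $(W_{[1,r]}(D_{r+1}),\star)\cong(W(A_r),\star)$ is locally injective and connected. By Proposition~\ref{prop:negative}, $J=[\phi]([1,r])$ must be of type $A$, so $J$ is not $\sigma$-invariant. After a diagram automorphism we get $J\subset[1,n]$, and hence $[\phi](i)=J_i(\boldsymbol\lambda)$ for $i\in[1,r]$ by Theorem~\ref{thm:Ak to An}, with $\boldsymbol\lambda\in\mathcal A_{r+1}(m+1)$ up to swapping the last two parts and $m\le n$. The same argument applies to the other type-$A$ parabolic $[1,r-1]\cup\{r+1\}$, so $J'=[\phi]([1,r-1]\cup\{r+1\})$ is also of type $A$. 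Full support then forces $n+1\in[\phi](r+1)$ and $n\notin[\phi](r+1)$, unless $n\in[\phi](r+1)$, in which case $[\phi](r+1)\supset\{n,n+1\}$.

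Next, pin down $[\phi](r+1)$. Since $s'_{r+1}$ commutes with $s'_{r}$ and with $s'_i$ for $i\le r-2$, Proposition~\ref{prop:locally inj} gives $\mu=2$ for those pairs. Meanwhile $\mu_{D_{n+1}}([\phi](r-1),[\phi](r+1))=\mu_{D_{n+1}}([\phi](r+1),[\phi](r-1))=3$. I would show that $[\phi](r+1)$ must be $\sigma([s,n])$, because the alternative $[k,n+1]$ with $k\le n$ contains $n$ and fails to commute appropriately with $[\phi](r)$. The argument uses Lemma~\ref{lem:first braid B2 Dn+1} and the computations behind it. Applying the $D_{n+1}$ diagram automorphism $\sigma$ to $J_r(\boldsymbol\lambda)$ and to $[\phi](r+1)$ then converts the $\{r-1,r+1\}$ restriction into an $A_2$ problem. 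Proposition~\ref{prop:A2->An} forces the pair $(J_{r-1}(\boldsymbol\lambda),[\phi](r+1))$ to be of the form $J_i(\boldsymbol\nu)$. The commuting condition between $[\phi](r)$ and $[\phi](r+1)$, which both end at the fork, should then force $\lambda_{r}=\lambda_{r+1}=1$ and $\lambda_{r-1}=\lambda_{r-2}=\dots=1$. Otherwise the two sets would overlap in a non-weakly-orthogonal way, or one would contain the other, which violates Proposition~\ref{prop:locally inj}. The conclusion is $[\phi](i)=\{i\}$ and $n=r$.

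The main obstacle is the commutation step at the fork: showing that $\mu_{D_{n+1}}([\phi](r),[\phi](r+1))\le 2$ forces $[\phi](r)=\{n\}$ and $[\phi](r+1)=\{n+1\}$. Commuting idempotents are not fully classified in the paper (only the expected weak-orthogonality criterion is stated). So I would first handle this directly: compute $w_\circ^{[a,n]}\star w_\circ^{\sigma([b,n])}$ using \eqref{eq:w0 type A} and \eqref{eq:w1,n D}, and check that it is not symmetric unless both intervals are singletons. The small case $r=3$, i.e.\ the source $D_4$ with its $S_3$ symmetry, may need a separate check analogous to Lemma~\ref{lem:B2 to D4}, done by direct computation.
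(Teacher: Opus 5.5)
Your skeleton matches the paper's. You describe the type-$A$ part via Theorem~\ref{thm:Ak to An}, show $n+1\in[\phi](r+1)$ and $n\notin[\phi](r+1)$, and then let $\mu_{D_{n+1}}([\phi](r),[\phi](r+1))=2$ force $[\phi](r)=\{n\}$ and $[\phi](r+1)=\{n+1\}$, hence $\boldsymbol\lambda=(1,\dots,1)$ and $n=r$. However, you skip the paper's first step, the reduction to $r=3$, and that leaves a genuine gap.

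Theorem~\ref{thm:Ak to An} describes the restriction of $\phi$ to $W_{[1,r]}(D_{r+1})$ only up to diagram automorphisms of the source $A_r$ as well as the target. For $r\ge 4$ the flip $i\mapsto r+1-i$ of $[1,r]$ is not a diagram automorphism of $D_{r+1}$: the leaf $r+1$ is attached to $r-1$, which the flip sends to $2$. So you may not assume $1\in[\phi](1)$. You must also exclude the reversed configuration, where $1\in[\phi](r)$ and $\max[\phi]([1,r])\in[\phi](1)$. There $[\phi](r)$ is nowhere near the fork, and your final commutation argument says nothing about it.

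The reversed case can be excluded. $[\phi](r+1)$ is connected, contains $n+1$, and is not orthogonal to $[\phi](r-1)$ (since $\mu=3$). Hence it contains $[\max[\phi](r-1)+1,n-1]$, which contains $\max[\phi](r-2)$. Nesting then contradicts Proposition~\ref{prop:locally inj}, and a non-nested overlap contradicts $\mu_{D_{n+1}}([\phi](r-2),[\phi](r+1))=2$.

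The paper avoids all this by first restricting to the $D_4$-parabolic $\{r-2,r-1,r,r+1\}$. In $D_4$ the flip of $\{1,2,3\}$ is one of the $S_3$ diagram automorphisms. Once the fork is shown to map to singletons, the sizes $|J_i(\boldsymbol\lambda)|$ force every $\lambda_i=1$.

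Two smaller points. First, your claim that a type-$A$ image $J=[\phi]([1,r])$ is not $\sigma$-invariant is false: any $J\subset[1,n-1]$ is $\sigma$-invariant, and so is $J=\{n-1,n,n+1\}$. The former is harmless. For $r=3$ and $n\ge 4$, however, the latter cannot be moved into $[1,n]$ by $\sigma$, and you need a source automorphism instead: by full support, $[\phi](\{1,2,3\})$ and $[\phi](\{1,2,4\})$ cannot both equal $\{n-1,n,n+1\}$.

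Second, the commutation step you flag as the main obstacle needs no explicit product computation. Suppose $w_\circ^{J}\star w_\circ^{K}=w_\circ^{J\cup K}$, and apply $p_L$ with $L=\{n-1,n,n+1\}$. By Lemma~\ref{lem:proj w0} this gives $w_\circ^{J\cap L}\star w_\circ^{K\cap L}=w_\circ^{L}$. For $J=[a,n]$ and $K=[s,n-1]\cup\{n+1\}$ with $\min(a,s)\le n-1$, the left side has length at most $5<6=\ell(w_\circ^L)$. Projecting onto a type-$A$ chain through the overlap disposes of the remaining non-nested overlaps in the same way, including the case $[\phi](r+1)=[k,n+1]$ and the reversed case above.
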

\begin{proof}
It suffices to prove the assertion for~$r=3$. 
Let~$\phi\in\Hom_{\mathscr H}(D_4,D_{n+1})$, $n\ge 3$,
be locally injective and connected. Then its restriction to~$(W_{[1,3]}(D_4),\star)
\cong (W(A_3),\star)$ is a locally injective 
connected homomorphism to~$(W_J(D_{n+1}),\star)$
where~$J=[\phi]([1,3])$ is connected. By Proposition~\ref{prop:negative}, $(W_J(D_{n+1}),\star)\cong (W(A_m),\star)$
for some~$m\ge 3$. By using diagram automorphisms of~$W(D_4)$ and of~$W(D_{n+1})$ we may assume, without loss of generality, that~$J=[1,m]$ for some~$3\le m\le n$. 
By Theorem~\ref{thm:Ak to An},
$[\phi](i)=J_i(\boldsymbol{\lambda})$
for some~$\boldsymbol{\lambda}=(\lambda_1,\lambda_1,\lambda_3,\lambda_4)$ 
with~$2\lambda_1+\lambda_3+\lambda_4=m+1$
and~$\lambda_1\ge\max(\lambda_3,\lambda_4)$.
Again, by Proposition~\ref{prop:negative},
$K=[\phi](\{2,4\})$ must be connected 
and~$(W_K(D_{n+1}),\star)$ must 
be isomorphic to~$(W(A_l),\star)$ for some~$l\ge 2$. If~$m<n$ then 
$\{n,n+1\}\in [\phi](4)$, which immediately leads to a contradiction
since~$[\phi](2)\cup [\phi](4)$
then contains a subdiagram of type~$D$. Thus,
$m=n$, $n+1\in[\phi](4)$ and~$n\notin [\phi](4)$. Write~$[\phi](2)=\lambda_4+
[1,2\lambda_1-1]=\lambda_4+[1,\nu_1+\nu_3-1]$,
$[\phi](4)=\lambda_4+[\nu_3,\nu_1+\nu_2+\nu_3-2]\cup\{n+1\}$, $\nu_1+\nu_2+\nu_3=l+1=
n-\lambda_4$
and~$\nu_1\ge \max(\nu_2,\nu_3)$.
Since~$[\phi](3)=[\lambda_1+\lambda_4+1,
n]$ it follows from Lemma~\ref{lem: w [1,k]*w[l,n]} that~$\mu_{D_{n+1}}([\phi](3),
[\phi](4))>2$ unless $[\phi](4)=\{n+1\}$ and~$[\phi](3)=\{n\}$. But 
the latter forces~$\lambda_1+\lambda_3=2$
and so~$\boldsymbol\lambda=(1,1,1,1)$,
that is, $n=4$ and~$\phi$ is the identity map.
\end{proof}
\begin{corollary}
A connected
homomorphism $(W(D_{r+1}),\star)\to 
(W(B_n),\star)$ cannot be locally injective.
\end{corollary}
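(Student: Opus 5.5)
We argue by contradiction, reducing to rank two and to the pieces of the target of type $A$, as the paper does for Lemma~\ref{lem:B2 An do not exist} and Proposition~\ref{prop:negative}. Let $\phi\in\Hom_{\mathscr H}(D_{r+1},B_n)$ be connected and locally injective. The restriction of a locally injective connected homomorphism to the parabolic submonoid of a connected subset is again locally injective and connected. So it suffices to take $r=3$, i.e. $\phi\in\Hom_{\mathscr H}(D_4,B_n)$, and, shrinking the codomain, to assume $\phi$ fully supported.

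First consider the three branches $\{1,2\}$, $\{2,3\}$, $\{2,4\}$ of $D_4$; each spans a parabolic submonoid of type $A_2$. For each $i\in\{1,3,4\}$, the set $[\phi](\{i,2\})$ is connected. It cannot contain $n$: otherwise the restriction to $W_{\{i,2\}}$ would be a connected locally injective homomorphism from $(W(A_2),\star)$ into a parabolic submonoid of type $B$, which Lemma~\ref{lem:B2 An do not exist} excludes.

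Hence $n\notin[\phi](\{1,2,3,4\})=[1,n]$, which contradicts full support. The only exception is the degenerate case where the relevant parabolic submonoid of type $B_k$ has $k=1$, so that it is really of type $A$. One checks that this case still forces $n\notin[\phi](i)$ for the rank-two restrictions. Consequently $[\phi](D_4)\subset[1,n-1]$, which is a subdiagram of type $A_{n-1}$.

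We are then reduced to a connected locally injective homomorphism $(W(D_4),\star)\to(W(A_{n-1}),\star)$, which Proposition~\ref{prop:negative} excludes. This gives the contradiction.

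The step needing care is the first: verifying that Lemma~\ref{lem:B2 An do not exist} really covers $A_2\to B_k$ for every connected image containing $n$, including small $k$. If that is awkward, the fallback is to apply Lemma~\ref{lem:B2 Bn all} directly to the shape $[\phi](2)\cup[\phi](i)\ni n$. A pair of intervals in $B_k$ that is not orthogonal has $\mu\ge 4$ in at least one order. This contradicts the requirement $\mu=3$ imposed by Proposition~\ref{prop:locally inj}, once one notes that one of the two intervals must contain $n$.
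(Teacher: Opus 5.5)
Your proof is correct, but it takes a different route from the paper's.

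\textbf{The paper's route.} The paper composes $\phi$ with the unfolding \eqref{eq:unfold Bn Dn+1}. That unfolding is injective and connected, so the composite is a connected locally injective homomorphism $(W(D_{r+1}),\star)\to(W(D_{n+1}),\star)$. Lemma~\ref{lem:Dr Dn} forces this composite to be an isomorphism with $r=n$. Then $\phi$ would be injective, which is impossible since $|W(D_{n+1})|=(n+1)|W(B_n)|$.

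\textbf{Your route.} You never leave $B_n$. Every node of $D_4$ lies on an edge labelled $3$. The $A_2\to B$ half of Lemma~\ref{lem:B2 An do not exist}, which is really Lemma~\ref{lem:B2 Bn all} applied to non-nested intervals, shows that no such edge can have image containing $n$. Hence the image lies in $W_{[1,n-1]}(B_n)$, which is of type $A$, and Proposition~\ref{prop:negative} finishes.

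\textbf{Comparison.} Your route avoids the $D\to D$ classification, the unfolding and the order count, using only rank-two computations plus Proposition~\ref{prop:negative}. It also proves more: any connected locally injective homomorphism from an irreducible simply-laced Hecke monoid into $(W(B_n),\star)$ has image in the type-$A$ part $W_{[1,n-1]}(B_n)$. The paper's route is a two-line argument once Lemma~\ref{lem:Dr Dn} is available. It illustrates how homogeneous homomorphisms transport classification results from one type to another.

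\textbf{Points to tidy in the write-up.}
\begin{enumerate}
\item The logic is out of order. You assume full support, reach a contradiction, and then keep going. Shrinking the codomain to the connected image may produce a type-$A$ monoid, so the two cases should be separated. Simplest: do not shrink. Show $n\notin[\phi](\{1,2,3,4\})$ directly, conclude the image lies in $W_{[1,n-1]}(B_n)$, and then invoke Proposition~\ref{prop:negative}.
\item The ``$k=1$'' case cannot occur at all. If $[\phi](\{i,2\})=\{n\}$, then one of $[\phi](i)$, $[\phi](2)$ contains the other, contradicting Proposition~\ref{prop:locally inj}.
\item Your ``fallback'' claim, that non-orthogonal intervals in $B_k$ have $\mu\ge 4$ in some order, is false as stated. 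Nested intervals, or two intervals both avoiding $k$, are counterexamples. What Lemma~\ref{lem:B2 Bn all} actually gives is $\mu\ge4$ in both orders for $[1,k']$ and $[l,k]$ with $2\le l\le k'+1\le k$. That is exactly the proof of the half of Lemma~\ref{lem:B2 An do not exist} you cite, so it is not a separate fallback.
\end{enumerate}
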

\begin{proof}
If such a homomorphism~$\phi$ existed, its composition
with~\eqref{eq:unfold Bn Dn+1} would yield
a homomorphism of the same type~$(W(D_{r+1}),\star)\to 
(W(D_{n+1}),\star)$, which 
is an isomorphism by Lemma~\ref{lem:Dr Dn}
and~$n=r$.
Therefore, $\phi$ must be injective.
Yet~$|W(D_{n+1})|=2^n (n+1)!=
(n+1)|W(B_n)|$ which is a contradiction. 
\end{proof}

\subsection{Concluding remarks}
We make the following
\begin{conjecture}\label{conj:families of inj}
\begin{enmalph}
\item\label{conj:families of inj.a} All homomorphisms
from Theorems~\ref{thm:Ak to An}, \ref{thm:Bk Bn}
and~\ref{thm:Br D_n+1} are injective and 
indecomposable as homomorphisms of Hecke monoids.
\item These homomorphisms  together
with those from
Propositions~\ref{prop:B2 Bn all} and~\ref{prop:B2 Dn+1},
exhaust, up to
diagram automorphisms, decorations
as in Lemma~\ref{lem:decoration}
all injective indecomposable fully supported homomorphisms
in~$\Hom_{\mathscr H}(M',M)$ where 
both $M'$ and~$M$ belong to 
classical series. 
\end{enmalph}
\end{conjecture}
Part~\ref{conj:families of inj.a} of this Conjecture was verified using our Python program for~$r=3$ and~$r=4$ and for codomain of rank at most 12.

One can check that injective 
homomorphisms from~$(W(A_2),\star)$ and
$(W(B_2),\star)$ to exceptional types 
exist only for the following pairs $(M',M)$, up
to diagram automorphisms and decorations,
$$
\begin{array}{c|c|c|c}
M'&M& [\phi](1)&[\phi](2)\\
\hline 
A_2&E_6&[1,4]\cup\{6\}&[2,6]\\
B_2&F_4&[1,3]&[2,4]\\
B_2&E_6&[1,5]&[2,6]\\
B_2&E_7&[1,4]\cup\{7\}&[2,7]\\
B_2&E_7&[1,5]\cup\{7\}&[2,7]\\
B_2&E_8&[1,6]\cup\{8\}&[2,8]
 \end{array}
$$
Note that~$[\phi](i)$, $i\in\{1,2\}$ are
connected for all~$\phi$ in the above list. None of these homomorphisms is parabolic. 
We hope that
homomorphisms from Conjecture~\ref{conj:families of inj}, together with homogeneous homomorphisms from Theorem~\ref{thm:adm finite class} and the ones listed above in exceptional types yield the bulk, if not all, solutions of Problem~\ref{prob:injective} for classical series.

\section*{List of symbols}
\def\bqq{{\setbox0\hbox{$\widehat{U}_q^+$}\setbox2\null\ht2\ht0\dp2\dp0\box2}}
\def\hr#1{\hyperlink{#1}{\pageref*{page:#1}}}

\noindent
{
\scriptsize
\begin{tabular}{p{1.55in}@{\bqq}l@{\hskip.25in}p{1.55in}@{\bqq}l@{\hskip.25in}p{1.55in}@{\bqq}l}
$\bar s$&p.~\hr{bar s}&$[a,b]_2$&p.~\hr{[a,b]2}&$\mathscr P(S)$&p.~\hr{PS}\\
$\ascprod$, $\dscprod$&p.~\hr{ascp}&
$\brd{xy}{m}$&p.~\hr{brd}&$\Cox I$&p.~\hr{Cox I}\\
$\Gamma(M)$&p.~\hr{Gamma(M)}&
$\Br^+(M)$, $\Br(M)$&p.~\hr{Br+(M)} &
$\ell$&p.~\hr{ell}\\
${}^{op}$&p.~\hr{op}&$W(M)$&p.~\hr{W(M)}&
$\pi_M$&p.~\hr{piM}\\
$\SQF^+(M)$&p.~\hr{SQF}&
$\Br^+_J(M)$, $W_J(M)$&p.~\hr{Br+J(M)}&
$\iota_J$&p.~\hr{iotaJ}\\
$\mathscr F(M)$&p.~\hr{F(M)}&
$\supp$&p.~\hr{supp}&
$w_\circ^J$&p.~\hr{w0J}\\
$\pi^\star_M$&p.~\hr{pi*M}&
$(W(M),\star)$&p.~\hr{HeMon}&
$\times,\vdash$&p.~\hr{times}\\
$\downarrow w$, $\uparrow w$&p.~\hr{dar}&
$D_L(w)$,$D_R(w)$&p.~\hr{DL(w)}&
$\cx ab$, $\cxr ab$&p.~\hr{cab}\\
$h(M)$&p.~\hr{h(M)}&
$w_{J;K}$&p.~\hr{wJ;K}&
$\mathscr C$, $\mathscr H$&p.~\hr{A C H}\\
$[\phi]$&p.~\hr{[phi]}&
$\mu_M$&p.~\hr{mu M}&
$\Lambda(M',M)$&p.~\hr{LM'M}\\
$\Theta_\xi$&p.~\hr{Theta xi}&
$p_J$&p.~\hr{pJ}&
$\ell_f$&p.~\hr{ell f}\\
$\mP_K(M)$&p.~\hr{Psbm}&
$\star_J$&p.~\hr{*J}&
$M^{\varpi}$&p.~\hr{MII}\\
$\mathbf f_\varpi$&p.~\hr{phi fold}&
$\mathscr G$&p.~\hr{Good}&
$\mathsf C(A)$&p.~\hr{C(A)}\\$a_i(\boldsymbol\lambda)$&p.~\hr{a_i(l)}&$J_i(\boldsymbol{\lambda})$&p.~\hr{J_i(l)}\\
\end{tabular}
}

\renewcommand{\PrintDOI}[1]{DOI: \href{https://dx.doi.org/#1}{#1}}
\renewcommand{\MR}[1]{\relax}

\end{document}